\theoremstyle{plain}
\newtheorem{thm}{Theorem}[subsection]
\newtheorem{prop}[thm]{Proposition}
\newtheorem{cor}[thm]{Corollary}
\newtheorem{lem}[thm]{Lemma}
\theoremstyle{definition}
\newtheorem{defn}[thm]{Definition}
\newtheorem{rem} [thm] {Remark}
\title{The $L^p$-$L^q$ maximal regularity for the Beris-Edward model in the half-space }
\date{}
\author[1]{Daniele Barbera \thanks{daniele.barbera96@gmail.com}}
\author[2,3]{Miho Murata \thanks{murata.miho@shizuoka.ac.jp}}
\affil[1]{\small{Department of Mathematics, Pisa University, Largo B. Pontecorvo 5, 56100 Pisa, Italy}}
\affil[2]{\small{Department of Mathematical and System Engineering, Faculty of Engineering, Shizuoka University, 3-5-1, Johoku, Naka-ku, Hamamatsu-shi, Shizuoka, 432-8561, Japan}}
\affil[3]{\small{Mathematical Institute, Tohoku University, Aoba, Sendai 980-8578, Japan}}
\begin{document}
\maketitle

\begin{abstract}
In this paper, we consider the model describing viscous incompressible liquid crystal flows, 
which is called the Beris-Edwards model in the half-space.
This model is a coupled system by the Navier-Stokes equations with the evolution equation of the director fields $Q$.
The purpose of this paper is to prove the linearized problem has a unique solution satisfying the maximal $L^p$ -$L^q$ regularity estimates, which is essential for the study of quasi-linear parabolic or parabolic-hyperbolic equations.
Our method relies on the $\mathcal R$-boundedness of the solution operator families to the resolvent problem in order to apply operator-valued Fourier multiplier theorems.
Consequently, we also have the local well-posedness for the Beris-Edwards model with small initial data.
\end{abstract}

{\textbf{AMS Subject Classification 2010}: 76A15, 35Q30, 35Q35}

\section{Introduction}

In this paper we will study the Beris-Edward model for nematic liquid crystals:
\begin{equation}\label{BE.sys.0}
    \left\{\begin{array}{ll}
       (\partial_t-\Delta)u+\nabla p+\beta {\rm Div}(\Delta-a)Q= f(u,Q)  & \text{in}\:\:(0,T)\times \mathbb{R}^N_+ \\
       (\partial_t-\Delta+a)Q-\beta D(u)=G(u,Q)  & \text{in}\:\:(0,T)\times\mathbb{R}^N_+ \\
       {\rm div} u=0 & \text{in}\:\:(0,T)\times\mathbb{R}^N_+ \\
       u=h,\quad D_NQ=H & \text{on}\:\:(0,T)\times\mathbb{R}^N_0 \\
       u(0)=u_0,\quad Q(0)=Q_0 & \text{in}\:\:\mathbb{R}^N_+,
    \end{array}\right.
\end{equation}
with
$$ ({\rm Div} A)_k=\sum_{j=1}^N\partial_j A_{k,j}\quad \forall A\colon\mathbb{R}^N\to\mathbb{R}^{N^2},\quad k=1,\ldots, N. $$
Liquid crystals are a state of matter intermediate between the solid state and the liquid state: such substances flow like liquids but they are strongly anisotropic. As the name suggested, the model was introduced by Beris and Edward in \cite{BE94}. Here $u\colon(0,T)\times \mathbb{R}^N_+\to \mathbb{R}^N$ and $p\colon(0,T)\times \mathbb{R}^N_+\to\mathbb{R}$ for $T>0$ are respectively the velocity field of the particles and the pressure of the material, while $Q\colon(0,T)\times \mathbb{R}^N_+\to S_0(N,\mathbb{R})$ was introduced by \cite{GP93} in order to measure the anisotropy of the substance, where
$$ S_0(N,\mathbb{R})\coloneqq \left\{A\in\mathbb{R}^{N^2}\:\Big|\: A^T=A,\quad {\rm tr}(A)=0\right\}, $$
where $A^T$ and ${\rm tr}A$ are respectively the transpose and the trace of a matrix $A$
$$ \left(A^T\right)_{j,k}=A_{k,j}\quad (j,k=1,\ldots, N),\quad {\rm tr}A=\sum_{j=1}^NA_{j,j} $$
and where $\mathbb{R}^N_+$ and $\mathbb{R}^N_0$ are respectively the half-space and its boundary, i.e.
$$ \mathbb{R}^N_+\coloneqq \left\{(x^\prime,x_N)\in\mathbb{R}^{N-1}\times \mathbb{R}\:\Big|\:x_N>0\right\}, $$
$$ \mathbb{R}^N_0\coloneqq \left\{(x^\prime,x_N)\in\mathbb{R}^{N-1}\times \mathbb{R}\:\Big|\: x_N=0\right\}. $$
Moreover, $\xi,a,b,c\in\mathbb{R}$, $\beta=\frac{2\xi}{N}$, $h\colon(0,T)\times \mathbb{R}^N_+\to\mathbb{R}^N$, $H\colon(0,T)\times \mathbb{R}^N_+\to \mathbb{R}^{N^2}$, 
$$ f(u,Q)=-(u\cdot \nabla) u + {\rm Div}\left[2\xi \mathbb{H}\colon Q\left(Q+\frac{Id}{N}\right) - (\xi+1)\mathbb{H}Q+(1-\xi)Q\mathbb{H}-\nabla Q\odot\nabla Q\right]-\beta {\rm Div}\mathcal{L}[\mathcal{F}(Q)],  $$
$$ G(u,Q)=-(u\cdot\nabla)Q+\xi(D(u)Q+QD(u))+W(u)Q-QW(u)-2\xi\left(Q+\frac{Id}{N}\right)Q\colon \nabla u+\mathcal{L}[\mathcal{F}(Q)],  $$
$$ W(u)=\frac{1}{2}\left(\nabla u-\nabla^Tu\right),\:\:  D(u)=\frac{1}{2}\left(\nabla u+\nabla^Tu\right), \quad \mathcal{F}(Q)=bQ^2-c|Q|^2Q,$$
$$ [\nabla Q\odot\nabla Q]_{jk}=\sum_{\alpha,\beta=1}^N\partial_j Q_{\alpha\beta}\partial_k Q_{\alpha\beta}\quad j,k=1,\ldots, N,  $$
$$ \mathbb{H}=\Delta Q-aQ+b\mathcal{L}[Q^2]-c|Q|^2Q,   $$
where $Id$ is the identity matrix of $\mathbb R^{N^2}$,  $|A|$ and $A\colon B$ of two symmetric matrices $A,B\in\mathbb{R}^{N^2}$ are respectively the Frobenius norm and his associated scalar product
$$ A\colon B\coloneqq {\rm tr}\left(B^TA\right)=\sum_{i,j=1}^NB_{ji}A_{ji}, $$
$$ |A|\coloneqq \sqrt{A\colon A}=\sqrt{\sum_{i,j=1}^N A_{ij}^2},  $$
and where
$$ \mathcal{L}[A]=A-{\rm tr}(A)\frac{Id}{N}. $$

\vspace{1mm}

The Beris-Edwards model was mathematically studied by several authors. Concerning the case $\xi=0$, the first result was obtained by Paicu and Zarnescu \cite{PZ12}. They proved the existence of global weak solutions in $\mathbb R^N$ with $N=2, 3$ as well as weak-strong uniqueness for $N=2$. An improved result of \cite{PZ12} in $\mathbb R^2$ was established in \cite{DA17}. Huang and Ding \cite{HD15} proved the existence of global weak solutions with a more general energy functional in $\mathbb R^3$. Abels, Dolzmann, and Liu \cite{ADL16} proved that the classical Beris-Edwards model, fluid viscosity depends on the $Q$-tensor, has a unique local solution in a bounded domain with Dirichlet boundary conditions. The global well-posedness was proved by Luo, Li, and Zhao \cite{LLX19} in a bounded with Dirichlet boundary conditions under the assumption viscosity is sufficiently large. Xiao \cite{X17} proved the global well-posedness in a bounded domain. The author constructed a strong solution in the $L^p$-$L^q$ maximal regularity class.

On the other hand, concerning the model with general parameter $\xi$, Abels, Dolzmann, and Liu \cite{ADL14} showed the unique existence of a strong local solution and global weak solutions with higher regularity in time in the case of inhomogeneous mixed Dirichlet/Neumann boundary conditions in a bounded domain. Liu and Wang \cite{LW18} improved the spatial regularity of solutions obtained in \cite{ADL14} and generalized their result to the case of anisotropic elastic energy. The global well-posedness and long-time behavior of the model in the two-dimensional periodic case was investigated by Cavaterra et al. \cite{CR16}.  In \cite{SS19} Schonbek and Shibata proved the global well-posedness and the decay properties in the $L^p$-$L^q$ maximal regularity class for the simplified model, which means that the linear terms $\Delta Q - a Q$ are removed from the first equation of \eqref{BE.sys.0}. Shibata and the second author obtained in \cite{MS22}  the unique existence and the decay properties of a strong global solution 
in the same solution spaces as \cite{SS19} for the Beris-Edwards model.

As far as we know, there is no result relating to the well-posedness for boundary value problems in unbounded domains even if $\xi=0$.
In this paper, we prove the $L^p$-$L^q$ maximal regularity for the linearized system in $\mathbb R^N_+$: 
$$ \left\{\begin{array}{ll}
    \partial_tu-\Delta u+\nabla p+\beta {\rm Div}(\Delta-a)Q=f & \text{in}\:\:\mathbb{R}_+\times \mathbb{R}^N_+ \\
    \partial_t Q-(\Delta-a)Q-\beta D(u)=G & \text{in}\:\:\mathbb{R}_+\times \mathbb{R}^N_+ \\
    {\rm div} u=0 & \text{in}\:\:\mathbb{R}_+\times \mathbb{R}^N_+ \\
    u=h, \quad D_NQ=H & \text{on}\:\:\mathbb{R}_+\times \mathbb{R}^N_0 \\
    u(0)=u_0,\quad Q(0)=Q_0 & \text{in}\:\:\mathbb{R}^N_+.
\end{array}\right. $$
For this purpose, $\mathcal R$-boundedness of the solution operator families to the resolvent problem is a key issue. Moreover, we prove 
the $L^p$-$L^q$ maximal regularity yields the local well-posedness for the system \eqref{BE.sys.0} with small initial data in $\mathbb R^N_+$.

\subsection{Notations}

In this section, we summarize the symbols and functional spaces used through the paper. 

\vspace{2mm}

Let $\theta\in\left(0,\frac{\pi}{2}\right)$ and $r>0$, then we can define
$$ \Sigma_\theta\coloneqq \{z\in\mathbb{C}\setminus\{0\}\mid |Arg(z)|<\pi- \theta\} $$
and
$$ \Sigma_{\theta,r}\coloneqq \{z\in\Sigma_\theta\mid |z|> r\}. $$

\vspace{1mm}

We will denote $\mathbb{N}_0=\mathbb{N}\cup\{0\}$, $\mathbb{R}_+=(0,+\infty)$ and $\mathbb{R}_-=(-\infty,0)$. For any $q\in(1,\infty)$ we denote the dual exponent $q^\prime=\frac{q}{q-1}$. For any multi-index $\alpha\in\mathbb{N}^N_0$ we write
$$ |\alpha|=\alpha_1+\cdots+\alpha_N, $$
$$ D^\alpha= \partial^{\alpha_1}_{x_1}\cdots \partial^{\alpha_N}_{x_N}. $$
For any $k\in\mathbb{N}_0$, for any $\Omega\subseteq\mathbb{R}^N$ open set and for any function $f\colon \Omega\to\mathbb{R}$, $g\colon\Omega\to \mathbb{R}^N$ and $A\colon\Omega\to \mathbb R^{N^2}$ we denote
$$ \nabla^kf=(D^\alpha f\mid |\alpha|=k), \quad \nabla^kg=(D^\alpha g_j\mid |\alpha|=k,\quad j=1,\ldots, N), $$
$$ \nabla^k A=(D^\alpha A_{\ell,j}\mid |\alpha|=k,\quad \ell,j=1,\ldots, N). $$
We will also denote $C^\infty(\Omega)$ the space of infinitely differentiable functions in $\Omega$ and $C^\infty_c(\Omega)$ the $C^\infty(\Omega)$-functions with compact support.

Let $\mathcal{F}$ and $\mathcal{F}^{-1}$ denote the Fourier transform and the Fourier inverse transform, respectively, which are defined by setting
$$ \widehat{f}(\tau)=\mathcal{F}[f](\tau)=\int_{\mathbb R}e^{-it\tau}f(t)dt, \quad \mathcal{F}^{-1}[f](t)=\frac{1}{2\pi}\int_{\mathbb R}e^{it\tau}f(\tau)d\tau. $$

Let $X,Y$ be two Banach spaces, then we denote with $\mathcal{L}(X;Y)$ the linear bounded operators between $X$ and $Y$. We will write $\mathcal{L}(X)$ when $Y=X$. 

Let $p,q\in(1,\infty)$, $m\in\mathbb{N}_0$ and $s\in\mathbb R$, then we will denote $L^q(\Omega)$, $W^{m,q}(\Omega)$ and $B^s_{q,p}(\Omega)$ respectively the Lebesgue, the Sobolev and the Besov spaces and we will denote $\|\cdot\|_{L^q(\Omega)}$, $\|\cdot\|_{W^{m,q}(\Omega)}$ and $\|\cdot\|_{B^s_{q,p}(\Omega)}$ their norms. We will denote $H^m(\Omega)\coloneqq W^{m,2}(\Omega)$.

Let $s\in(0,1)$ and $p\in(1,\infty)$, then we recall the definition of
$$ H^s_p(\mathbb{R})\coloneqq \left\{v\in L^p(\mathbb{R})\:\Big|\: \mathcal{F}^{-1}[(1+|\tau|^2)^{s/2}\mathcal{F}[v]]\in L^p(\mathbb{R})\right\} $$
with the norm
$$ \|v\|_{H^s_p(\mathbb{R})}\coloneqq \left\|\mathcal{F}^{-1}\left[(1+|\tau|^2)^{s/2}\widehat{u}\right]\right\|_{L^p(\mathbb{R})}. $$

Let now $A\subseteq\mathbb{R}$ open, then we define
$$ H^s_p(A)\coloneqq\left\{v\in L^p(A)\mid \exists \:\widetilde{v}\in H^s_p(\mathbb{R})\:\:\text{such that}\:\: \widetilde{v}_{|A}=v\right\}, $$
with the norm
$$ \|v\|_{H^s_p(A)}\coloneqq \inf_{\widetilde{v}_{|A}=v}\|\widetilde{v}\|_{H^s_p(\mathbb{R})}. $$
Moreover, let $X$ be a Banach space, then we denote $L^p((a,b);X)$, $W^{m,p}((a,b);X)$ and $H^s((a,b);X)$ the previous spaces function for $X$-valued functions for any $(a,b)\subseteq\mathbb{R}$.

Let $N\in\mathbb N$, $\Omega\subseteq \mathbb R^N$ open set, $0\le k<s<m$ and $p,q\in[1,\infty]$, then we recall the definition of the Besov spaces
$$ B^s_{p,q}\left(\Omega\right)\coloneqq \left(W^{k,p}\left(\Omega\right), W^{m,p}\left(\Omega\right)\right)_{\theta,q}, $$
with $s=(1-\theta)k+\theta m$.

Let $q\in(1,\infty)$ then we denote
$$ J_q\left(\mathbb{R}^N_+\right)\coloneqq \left\{f\in L^q\left(\mathbb{R}^N_+;\mathbb{R}^N\right)\:\Big|\:\left<f,\nabla \varphi\right>=0,\quad \forall \varphi\in\widehat{H}^1_{q^\prime}\left(\mathbb{R}^N_+\right)\right\}, $$
$$ \widehat{H}^1_{q}\left(\mathbb{R}^N_+\right)\coloneqq \left\{\varphi\in L^{q}_{loc}\left(\mathbb{R}^N_+\right)\:\Big|\: \nabla \varphi\in L^{q}\left(\mathbb{R}^N_+;\mathbb{R}^N\right)\right\}. $$

Finally, in the paper we will use $C$ to indicate a constant which depends on the parameters of the problem. In the statements we will use $C(a,b,\ldots)$ to underline the dependence from $a,b,\ldots$, otherwise we will use the symbols 
$$ f(x)\lesssim g(x)\:\Leftrightarrow\: \exists C\:\: \text{s.t.}\:\:f(x)\le Cg(x) $$
$$ f(x)\gtrsim g(x)\:\Leftrightarrow\: \exists C\:\: \text{s.t.}\:\:f(x)\ge Cg(x) $$
$$ f(x)\simeq g(x)\:\Leftrightarrow\: \exists C\:\: \text{s.t.}\:\:f(x)= Cg(x). $$

\subsection{$\mathcal{R}$-boundedness and main results}

The main purpose of the paper is to prove the $L^p$-$L^q$ maximal regularity for the linearized system
\begin{equation}\label{lin.evo.sys.}
    \left\{\begin{array}{ll}
    \partial_tu-\Delta u+\nabla p+\beta {\rm Div}(\Delta-a)Q=f & \text{in}\:\:\mathbb{R}_+\times \mathbb{R}^N_+ \\
    \partial_t Q-(\Delta-a)Q-\beta D(u)=G & \text{in}\:\:\mathbb{R}_+\times \mathbb{R}^N_+ \\
    {\rm div} u=0 & \text{in}\:\:\mathbb{R}_+\times \mathbb{R}^N_+ \\
    u=h, \quad D_NQ=H & \text{on}\:\:\mathbb{R}_+\times \mathbb{R}^N_0 \\
    u(0)=u_0,\quad Q(0)=Q_0 & \text{in}\:\:\mathbb{R}^N_+,
\end{array}\right.
\end{equation}
with 
$$ h,f\colon\mathbb{R}_+\times \mathbb{R}^N_+\to \mathbb{R}^N,\quad G,H\colon\mathbb{R}_+\times \mathbb{R}^N_+\to S_0(N,\mathbb{R}), $$ $$ u_0\colon\mathbb{R}^N_+\to\mathbb{R}^N,\quad Q_0\colon\mathbb{R}^N_+\to S_0(N,\mathbb R) $$
in some suitable function spaces. We will start from the study of the resolvent system:
\begin{equation}\label{res.sys.}
    \left\{\begin{array}{ll}
        (\lambda-\Delta)u+\nabla p+\beta {\rm Div}(\Delta-a) Q=f & \text{in}\:\:\mathbb{R}^N_+ \\
        (\lambda+a-\Delta)Q-\beta D(u)=G & \text{in}\:\:\mathbb{R}^N_+ \\
        {\rm div}u=0 & \text{in}\:\:\mathbb{R}^N_+ \\
        u=h,\quad D_NQ=H & \text{on}\:\:\mathbb{R}^N_0.
    \end{array}\right.
\end{equation}
As is done in \cite{S18}, \cite{SS08}, \cite{SS12}, \cite{SS19} and \cite{MS22}, we need to introduce the notion of $\mathcal{R}$-boundedness:
\begin{defn}\label{d.R-bound}\hfill\\
Let $X$ and $Y$ be two Banach spaces, then we say that a family $\mathscr{I}\subseteq\mathcal{L}(X,Y)$ is $\mathcal{R}$-bounded if there is $C>0$ and $p\in[1,\infty)$ such that, for any $m\in\mathbb{N}$, for any $T_j\in\mathscr{I}$, for any $x_j\in X$ with $j=1,\ldots, m$ and for any sequence $\{r_j(z)\}_{j=1}^m$ of independent, symmetric, random $\{-1,1\}$-valued variables on $[0,1]$ it holds
$$ \int_0^1 \left\|\sum_{j=1}^mr_j(z)T_j(x_j)\right\|_Y^pdz\le C\int_0^1\left\|\sum_{j=1}^m r_j(z)x_j\right\|_X^pdz. $$
The minimal $C$ it is called $\mathcal{R}$-bound of $\mathscr{I}$ and it is denoted by $\mathcal{R}(\mathscr{I})$.
\end{defn}


\vspace{2mm}

We will prove in the case $f=G=0$ that the solutions for \eqref{res.sys.} can be written as $(u,p,Q)=\phi_\lambda(h,H), $
with 
$$ \left\{(\tau\partial_\tau)^\ell\phi_\lambda\:\big|\:\lambda\in\Sigma_{\theta,r}\right\}\quad \ell=0,1 $$
$\mathcal{R}$-bounded for any $\theta\in\left(\theta_0,\frac{\pi}{2}\right)$ and $r>0$ for some $\theta_0>0$. In fact, the $\mathcal{R}$-boundedness will be crucial not only for the study of solution for the resolvent system \eqref{res.sys.}, but also for the existence of the linear evolution system \eqref{lin.evo.sys.}.

\vspace{2mm}

We are now ready to state the main result of the paper, that is the $L^p$-$L^q$ maximal regularity result for the linearized system \eqref{lin.evo.sys.}: 
\begin{thm}\label{t.evol.est.}
Let $N\ge 2$, $a>0$, $\beta\in\mathbb{R}$ and $p,q\in(1,+\infty)$ with $\frac{2}{p}+\frac{1}{q}<2$, then there is $\gamma_0>0$ such that for any $\gamma\ge \gamma_0$, for any $f,G,h,H$ with $h_N=0$ on $\mathbb R^N_0$ and 
$$ e^{-\gamma t}f\in L^p\left(\mathbb{R}_+;L^q\left(\mathbb{R}^N_+;\mathbb{R}^N\right)\right), \quad  e^{-\gamma t}G\in L^p\left(\mathbb{R}_+;W^{1,q}\left(\mathbb{R}^N_+;S_0(N,\mathbb R)\right)\right), $$
$$ e^{-\gamma t}h\in \bigcap_{l=0}^2H^{l/2}_{p}\left(\mathbb{R}_+;W^{2-l,q}\left(\mathbb{R}^N_+;\mathbb{R}^{N}\right)\right),\quad e^{-\gamma t}H\in \bigcap_{l=0}^2H^{l/2}_{p}\left(\mathbb{R}_+;W^{2-l,q}\left(\mathbb{R}^N_+;S_0(N,\mathbb R)\right)\right), $$ 
and $u_0\in B_{q,p}^{2(1-1/p)}(\mathbb{R}^N_+;\mathbb{R}^{N})\cap J_q(\mathbb{R}^N_+)$, $Q_0\in B_{q,p}^{3-2/p}(\mathbb{R}^N_+;S_0(N,\mathbb{R}))$ such that
$$ u_0-h(0)=D_NQ_0-H(0)=0 \quad \text{on}\:\:\mathbb{R}^N_0, $$
there is a solution $(u,p,Q)$ for \eqref{lin.evo.sys.}, unique up to additive functions $c(t)$ on the pressure term $p$, with $p(t)\in \widehat{H}^1_{q}(\mathbb{R}^N_+)$ for a.e. $t>0$ and
$$ e^{-\gamma t}u\in \bigcap_{l=0}^2H^{l/2}_{p}\left(\mathbb{R}_+;W^{2-l,q}\left(\mathbb{R}^N_+;\mathbb{R}^{N}\right)\right),\:\:   e^{-\gamma t}Q\in \bigcap_{l=0}^2H^{l/2}_p\left(\mathbb{R}_+;W^{3-l,q}\left(\mathbb{R}^N_+;S_0(N,\mathbb R)\right)\right), $$
$$ e^{-\gamma t}\nabla p\in L^p\left(\mathbb{R}_+;L^q\left(\mathbb{R}^N_+;\mathbb{R}^N\right)\right), $$
with
$$ \sum_{l=0}^2\|e^{-\gamma t}u\|_{H^{l/2}_{p}(\mathbb{R}_+;W^{2-l,q}(\mathbb{R}^N_+))} + \sum_{l=0}^2 \|e^{-\gamma t}Q\|_{H^{l/2}_{p}(\mathbb{R}_+;W^{3-l,q}(\mathbb{R}^N_+))}+ \|e^{-\gamma t}\nabla_x p\|_{L^p(\mathbb{R}_+;L^q(\mathbb{R}^N_+))} \le $$
$$ \le C\left[\sum_{l=0}^2 \| e^{-\gamma t}(h,H)\|_{H^{l/2}_{p}(\mathbb{R}_+;W^{2-k,q}(\mathbb{R}^N_+))}+\|e^{-\gamma t}f\|_{L^p(\mathbb{R}_+;L^q(\mathbb{R}^N_+))} +\|e^{-\gamma t}G\|_{L^p(\mathbb{R}_+;W^{1,q}(\mathbb{R}^N_+)}+ \right. $$
$$ \left.+\|u_0\|_{B_{q,p}^{2(1-1/p)}(\mathbb{R}^N_+)}+\|Q_0\|_{B_{q,p}^{3-2/p}(\mathbb{R}^N_+)}\right], $$
for some $C=C(a,\beta,p,q,N)>0$. 
\end{thm}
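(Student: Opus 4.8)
The plan is to follow the now-standard route for establishing $L^p$–$L^q$ maximal regularity: reduce the evolution problem \eqref{lin.evo.sys.} to a resolvent problem \eqref{res.sys.}, establish $\mathcal R$-boundedness of the solution operator families there, and then invoke the operator-valued Fourier multiplier theorem of Weis together with the Mikhlin-type characterization via $\mathcal R$-bounded symbols. The first step is a reduction to homogeneous boundary data. By standard extension/trace results (Besov trace theory, the fact that $B^{2(1-1/p)}_{q,p}$ and $B^{3-2/p}_{q,p}$ are exactly the trace spaces of the asserted maximal-regularity classes at $t=0$), I would subtract off a known function absorbing the boundary terms $h,H$, the data $f$ split into its Helmholtz components, and the initial data $u_0,Q_0$. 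After this reduction one is left to solve \eqref{lin.evo.sys.} with $h=H=0$, $u_0=Q_0=0$, and $f\in J_q$ (plus a lower-order remainder), which is the genuinely coupled Stokes–$Q$-tensor part on the half-space.

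Next I would treat the resolvent problem \eqref{res.sys.} with $f=G=0$, which the excerpt already announces: its solution is $(u,p,Q)=\phi_\lambda(h,H)$ with $\{(\tau\partial_\tau)^\ell \phi_\lambda\mid \lambda\in\Sigma_{\theta,r}\}$ $\mathcal R$-bounded for $\ell=0,1$, $\theta\in(\theta_0,\pi/2)$, some $\theta_0$ and any $r>0$. I would build the full solution operator $\mathcal S(\lambda)$ for \eqref{res.sys.} with general $(f,G)$ by first solving the whole-space problem via Helmholtz projection (the interior problem decouples into a parabolic system for $(u,Q)$ whose symbol is a $2\times2$ block matrix in the Fourier variable, uniformly invertible for $\lambda\in\Sigma_{\theta,r}$, $r$ large, since $a>0$), restricting to the half-space, and correcting the boundary error with $\phi_\lambda$. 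The composition of $\mathcal R$-bounded families is $\mathcal R$-bounded, so $\{(\lambda\partial_\lambda)^\ell \mathcal S(\lambda)\}$ inherits $\mathcal R$-boundedness. Then I would translate $\mathcal R$-boundedness of the resolvent family, including one power of $\lambda\partial_\lambda$, into maximal $L^p$-regularity for the evolution problem in the exponentially weighted spaces $e^{-\gamma t}(\cdot)$: set $\lambda=\gamma+is$, apply Weis's theorem to get $L^p(\mathbb R;L^q)$ bounds for $e^{-\gamma t}\partial_t u$, $e^{-\gamma t}\nabla^2 u$, $e^{-\gamma t}\nabla^3 Q$, $e^{-\gamma t}\nabla p$, and recover the fractional-in-time norms $H^{l/2}_p$ by interpolation exactly as in \cite{SS08,SS12,SS19,MS22}. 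Uniqueness up to $c(t)$ on the pressure follows from the injectivity of the resolvent solution operator for $\gamma$ large and a standard duality/Laplace-transform argument.

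The main obstacle is the boundary-layer analysis of $\phi_\lambda$, i.e.\ solving \eqref{res.sys.} with $f=G=0$ and proving $\mathcal R$-boundedness of the solution operators; this is where the coupling through $\beta\,{\rm Div}(\Delta-a)Q$ and $\beta D(u)$, the third-order term in $Q$, and the mixed boundary conditions $u=h$, $D_NQ=H$ genuinely interact. Concretely, after partial Fourier transform in $x'$ one obtains a system of ODEs in $x_N$ whose characteristic roots are $|\xi'|$ (from the pressure/divergence constraint), $\sqrt{\lambda+|\xi'|^2}$ and $\sqrt{\lambda+a+|\xi'|^2}$ (the two parabolic roots, now distinct because $a>0$), and the solvability of the boundary conditions amounts to showing the associated Lopatinskii–Shapiro determinant is bounded below; one must then extract from the explicit solution formulas multipliers of Mikhlin–Bourgain type to which the technical $\mathcal R$-boundedness lemmas (as in \cite{S18}) apply, uniformly in $\lambda\in\Sigma_{\theta,r}$. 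The condition $\frac{2}{p}+\frac1q<2$ enters precisely to make the compatibility condition $u_0-h(0)=D_NQ_0-H(0)=0$ meaningful in the trace sense and to place the data in the correct Besov classes; $a>0$ is what keeps the two diffusion operators spectrally separated and the symbol uniformly elliptic. Everything else — the Helmholtz decomposition on the half-space, interpolation to recover $H^{l/2}_p$ norms, and the passage from resolvent to evolution estimates — is by now routine and can be cited from the references listed in the excerpt.
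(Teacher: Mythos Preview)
Your overall architecture matches the paper's: split into (i) a whole-space piece for $f,G$ (the paper extends $f,G$ to $\mathbb R^N$ by even/odd reflection and cites \cite{MS22}), (ii) a boundary-data piece solved via Laplace transform plus $\mathcal R$-boundedness of $\phi_\lambda$ and Weis's theorem, and (iii) an initial-data piece handled by the analytic semigroup. Your ordering of the reductions differs slightly from the paper's (which does forces first, then boundary, then initial data), but this is immaterial.

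There is, however, a genuine gap in your sketch of the boundary-layer analysis. The characteristic roots of the tangentially-Fourier-transformed ODE system are \emph{not} $|\xi'|$, $\sqrt{\lambda+|\xi'|^2}$, $\sqrt{\lambda+a+|\xi'|^2}$. The coupling terms $\beta D(u)$ and $\beta\,{\rm Div}(\Delta-a)Q$ feed back into each other and produce, for $\widehat u$, the factored operator $(-|\xi'|^2+D_N^2)L(D_N)$ with
\[
L(t)=(\lambda+|\xi'|^2-t^2)(\lambda+a+|\xi'|^2-t^2)+\tfrac{\beta^2}{2}\bigl((t^2-|\xi'|^2)^2-a(t^2-|\xi'|^2)\bigr),
\]
so the relevant exponents are $A=|\xi'|$ and two $\beta$-dependent roots $L_1,L_2$ (for $\widehat Q$ one also picks up $B_a=\sqrt{\lambda+a+|\xi'|^2}$). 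These $L_1,L_2$ coincide exactly at the real point $\lambda=\eta:=a(1+\beta^2/2)/(\sqrt2\,|\beta|)\in\Sigma_\theta$, so the solution formula must be rewritten via $\mathcal M(L_2,L_1,x_N)=(e^{-L_2x_N}-e^{-L_1x_N})/(L_2-L_1)$ to stay regular there, and the multiplier estimates have to survive this degeneracy. Furthermore, the Lopatinski\u{\i} determinants (denoted $\mathcal C_a$, $\mathcal A_a$ in the paper) are not shown to be nonzero by direct computation: the paper proves it indirectly, by an $L^2$ energy-type uniqueness lemma for the Fourier ODE system which only goes through when $\tan\theta\ge|\beta|/\sqrt2$. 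This is the actual origin of the constraint on $\theta_0$; it is not, as you suggest, a mere spectral-separation effect of $a>0$. Without the correct characteristic roots and this uniqueness argument, the $\mathcal R$-boundedness of $\phi_\lambda$ cannot be established as you outline.
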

The conditions
$$ u_0-h(0)=D_NQ_0-H(0)=0 \quad \text{on}\:\:\mathbb{R}^N_0 $$
are called compatibility conditions, while $h_N=0$ on $\mathbb R^N_0$ follows by the divergence-free condition of $u$. We notice that it is reasonable to take the trace for $u_0$ and $D_NQ_0$ thanks to the condition $\frac{2}{p}+\frac{1}{q}<2$ (see Theorem 6.6.1 of \cite{BL76}). As we will see later in the paper, the resolvent estimate will follow from the proof of Theorem \ref{t.evol.est.} and, in particular, from the $\mathcal{R}$-boundedness of the map $\phi_\lambda$ we introduced before:
\begin{thm}\label{t.res.est.}
Let $N\ge 2$, $a,r>0$, $\beta\in\mathbb{R}$, $\theta\in\left(\theta_0,\frac{\pi}{2}\right)$ with $\tan\theta_0\ge \frac{|\beta|}{\sqrt{2}}$, let $q\in(1,\infty)$, let $f\in L^q(\mathbb{R}^N_+;\mathbb{R}^N)$, $G\in W^{1,q}(\mathbb{R}^N_+;S_0(N,\mathbb{R}))$, $h\in W^{2,q}(\mathbb{R}^N_+;\mathbb{R}^N)$ with $h_N=0$ on $\mathbb R^N_0$ and $H\in W^{2,q}(\mathbb{R}^N_+;S_0(N,\mathbb{R}))$, then for any $\lambda\in\Sigma_{\theta,r}$ there is a solution $(u,p,Q)$ for (\ref{res.sys.}), unique up to additive constants on the pressure term $p$, with $u\in W^{2,q}(\mathbb{R}^N_+;\mathbb{R}^N)$, $p\in \widehat{H}^1_q(\mathbb{R}^N_+)$ and $Q\in W^{3,q}(\mathbb{R}^N_+,S_0(N,\mathbb{R}))$, moreover there is $C=C(a,\beta,\theta,r,q,N)>0$ such that
$$ \left\|\left(|\lambda|u,|\lambda|^\frac{1}{2}\nabla u,D^2u\right)\right\|_{L^q(\mathbb{R}^N_+)}+\|\nabla p\|_{L^q(\mathbb{R}^N_+)}+\left\|\left(|\lambda|^\frac{3}{2}Q, |\lambda|\nabla Q, |\lambda|^\frac{1}{2}D^2Q, D^3Q\right)\right\|_{L^q(\mathbb{R}^N_+)}\le $$
$$ \le C\left[ \|f\|_{L^q(\mathbb{R}^N_+)}+\left\|\left(|\lambda|^\frac{1}{2}G,\nabla G\right)\right\|_{L^q(\mathbb{R}^N_+)}+ \left\|\left(|\lambda|(h,H),|\lambda|^\frac{1}{2}\nabla (h,H),D^2(h,H)\right)\right\|_{L^q(\mathbb{R}^N_+)}\right]. $$
\end{thm}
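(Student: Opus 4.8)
The plan is to follow the by now standard program of Shibata and collaborators: first reduce \eqref{res.sys.} to the case $f=G=0$, then construct the solution of the resulting boundary value problem by the partial Fourier transform in $x'\in\mathbb R^{N-1}$, and finally prove that the solution operator family $\phi_\lambda$ is $\mathcal R$-bounded, which in particular gives the uniform operator-norm bounds (with the right powers of $\lambda$) that are the content of the estimate. For Step~1 I would extend $f,G$ to $\widetilde f\in L^q(\mathbb R^N;\mathbb R^N)$, $\widetilde G\in W^{1,q}(\mathbb R^N;S_0(N,\mathbb R))$ with norms (including the $|\lambda|^{1/2}$-weighted ones) controlled by those of $f,G$, and solve the corresponding resolvent problem on $\mathbb R^N$ by Fourier transform. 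Taking $\mathrm{div}$ of the first equation and using $\mathrm{div}\,u=0$ gives $\widehat p$ in terms of $\widehat Q$, the second equation gives $\widehat Q$ in terms of $\widehat u$, and substituting back leaves a scalar equation for $\widehat u$ whose symbol, after clearing the factor $\lambda+a+|\xi|^2$, is $P(\lambda,\xi)=\lambda^2+(a+2|\xi|^2)\lambda+(1+\tfrac{\beta^2}{2})|\xi|^2(a+|\xi|^2)$; its $\lambda$-discriminant equals $a^2-2\beta^2|\xi|^2(a+|\xi|^2)$, so the two roots behave like $|\xi|^2(-1\pm i|\beta|/\sqrt2)$ as $|\xi|\to\infty$, and the hypothesis $\tan\theta_0\ge|\beta|/\sqrt2$ is exactly what keeps them out of $\overline{\Sigma_\theta}$ uniformly in $\xi$, giving $|P(\lambda,\xi)|\gtrsim(|\lambda|+|\xi|^2)(|\lambda|+a+|\xi|^2)$ on $\Sigma_{\theta,r}$. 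A standard Fourier multiplier argument (in fact the $\mathcal R$-bounded version, needed later) then produces a whole-space solution $(v,\pi,R)$ obeying the estimate of Theorem~\ref{t.res.est.}. Passing to $(u-v,p-\pi,Q-R)$ reduces matters to $f=G=0$ with modified boundary data $h-v|_{x_N=0}$ and $H-D_NR|_{x_N=0}$, whose norms are dominated by the right-hand side of the claimed estimate by the trace inequality.

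\emph{The model problem with $f=G=0$.} Applying the partial Fourier transform $x'\mapsto\xi'$ turns \eqref{res.sys.} into a system of ODEs on $x_N>0$ coupling a Stokes-type system for $(\widehat u,\widehat p)$ to $(\partial_N^2-\mu^2)\widehat Q=-\beta\widehat{D(u)}$, where $\mu=\mu_\lambda(\xi')=\sqrt{\lambda+a+|\xi'|^2}$ with $\mathrm{Re}\,\mu>0$, and the bounded modes are $e^{-\omega x_N}$ with $\omega=\sqrt{\lambda+|\xi'|^2}$, $e^{-\mu x_N}$ and $e^{-|\xi'|x_N}$. Writing the decaying solution as a combination of these modes and of the particular solution of the $\widehat Q$-equation built from $\widehat{D(u)}$, and imposing $\widehat u=\widehat h$ and $\partial_N\widehat Q=\widehat H$ at $x_N=0$, one is led to a finite linear system for the coefficients whose determinant — the Lopatinskii--Shapiro determinant of the problem — must be shown to be bounded away from zero on $\Sigma_{\theta,r}$ uniformly in $\xi'$, again using $\tan\theta_0\ge|\beta|/\sqrt2$. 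Solving it produces the solution operator $\phi_\lambda(h,H)=(u,p,Q)$ in the form of explicit Poisson- and Fourier-multiplier-type formulas.

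\emph{$\mathcal R$-boundedness and conclusion.} Each component of $\phi_\lambda$, and of $(\tau\partial_\tau)\phi_\lambda$ with $\lambda=\gamma+i\tau$, is a finite sum of operators of the form $g\mapsto\mathcal F_{\xi'}^{-1}[m(\xi',\lambda)\,e^{-\kappa(\xi',\lambda)x_N}\,\widehat g(\xi')]$ with symbols $m$ lying in the multiplier classes of Shibata--Shimizu type of the appropriate homogeneity degree in $(\xi',\lambda^{1/2})$; applying the corresponding $\mathcal R$-boundedness lemmas together with the Volevich trick (to treat $|\lambda|u$, $|\lambda|^{1/2}\nabla u$, $D^2u$, $|\lambda|^{3/2}Q$, $|\lambda|\nabla Q$, $|\lambda|^{1/2}D^2Q$, $D^3Q$ and $\nabla p$ simultaneously) yields the $\mathcal R$-boundedness of $\{(\tau\partial_\tau)^\ell\phi_\lambda:\lambda\in\Sigma_{\theta,r}\}$ for $\ell=0,1$. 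Since $\mathcal R$-boundedness implies uniform operator-norm boundedness, the case $\ell=0$, combined with the estimate for $(v,\pi,R)$ from Step~1, gives the bound and the regularity asserted in Theorem~\ref{t.res.est.}. Uniqueness up to an additive constant in $p$ follows from a duality argument against the adjoint resolvent problem, which has the same structure and is solvable with analogous estimates by the same construction; for $\mathrm{Re}\,\lambda\ge0$ it can also be seen directly by pairing the first equation with $\overline u$ and the second with $\overline{(a-\Delta)Q}$ in $L^2(\mathbb R^N_+)$, integrating by parts — the boundary contributions vanish because $u=0$ and $D_NQ=0$ on $\mathbb R^N_0$ — adding the two identities so that the $\beta$-terms cancel, and invoking $\lambda\in\Sigma_{\theta,r}$.

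\emph{Main obstacle.} The crux is the model problem together with the symbol bookkeeping of the $\mathcal R$-boundedness step: producing the explicit representation formulas for this system, which is third order in $Q$ and of Stokes type in $(u,p)$, and, above all, proving the uniform lower bound for the Lopatinskii--Shapiro determinant over $\Sigma_{\theta,r}\times\mathbb R^{N-1}$, which is where the restriction $\tan\theta_0\ge|\beta|/\sqrt2$ (equivalently, the admissible opening of the resolvent sector) genuinely enters. Once the formulas are available, the $\mathcal R$-boundedness is a long but routine verification that each building block belongs to the appropriate multiplier class.
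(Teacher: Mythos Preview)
Your overall programme matches the paper's: reduce to $f=G=0$ via the whole-space problem, solve the half-space model problem by partial Fourier transform in $x'$, prove the Lopatinskii--Shapiro condition and derive $\mathcal R$-boundedness for the solution operators, and deduce uniqueness by duality. The identification of the crux---the uniform lower bound on the boundary determinant, and the role of $\tan\theta_0\ge|\beta|/\sqrt2$---is correct.

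There is, however, a concrete error in your description of the model problem. The decaying modes for $\widehat u$ are \emph{not} $e^{-\omega x_N}$ with $\omega=\sqrt{\lambda+|\xi'|^2}$ together with $e^{-|\xi'|x_N}$: because of the coupling term $\beta\,\mathrm{Div}(\Delta-a)Q$ in the first equation, eliminating $\widehat p$ and $\widehat Q$ from the Fourier system leads to $(-|\xi'|^2+D_N^2)L(D_N)\widehat u=0$ with
\[
L(t)=(\lambda+|\xi'|^2-t^2)(\lambda+a+|\xi'|^2-t^2)+\tfrac{\beta^2}{2}\bigl((t^2-|\xi'|^2)^2-a(t^2-|\xi'|^2)\bigr),
\]
so the relevant exponents are $A=|\xi'|$ and the two roots $L_1,L_2$ of $L$ with positive real part (with $L_j^2=|\xi'|^2+z_j(\lambda)$, $z_j$ the roots of the quadratic you wrote down in the whole-space step). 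The mode $\sqrt{\lambda+|\xi'|^2}$ appears only when $\beta=0$. Correspondingly $\widehat Q$ carries $A$, $B_a=\sqrt{\lambda+a+|\xi'|^2}$, $L_1$, $L_2$, and $\widehat p$ carries $A$, $B_a$. This changes the Lopatinskii system: the paper has to control \emph{two} quantities, $\mathcal C_a$ and $\mathcal A_a$, and the non-vanishing of each is obtained not by a direct computation but by contradiction with an $L^2$ uniqueness lemma on the Fourier side (Lemma~\ref{l.u.}), while the uniform lower bounds require a separate asymptotic analysis in $t=|\xi'|/\sqrt{\lambda+a}$.

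A smaller point: your proposed direct $L^2$ pairing (first equation with $\overline u$, second with $\overline{(a-\Delta)Q}$) does not make the $\beta$-terms cancel---the two cross terms are complex conjugates, not negatives, of each other. The paper's $L^2$ argument first eliminates $\widehat Q$ via the resolvent $(\lambda+a+|\xi'|^2-D_N^2)^{-1}$ and then uses Plancherel in $x_N$; for $L^q$ uniqueness it uses the duality argument you also mention, which is correct.
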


Later in the paper, as an application of Theorem \ref{t.evol.est.}, we will prove the local well-posedness for the system \eqref{BE.sys.0} with small initial data:
\begin{thm}\label{t.loc.ex.}
Let $N\ge 2$, $\xi,b,c\in\mathbb{R}$, $a>0$, let $p\in(2,\infty)$ and $q\in(N,\infty)$, then we can find $\varepsilon_0>0$ and $T=T(\varepsilon_0)>0$ such that for any $\varepsilon<\varepsilon_0$, for any $h,H$ with $h_N=0$ on $\mathbb{R}^N_0$ such that 
$$ h\in \bigcap_{l=0}^2H^{l/2}_p\left((0,T);W^{2-l,q}\left(\mathbb{R}^N_+;\mathbb{R}^{N}\right)\right),\quad H\in \bigcap_{l=0}^2H_p^{l/2}\left((0,T);W^{2-l,q}\left(\mathbb{R}^N_+;S_0(N,\mathbb{R})\right)\right), $$ 
for any $u_0\in J_q(\mathbb{R}^N_+)\cap B_{q,p}^{2(1-1/p)}(\mathbb{R}^N_+;\mathbb{R}^{N})$, $Q_0\in B_{q,p}^{3-2/p}(\mathbb{R}^N_+;S_0(N,\mathbb{R}))$ and
$$ u_0-h(0)=D_NQ_0-H(0)=0 \quad \text{on}\:\:\mathbb{R}^N_0, $$
with
$$ \|u_0\|_{B^{2(1-1/p)}_{q,p}(\mathbb{R}^N_+)}+\|Q_0\|_{B^{3-2/p}_{q,p}(\mathbb{R}^N_+)}\le \varepsilon, $$
we can find a solution $(u,p,Q)$ for (\ref{BE.sys.0}), unique up to additive functions $c(t)$ on the pressure term $p$, with $p(t)\in \widehat{H}^1_{q}(\mathbb{R}^N_+)$ for a.e. $t\in(0,T)$ and
$$ u\in \bigcap_{l=0}^2H^{l/2}_p\left((0,T);W^{2-l,q}\left(\mathbb{R}^N_+;\mathbb{R}^{N}\right)\right),\:\: \nabla p\in L^p\left((0,T);L^q\left(\mathbb{R}^N_+;\mathbb{R}^N\right)\right), $$
$$ Q\in \bigcap_{l=0}^2H^{l/2}_p\left(\mathbb{R}_+;W^{3-l,q}\left(\mathbb{R}^N_+;S_0(N,\mathbb{R})\right)\right), $$
moreover we can find $C=C(\xi,a,b,c,p,q,N)>0$ such that
$$ \|u\|_{H^{1}_p((0,T);L^q(\mathbb{R}^N_+))}+\|u\|_{L^p((0,T);W^{2,q}(\mathbb{R}^N_+)}+ $$
$$ + \|Q\|_{H^{1}_p((0,T);W^{1,q}(\mathbb{R}^N_+))} + \|Q\|_{L^p((0,T);W^{3,q}(\mathbb{R}^N_+))}+\|\nabla p\|_{L^p((0,T);L^q(\mathbb{R}^N))}\le  $$
$$ \le C\left[ \sum_{l=0}^2 \|h\|_{H^{l/2}_p((0,T);W^{2-l,q}(\mathbb{R}^N_+))}+\|H\|_{H^{l/2}_p((0,T);W^{2-l,q}(\mathbb{R}^N_+))}+ \|u_0\|_{B^{2(1-1/p)}_{q,p}(\mathbb{R}^N_+)}+\|Q_0\|_{B^{3-2/p}_{q,p}(\mathbb{R}^N_+)}\right]. $$
\end{thm}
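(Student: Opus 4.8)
The plan is to solve \eqref{BE.sys.0} on a short time interval $(0,T)$ by a Banach fixed-point argument in the maximal-regularity class produced by Theorem \ref{t.evol.est.}. Introduce the solution space $\mathbb{E}_T$ of pairs $(u,Q)$ with $u\in\bigcap_{l=0}^2 H^{l/2}_p((0,T);W^{2-l,q}(\mathbb{R}^N_+;\mathbb{R}^N))$ and $Q\in\bigcap_{l=0}^2 H^{l/2}_p((0,T);W^{3-l,q}(\mathbb{R}^N_+;S_0(N,\mathbb{R})))$, normed by the left-hand side of the claimed estimate; since the nonlinearities $f,G$ of \eqref{BE.sys.0} do not involve $p$, the pressure need not be an iteration variable and is recovered a posteriori. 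Using a finite-interval reformulation of Theorem \ref{t.evol.est.} (obtained from the weighted estimate by extending the data to $\mathbb{R}_+$, solving, and restricting, with constant independent of $T\in(0,1]$), let $\mathcal{S}$ denote the resulting solution operator, so that $(u,Q,\nabla p)=\mathcal{S}(f,G,h,H,u_0,Q_0)$ with the attendant bound; note that the compatibility conditions and $h_N|_{\mathbb{R}^N_0}=0$ assumed here are precisely those required by Theorem \ref{t.evol.est.} and do not involve $f,G$. The map to iterate is $\Phi(u,Q):=$ (first two components of) $\mathcal{S}(f(u,Q),G(u,Q),h,H,u_0,Q_0)$, whose fixed point is the desired solution.

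Next I would record the embeddings of $\mathbb{E}_T$ that make the nonlinear estimates possible; this is exactly where the hypotheses $p>2$ and $q>N$ enter. The mixed-derivative trace embedding gives $u\in \mathrm{BUC}([0,T];B^{2(1-1/p)}_{q,p}(\mathbb{R}^N_+))$ and $Q\in \mathrm{BUC}([0,T];B^{3-2/p}_{q,p}(\mathbb{R}^N_+))$, which makes $u(0)$, $Q(0)$ and $D_NQ(0)$ meaningful; since $2(1-1/p)-N/q>0$ and $3-2/p-N/q>1$ one gets $u\in \mathrm{BUC}_t(L^\infty_x)$ and $Q\in \mathrm{BUC}_t(W^{1,\infty}_x)$, and by interpolation $\nabla^2 Q\in \mathrm{BUC}_t(B^{1-2/p}_{q,p})\subset \mathrm{BUC}_t(L^q_x)$; from $W^{2,q}\hookrightarrow W^{1,\infty}$ and $W^{3,q}\hookrightarrow W^{2,\infty}$ one gets $u\in L^p_t(W^{1,\infty}_x)$, $Q\in L^p_t(W^{2,\infty}_x)$, while $\nabla^3 Q\in L^p_t(L^q_x)$. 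Hölder in time additionally yields $\|v\|_{L^p_t(X)}\le T^{1/p}\|v\|_{L^\infty_t(X)}$, i.e. a positive power of $T$ whenever a factor can be moved from $L^\infty_t$ to $L^p_t$.

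The core estimates are then: every term of $f$ and of $G$ is of degree at least two in $(u,Q)$ and their derivatives (with $\mathbb{H}$ of degree at least one), so a term-by-term Hölder estimate, placing each factor in one of $L^\infty_t L^\infty_x$, $L^p_t L^\infty_x$, $L^\infty_t L^q_x$, $L^p_t L^q_x$ according to the embeddings above, gives $\|f(u,Q)\|_{L^p_t L^q_x}+\|G(u,Q)\|_{L^p_t W^{1,q}_x}\le C\,P(\|(u,Q)\|_{\mathbb{E}_T})$ with $P$ a polynomial, $P(0)=0$ and no linear part, and a matching Lipschitz bound $\|f(u_1,Q_1)-f(u_2,Q_2)\|_{L^p_t L^q_x}+\|G(u_1,Q_1)-G(u_2,Q_2)\|_{L^p_t W^{1,q}_x}\le C\,P_1(\|(u_1,Q_1)\|_{\mathbb{E}_T}+\|(u_2,Q_2)\|_{\mathbb{E}_T})\,\|(u_1-u_2,Q_1-Q_2)\|_{\mathbb{E}_T}$ with $P_1(0)=0$. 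The first-order bilinear contributions — $(u\cdot\nabla)u$, $D(u)Q$, $W(u)Q$, $(u\cdot\nabla)Q$, and the $\mathcal{F}(Q)$-type terms — additionally carry a factor $T^\kappa$, $\kappa>0$, by placing one factor in $L^p_t$; the genuinely top-order quadratic pieces (those with $\nabla^3 Q$ in $f$, or $\nabla^2 u,\nabla^2 Q$ in $\nabla G$) have no $T$ gain but are superlinear in $\|(u,Q)\|_{\mathbb{E}_T}$, which suffices.

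Finally, combining with the bound for $\mathcal{S}$ gives $\|\Phi(u,Q)\|_{\mathbb{E}_T}\le C_0\big(\|(h,H)\|_{\mathrm{tr},(0,T)}+\|u_0\|_{B^{2(1-1/p)}_{q,p}}+\|Q_0\|_{B^{3-2/p}_{q,p}}\big)+C_0P(\|(u,Q)\|_{\mathbb{E}_T})$, and $\Phi(0,0)$ has norm at most $\delta:=C_0(\|(h,H)\|_{\mathrm{tr},(0,T)}+\varepsilon)$. Since the trace norm of $(h,H)$ over $(0,T)$ tends to $0$ as $T\to0$ and $\|u_0\|+\|Q_0\|\le\varepsilon$, one chooses $T\le1$ and then $\varepsilon_0$ small so that $\delta$ is as small as needed; with $\rho:=2\delta$, superlinearity of $P$ forces $C_0P(\rho)\le\delta$, so $\Phi$ maps the closed ball $\overline{B_\rho}\subset\mathbb{E}_T$ into itself, and the Lipschitz bound gives $\|\Phi(u_1,Q_1)-\Phi(u_2,Q_2)\|_{\mathbb{E}_T}\le\frac12\|(u_1-u_2,Q_1-Q_2)\|_{\mathbb{E}_T}$ for $\rho$ small, so $\Phi$ is a contraction. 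The unique fixed point $(u,Q)$ and the pressure $p$ returned by $\mathcal{S}$ (defined up to $c(t)$, with $p(t)\in\widehat{H}^1_q(\mathbb{R}^N_+)$) solve \eqref{BE.sys.0} and satisfy the stated estimate, and uniqueness in $\mathbb{E}_T$ follows from the Lipschitz bound. I expect the main obstacle to be the nonlinear estimates: they require the precise anisotropic embeddings of $\mathbb{E}_T$ (where $p>2$, $q>N$ are sharp for the top-order terms) together with careful bookkeeping of which factor of each multilinear term is placed in which mixed space-time Lebesgue space so that either a power of $T$ or the superlinearity is available to absorb it.
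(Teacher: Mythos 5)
Your overall architecture — extend to $(0,\infty)$, invoke the weighted maximal regularity of Theorem~\ref{t.evol.est.}, restrict back, and close a contraction argument using the embeddings furnished by $p>2$, $q>N$ — is indeed the paper's approach, and your description of which nonlinear terms sit in which mixed $L^r_tL^s_x$ bucket is morally the same bookkeeping the paper carries out term by term. However, your final closure step contains a genuine error, and your ``finite-interval reformulation'' is vague at exactly the point where the paper has to work.

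The error is the claim that ``the trace norm of $(h,H)$ over $(0,T)$ tends to $0$ as $T\to 0$.'' It does not, in general. Since $p>2$, point evaluation $v\mapsto v(0)$ is continuous on $H^{1/2}_p(\mathbb{R})$, so for any extension $\widetilde h$ of $h|_{(0,T)}$ one has $\|\widetilde h\|_{H^{1/2}_p(\mathbb{R};W^{1,q})}\gtrsim \|h(0)\|_{W^{1,q}(\mathbb{R}^N_+)}$ with a $T$-independent constant; hence $\liminf_{T\to 0}\|h\|_{H^{1/2}_p((0,T);W^{1,q})}\gtrsim \|h(0)\|_{W^{1,q}(\mathbb{R}^N_+)}$, and the compatibility condition only constrains the boundary trace of $h(0)$, not $h(0)$ on all of $\mathbb{R}^N_+$. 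So $\delta$ cannot be made small by shrinking $T$, and your closed ball of radius $\rho=2\delta$ does not become small. The paper avoids this by taking the ball radius $\omega$ \emph{large}, of order the full boundary-data norm plus $\varepsilon$, and obtaining all the needed smallness from elsewhere: the nonlinear estimates are of the form $\varepsilon^2+T^{\kappa}(\omega^2+\cdots)$, where the $\varepsilon^2$ piece comes from the semigroup evolution $\psi(t)T(|t|)(u_0,Q_0)$ (supported on all of $\mathbb{R}$, hence no $T$-gain, but small because the initial data is small) and the $T^\kappa$ piece comes from the reflected extension $E_T[\cdot-T(t)(u_0,Q_0)]$ (supported in $(0,2T)$, hence a $T$-power, but with no smallness of its own). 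For this split to work, the extensions $\mathcal{E}_1,\dots,\mathcal{E}_4$ have to be defined exactly this way — subtract the semigroup, even-reflect on $(0,2T)$, add back the semigroup with a time cutoff — which is more than ``extend the data to $\mathbb{R}_+$, solve, and restrict.'' With this modification your nonlinear estimates and Lipschitz bound go through, the self-map condition becomes $C(\text{data}+\varepsilon)\le\omega/2$ and $CT^{1/p}\!\cdot\!\text{poly}(\omega,\varepsilon)\le\omega/2$, and the contraction requires $\varepsilon$ and $T$ small (given $\omega$); note that one should also mind that the top-order quadratic terms such as $Q\,\partial^3 Q$ do in fact admit a $T^{1/2}$-gain via a $W^{1/2,p}_t$-Hölder argument, which is needed because superlinearity alone would not help once $\omega$ is large.
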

The paper is organized as follows: in Section 2 we study the existence of a solution for the system \eqref{lin.evo.sys.} composed with the partial Fourier Transformation in $\mathbb{R}^N_+$; in Section 3 we prove the $\mathcal{R}$-boundedness of the solution for \eqref{lin.evo.sys.} and the consequent proof of Theorem \ref{t.res.est.}; in Section 4 we prove Theorem \ref{t.evol.est.} and finally in Section 5 we prove Theorem \ref{t.loc.ex.}.

\vspace{2mm}

\section{Existence and uniqueness of the solution for the Fourier System}
\subsection{Solution formula for the Fourier System}

As we said in the introduction, we firstly focus on the resolvent system \eqref{res.sys.} with $f=G=0$, that is
\begin{equation}\label{res.sys.2}
    \left\{\begin{array}{ll}
        (\lambda-\Delta)u+\nabla p+\beta {\rm Div}(\Delta-a) Q=0 & \text{in}\:\:\mathbb{R}^N_+ \\
        (\lambda+a-\Delta)Q-\beta D(u)=0 & \text{in}\:\:\mathbb{R}^N_+ \\
        {\rm div}u=0 & \text{in}\:\:\mathbb{R}^N_+ \\
        u=h,\quad D_NQ=H & \text{on}\:\:\mathbb{R}^N_0.
    \end{array}\right.
\end{equation}
Let us take the partial Fourier Transformation
$$ \widehat{v}(\xi^\prime,x_N)\coloneqq \int_{\mathbb{R}^{N-1}}e^{-ix^\prime\cdot \xi^\prime}v(x^\prime,x_N)dx^\prime\quad \xi^\prime\in\mathbb{R}^{N-1}\quad x_N>0 $$
of the system (\ref{res.sys.2}):
\begin{equation}\label{F.u.s.}
\left\{
\begin{aligned}
&(\lambda+|\xi^\prime|^2-D_N^2)\widehat{u}_j + i\xi_j \widehat{p} +\beta \sum^{N-1}_{k=1}i\xi_k (-|\xi^\prime|^2+D_N^2-a)\widehat{Q}_{jk}  \\
&\enskip + \beta D_N (-|\xi^\prime|^2+D_N^2-a)\widehat{Q}_{jN}=0 \quad (j =1, \dots, N-1),\\
&(\lambda+|\xi^\prime|^2-D_N^2)\widehat{u}_N + D_N \widehat{p} +\beta \sum^{N-1}_{k=1}i\xi_k (-|\xi^\prime|^2+D_N^2-a)\widehat{Q}_{Nk} \\
&\enskip + \beta D_N (-|\xi^\prime|^2+D_N^2-a)\widehat{Q}_{NN}=0,
\end{aligned}
\right.
\end{equation}
\begin{equation}\label{F.div.e.}
\sum^{N-1}_{k=1}i\xi_k \widehat{u}_k + D_N \widehat{u}_N=0,
\end{equation}
\begin{equation}\label{F.Q.s.}
   \left\{
\begin{aligned}
&(\lambda +|\xi^\prime|^2-D_N^2+a)\widehat{Q}_{jk} - \frac{\beta}{2} (i\xi_j \widehat{u}_k+i\xi_k \widehat{u}_j) =0  \quad (j,k =1, \dots, N-1), \\
&(\lambda +|\xi^\prime|^2-D_N^2+a)\widehat{Q}_{jN} - \frac{\beta}{2} (i\xi_j \widehat{u}_N+D_N \widehat{u}_j) =0 \quad (j =1, \dots, N-1), \\
&(\lambda +|\xi^\prime|^2-D_N^2+a)\widehat{Q}_{NN} - \beta D_N \widehat{u}_N = 0,
\end{aligned}
\right.
\end{equation}
\begin{equation}\label{F.BCs}
    \widehat{u}_k(0)=\widehat{h}_k,\quad D_N\widehat{Q}_{jk}(0)=\widehat{H}_{jk}\quad (j,k=1,\ldots, N).
\end{equation} 
In order to find the solution formula of the systems (\ref{F.u.s.}), (\ref{F.div.e.}) and (\ref{F.Q.s.}), we multiply the first equation of \eqref{F.u.s.} by $i\xi_j$ and sum with respect to $j$:
\begin{equation}\label{r3}
\begin{aligned}
&\sum^{N-1}_{j=1}(\lambda+|\xi^\prime|^2-D_N^2)i\xi_j\widehat{u}_j -|\xi^\prime|^2 \widehat{p} \\
&\enskip +\beta \sum^{N-1}_{j, k=1}i\xi_j i\xi_k (-|\xi^\prime|^2+D_N^2-a)\widehat{Q}_{jk} 
+ \beta \sum^{N-1}_{j=1}i\xi_j D_N (-|\xi^\prime|^2+D_N^2-a)\widehat{Q}_{jN}=0.
\end{aligned}
\end{equation}
Applying $D_N$ to the second equation of \eqref{F.u.s.},
we have
\begin{equation}\label{r4}
\begin{aligned}
&(\lambda+|\xi^\prime|^2-D_N^2)D_N\widehat{u}_N + D_N^2 \widehat{p} \\
&\enskip +\beta \sum^{N-1}_{k=1}i\xi_k (-|\xi^\prime|^2+D_N^2-a)D_N\widehat{Q}_{Nk} 
+ \beta D_N^2 (-|\xi^\prime|^2+D_N^2-a)\widehat{Q}_{NN}=0.
\end{aligned}
\end{equation}
The summation of \eqref{r3} and \eqref{r4} gives us
\begin{equation}\label{r5}
\begin{aligned}
(-|\xi^\prime|^2 + D_N^2)\widehat{p}
&+\beta \sum^{N-1}_{j, k=1}i\xi_j i\xi_k (-|\xi^\prime|^2+D_N^2-a)\widehat{Q}_{jk} \\
&+ 2\beta \sum^{N-1}_{j=1}i\xi_j D_N (-|\xi^\prime|^2+D_N^2-a)\widehat{Q}_{jN}\\
&+ \beta D_N^2 (-|\xi^\prime|^2+D_N^2-a)\widehat{Q}_{NN}=0,
\end{aligned}
\end{equation}
where we have used \eqref{F.div.e.} and the fact that $Q$ is a symmetric matrix.
Then applying $(\lambda + |\xi^\prime|^2 - D_N^2 + a)$ to \eqref{r5} and using \eqref{F.div.e.} and \eqref{F.Q.s.},
we get
\begin{equation}\label{p}
(-|\xi^\prime|^2 + D_N^2)(\lambda + |\xi^\prime|^2 - D_N^2 + a) \widehat{p}=0. 
\end{equation}
Thanks to  \eqref{F.div.e.}, \eqref{F.Q.s.}, and \eqref{p}, $\widehat{p}$ and $\widehat{Q}$ can be eliminated from \eqref{F.u.s.}:
\begin{equation}\label{u}
(-|\xi^\prime|^2 + D_N^2)L(D_N) \widehat{u} =0,
\end{equation}
where 
$$ L(t)=(\lambda+|\xi^\prime|^2-t^2)(\lambda+a+|\xi^\prime|^2-t^2)+\frac{\beta^2}{2}\left((t^2-|\xi^\prime|^2)^2-a(t^2-|\xi^\prime|^2)\right). $$
Then applying $(-|\xi^\prime|^2 + D_N^2)L(D_N)$ to \eqref{F.Q.s.},
we have
\begin{equation}\label{q}
(\lambda + |\xi^\prime|^2 - D_N^2 +a)(-|\xi^\prime|^2 + D_N^2)L(D_N) \widehat{Q} =0.
\end{equation}
In term of \eqref{p}, \eqref{u}, and \eqref{q}, 
we prove that
\begin{equation}\label{r.f.}
    \begin{aligned}
    & \widehat{u}_j=A^0_je^{-Ax_N}+A_j^1e^{-L_1x_N}+A_j^2e^{-L_2x_N} \quad (j=1,\ldots, N) \\
    & \widehat{Q}_{jk}=A_{jk}e^{-Ax_N}+P_{jk}e^{-B_ax_N}+Q_{jk}^1e^{-L_1x_N}+Q_{jk}^2e^{-L_2x_N}\quad (j, k=1,\ldots, N) \\
    & \widehat{p}=Ce^{-Ax_N}+De^{-B_ax_N},
    \end{aligned}
\end{equation}
are solutions of the systems (\ref{F.u.s.}), (\ref{F.div.e.}) and (\ref{F.Q.s.}), where 
$$ A=|\xi^\prime|,\quad B_a=\sqrt{\lambda+a+|\xi^\prime|^2}, $$
and $L_{1,2}$ are the roots of $L(t)$
with ${\rm Re}(L_{1,2})>0$. It can be seen that 
$$ [L_{1,2}(\lambda,\xi^\prime)]^2=|\xi^\prime|^2+z_{1,2}(\lambda), $$
where $z_{1,2}$ are the roots of
$$ L(z)=(\lambda-z)(\lambda+a-z)+\frac{\beta^2}{2}\left(z^2-az\right). $$
In particular,
\begin{equation}\label{def.z.}
    \begin{array}{ll}
       z_{1,2} & =\displaystyle\frac{2\lambda+a(1+\beta^2/2)\pm\sqrt{(2\lambda+a(1+\beta^2/2))^2-4\lambda(\lambda+a)(1+\beta^2/2)}}{2(1+\beta^2/2)} \\
       & = \displaystyle\frac{2\lambda+a\left(1+\beta^2/2\right)\pm\sqrt{a^2\left(1+\beta^2/2\right)^2-2\lambda^2\beta^2}}{2\left(1+\beta^2/2\right)}.
    \end{array}
\end{equation}
Moreover, the coefficients of (\ref{r.f.}) satisfy the following relationships:
\begin{equation}\label{dep.f.A^0}
    \left\{\begin{array}{ll}
        A^0_j=-\lambda^{-1}i\xi_jC & j=1,\ldots, N-1 \\
        A^0_N=\lambda^{-1}AC & \null
    \end{array}\right.
\end{equation}
\begin{equation}\label{dep.f.A^1_k}
   \begin{aligned}
    &(L_2-L_1)\left\{B_a^3(L_1+L_2)-A^2B_a^2-A^2L_1L_2\right\}A^1_j\\
    &\enskip =-\left\{(B_a^2-L_2^2)E_j-L_2(B_aL_2-A^2)(\widehat{h}_j-A_j^0)\right\}(B_a^2-L_1^2)\quad j=1,\ldots, N-1 
  \end{aligned}
\end{equation}
\begin{equation}\label{dep.f.A^1_N}
    A_N^1=-\frac{A}{\lambda}\frac{L_2-A}{L_2-L_1}C-\frac{1}{L_2-L_1}i\xi^\prime\cdot \widehat{h}^\prime
\end{equation}
\begin{equation}\label{dep.f.A^2_k}
  \begin{aligned}
    &(L_2-L_1)\left\{B_a^3(L_1+L_2)-A^2B_a^2-A^2L_1L_2\right\}A^2_j \\
    &\enskip = \left\{(B_a^2-L_1^2)E_j-L_1(B_aL_1-A^2)(\widehat{h}_j-A_j^0)\right\}(B_a^2-L_2^2)\quad j=1,\ldots, N-1
  \end{aligned}
\end{equation}
\begin{equation}\label{dep.f.A^2_N}
    A_N^2=\frac{A}{\lambda}\frac{L_1-A}{L_2-L_1}C+\frac{1}{L_2-L_1}i\xi^\prime\cdot \widehat{h}^\prime
\end{equation}
\begin{equation}\label{dep.f.A_jk}
    \left\{\begin{array}{ll}
    A_{jk}=-\beta\lambda^{-1}(\lambda+a)^{-1}i\xi_ji\xi_kC & j,k=1,\ldots, N-1 \\
    A_{Nk}=\beta\lambda^{-1}(\lambda+a)^{-1}Ai\xi_k C & k=1,\ldots, N-1 \\
    A_{NN}=-\beta\lambda^{-1}(\lambda+a)^{-1}A^2C & \null
    \end{array}\right.
\end{equation}
\begin{equation}\label{dep.f.Q^1}
    \left\{\begin{array}{ll}
    Q^1_{jk}=\displaystyle\frac{\beta(i\xi_kA_j^1+i\xi_jA_k^1)}{2(B_a^2-L_1^2)} & j,k=1,\ldots, N-1 \\
    Q^1_{Nk}=\displaystyle\frac{\beta(i\xi_kA_N^1-L_1A_k^1)}{2(B_a^2-L_1^2)} & k=1,\ldots, N-1 \\
    Q^1_{NN}=-\displaystyle\frac{\beta L_1A_N^1}{B_a^2-L_1^2} & \null
    \end{array}\right.
\end{equation}
\begin{equation}\label{dep.f.Q^2}
    \left\{\begin{array}{ll}
    Q^2_{jk}=\displaystyle\frac{\beta(i\xi_kA_j^2+i\xi_jA_k^2)}{2(B_a^2-L_2^2)} & j,k=1,\ldots, N-1 \\
    Q^2_{Nk}=\displaystyle\frac{\beta(i\xi_kA_N^2-L_2A_k^2)}{2(B_a^2-L_2^2)} & k=1,\ldots, N-1 \\
    Q^2_{NN}=-\displaystyle\frac{\beta L_2A_N^2}{B_a^2-L_2^2} & \null
    \end{array}\right.
\end{equation}
\begin{equation}\label{dep.f.P}
    P_{jk}=-\frac{1}{B_a}\left(AA_{jk}+L_1Q_{jk}^1+L_2Q_{jk}^2+\widehat{H}_{jk}\right)
\end{equation}
\begin{equation}\label{dep.f.C}
    \left(\mathcal{I}_1+\frac{\mathcal{I}_2}{L_2-L_1}\right)\frac{A}{\lambda}C=\frac{\hbar}{L_2-L_1}+\mathcal{H}_1, 
\end{equation}
\begin{equation}\label{dep.f.D}
    D=\frac{\beta\lambda}{B_a}\left(\sum_{k=1}^{N-1}i\xi_kP_{Nk}-B_aP_{NN}\right),
\end{equation}
where
\begin{align}
    E_j&=\frac{2i\xi_jB_a}{\beta^2\lambda}D-\frac{2A}{\beta}\left(\sum_{k=1}^{N-1}i\xi_kA_{jk}-B_aA_{jN}\right)\nonumber\\
       & \quad + i\xi_j\sum_{\alpha=1,2}\frac{L_\alpha A_N^\alpha}{B_a+L_\alpha}-\frac{2}{\beta}\left(\sum_{k=1}^{N-1}\xi_k\widehat{H}_{jk}-B_a\widehat{H}_{jN}\right), \label{dep.f.E}\\
    \mathcal{I}_1&=\beta\frac{A^2}{B_a^2-A^2}\left\{2A^2-\frac{A(B_a^2+A^2)}{B_a}\right\}, \label{I_1for.}\\
    \mathcal{I}_2&=-\beta\displaystyle\frac{L_1(L_2-A)}{B_a^2-L_1^2}\left\{2A^2L_1-\frac{(B_a^2+A^2)(L_1^2+A^2)}{2B_a}\right\}+ \nonumber\\
    &\quad + \beta\frac{L_2(L_1-A)}{B_a^2-L_2^2}\left\{2A^2L_2-\frac{(B_a^2+A^2)(L_2^2+A^2)}{2B_a}\right\},\label{I_2for.}\\
     \hbar&=\displaystyle\Bigl[\beta\frac{L_1}{B_a^2-L_1^2}\left\{2A^2L_1-\frac{(B_a^2+A^2)(L_1^2+A^2)}{2B_a}\right\}\nonumber\\
      &\quad -\displaystyle\beta\frac{L_2}{B_a^2-L_2^2}\left\{2A^2L_2-\frac{(B_a^2+A^2)(L_2^2+A^2)}{2B_a}\right\}\Bigr]i\xi^\prime\cdot\widehat{h}^\prime, \label{hfor.}\\
        \mathcal{H}_1&=\displaystyle A^2\widehat{H}_{NN}-\frac{B_a^2+A^2}{B_a}\sum_{j=1}^{N-1}i\xi_j\widehat{H}_{jN}+\sum_{j,k=1}^{N-1}i\xi_ji\xi_k\widehat{H}_{jk},\nonumber
\end{align}
It is possible to write explicitly the value of the coefficients of \eqref{r.f.}, anyway these formulas are complicated and, in the paper, we will use just the relations from \eqref{dep.f.A^0} to \eqref{hfor.}. 

\vspace{2mm}

Our next task is to prove, as we anticipated, that 
$$ (u,p,Q)=\phi_\lambda(h,H), $$
with $\{\phi_\lambda\}_\lambda$ $\mathcal{R}$-bounded. In order to do so, we need some estimates over the coefficients of \eqref{r.f.}. We will do it more specifically in Section 3. In this section, we will prove that the relations from \eqref{dep.f.A^0} to \eqref{hfor.} give us a solution for the Fourier system from \eqref{F.u.s.} to \eqref{F.BCs}. This is not so easy for the presence of the functions:
$$ \mathcal{C}_a(\lambda,\xi^\prime)=\frac{1}{\lambda}\left(\mathcal{I}_1+\frac{\mathcal{I}_2}{L_2-L_1}\right), $$
$$ \mathcal{A}_a(\lambda,\xi^\prime)=B_a^3(L_1+L_2)-A^2B_a^2-A^2L_1L_2. $$
As it can be seen from the relations from (\ref{dep.f.A^0}) to (\ref{hfor.}), it is crucial that these two quantities are different from zero in order to have a well-defined solution. This will be the main aim of the section. Before going on, we notice that, if $\beta=0$, then $L_1(\lambda)\neq L_2(\lambda)$ for any $\lambda\in\Sigma_\theta$, otherwise
$$ L_1(\lambda)=L_2(\lambda)\:\Leftrightarrow\:\lambda= \pm\frac{a(1+\beta^2/2)}{\sqrt{2}|\beta|}. $$
The value $\eta\coloneqq \frac{a(1+\beta^2/2)}{\sqrt{2}|\beta|}$, is a positive real number, so it belongs to $\Sigma_\theta$ for any $\theta\in\left(0,\frac{\pi}{2}\right)$. For this reason, in the following, we will separate the case $\lambda\neq \eta$ and the case $\lambda=\eta$.

\subsection{The case $\lambda\neq\eta$}

We want to prove that $\mathcal{C}_a$ and $\mathcal{A}_a$ do not vanish for any $\lambda\in\Sigma_{\theta,r}$ and $\xi^\prime\in\mathbb{R}^{N-1}$ for some $\theta\in\left(0,\frac{\pi}{2}\right)$ and $r\ge 0$. The idea is the following: let us suppose $h=H=0$ and let $\lambda$ and $\xi^\prime$ such that $\mathcal{C}_a(\lambda,\xi^\prime)=0$, then \eqref{dep.f.C} gives no conditions over the coefficient $C$. In other words, we expect the value of $C$ to be not unique for such a choice of $\lambda$ and $\xi^\prime$. For this reason, we prove a uniqueness result for the Fourier system:
\begin{lem}\label{l.u.}
Let 
$$ \widehat{u}(\xi^\prime,\cdot), \:\widehat{Q}(\xi^\prime,\cdot),\:\widehat{p}(\xi^\prime,\cdot)\in C^\infty((0,+\infty))\cap H^2((0,+\infty)) $$
be a solution for the systems (\ref{F.u.s.}), (\ref{F.div.e.}) and (\ref{F.Q.s.}) with $a\ge 0$, $\beta\in\mathbb{R}$ and with initial conditions 
$$ \widehat{u}(0)=0,\quad D_N\widehat{Q}(0)=0, $$
let $\theta\in\left[\theta_0,\frac{\pi}{2}\right)$ with $\tan\theta_0\ge \frac{|\beta|}{\sqrt{2}}$, let $\lambda\in \overline{\Sigma_\theta}$,
then $\widehat{u}=\widehat{Q}\equiv0$. Moreover $\widehat{p}(\xi^\prime)=0$ for any $\xi^\prime\neq 0$.
\end{lem}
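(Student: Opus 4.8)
The plan is to prove a stronger statement: the only $H^2((0,\infty))$-solution of the homogeneous Fourier system with zero boundary data is the trivial one. The natural tool is an energy identity obtained by testing the equations against the complex conjugates of $\widehat u$ and $\widehat Q$ and integrating in $x_N$ over $(0,\infty)$; since the boundary conditions are $\widehat u(0)=0$ and $D_N\widehat Q(0)=0$, all boundary terms arising from integration by parts in $x_N$ vanish. First I would multiply the momentum equation \eqref{F.u.s.} by $\overline{\widehat u_j}$, sum over $j$, and integrate; the pressure term contributes $\int_0^\infty \widehat p\,(\sum_k i\xi_k\overline{\widehat u_k} + D_N\overline{\widehat u_N})\,dx_N$, which is $\overline{\int_0^\infty \widehat p\,\overline{(\mathrm{div}\,u)^{\wedge}}}=0$ by \eqref{F.div.e.} together with $\widehat u(0)=0$ (so no boundary term). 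This kills the pressure entirely. Then I would treat the $\beta$-coupling terms: integrating by parts in $x_N$ twice, the term $\beta\,\mathrm{Div}(\Delta-a)Q$ tested against $\overline{\widehat u}$ should be rearranged, using the symmetry of $Q$ and $D_N\widehat Q(0)=0$, to match (up to conjugation and sign) the term $\beta D(u)$ tested against $\overline{\widehat Q}$ in the $Q$-equation \eqref{F.Q.s.}. Concretely, test \eqref{F.Q.s.} by $(\lambda+|\xi'|^2-D_N^2+a)\overline{\widehat Q_{jk}}$ summed appropriately, or more simply test the $Q$-equation by $\overline{(-|\xi'|^2+D_N^2-a)\widehat Q}$ so the coupling produces exactly $-\beta\,\overline{D(u)\colon (-|\xi'|^2+D_N^2-a)Q}$, which is the conjugate of the coupling term generated in the momentum balance.

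Carrying this out, I expect to obtain an identity of the schematic form
\[
(\lambda+|\xi'|^2)\|\widehat u\|_{L^2}^2 + \|D_N\widehat u\|_{L^2}^2 + c_\beta\Big[(\lambda+a+|\xi'|^2)\|(\text{derivatives of }\widehat Q)\|^2 + \cdots\Big] = 0,
\]
where all the quadratic forms in the brackets are sums of terms of the type $(\lambda+a+|\xi'|^2)\|w\|_{L^2}^2 + \||\xi'|w\|^2 + \|D_N w\|^2$ coming from the operator $\lambda+a+|\xi'|^2-D_N^2$ acting on the various components of $\widehat Q$, with a nonnegative coefficient $c_\beta>0$ (here $c_\beta$ will essentially be $1/2$ or $\beta^2/2$ depending on how the coupling is normalized; the exact constant does not matter, only its sign). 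Taking the real part and using that $\mathrm{Re}(\lambda+|\xi'|^2)>0$ and $\mathrm{Re}(\lambda+a+|\xi'|^2)>0$ for $\lambda\in\overline{\Sigma_\theta}$, $a\ge0$, $|\xi'|\ge 0$ — unless $\xi'=0$ and $\lambda$ is on the boundary of the sector, a degenerate case handled separately — forces $\widehat u\equiv 0$ and all the tested quadratic expressions in $\widehat Q$ to vanish, whence $\widehat Q\equiv 0$. Once $\widehat u=\widehat Q\equiv 0$, equation \eqref{F.u.s.} reduces to $i\xi_j\widehat p=0$ for $j=1,\dots,N-1$ and $D_N\widehat p=0$; for $\xi'\ne 0$ this gives $\widehat p\equiv 0$, while for $\xi'=0$ only $\widehat p=\text{const}$ is obtained, which is consistent with the "unique up to constants" clause and is why the last sentence of the lemma restricts to $\xi'\ne0$.

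The main obstacle is the bookkeeping of the $\beta$-coupling: one must choose the test functions so that the coupling term in the $u$-equation and the coupling term in the $Q$-equation are exact conjugates and therefore cancel when the two identities are added (this is why testing $Q$ against $(-|\xi'|^2+D_N^2-a)\overline{\widehat Q}$ rather than against $\overline{\widehat Q}$ is the right move — it matches the operator $(\Delta-a)$ sitting in front of $\mathrm{Div}\,Q$ in the momentum equation). A secondary subtlety is verifying that after this cancellation the surviving quadratic form in $\widehat Q$ is still coercive, i.e. that taking the real part of $(\lambda+a+|\xi'|^2-D_N^2)$-type terms gives a genuinely positive combination; here the hypothesis $\tan\theta_0\ge|\beta|/\sqrt2$ is what guarantees that any indefinite cross-terms (if the cancellation is not perfect and leaves a remainder proportional to $\beta^2\lambda^2$, as the discriminant in \eqref{def.z.} suggests) are dominated — geometrically, $\lambda\in\overline{\Sigma_\theta}$ with this angle constraint keeps $\mathrm{Re}$ of the relevant expressions positive. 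I would also need to justify that $\widehat u,\widehat Q\in H^2$ plus the equations give enough regularity (in fact $C^\infty$, already assumed) and decay at $x_N=+\infty$ to discard the boundary terms at infinity — this is immediate from $H^2((0,\infty))\hookrightarrow C^1([0,\infty))$ with decay, so it is routine.
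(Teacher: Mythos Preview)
Your scheme has a genuine gap at the coupling step. If you test \eqref{F.u.s.} against $\overline{\widehat u}$ and \eqref{F.Q.s.} against $(-|\xi'|^2+D_N^2-a)\overline{\widehat Q}$, the two coupling contributions you obtain are
\[
C_u=-\beta\int_0^\infty (-|\xi'|^2+D_N^2-a)\widehat Q:\overline{\widehat{D(u)}}\,dx_N,
\qquad
C_Q=-\beta\int_0^\infty \widehat{D(u)}:(-|\xi'|^2+D_N^2-a)\overline{\widehat Q}\,dx_N,
\]
and since the operator $-|\xi'|^2+D_N^2-a$ has real coefficients you get $C_Q=\overline{C_u}$. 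Adding therefore gives $2\,\mathrm{Re}\,C_u$, not zero; subtracting gives $2i\,\mathrm{Im}\,C_u$. In neither case does the coupling disappear, and $\mathrm{Re}\,C_u$ has no sign. Worse, with your choice of test function the leading $Q$-contribution is
\[
\int_0^\infty(\lambda+a+|\xi'|^2-D_N^2)\widehat Q:(-a-|\xi'|^2+D_N^2)\overline{\widehat Q}\,dx_N
=-(\lambda+\mu)\bigl(\mu\|\widehat Q\|^2+\|D_N\widehat Q\|^2\bigr)-\mu\|D_N\widehat Q\|^2-\|D_N^2\widehat Q\|^2
\]
with $\mu=a+|\xi'|^2$, so after taking real parts the $\widehat u$-terms and the $\widehat Q$-terms appear with \emph{opposite} signs. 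You do not get a coercive identity this way, and the sketch ``taking the real part forces $\widehat u\equiv0$'' does not go through.

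The paper avoids this by \emph{not} pairing the two equations symmetrically. Instead it eliminates $\widehat Q$ from the outset, writing $\widehat Q=\beta(\lambda+a+|\xi'|^2-{D_N^2}_{|\mathrm{Neu}})^{-1}\widehat{D(u)}$ from \eqref{F.Q.s.}, substituting into \eqref{F.u.s.}, and testing the resulting equation in $\widehat u$ alone against $\overline{\widehat u}$. After an even extension in $x_N$ and Plancherel, the identity becomes
\[
\lambda\|\widehat u\|^2+\int_{\mathbb R}\Bigl[\Bigl(1+\tfrac{\beta^2}{2}\Bigr)-\tfrac{\lambda\beta^2}{2(\lambda+a+|\xi|^2)}\Bigr]\bigl|\mathcal F_N[E_{\mathrm{even}}\widehat{D(u)}]\bigr|^2\,d\xi_N=0,
\]
with a genuinely complex multiplier on the Fourier side. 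One then takes the \emph{imaginary} part to relate $\|\widehat u\|^2$ to a weighted integral of $|\widehat{D(u)}|^2$, substitutes back, and is left with the pointwise condition $(1+\beta^2/2)>\beta^2|\lambda|^2/(2|\lambda+a+|\xi|^2|^2)$; it is precisely here, and only here, that $\tan\theta\ge|\beta|/\sqrt2$ is used. Your remark that this hypothesis ``dominates indefinite cross-terms'' is in spirit correct but is not how the argument actually closes: the condition enters through this explicit scalar inequality, not through a perturbative bound on a remainder.
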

\begin{proof}\hfill\\
Thanks to (\ref{F.Q.s.}) and (\ref{F.div.e.}), for any $k=1,\ldots, N$ we have that
\begin{equation}\label{proof.u.1}
\begin{aligned}
    \sum_{j=1}^{N-1}i\xi_j(-|\xi^\prime|^2-a+D_N^2)\widehat{Q}_{kj}+D_N(-|\xi^\prime|^2-a+D_N^2)\widehat{Q}_{kN}= \\
= \lambda\left(\sum_{j=1}^{N-1}i\xi_j\widehat{Q}_{kj}+D_N\widehat{Q}_{kN}\right)+\frac{\beta}{2}(|\xi^\prime|^2-D_N^2)\widehat{u}_k.
\end{aligned}
\end{equation}
Now we use the fact that 
$$ \widehat{Q}_{kj}=\beta(\lambda+a+|\xi^\prime|^2-{D_N}_{|Neu}^2)^{-1}(\widehat{D(u)})_{kj}, $$
where $(\lambda+a+|\xi^\prime|^2-{D_N}_{|Neu}^2)^{-1}f$ is the only solution in $H^2((0,+\infty))$ of the system
$$ \left\{\begin{array}{ll}
    (\lambda+a+|\xi^\prime|^2-D_N^2)v=f & x_N\in (0,+\infty) \\
    D_Nv(0)=0. & \null
\end{array}\right. $$
It is easy to see that 
\begin{equation}\label{proof.u.2} v=(\lambda+a+|\xi^\prime|^2-{D^2_N}_{|Neu})^{-1}f=(\lambda+a+|\xi^\prime|^2-D^2_N)^{-1}E_{even}[f]_{|(0,+\infty)} ,\end{equation}
where $(\lambda+a+|\xi^\prime|^2-D^2)^{-1}$ is the resolvent for $D^2$ in $\mathbb{R}$ and $E_{even}[f]$ is the even extension of $f$. In particular, it can be seen that $(\lambda+a+|\xi^\prime|^2-D^2)^{-1}E_{even}[f]$ is still an even function in $x_N$.
If we use (\ref{proof.u.1}) in (\ref{F.u.s.}) we gain
\begin{equation}\label{proof.u.4}
    \begin{aligned}
        & \left[\lambda+\left(1+\frac{\beta^2}{2}\right)(|\xi^\prime|^2-D_N^2)\right]\widehat{u}_k+i\xi_k\widehat{p}+ \beta^2\lambda \sum_{j=1}^{N-1}i\xi_j(\lambda+a+|\xi^\prime|^2-{D_N}_{|Neu}^2)^{-1}\widehat{D(u)}_{kj} +  \\
        & +\beta^2\lambda D_N(\lambda+a+|\xi^\prime|^2-{D_N}_{|Neu}^2)^{-1}\widehat{D(u)}_{kN}=0 \quad (k=1,\ldots, N-1) \\
        & \left[\lambda+\left(1+\frac{\beta^2}{2}\right)(|\xi^\prime|^2-D_N^2)\right]\widehat{u}_N+D_N\widehat{p}+ \beta^2\lambda \sum_{j=1}^{N-1}i\xi_j(\lambda+a+|\xi^\prime|^2-{D_N}_{|Neu}^2)^{-1}\widehat{D(u)}_{Nj}+ & \null \\
        & +\beta^2\lambda D_N(\lambda+a+|\xi^\prime|^2-{D_N}_{|Neu}^2)^{-1}\widehat{D(u)}_{NN}=0\quad (k=N).
    \end{aligned}
\end{equation}
Thanks to the divergence free condition, we also have that
\begin{equation}\label{proof.u.5} (|\xi^\prime|^2-D_N^2)\widehat{u}= -2\widehat{{\rm Div}(D(u))}. \end{equation}
Let us multiply the $k$-th row of (\ref{proof.u.4}) for $\overline{\widehat{u}_k}$ in the sense of $L^2((0,+\infty))$ for any $k=1,\ldots, N$ and then we sum the rows:
\begin{equation}\label{proof.u.3}
\begin{aligned}
    \lambda\int_0^\infty|\widehat{u}|^2 dx_N+\left(1+\frac{\beta^2}{2}\right)\int_0^\infty |\xi^\prime|^2|\widehat{u}|^2+|D_N\widehat{u}|^2dx_N+ \\
    +\sum_{k=1}^{N-1}\int_0^\infty i\xi_k\widehat{p}\overline{\widehat{u}_k}dx_N+ \int_0^\infty D_N\widehat{p}\overline{\widehat{u}_N}dx_N+ \\
    +\lambda\beta^2\sum_{j=1}^{N-1}\int_0^\infty i\xi_j(\lambda+a+|\xi^\prime|^2-{D_N}_{|Neu}^2)^{-1}\widehat{D(u)}^j\cdot \overline{\widehat{u}}dx_N+ \\
    + \lambda\beta^2\int_0^\infty D_N(\lambda+a+|\xi^\prime|^2-{D_N}_{|Neu})^{-1}\widehat{D(u)}^N\cdot \overline{\widehat{u}}dx_N. 
\end{aligned}
\end{equation}
Firstly, we notice that
\begin{equation}\label{proof.u.6}
\int_0^\infty \sum_{k=1}^{N-1}i\xi_k\widehat{p}\overline{\widehat{u}_k}+D_N\widehat{p}\overline{\widehat{u}_N} dx_N=-\overline{\int_0^\infty\widehat{p}\left(\sum_{k=1}^{N-1}i\xi_k\widehat{u}_i+D_N\widehat{u}_N\right) dx_N}=0,
\end{equation}
where we've used that $\widehat{u}(0)=0$ and (\ref{F.div.e.}). Let us pass to the last terms of (\ref{proof.u.3}):
$$ \int_0^\infty i\xi_j(\lambda+a+|\xi^\prime|^2-{D_N}_{|Neu}^2)^{-1}\widehat{D(u)}^j\cdot \overline{\widehat{u}}dx_N=-\int_0^\infty (\lambda+a+|\xi^\prime|^2-{D_N}_{|Neu}^2)^ {-1}\widehat{D(u)}^j\cdot \overline{\widehat{\partial_ju}}dx_N. $$
Analogously, using the condition $\widehat{u}(0)=0$, we achieve 
$$ \int_0^\infty D_N(\lambda+a+|\xi^\prime|^2-{D_N}_{|Neu}^2)^{-1}\widehat{D(u)}^N\cdot \overline{\widehat{u}}dx_N=-\int_0^\infty(\lambda+a+|\xi^\prime|^2-{D_N}_{|Neu}^2)^{-1}\widehat{D(u)}^N\cdot \overline{\widehat{D_Nu}}dx_N. $$
Therefore 
$$ \int_0^\infty i\xi_j(\lambda+a+|\xi^\prime|^2-{D_N}_{|Neu}^2)^{-1}\widehat{D(u)}^j\cdot \overline{\widehat{u}}+ D_N(\lambda+a+|\xi^\prime|^2-{D_N}_{|Neu}^2)^{-1}\widehat{D(u)}^N\cdot \overline{\widehat{u}}dx_N= $$
$$ =- \int_0^\infty (\lambda+a+|\xi^\prime|^2-{D_N}_{|Neu}^2)^{-1}\widehat{D(u)}\colon \overline{\widehat{\nabla u}}dx_N, $$
where we recall $A\colon B={\rm tr}(B^TA)$ for any $A,B\in\mathbb{R}^{N^2}$. It is easy to check that $A\colon B=0$ when $A$ is symmetric and $B$ is anti-symmetric, so
$$ = - \int_0^\infty (\lambda+a+|\xi^\prime|^2-{D_N}_{|Neu}^2)^{-1}\widehat{D(u)}\colon \overline{\widehat{D(u)}}dx_N. $$
Now, thanks to (\ref{proof.u.2}) and the Plancherel Identity applied in $x_N$, we have that
$$ = -\frac{1}{2}\int_\mathbb{R} (\lambda+a+|\xi^\prime|^2-D_{N}^2)^{-1}E_{even}[\widehat{D(u)}]\colon \overline{E_{even}[\widehat{D(u)}]}dx_N= $$
$$ = -\frac{1}{2}\sum_{j,k=1}^N\int_\mathbb{R} (\lambda+a+|\xi^\prime|^2-D_{N}^2)^{-1}E_{even}[\widehat{D(u)}]_{jk} \overline{E_{even}[\widehat{D(u)}]_{jk}}dx_N= $$
$$ = -\frac{1}{2}\sum_{j,k=1}^N\int_\mathbb{R} \mathcal{F}_N\left[(\lambda+a+|\xi^\prime|^2-D_{N}^2)^{-1}E_{even}[\widehat{D(u)}]_{jk}\right]\overline{\mathcal{F}_N\left[E_{even}[\widehat{D(u)}]_{jk}\right]}dx_N, $$
where $\mathcal{F}_N(f)$ is the Fourier Transformation in $x_N$. Since
$$ \mathcal{F}_N[(\mu-\partial_{x_N}^2)^{-1}v](\xi_N)=\frac{\mathcal{F}_N[v](\xi_N)}{\mu+\xi_N^2}, $$
for any $v\in L^2(\mathbb{R})$ and for any $\mu\in \mathbb{C}\setminus\mathbb{R}_-$, then
$$ -\frac{1}{2}\sum_{j,k=1}^N\int_\mathbb{R} \mathcal{F}_N\left[(\lambda+a+|\xi^\prime|^2-D_{N}^2)^{-1}E_{even}[\widehat{D(u)}]_{jk}\right]\overline{\mathcal{F}_N\left[E_{even}[\widehat{D(u)}]_{jk}\right]}dx_N = $$
$$ = - \frac{1}{2}\int_\mathbb{R}\frac{\left|\mathcal{F}_N\left[E_{even}[\widehat{D(u)}]\right]\right|^2}{\lambda+a+|\xi^\prime|^2+\xi_N^2}d\xi_N. $$
On the other hand, thanks to (\ref{proof.u.5}), we can repeat the same argument for the free gradient part:
$$ \int_0^\infty (|\xi^\prime|^2-D_N^2)\widehat{u}\cdot \overline{\widehat{u}}dx_N=-2\int_0^\infty \widehat{{\rm Div}(D(u))}\cdot \overline{\widehat{u}}dx_N= $$
$$ = 2\int_0^\infty \widehat{D(u)}\colon \overline{\widehat{D(u)}}dx_N=\int_\mathbb{R}|E_{even}[\widehat{D(u)}]|^2dx_N=\int_\mathbb{R}\left|\mathcal{F}_N\left[E_{even}\left[\widehat{D(u)}\right]\right]\right|^2d\xi_N. $$
Therefore, if we come back to (\ref{proof.u.3}), we gain that 
\begin{equation}\label{proof.u.8}
\lambda\int_0^\infty|\widehat{u}|^2dx_N+\int_{\mathbb{R}}\left[\left(1+\frac{\beta^2}{2}\right)-\frac{\lambda\beta^2}{2(\lambda+a+|\xi^\prime|^2+\xi_N^2)}\right]\left|\mathcal{F}_N\left[E_{even}\left[\widehat{D(u)}\right]\right]\right|^2d\xi_N=0.
\end{equation}
Now we notice that
$$ 1+\frac{\beta^2}{2}-\frac{\beta^2\lambda}{2(\lambda+a+|\xi^\prime|^2+\xi_N^2)}=1+\frac{\beta^2(a+|\xi^\prime|^2+\xi_N^2)}{2(\lambda+a+|\xi^\prime|^2+\xi_N^2)}. $$
If $\lambda\in\mathbb{R}$, then $\lambda>0$ and (\ref{proof.u.8}) implies $\widehat{u}=0$ a.e. in $(0,+\infty)$. Otherwise, we can take the imaginary part of (\ref{proof.u.8}) we gain 
\begin{equation}\label{proof.u.9}
        Im\lambda\int_0^\infty |\widehat{u}|^2dx_N=\int_\mathbb{R}Im\left(\frac{\lambda\beta^2}{2(\lambda+a+|\xi^\prime|^2+\xi_N^2)}\right)\left|\mathcal{F}_N\left[E_{even}\left[\widehat{D(u)}\right]\right]\right|^2\xi_N.
\end{equation}
It can be seen that 
$$ Im\left(\frac{\lambda}{\lambda+a+|\xi^\prime|^2+\xi_N^2}\right)=\frac{Im\lambda(a+|\xi^\prime|^2+\xi_N^2)}{|\lambda+a+|\xi^\prime|^2+\xi_N^2|^2}, $$
therefore 
\begin{equation}\label{proof.u.10}
    \int_0^\infty|\widehat{u}|^2dx_N=\int_\mathbb{R}\frac{\beta^2(a+|\xi^\prime|^2+\xi_N^2)}{2|\lambda+a+|\xi^\prime|^2+\xi_N^2|^2}\left|\mathcal{F}_N\left[E_{even}\left[\widehat{D(u)}\right]\right]\right|^2d\xi_N.
\end{equation}
Applying (\ref{proof.u.10}) to (\ref{proof.u.8}) we get
$$ \int_\mathbb{R}\left[\left(1+\frac{\beta^2}{2}\right)-\frac{|\lambda|^2\beta^2}{2|\lambda+a+|\xi^\prime|^2+\xi_N^2|^2}\right]\left|\mathcal{F}_N\left[E_{even}\left[\widehat{D(u)}\right]\right]\right|^2d\xi_N=0. $$
Let us suppose for the moment that 
\begin{equation}\label{proof.u.11}
    \left(1+\frac{\beta^2}{2}\right)-\frac{|\lambda|^2\beta^2}{2|\lambda+a+|\xi^\prime|^2+\xi_N^2|^2}>0\quad \forall \xi=(\xi^\prime,\xi_N)\in\mathbb{R}^N.
\end{equation}
In this case, it has to happen that
$$ \mathcal{F}_N[E_{even}[\widehat{D(u)}]](\xi)=0\quad \text{for a.e.}\:\: \xi\in\mathbb{R}^N, $$
and, thanks to (\ref{proof.u.10}), we get also in this case $\widehat{u}=0$ a.e. on $(0,\infty)$. On the other hand, we have now that for any $k,j=1,\ldots, N$ the function $\widehat{Q}_{kj}$ resolves
$$ \left\{\begin{array}{ll}
    (\lambda+a+|\xi^\prime|^2-D_N^2)\widehat{Q}_{kj}=0 & \text{in}\:\:(0,+\infty) \\
    D_N\widehat{Q}_{ij}(0)=0 & \null
\end{array}\right. $$
and it is well-known that it implies $\widehat{Q}_{kj}\equiv0$. \\
Finally, if $\xi^\prime\neq0$, we can find $\xi_j\neq0$ for some $j=1,\ldots, N-1$. In this case, if we turn back to the $j$-th row of (\ref{F.u.s.}) we have that $p(\xi^\prime)=0$ for any $\xi^\prime\neq0$.
Finally, in order to verify property (\ref{proof.u.11}) we can notice that, if we denote
$$ f(x)\coloneqq \frac{1}{|\lambda+a+x|^2}=\frac{1}{(Re\lambda+a+x)^2+Im\lambda^2}\quad x\in\mathbb{R}, $$
then
$$ \max_{x\ge 0} f(x)=\left\{\begin{array}{ll}
    \frac{1}{|\lambda+a|^2} & Re\lambda\ge -a  \\
    \frac{1}{Im\lambda^2} & Re\lambda\le -a.
\end{array} \right. $$
By standard analytic arguments and the condition $\tan\theta\ge \frac{|\beta|}{\sqrt{2}}$ it can be seen that
$$ \frac{\beta^2|\lambda|^2}{2}\max_{\xi\in \mathbb{R}^N}f(|\xi|^2)\le 1+\frac{\beta^2}{2}. $$
\end{proof}
We are now ready to prove that $\mathcal{C}_a,\mathcal{A}_a\neq 0$ for the case $\lambda\neq \eta$: 
\begin{prop}\label{p.nonzero-prop.}
Let $a\ge0$, $\beta\neq 0$ and $\theta\in\left(0,\frac{\pi}{2}\right)$ with $\tan\theta\ge \frac{|\beta|}{\sqrt{2}}$, let us suppose that the coefficients of (\ref{r.f.}) satisfy (\ref{dep.f.A^0}), (\ref{dep.f.A^1_N}), (\ref{dep.f.A^2_N}), conditions from (\ref{dep.f.Q^1}) to (\ref{dep.f.P}) and (\ref{dep.f.D}), then $(\widehat{u},\widehat{p},\widehat{Q})$ defined as in \eqref{r.f.} is the only solution of the Fourier System and 
$$ \mathcal{A}_a(\lambda,\xi^\prime),\:\mathcal{C}_a(\lambda,\xi^\prime)\neq 0 \quad \forall \xi^\prime\in\mathbb{R}^{N-1},\quad \forall \lambda\in\overline{\Sigma_\theta}\setminus\{\eta\}, $$
where $\eta=\frac{a(1+\beta^2/2)}{\sqrt{2}|\beta|}$.
\end{prop}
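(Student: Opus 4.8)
I would argue by contradiction, building on the uniqueness furnished by Lemma~\ref{l.u.}. For $\lambda\in\overline{\Sigma_\theta}$ with $\tan\theta\ge|\beta|/\sqrt2$, that lemma tells us the homogeneous Fourier system — \eqref{F.u.s.}, \eqref{F.div.e.}, \eqref{F.Q.s.} with $\widehat u(0)=0$, $D_N\widehat Q(0)=0$ — has only the zero solution in $C^\infty((0,\infty))\cap H^2((0,\infty))$ (with $\widehat p\equiv0$ whenever $\xi'\neq0$). Since the difference of two solutions of the inhomogeneous Fourier system is such a homogeneous solution, once the ansatz \eqref{r.f.} with coefficients \eqref{dep.f.A^0}--\eqref{hfor.} is known to solve the system it is automatically the unique solution (up to the additive constant in $\widehat p$). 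So the whole statement reduces to proving that these formulas are meaningful, i.e. $\mathcal A_a(\lambda,\xi')\neq0$ and $\mathcal C_a(\lambda,\xi')\neq0$ for every $\xi'\in\mathbb R^{N-1}$ and every $\lambda\in\overline{\Sigma_\theta}\setminus\{\eta\}$; excluding $\eta$ is what guarantees $L_1(\lambda)\neq L_2(\lambda)$, hence that \eqref{r.f.} has the right shape.

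I would treat $\mathcal A_a$ first. Assume $\mathcal A_a(\lambda_0,\xi_0')=0$ and aim for a nonzero homogeneous solution. Put $\widehat h=\widehat H=0$ and, in addition, $C=0$. Then \eqref{dep.f.A^0} gives $A^0_j=0$, \eqref{dep.f.A_jk} gives $A_{jk}=0$, \eqref{dep.f.A^1_N}--\eqref{dep.f.A^2_N} give $A^1_N=A^2_N=0$; matching \eqref{F.div.e.} coefficientwise in $e^{-L_\alpha x_N}$ yields $i\xi^\prime\cdot(A^\alpha_j)_j=L_\alpha A^\alpha_N=0$, and then \eqref{dep.f.Q^1}--\eqref{dep.f.P} together with \eqref{dep.f.D} force $\widehat p\equiv0$. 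What is left is a homogeneous linear system for the tangential amplitudes $(A^1_j,A^2_j)_{j=1}^{N-1}$, namely \eqref{dep.f.A^1_k}--\eqref{dep.f.A^2_k}, whose determinant is a nonzero multiple of $(L_2-L_1)\mathcal A_a=0$; since $L_1\neq L_2$ there is a nontrivial choice of $(A^1_j,A^2_j)$, and reconstructing $\widehat Q$ through \eqref{dep.f.Q^1}--\eqref{dep.f.P} produces a nonzero $(\widehat u,0,\widehat Q)$ solving the homogeneous Fourier system, contradicting Lemma~\ref{l.u.}. The case $\xi_0'=0$ is disposed of directly, since there $\mathcal A_a=B_a^3(L_1+L_2)\neq0$ because ${\rm Re}\,B_a>0$ and ${\rm Re}(L_1+L_2)>0$.

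With $\mathcal A_a\neq0$ in hand, every coefficient of \eqref{r.f.} becomes a well-defined function of $C,\widehat h,\widehat H$ through \eqref{dep.f.A^0}--\eqref{dep.f.A_jk} and \eqref{dep.f.Q^1}--\eqref{dep.f.D}, and only \eqref{dep.f.C} remains, its purpose being to pin down $C$. Suppose now $\mathcal C_a(\lambda_0,\xi_0')=0$ and take $\widehat h=\widehat H=0$: then $\hbar=0$ by \eqref{hfor.} and $\mathcal H_1=0$, so \eqref{dep.f.C} becomes $\mathcal C_a\,A\,C=0$ and says nothing about $C$. Picking $C\neq0$ gives a homogeneous solution that is genuinely nonzero — for $\xi_0'\neq0$ because $A^0_N=\lambda^{-1}|\xi_0'|C\neq0$ makes $\widehat u_N\not\equiv0$, and for $\xi_0'=0$ because evaluating $\mathcal C_a=\tfrac1\lambda\bigl(\mathcal I_1+\tfrac{\mathcal I_2}{L_2-L_1}\bigr)$ there gives $-\beta B_a^3L_1L_2(L_1+L_2)\big/\bigl(2\lambda(B_a^2-L_1^2)(B_a^2-L_2^2)\bigr)\neq0$, so that case cannot occur. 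Either way Lemma~\ref{l.u.} is contradicted, hence $\mathcal C_a\neq0$ as well.

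Finally, as $\mathcal A_a$ and $\mathcal C_a$ never vanish, the relations \eqref{dep.f.A^0}--\eqref{hfor.} fix all coefficients uniquely, and by the substitution carried out in this subsection $(\widehat u,\widehat p,\widehat Q)$ as in \eqref{r.f.} solves \eqref{F.u.s.}--\eqref{F.BCs}; uniqueness is Lemma~\ref{l.u.} applied to the difference of two solutions. The step I expect to be most delicate is the one for $\mathcal A_a$: one must verify that the freedom created by $\mathcal A_a=0$ is not secretly destroyed by the side conditions — divergence-freeness, the compatibility $\widehat u(0)=0$, and the membership $\widehat p\in H^2$ (which is what forces $C=0$ in that step) — and carrying out this bookkeeping, together with the degenerate cases $\xi'=0$ and small $N$, is where the genuine work lies.
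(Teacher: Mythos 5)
Your overall strategy is the same as the paper's: deduce nonvanishing of $\mathcal A_a$ and $\mathcal C_a$ by contradiction, using Lemma~\ref{l.u.} to rule out nontrivial homogeneous Fourier solutions. But the way you extract the contradiction is genuinely different. The paper keeps $C$ as a free variable alongside $A^1_k,A^2_k$, packages all the residual constraints (the coefficient of $e^{-B_ax_N}$ in the first block of \eqref{F.u.s.}, the divergence conditions, and the Dirichlet condition) into a single $(2N-1)\times(2N-1)$ matrix $M$, and computes $\det M = (-1)^{N-1}(\alpha_1-\alpha_2)^{N-2}\bigl[A^2(\alpha_2/\lambda+\alpha_C)+\Lambda(\alpha_1-\alpha_2)\bigr]$, identifying $\alpha_1-\alpha_2$ as a multiple of $\mathcal A_a$ and the bracket as a multiple of $\mathcal C_a$ — so one determinant kills both quantities at once. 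You instead split the argument: for $\mathcal A_a$ you additionally set $C=0$ and look only at the tangential amplitudes; for $\mathcal C_a$ you observe that when $\widehat h=\widehat H=0$ the equation \eqref{dep.f.C} becomes vacuous, so you may prescribe $C\neq 0$ directly. Your $\mathcal C_a$ argument is in fact more transparent than the paper's — it makes explicit why $C$ becomes free — and your explicit treatment of $\xi'=0$ (for both $\mathcal A_a$ and $\mathcal C_a$) is a welcome addition, since the paper's matrix has a degenerate third column when $A=0$ and the $\xi'=0$ case really does need to be handled separately.

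On the $\mathcal A_a$ step, your identification of the degenerate system is imprecise. You call \eqref{dep.f.A^1_k}--\eqref{dep.f.A^2_k} "a homogeneous linear system whose determinant is a nonzero multiple of $(L_2-L_1)\mathcal A_a$," but those are the solution formulas, not the constraints, and their right-hand sides contain $E_j$, which through $D$ and $P_{Nk}$ depends linearly on $A^1,A^2$. So the linear system you actually face is $(L_2-L_1)\mathcal A_a\,\mathrm{Id}$ minus a low-rank correction; its determinant is a polynomial in $\mathcal A_a$ with $\mathcal A_a$ appearing to a power, not simply a multiple of $\mathcal A_a$. The cleaner route, closer to the paper, is to derive the constraint system from the Fourier equations directly: with $C=0$ and $\widehat h=\widehat H=0$, the boundary condition gives $A^2=-A^1$, the divergence condition gives $i\xi'\cdot A^1=0$, and the $e^{-B_ax_N}$-coefficient block reduces (after substitution) to $(\alpha_1-\alpha_2)A^1=0$. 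When $\mathcal A_a=0$ so that $\alpha_1=\alpha_2$, the only remaining constraint on $A^1\in\mathbb C^{N-1}$ is $i\xi'\cdot A^1=0$, whose kernel is nontrivial precisely when $N\ge 3$. That is where your construction actually lives, and it also makes visible the one real subtlety you flagged: for $N=2$ the argument produces nothing (this is in fact mirrored in the paper's formula, where the factor $(\alpha_1-\alpha_2)^{N-2}$ drops out at $N=2$). That gap is benign because $\mathcal A_a$ depends only on $\lambda$ and $|\xi'|$ and not on the dimension, so nonvanishing for $N\ge 3$ transfers to $N=2$; but neither you nor the paper says this out loud, and you should if you want the proof to close for all $N\ge 2$.
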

\begin{proof}\hfill\\
Let $\widehat{h}=\widehat{H}\equiv 0$ a let us suppose by contradiction that there are $\lambda\in\overline{\Sigma_\theta}\setminus\{\eta\}$ and $\xi^\prime\in\mathbb{R}^{N-1}$ such that $\mathcal{A}_a(\lambda,\xi^\prime)=0$ or $\mathcal{C}_a(\lambda,\xi^\prime)=0$. Our claim is that we can find a solution of (\ref{F.u.s.}), (\ref{F.div.e.}), (\ref{F.Q.s.}) and 
(\ref{F.BCs}) different from 0. If we prove it, we get the contradiction with Lemma \ref{l.u.}.\\
In our first calculation, we will consider $C$, $A_k^1$ and $A_k^2$ for $k=1,\ldots, N-1$ as free variables and we will substitute the other parameters according to the relationships in hypothesis. With this partial substitution, many of the equations of the systems (\ref{F.u.s.}), (\ref{F.div.e.}), (\ref{F.Q.s.}) and (\ref{F.BCs}) are satisfied. In particular, we want to find $C$, $A_k^1$ and $A_k^2$ for $k=1,\ldots, N-1$ which solve the linear system
\begin{equation}\label{proof.n0prop.1} 
    \left\{ \begin{aligned}
    & i\xi_kD+\lambda\beta\sum_{j=1}^{N-1}i\xi_jP_{kj}-\lambda\beta B_aP_{kN}=0 & k=1,\ldots, N-1 \\
    & \sum_{k=1}^{N-1}i\xi_kA_k^1-L_1A_N^1=0 & \null \\
    & \sum_{k=1}^{N-1}i\xi_kA_k^2-L_2A_N^2=0 & \null \\
    & A_k^0+A_k^1+A_k^2=0 & k=1,\ldots, N-1.
\end{aligned}\right. \end{equation}
Let us see that the third equation depends from the fourth equation of (\ref{proof.n0prop.1}) and the definition of $A_N^j$ for $j=0,1,2$ in (\ref{dep.f.A^0}), (\ref{dep.f.A^1_N}) and (\ref{dep.f.A^2_N}): 
$$ \sum_{k=1}^{N-1}i\xi_kA_k^2-L_2A_N^2=-\sum_{k=1}^{N-1}i\xi_kA_k^0-\sum_{k=1}^{N-1}i\xi_kA_k^1+L_2A_N^0+L_2A_N^1= $$
$$ = (L_2-A)A_N^0+(L_2-L_1)A_N^1=\left[\frac{A(L_2-A)}{\lambda}-\frac{A(L_2-A)}{\lambda}\right]C=0. $$
So we reduce to the system
\begin{equation}\label{proof.n0prop.2} 
    \left\{ \begin{aligned}
    & i\xi_kD+\lambda\beta\sum_{j=1}^{N-1}i\xi_jP_{kj}-\lambda\beta B_aP_{kN}=0 & \quad k=1,\ldots, N-1 \\
    & \sum_{k=1}^{N-1}i\xi_kA_k^1-L_1A_N^1=0 & \null \\
    & A_k^0+A_k^1+A_k^2=0 & k=1,\ldots, N-1.
\end{aligned}\right. \end{equation}
Thanks to the relationships (\ref{dep.f.A^0}), (\ref{dep.f.A^1_N}), (\ref{dep.f.A^2_N}), from (\ref{dep.f.Q^1}) to (\ref{dep.f.P}) and (\ref{dep.f.D}), we can rewrite the system (\ref{proof.n0prop.2}) in the following way:
$$ \left(\begin{matrix}
\alpha_1Id_{N-1}  & \alpha_2Id_{N-1} & i\alpha_C\xi^\prime \\
i(\xi^\prime)^T & \underline{0}_{N-1}^T & \Lambda \\
Id_{N-1} & Id_{N-1} & -i\lambda^{-1}\xi^\prime
\end{matrix}\right)\left(\begin{matrix}
A_k^1 \\ A_k^2 \\ C
\end{matrix}\right)=\underline{0}_{2N-1}, $$
where $\underline{0}_d$ is the $0$ vector of $\mathbb{R}^d$ for any $d\in\mathbb{N}$ and 
$$ \alpha_C=\frac{A\beta^2}{B_a^2}\left[\frac{A(B_a-A)^2}{B_a^2-A^2}+\frac{L_2(A^2+L_2^2-3B_aL_2+B_a^2)(L_1-A)}{2(B_a^2-L_2^2)(L_2-L_1)}-\frac{L_1(A^2+L_1^2-3B_aL_1+B_a^2)(L_2-A)}{2(B_a^2-L_1^2)(L_2-L_1)}\right], $$
$$ \alpha_1=\frac{\lambda\beta^2L_1(A^2-B_aL_1)}{2B_a(B_a^2-L_1^2)},\quad \alpha_2=\frac{\lambda\beta^2L_2(A^2-B_aL_2)}{2B_a(B_a^2-L_2^2)},\quad \Lambda=\frac{L_1A(L_2-A)}{\lambda(L_2-L_1)}. $$
Let us call $M$ the previous matrix. By a computation we get 
\begin{equation}\label{proof.n0prop.4}
    \det M=(-1)^{N-1}(\alpha_1-\alpha_2)^{N-2}\left[A^2\left(\frac{\alpha_2}{\lambda}+\alpha_C\right)+\Lambda(\alpha_1-\alpha_2)\right].
\end{equation}
It can be seen that:
$$ \alpha_1-\alpha_2= \frac{\beta^2\lambda(L_2-L_1)\mathcal{A}_a}{2B_a(B_a^2-L_1^2)(B_a^2-L_2^2)}, $$
$$ \frac{\alpha_2}{\lambda}+\alpha_C=\frac{\beta^2L_2(A^2-B_aL_2)}{2B_a(B_a^2-L_2^2)}+ $$
$$ + \frac{\beta^2}{B_a^2}\left[\frac{A^2(B_a-A)^2}{B_a^2-A^2}+\frac{AL_2(A^2+L_2^2-3B_aL_2+B_a^2)(L_1-A)}{2(B_a^2-L_2^2)(L_2-L_1)}-\frac{AL_1(A^2+L_1^2-3B_aL_1+B_a^2)(L_2-A)}{2(B_a^2-L_1^2)(L_2-L_1)}\right]. $$
Finally
$$ A^2\left(\frac{\alpha_2}{\lambda}+\alpha_C\right)+\frac{L_1A(L_2-A)}{\lambda(L_2-L_1)}(\alpha_1-\alpha_2)=-\frac{\beta A}{B_a}\left(\mathcal{I}_1+\frac{\mathcal{I}_2}{L_2-L_1}\right). $$
This tell us that, if $\mathcal{A}_a=0$ or $\mathcal{C}_a=0$, we can find $C$, $A_k^1$ and $A_k^2$ different from 0 which resolves the systems (\ref{F.u.s.}), (\ref{F.div.e.}), (\ref{F.Q.s.}) and (\ref{F.BCs}), which contradicts Lemma \ref{l.u.}.
\end{proof}

\subsection{The case $\lambda=\eta$}

In this chapter, we will suppose $a> 0$. If we admit the case $\lambda=\eta$, we need to rewrite the representation formula (\ref{r.f.}): 
$$ \begin{array}{l}
\widehat{u}_k=A_k^0e^{-Ax_N}+(A_k^1+A_k^2)e^{-L_1x_N}+A_k^2(L_2-L_1)\mathcal{M}(L_2,L_1,x_N), \\
\widehat{Q}_{kj}=A_{kj}e^{-Ax_n}+P_{kj}e^{-B_ax_N}+(Q_{kj}^1+Q_{kj}^2)e^{-L_1x_N}+Q_{kj}^2(L_2-L_1)\mathcal{M}(L_2,L_1,x_N), \\
\widehat{p}=Ce^{-Ax_N}+De^{-B_ax_N},
\end{array} $$
where
$$ \mathcal{M}(\gamma_1,\gamma_2,t)=\frac{e^{-\gamma_1t}-e^{-\gamma_2t}}{\gamma_1-\gamma_2}\quad \forall \gamma_{1,2}\in\mathbb{C},\quad \forall t\in\mathbb{R}. $$
Let us call $L_0\coloneqq L_1(\eta)=L_2(\eta)$, then we claim that the representation formula for the solution is
\begin{equation}\label{rep.f.sol.lambda=eta}
    \begin{aligned}
    & \widetilde{u}_k=\widetilde{A}_k^0e^{-Ax_N}+\widetilde{A}_k^1e^{-L_0x_N}+\widetilde{A}_k^2x_Ne^{-L_0x_N}, \\
    & \widetilde{Q}_{kj}=\widetilde{A}_{kj}e^{-Ax_N}+\widetilde{P}_{kj}e^{-B_ax_N}+\widetilde{Q}_{kj}^1e^{-L_0x_N}+\widetilde{Q}_{kj}^2x_Ne^{-L_0x_N}, \\
    & \widetilde{p}=\widetilde{C}e^{-Ax_N}+\widetilde{D}e^{-B_ax_N}.
    \end{aligned}
\end{equation}
where
\begin{equation}\label{dep.f.A^0.eta}
        \widetilde{A}_k^0\coloneqq \lim_{\lambda\to\eta}A_k^0(\lambda)=A_k^0(\eta),
\end{equation}
\begin{equation}\label{dep.f.A^1.eta}
    \widetilde{A}_k^1\coloneqq \lim_{\lambda\to\eta} A_k^1(\lambda)+A_k^2(\lambda)= \left\{\begin{aligned}
    & \widehat{h}_k+\frac{i\xi_k}{\eta}\widetilde{C} & k<N \\
    & -\frac{A}{\eta}\widetilde{C} & k=N,
    \end{aligned}\right.
\end{equation}
\begin{equation}\label{dep.f.A^2.eta}
\begin{aligned}
    & \widetilde{A}_k^2\coloneqq-\lim_{\lambda\to\eta} (L_2(\lambda)-L_1(\lambda))A_k^2(\lambda) =  \\
    & = \left\{\begin{aligned}
   & -\widetilde{\mathcal{A}}_a^{-1}\left[(B_a^2-L_0^2)\left\{(B_a^2-L_0^2)\widetilde{E_k}-L_0(B_aL_0-A^2)(\widehat{h}_k-\widetilde{A}_k^0)\right\}\right] & k<N \\
   & -\frac{A(L_0-A)}{\eta}\widetilde{C}-i\xi^\prime\cdot \widehat{h}^\prime & k=N,
    \end{aligned}\right.
\end{aligned} 
\end{equation}
\begin{equation}\label{dep.f.Ajk.eta}
    \widetilde{A}_{jk}\coloneqq \lim_{\lambda\to \eta}A_{jk}(\lambda)=A_{jk}(\eta),
\end{equation}
\begin{equation}\label{dep.f.Q^1.eta}
    \widetilde{Q}^1_{jk}\coloneqq \lim_{\lambda\to\eta}Q_{jk}^1(\lambda)+Q_{jk}^2(\lambda)=\left\{\begin{aligned}
    & \frac{\beta(i\xi_k\widetilde{A}_j^1+i\xi_j\widetilde{A}_k^1)}{2(B_a^2-L_0^2)}-\frac{\beta L_0(i\xi_k\widetilde{A}_j^2+i\xi_j\widetilde{A}_k^2)}{(B_a^2-L_0^2)^2} & j,k<N \\
    & \frac{\beta(i\xi_k\widetilde{A}_N^1-L_0\widetilde{A}_k^1)}{2(B_a^2-L_0^2)}+\frac{\beta[(B_a^2+L_0^2)\widetilde{A}_k^2-2i\xi_kL_0\widetilde{A}_N^2]}{2(B_a^2-L_0^2)^2} & k<j=N \\
    & -\frac{\beta L_0\widetilde{A}_N^1}{B_a^2-L_0^2}+\frac{\beta(B_a^2+L_0^2)\widetilde{A}_N^2}{(B_a^2-L_0^2)^2} & k=j=N,
    \end{aligned}\right.
\end{equation}
\begin{equation}\label{dep.f.Q^2.eta}
    \widetilde{Q}^2_{jk}\coloneqq -\lim_{\lambda\to\eta}(L_2(\lambda)-L_1(\lambda))Q_{jk}^2(\lambda)=\left\{\begin{aligned}
    & \frac{\beta(i\xi_k\widetilde{A}_j^2+i\xi_j\widetilde{A}_k^2)}{2(B_a^2-L_0^2)} & j,k<N \\
    & \frac{\beta(i\xi_k\widetilde{A}_N^2-L_0\widetilde{A}_k^2)}{2(B_a^2-L_0^2)} & k<j=N \\
    & -\frac{\beta L_2\widetilde{A}_N^2}{B_a^2-L_0^2} & k=j=N,
    \end{aligned}\right.
\end{equation}
\begin{equation}\label{dep.f.P.tilde}
    \widetilde{P}_{jk}\coloneqq \lim_{\lambda\to\eta}P_{jk}(\lambda)=P_{jk}(\eta),    
\end{equation}
\begin{equation}
    \widetilde{D}\coloneqq \lim_{\lambda\to\eta}D(\lambda)=D(\eta),
\end{equation}
and where 
\begin{equation}\label{def.A-it.til.}
    \widetilde{\mathcal{A}}_a\coloneqq \lim_{\lambda\to\eta}\mathcal{A}_a=
    2B_a^3L_0-A^2B_a^2-A^2L_0^2.
\end{equation}
\begin{equation}\label{def.E.til.}
    \widetilde{E}_k=\frac{2i\xi_kB_a}{\beta^2\eta}\widetilde{D}-\frac{2A}{\beta}\left(\sum_{j=1}^{N-1}i\xi_j\widetilde{A}_{kj}-B_a\widetilde{A}_{kN}\right)+ i\xi_k\frac{L_0 \widetilde{A}_N^1}{B_a+L_0}-\frac{2}{\beta}\left(\sum_{k=1}^{N-1}\xi_j\widehat{H}_{kj}-B_a\widehat{H}_{jN}\right),
\end{equation}
\begin{equation}\label{dep.f.C.eta}
    \widetilde{C}=\lim_{\lambda\to\eta}C(\lambda),
\end{equation}
Everything is well-defined because
$$ C(\lambda)=A^{-1}\mathcal{C}_a(\lambda)^{-1}\left[\frac{\hbar(\lambda)}{L_2(\lambda)-L_1(\lambda)}+\mathcal{H}_1(\lambda)\right]. $$
It can be seen that 
$$ \frac{\mathcal{I}_2}{L_2-L_1}=\frac{4\beta A^2B_a[L_1L_2(B_a^2 + L_1L_2)-AB_a^2(L_1 + L_2)]}{2B_a(B_a^2-L_1^2)(B_a^2-L_2^2)}+ $$
$$ -\frac{\beta(B_a^2 + A^2)[(B_a^2 + A^2)L_1L_2(L_1 + L_2)- AB_a^2(L_1^2 + L_1L_2 + L_2^2 + A^2) + AL_1L_2(L_1L_2- A^2)]}{2B_a(B_a^2-L_1^2)(B_a^2-L_2^2)}; $$
$$ \frac{\hbar}{L_2-L_1}=-\frac{2\beta A^2B_a^2(L_1+L_2)}{(B_a^2-L_1^2)(B_a^2-L_2^2)}+\frac{\beta(B_a^2+A^2)[B_a^2(L_1^2+L_1L_2+L_2^2)+A^2B_a^2-L_1^2L_2^2+A^2L_1L_2]}{2B_a(B_a^2-L_1^2)(B_a^2-L_2^2)}. $$
Therefore there is $\lim_{\lambda\to\eta}C(\lambda)$. If we call 
\begin{equation}\label{f.C-it.til.}
    \widetilde{\mathcal{C}}_a\coloneqq \lim_{\lambda\to\eta}\mathcal{C}_a(\lambda),
\end{equation}
\begin{equation}\label{f.h.til.}
    \widetilde{\hbar}\coloneqq \lim_{\lambda\to\eta}\frac{\hbar(\lambda)}{L_2(\lambda)-L_1(\lambda)},\quad \widetilde{\mathcal{H}}_1=\lim_{\lambda\to\eta}\mathcal{H}_1(\lambda)=\mathcal{H}_1(\eta)
\end{equation}
then
$$ \widetilde{C}=A^{-1}\widetilde{\mathcal{C}}_a^{-1}[\widetilde{\hbar}+\widetilde{\mathcal{H}}_1]. $$
Moreover $L_0^2=A^2+z$, where
$$ z\coloneqq z_1(\eta)=z_2(\eta)=\frac{2\eta+a(1+\beta^2/2)}{2(1+\beta^2/2)}=\frac{a}{2}+\frac{\eta}{1+\beta^2/2}\in(0,a+\eta). $$
Therefore $A<L_0<B_a$ and 
$$ \widetilde{\mathcal{A}}_a=B_a^2(B_aL_0-A^2)+L_0(B_a^3-A^2L_0)>0. $$
As before, we need to prove that $\widetilde{\mathcal{C}}_a\neq0$ using again Lemma \ref{l.u.}:
\begin{prop}\label{p.nonzero-prop.eta}
Let $a>0$, $\beta\neq0$, then $\widetilde{\mathcal{C}}_a(\xi^\prime)\neq0$ for any $\xi^\prime\in\mathbb{R}^{N-1}$ and the functions given by (\ref{rep.f.sol.lambda=eta}) are solution of the system from (\ref{F.u.s.}) to (\ref{F.BCs}) for $\lambda=\eta$.  
\end{prop}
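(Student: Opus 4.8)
The plan is to follow the contradiction scheme of Proposition~\ref{p.nonzero-prop.}, adapted to the confluent regime $L_1=L_2=L_0$. The uniqueness input, Lemma~\ref{l.u.}, is available at $\lambda=\eta$ because $\eta=\frac{a(1+\beta^2/2)}{\sqrt 2|\beta|}$ is a positive real, hence lies in $\overline{\Sigma_\theta}$ for every $\theta$ with $\tan\theta\ge|\beta|/\sqrt2$. Two things must be shown: first that $\widetilde{\mathcal C}_a(\xi^\prime)\ne 0$ for all $\xi^\prime$, and second that the modified representation \eqref{rep.f.sol.lambda=eta} actually solves \eqref{F.u.s.}--\eqref{F.BCs} at $\lambda=\eta$. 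Once $\widetilde{\mathcal C}_a\ne 0$ is known, all coefficients in \eqref{rep.f.sol.lambda=eta} are finite limits (this was checked just before the proposition using the explicit \emph{symmetric} expansions of $\mathcal I_2/(L_2-L_1)$ and $\hbar/(L_2-L_1)$), and $\widetilde{\mathcal A}_a>0$ is already in hand from $A<L_0<B_a$.

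\emph{Nonvanishing of $\widetilde{\mathcal C}_a$.} I would argue by contradiction: suppose $\widetilde{\mathcal C}_a(\xi^\prime)=0$ for some $\xi^\prime\ne 0$ and put $\widehat h=\widehat H=0$. Then the $\lambda\to\eta$ limit of \eqref{dep.f.C} imposes no constraint on $\widetilde C$, so $\widetilde C$ becomes a free parameter, and the goal is to build a nonzero solution of the homogeneous Fourier system. Substituting \eqref{rep.f.sol.lambda=eta} together with the fixed relations \eqref{dep.f.A^0.eta}--\eqref{dep.f.P.tilde} into \eqref{F.u.s.}--\eqref{F.BCs}, all equations are satisfied except a homogeneous linear system $\widetilde M\,\mathbf v=\mathbf 0$ in the $2N-1$ unknowns $\mathbf v=(\widetilde A_1^1,\dots,\widetilde A_{N-1}^1,\widetilde A_1^2,\dots,\widetilde A_{N-1}^2,\widetilde C)$ — the confluent analogue of \eqref{proof.n0prop.2}. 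The matrix $\widetilde M$ arises from the matrix $M$ of \eqref{proof.n0prop.4} through the substitutions $\widetilde A_k^1=\lim_{\lambda\to\eta}(A_k^1+A_k^2)$, $\widetilde A_k^2=-\lim_{\lambda\to\eta}(L_2-L_1)A_k^2$, the cancellation of the $1/(L_2-L_1)$ singularities being guaranteed, once more, by the symmetric expansions recorded before \eqref{f.C-it.til.}. Computing $\det\widetilde M$ then yields — after cancelling the appropriate power of $L_2-L_1$ and passing to the limit — a nonzero constant times a nonnegative power of $\widetilde{\mathcal A}_a$ times $\widetilde{\mathcal C}_a$, the exact counterpart of the identity $A^2(\alpha_2/\lambda+\alpha_C)+\Lambda(\alpha_1-\alpha_2)=-\tfrac{\beta A}{B_a}\lambda\,\mathcal C_a$ used in Proposition~\ref{p.nonzero-prop.}. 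Since $\widetilde{\mathcal A}_a=B_a^2(B_aL_0-A^2)+L_0(B_a^3-A^2L_0)>0$, the hypothesis $\widetilde{\mathcal C}_a=0$ forces $\det\widetilde M=0$, hence a nonzero $\mathbf v\in\ker\widetilde M$; the corresponding $(\widetilde u,\widetilde p,\widetilde Q)$ from \eqref{rep.f.sol.lambda=eta} is a nonzero element of $C^\infty((0,\infty))\cap H^2((0,\infty))$ — the decay coming from $\operatorname{Re}A,\operatorname{Re}L_0,\operatorname{Re}B_a>0$ — solving the homogeneous system, which contradicts Lemma~\ref{l.u.}. The case $\xi^\prime=0$ (where $A=0$ and $\widetilde\hbar=\widetilde{\mathcal H}_1=0$) I would treat directly, or by continuity in $\xi^\prime$.

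\emph{The representation solves the system.} With $\widetilde{\mathcal C}_a\ne 0$ in hand, fix $(\widehat h,\widehat H)$ and, for $\lambda\in\Sigma_{\theta,r}\setminus\{\eta\}$ close to $\eta$, let $(\widehat u^\lambda,\widehat p^\lambda,\widehat Q^\lambda)$ be the solution from Proposition~\ref{p.nonzero-prop.}; each equation of \eqref{F.u.s.}--\eqref{F.BCs} holds for this triple. Although some individual coefficients in its representation diverge as $\lambda\to\eta$, the combinations actually appearing converge — precisely the reason the function $\mathcal M$ and the limits \eqref{dep.f.A^0.eta}--\eqref{dep.f.C.eta} were introduced — so $(\widehat u^\lambda,\widehat p^\lambda,\widehat Q^\lambda)\to(\widetilde u,\widetilde p,\widetilde Q)$, together with their $x_N$-derivatives up to second order, locally uniformly on $(0,\infty)$; passing to the limit in each equation shows that \eqref{rep.f.sol.lambda=eta} solves \eqref{F.u.s.}--\eqref{F.BCs} at $\lambda=\eta$. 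Uniqueness follows by subtracting two solutions and applying Lemma~\ref{l.u.}.

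\emph{Main obstacle.} The technical heart is the determinant computation for $\widetilde M$: one must organize the degenerate linear system so that the $(L_2-L_1)^{-1}$ blow-ups cancel and then extract the factor $\widetilde{\mathcal C}_a$ from $\det\widetilde M$ (using $\widetilde{\mathcal A}_a\ne 0$). This is the confluent version of \eqref{proof.n0prop.4} and demands careful bookkeeping of the rescaled coefficients \eqref{dep.f.A^1.eta}--\eqref{dep.f.Q^2.eta}; everything else is a routine limiting argument anchored on Lemma~\ref{l.u.}.
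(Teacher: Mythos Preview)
Your proposal is correct and follows the same contradiction scheme as the paper: assume $\widetilde{\mathcal C}_a=0$, build a nontrivial kernel vector for the confluent linear system, produce a nonzero homogeneous solution, and contradict Lemma~\ref{l.u.}; then pass to the limit $\lambda\to\eta$ to verify \eqref{rep.f.sol.lambda=eta} solves the Fourier system.

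The organizational difference worth noting is in the determinant step. You keep all $2N-1$ unknowns $(\widetilde A_k^1,\widetilde A_k^2,\widetilde C)$ and propose to extract $\widetilde{\mathcal C}_a$ from $\det\widetilde M$ by rescaling away the $(L_2-L_1)$ singularities and taking the limit of the nonconfluent determinant \eqref{proof.n0prop.4}. The paper instead first substitutes the closed relation \eqref{dep.f.A^1.eta} (which in the homogeneous case reads $\widetilde A_k^1=i\xi_k\widetilde C/\eta$), reducing to an $N\times N$ system in $(\widetilde A_k^2,\widetilde C)$. The key observation is then purely algebraic: this $N\times N$ matrix has \emph{exactly the same block structure} as the nonconfluent $M$, so formula \eqref{proof.n0prop.4} applies verbatim with the substitutions $\alpha_1\mapsto\widetilde\alpha_2$, $\alpha_2\mapsto 0$, $\alpha_C\mapsto\widetilde\alpha_C+\widetilde\alpha_1/\eta$, $\Lambda\mapsto L_0A(L_0-A)/\eta$. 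The resulting bracket is then recognized as the limit of $A^2(\alpha_2/\lambda+\alpha_C)+\Lambda(\alpha_1-\alpha_2)$, hence equals $-\frac{\beta A\eta}{B_a}\widetilde{\mathcal C}_a$, while the prefactor $\widetilde\alpha_2^{N-2}$ is nonzero because $\widetilde\alpha_2\propto\widetilde{\mathcal A}_a$. This bypasses the delicate $(L_2-L_1)^{N-2}$ cancellation you would otherwise have to track in the full $(2N-1)\times(2N-1)$ determinant (which tends to zero for $N\ge 3$), and it explains why $\widetilde{\mathcal A}_a$ does not need to appear as a separate factor in the final criterion. Your route works but is heavier; the paper's reduction is the cleaner bookkeeping you flagged as the main obstacle.
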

\begin{proof}\hfill\\
As in the previous section, we want to use the relations from \eqref{dep.f.A^0.eta} to \eqref{def.E.til.} in order to rewrite system from \eqref{F.u.s.} to \eqref{F.BCs} as a 
$$ \widetilde{M}\cdot(\widetilde{C},\widetilde{A}_k^1,\widetilde{A}_k^2)^T, $$
with $\widetilde{M}=\widetilde{M}(\xi^\prime)$ a matrix of suitable dimension. Let us suppose for simplicity $\widehat{h}=\widehat{H}=0$. In this case
$$ \widetilde{u}=\lim_{\lambda\to\eta}\widehat{u},\quad  \widetilde{Q}=\lim_{\lambda\to\eta}\widehat{Q}, \quad  \widetilde{p}=\lim_{\lambda\to\eta}\widehat{p}. $$
It is easy to prove that for any $\alpha\in\mathbb{N}$ we have
\begin{equation}\label{switch.der-lim.}
    \begin{array}{c}
D^\alpha_N\lim_{\lambda\to \eta}A_k^0(\lambda)e^{-Ax_N}=\lim_{\lambda\to\eta}D_N^\alpha A_k^0(\lambda)e^{-Ax_N} \\
D^\alpha_N\lim_{\lambda\to\eta}A_{jk}(\lambda)e^{-Ax_N}=\lim_{\lambda\to\eta}D^\alpha_NA_{jk}(\lambda)e^{-Ax_N} \\
D^\alpha_N\lim_{\lambda\to \eta}P_{jk}(\lambda)e^{-B_ax_N}=\lim_{\lambda\to\eta}D_N^\alpha P_{jk}^0(\lambda)e^{-B_ax_N} \\
D^\alpha_N\lim_{\lambda\to\eta}D(\lambda)e^{-B_ax_N}=\lim_{\lambda\to\eta}D^\alpha_ND(\lambda)e^{-B_ax_N} \\
D^\alpha_{N}\left(\lim_{\lambda\to\eta}(A_k^1(\lambda)+A_k^2(\lambda))e^{-L_1x_N}\right)=\lim_{\lambda\to \eta}D^\alpha_N(A_k^1(\lambda)+A_k^2(\lambda))e^{-L_1x_N} \\
D^\alpha_{N}\left(\lim_{\lambda\to\eta}(Q_{jk}^1(\lambda)+Q_{jk}^2(\lambda))e^{-L_1x_N}\right)=\lim_{\lambda\to \eta}D^\alpha_N(Q_{jk}^1(\lambda)+Q_{jk}^2(\lambda))e^{-L_1x_N} \\
D^\alpha_N\left(\lim_{\lambda\to\eta} A_k^2(\lambda) (e^{-L_2x_N}-e^{-L_1x_N})\right)= \lim_{\lambda\to \eta}D^\alpha_N A_k^2(\lambda)(e^{-L_1x_N}-e^{-L_2x_N}) \\
D^\alpha_N\left(\lim_{\lambda\to\eta} Q_{jk}^2(\lambda) (e^{-L_2x_N}-e^{-L_1x_N})\right)= \lim_{\lambda\to \eta}D^\alpha_N Q_{jk}^2(\lambda)(e^{-L_1x_N}-e^{-L_2x_N}).
\end{array}
\end{equation}
Using these identities and the fact that $(\widehat{u},\widehat{p},\widehat{Q})$ is a solution for the system for any $\lambda\neq \eta$, we can prove, repeating the strategy of Proposition \ref{p.nonzero-prop.}, that
\begin{equation}\label{proof.n0prop.eta.2} 
    \left\{ \begin{aligned}
    & i\xi_k\widetilde{\alpha}_C\widetilde{C}+\widetilde{\alpha}_1\widetilde{A}_k^1+\widetilde{\alpha}_2\widetilde{A}_k^2=0 & k=1,\ldots, N-1 \\
    & \sum_{k=1}^{N-1}i\xi_k\widetilde{A}_k^1-L_0\widetilde{A}_N^1+\widetilde{A}_N^2=0 & \null \\
    & \sum_{k=1}^{N-1}i\xi_k\widetilde{A}_k^2-L_0\widetilde{A}_N^2=0. & \null
\end{aligned}\right. \end{equation}
with
$$ \widetilde{\alpha}_C\coloneqq \lim_{\lambda\to \eta}\alpha_C(\lambda),\quad \widetilde{\alpha}_1\coloneqq \lim_{\lambda\to\eta}\alpha_1(\lambda), \quad \widetilde{\alpha}_2\coloneqq -\lim_{\lambda\to\eta}\frac{\alpha_2(\lambda)-\alpha_1(\lambda)}{L_2-L_1}, $$
and $\alpha_1,\alpha_2,\alpha_C$ be the same of the proof of Proposition \ref{p.nonzero-prop.}. Let us see that the second equation of (\ref{proof.n0prop.eta.2}) depends linearly by the other conditions:
$$ \sum_{k=1}^{N-1}i\xi_k\widetilde{A}_k^1-L_0\widetilde{A}_N^1+\widetilde{A}_N^2= $$
$$ =-\sum_{k=1}^{N-1}i\xi_k\widetilde{A}_k^0+L_0\widetilde{A}_N^0-\frac{A(L_0-A)}{\eta}\widetilde{C}= -A\widetilde{A}_N^0+L_0\widetilde{A}_N^0-\frac{A(L_0-A)}{\eta}\widetilde{C}=0. $$
Finally, if we also substitute the formulas (\ref{dep.f.A^1.eta}) and (\ref{dep.f.A^2.eta}) in (\ref{proof.n0prop.eta.2}), our system becomes
\begin{equation}\label{proof.n0prop.eta.3} 
    \left\{ \begin{array}{ll}
    i\xi_k\left(\widetilde{\alpha}_C+\frac{\widetilde{\alpha}_1}{\eta}\right)\widetilde{C}+\widetilde{\alpha}_2\widetilde{A}_k^2=0 & k=1,\ldots, N-1 \\
    \sum_{k=1}^{N-1}i\xi_k\widetilde{A}_k^2+\frac{L_0A(L_0-A)}{\eta}\widetilde{C}=0. & \null
\end{array}\right. \end{equation}
The matrix associated to (\ref{proof.n0prop.eta.3}) is 
$$ \widetilde{M}\coloneqq \left(\begin{matrix} \widetilde{\alpha}_2Id_{N-1} & i\xi^\prime\left(\widetilde{\alpha}_C+\frac{\widetilde{\alpha}_1}{\eta}\right) \\
i(\xi^\prime)^T & \frac{L_0A(L_0-A)}{\eta} 
\end{matrix}\right). $$
If we turn back to the matrix $M$ of Proposition \ref{p.nonzero-prop.}, we can notice that $\det\widetilde{M}=(-1)^{N-1}\det M$, with
$$ \left\{\begin{array}{l}
\alpha_1=\widetilde{\alpha}_2, \\
\alpha_2=0, \\
\alpha_C=\widetilde{\alpha}_C+\frac{\widetilde{\alpha}_1}{\eta}, \\
\Lambda=\frac{L_0A(L_0-A)}{\eta}, \\
\lambda=\eta.
\end{array} \right. $$
So, using the formula (\ref{proof.n0prop.4}), we get that
$$ \det \widetilde{M}=\widetilde{\alpha}_2^{N-2}\left[A^2\left(\widetilde{\alpha}_C+\frac{\widetilde{\alpha}_1}{\eta}\right)+\frac{L_0A(L_0-A)}{\eta}\widetilde{\alpha}_2\right]. $$
Now we notice that
$$ \lim_{\lambda\to\eta}\alpha_2(\lambda)=\lim_{\lambda\to\eta}\frac{\alpha_2(\lambda)-\alpha_1(\lambda)}{L_2(\lambda)-L_1(\lambda)}(L_2(\lambda)-L_1(\lambda))+\alpha_1(\lambda)=\widetilde{\alpha}_1. $$
So
$$ A^2\left(\widetilde{\alpha}_C+\frac{\widetilde{\alpha}_1}{\eta}\right)+\frac{L_0A(L_0-A)}{\eta}\widetilde{\alpha}_2= $$
$$ =\lim_{\lambda\to\eta} A^2\left(\frac{\alpha_2(\lambda)}{\lambda}+\alpha_C(\lambda)\right)+\frac{L_1A(L_2-A)}{\lambda(L_2(\lambda)-L_1(\lambda))}(\alpha_1(\lambda)-\alpha_2(\lambda))=$$
$$ = \lim_{\lambda\to\eta}-\frac{\beta A\lambda}{B_a}\mathcal{C}_a(\lambda)=-\frac{\beta A \eta}{B_a}\widetilde{\mathcal{C}}_a. $$  
We are now ready to conclude: let us suppose by contradiction that exists $\xi^\prime\in\mathbb{R}^{N-1}$ s.t. $\widetilde{\mathcal{C}}_a(\xi^\prime)=0$, then by the calculation we have just done, we have that $\widetilde{M}$ is a singular matrix and, in particular, we can find $(\widetilde{C},\widetilde{A}_k^2)^T\in\ker \widetilde{M}$. This means that we can find $\widetilde{C}$ and $\widetilde{A}_k^2$ not zero which solve the system (\ref{proof.n0prop.eta.2}) with $\widehat{h}=\widehat{H}=0$. Then, again by the calculation we have made so far, we can find a solution $(\widehat{u},\widehat{Q},\widehat{p})$ of systems from (\ref{F.u.s.}) to (\ref{F.BCs}) with $\widehat{h}=\widehat{H}=0$ and it contradicts Lemma \ref{l.u.}. Therefore $\widetilde{\mathcal{C}}_a\neq0$ for any $\xi^\prime\in\mathbb{R}^{N-1}$.

\vspace{2mm}

 On the other hand, since $\widetilde{\mathcal{C}}_a\neq0$ for any $\xi^\prime\in\mathbb{R}^{N-1}$, then $(\widetilde{u},\widetilde{p},\widetilde{Q})$ is the limit as $\lambda\to \eta$ of $(\widehat{u},\widehat{p},\widehat{Q})$ of (\ref{r.f.}) for any $\xi^\prime\in\mathbb{R}^{N-1}$. We already know that the functions of the previous chapter are solutions so, thanks to (\ref{switch.der-lim.}), we conclude that also these new functions solve systems (\ref{F.u.s.}) to (\ref{F.Q.s.}).
\end{proof}

\section{Resolvent Estimate}
\subsection{Lower bound for $\mathcal{C}_a$ and $\mathcal{A}_a$}

In this section, we start the proof of Theorem \ref{t.res.est.}, in particular we want to prove that $(u,p,Q)=\phi_\lambda(h,H)$ with $\{\phi_\lambda\}_\lambda$ a $\mathcal{R}$-bounded family of functions. Firstly, we need an estimate for the functions $\mathcal{C}_a$ and $\mathcal{A}_a$ extended to the case $\lambda=\eta$, that is
$$ \mathcal{C}_a=\left\{\begin{aligned}
    & \frac{1}{\lambda}\left(\mathcal{I}_1+\frac{\mathcal{I}_2}{L_2-L_1}\right) & \lambda\neq \eta \\
    & \lim_{\lambda\to \eta} \frac{1}{\lambda}\left(\mathcal{I}_1+\frac{\mathcal{I}_2}{L_2-L_1}\right) & \lambda=\eta,
\end{aligned}\right. $$
$$ \mathcal{A}_a=\left\{\begin{aligned}
    & B_a^3(L_1+L_2)-A^2B_a^2-A^2L_1L_2 & \lambda\neq \eta \\
& \lim_{\lambda\to\eta} B_a^3(L_1+L_2)-A^2B_a^2-A^2L_1L_2 & \lambda=\eta.
\end{aligned}\right.
$$
 Let us start from the estimates of $\widetilde{z}_j$ for $j=1,2$:
\begin{lem}\label{l.est.z}
Let $a\ge 0$ and $\beta\in\mathbb{R}$, let $\widetilde{z}_j\coloneqq \frac{z_j}{\lambda+a}$ for $j=1,2$, then 
$$ \widetilde{z}_1\xrightarrow{|\lambda|\to+\infty}z_-\coloneqq \frac{2-i|\beta|\sqrt{2}}{2\left(1+\frac{\beta^2}{2}\right)},\quad \widetilde{z}_2\xrightarrow{|\lambda|\to+\infty}z_+\coloneqq \frac{2+i|\beta|\sqrt{2}}{2\left(1+\frac{\beta^2}{2}\right)}. $$
Moreover, let $\theta\in\left(0,\frac{\pi}{2}\right)$ and $r>0$, there are $K_{m}=K_{m}(a,\beta,r)>0$ and $K_M=K_M(a,\beta)$ such that
$$ K_{m}\le |\widetilde{z}_j(\lambda)|\le K_M \quad \forall \lambda\in\overline{\Sigma_{\theta,r}}\:\:j=1,2. $$
\end{lem}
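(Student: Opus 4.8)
The plan is to realize $\widetilde z_1,\widetilde z_2$ as the two roots of a quadratic whose coefficients depend conveniently on $\lambda$, and then read off both assertions from Vieta's formulas. Substituting $z=(\lambda+a)w$ into the equation $L(z)=(\lambda-z)(\lambda+a-z)+\frac{\beta^2}{2}(z^2-az)=0$ defining $z_{1,2}$ in \eqref{def.z.}, and dividing by $\lambda(\lambda+a)$ (allowed since $\lambda\in\Sigma_{\theta,r}$ forces $\lambda\neq 0$ and $\lambda\neq-a$), shows that $\widetilde z_1(\lambda),\widetilde z_2(\lambda)$ are exactly the roots of
$$ c\,(1+s)\,w^2-(2+cs)\,w+1=0,\qquad c\coloneqq 1+\tfrac{\beta^2}{2},\quad s\coloneqq\tfrac{a}{\lambda}, $$
so that, by Vieta,
$$ \widetilde z_1+\widetilde z_2=\frac{2\lambda+ca}{c(\lambda+a)},\qquad \widetilde z_1\widetilde z_2=\frac{\lambda}{c(\lambda+a)}. $$

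For the limits, note that on $\Sigma_{\theta,r}$ one has $|s|=a/|\lambda|\to 0$ as $|\lambda|\to\infty$, uniformly in $\arg\lambda$, while $|1+s|=|\lambda+a|/|\lambda|\ge\sin\theta$ because the ray $\arg z=\pi$ is excluded from $\overline{\Sigma_\theta}$. Hence the coefficients above converge to those of $c\,w^2-2w+1=0$, whose roots are $\frac{2\pm i\sqrt{2}\,|\beta|}{2c}$, i.e. precisely $z_-$ and $z_+$. Continuity of the roots of a quadratic in its coefficients then shows that the unordered pair $\{\widetilde z_1,\widetilde z_2\}$ tends to $\{z_-,z_+\}$, and tracking the sign in \eqref{def.z.} (the one making $\lambda^{-1}\sqrt{a^2c^2-2\lambda^2\beta^2}\to\pm i\sqrt{2}\,|\beta|$) pins down the labelled limits $\widetilde z_1\to z_-$, $\widetilde z_2\to z_+$. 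Since $|z_-|=|z_+|=\sqrt{2}/\sqrt{2+\beta^2}$ in any case, the labelling will be irrelevant for the estimates.

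For the uniform bounds, the key geometric fact is again that $-a$ has positive distance from $\overline{\Sigma_\theta}$, which gives $|\lambda+a|\gtrsim|\lambda|+a$ on $\overline{\Sigma_{\theta,r}}$. Combined with the Vieta identities this yields $|\widetilde z_1+\widetilde z_2|\le S$ and $|\widetilde z_1\widetilde z_2|=\frac{|\lambda|}{c|\lambda+a|}\le P$ for constants $S,P>0$ depending only on $a,\beta$ (and $\theta$), and, crucially, $|\widetilde z_1\widetilde z_2|\ge\frac{|\lambda|}{c(|\lambda|+a)}\ge P_0$ with $P_0\coloneqq\frac{r}{c(r+a)}>0$; this last lower bound is exactly where the hypothesis $r>0$ enters, since otherwise $\widetilde z_1\widetilde z_2\to 0$ as $\lambda\to 0$. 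Writing $w^2-\sigma w+p=0$ for the quadratic satisfied by each $w=\widetilde z_j$, with $\sigma=\widetilde z_1+\widetilde z_2$ and $p=\widetilde z_1\widetilde z_2$, the identity $|\widetilde z_j|^2=|\sigma\widetilde z_j-p|$ gives both $|\widetilde z_j|^2\le S|\widetilde z_j|+P$ and $|\widetilde z_j|^2\ge P_0-S|\widetilde z_j|$, whence
$$ \frac{-S+\sqrt{S^2+4P_0}}{2}\ \le\ |\widetilde z_j|\ \le\ \frac{S+\sqrt{S^2+4P}}{2}, $$
which are the desired constants $K_m$ (also depending on $r$) and $K_M$.

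The computations are short; the only points needing care are making the estimates genuinely uniform over the unbounded sector (reduced above to $-a\notin\overline{\Sigma_\theta}$) and checking that the branch of the square root splitting $z_{1,2}$ in \eqref{def.z.} causes no spurious monodromy of the labelling across $\Sigma_\theta$, for which one uses that the discriminant $a^2c^2-2\lambda^2\beta^2$ vanishes on $\overline{\Sigma_\theta}$ only at $\lambda=\eta$ (and, as noted, the modulus bounds do not see the labelling at all).
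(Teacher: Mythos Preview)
Your proof is correct and takes a genuinely different route from the paper. The paper's argument is purely qualitative: it observes that if $z_j(\lambda)=0$ then the defining relation forces $\lambda(\lambda+a)=0$, impossible on $\overline{\Sigma_{\theta,r}}$; since $\widetilde z_j$ is continuous, nowhere zero, and has nonzero limits $z_\pm$ as $|\lambda|\to\infty$, a compactness argument on the one-point compactification of the sector gives the two-sided bound. Your approach instead passes through the quadratic satisfied by $\widetilde z_j$ and reads off explicit constants from Vieta, which has two advantages: the bounds are fully quantitative (one sees immediately where the dependence on $r$ enters, via $P_0=r/(c(r+a))$), and the symmetric functions make the argument indifferent to the labelling of the roots, so the discussion of branch choices for the limits becomes entirely cosmetic. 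The paper's route is shorter and needs no computation beyond ``$z_j=0\Rightarrow\lambda(\lambda+a)=0$'', but it only asserts existence of the constants; your route actually produces them. Note, incidentally, that your constants $S,P$ inherit a $\theta$-dependence through the estimate $|\lambda+a|\gtrsim|\lambda|+a$, so your $K_M$ depends on $\theta$ as well; the statement's notation $K_M(a,\beta)$ is slightly loose on this point, and the paper's own compactness proof does not do better.
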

\begin{proof}\hfill\\
It is easy to prove that $\widetilde{z}_1\to z_-$ and $\widetilde{z}_2\to z_+$ as $|\lambda|\to+\infty$. Now we notice that $z_j\neq 0$ for any $\lambda\in \overline{\Sigma_{\theta,r}}$: by definition of $z_j$ we have that
\begin{equation}\label{proof.est.z} (\lambda-z_j)(\lambda+a-z_j)+\frac{\beta^2}{2}(z_j^2-az_j)=0 \quad\forall \lambda\in\overline{\Sigma_{\theta,r}}\quad j=1,2. \end{equation}
Let us suppose by contradiction that exists $\lambda\in\overline{\Sigma_{\theta,r}}$ such that $z_j(\lambda)=0$ for some $j=1,2$. Therefore, turning back to (\ref{proof.est.z}), we should have that
$$ \lambda(\lambda+a)=0, $$
which is impossible if $\lambda\in\overline{\Sigma_{\theta,r}}$. Moreover, since the limits are different from 0 for any $\beta\in\mathbb{R}$, by continuity in $\lambda$ we can conclude.
\end{proof}
In the previous section, we have proved that $\mathcal{C}_a$ and $\mathcal{A}_a$ are always different from 0. Since these two functions are also regular, it means that they assume values far from 0 in every compact of $\overline{\Sigma_{\theta,r}}\times\mathbb{R}^{N-1}$. Now we want to study the behaviour when $\lambda$ or $\xi^\prime$ go to infinity. In order to do so, we will use a Laurent series with a special kind of rest term: 
\begin{defn}\hfill\\
Let $\alpha\in\mathbb{Z}$ and let $A\subseteq\mathbb{C}$, then we denote
$$ o_\infty(A,t^{-\alpha})\coloneqq \left\{g(\lambda,t)\:\Bigg|\: \sup_{\lambda\in A}|g(\lambda,t)||t|^\alpha\xrightarrow{|t|\to+\infty}0\right\}, $$
$$ o_0(A,t^{\alpha})\coloneqq \left\{g(\lambda,t)\:\Bigg|\: \sup_{\lambda\in A}|g(\lambda,t)||t|^{-\alpha}\xrightarrow{|t|\to0}0\right\}. $$
\end{defn}
It is easy to see that such a symbols satisfy the rules of standard Landau Symbols:
\begin{lem}\label{l.opicc.}
Let $\alpha,\beta\in\mathbb{N}_0$ and $A,B\subseteq\mathbb{C}$ then
\begin{itemize}
    \item [1)] If $\beta\ge \alpha$, then $o_\infty(A,t^{-\beta})\subseteq o_\infty(A,t^{-\alpha})$ and $o_0(A,t^\beta)\subseteq o_0(A,t^\alpha)$;
    \item [2)] If $B\subseteq A$, then $o_\infty(A,t^{-\alpha}))\subseteq o_\infty(B,t^{-\alpha})$ and $o_0(A,t^\alpha)\subseteq o_0(B,t^\alpha)$;
    \item [3)] We have that
    $$ t^{\beta} o_\infty(A,t^{-\alpha})=o_\infty(A,t^{-\alpha+\beta}), \quad t^\beta o_0(A,t^\alpha)=o_0(A,t^{\alpha+\beta}); $$
    \item [4)] Let $f(\lambda)$ be s.t. $\sup_{\lambda\in A}|f(\lambda)|\le C$, then
    $$ f(\lambda)o_\infty(A,t^{-\alpha})\subseteq o_\infty(A,t^{-\alpha}), \quad f(\lambda)o_0(A,t^\alpha)\subseteq o_0(A,t^\alpha); $$
    \item [5)] We have that 
    $$ o_\infty(A,t^{-\alpha})+o_\infty(A,t^{-\beta})\subseteq o_\infty(A,t^{-\min\{\alpha,\beta\}}),\quad o_0(A,t^\alpha)+o_0(A,t^\beta)\subseteq o_0(A,t^{\min\{\alpha,\beta\}}); $$
    \item [6)] We have that 
    $$ f(t)\in o_0(A,t^\alpha)\:\Leftrightarrow\: f(1/t)\in o_\infty(A,t^{-\alpha}). $$
\end{itemize}
\end{lem}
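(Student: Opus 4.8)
The plan is to verify all six items directly from the two definitions, since each reduces to an elementary manipulation of suprema together with an inequality between powers of $|t|$; the only point requiring care is that the defining limits concern only the asymptotic regime $|t|\to+\infty$ (for $o_\infty$) or $|t|\to 0$ (for $o_0$), so any comparison of powers of $|t|$ need hold only for $|t|$ large, respectively small. In other words, these symbols are genuine ``little-o'' relations that happen to be uniform in $\lambda\in A$, so the usual Landau calculus transfers verbatim once the suprema are handled monotonically.

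Concretely, for item 1) I would note that $\beta\ge\alpha$ gives $|t|^\alpha\le|t|^\beta$ for $|t|\ge 1$, whence $\sup_{\lambda\in A}|g(\lambda,t)|\,|t|^\alpha\le\sup_{\lambda\in A}|g(\lambda,t)|\,|t|^\beta\xrightarrow{|t|\to+\infty}0$, and symmetrically for $o_0$ using $|t|\le 1$. Item 2) is immediate from $\sup_{\lambda\in B}\le\sup_{\lambda\in A}$ when $B\subseteq A$. For item 3), if $g\in o_\infty(A,t^{-\alpha})$ then $\sup_{\lambda\in A}|t^\beta g(\lambda,t)|\,|t|^{\alpha-\beta}=\sup_{\lambda\in A}|g(\lambda,t)|\,|t|^{\alpha}\to 0$, and likewise in the $o_0$ case. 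Item 4) follows from $\sup_{\lambda\in A}|f(\lambda)g(\lambda,t)|\,|t|^{\alpha}\le C\,\sup_{\lambda\in A}|g(\lambda,t)|\,|t|^{\alpha}\to 0$.

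For item 5), after relabelling so that $\min\{\alpha,\beta\}=\alpha$, item 1) lets me view both summands as elements of $o_\infty(A,t^{-\alpha})$, and then the triangle inequality yields $\sup_{\lambda\in A}|g_1+g_2|\,|t|^{\alpha}\le\sup_{\lambda\in A}|g_1|\,|t|^{\alpha}+\sup_{\lambda\in A}|g_2|\,|t|^{\alpha}\to 0$; the $o_0$ version is identical. Finally, for item 6) I would perform the substitution $s=1/t$: the statement $f(t)\in o_0(A,t^\alpha)$ unravels to $\sup_{\lambda\in A}|f(\lambda,s)|\,|s|^{-\alpha}\to 0$ as $|s|\to 0$, and writing $s=1/t$ this is exactly $\sup_{\lambda\in A}|f(\lambda,1/t)|\,|t|^{\alpha}\to 0$ as $|t|\to+\infty$, i.e. $f(1/t)\in o_\infty(A,t^{-\alpha})$, with the converse obtained by reading the same computation backwards.

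Since no step presents a genuine difficulty, I expect the ``main obstacle'' to be purely organisational: keeping the roles of $\alpha$ and $\beta$ straight in items 1), 3) and 5), and making sure each estimate is invoked only in the asymptotic regime ($|t|\to+\infty$ or $|t|\to 0$) appropriate to the symbol at hand, so that the comparisons between powers of $|t|$ are legitimate.
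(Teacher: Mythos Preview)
Your proposal is correct and complete; each item is handled by the natural manipulation of the definitions, and no step is missing. The paper in fact does not provide a proof of this lemma at all (it merely states that ``such symbols satisfy the rules of standard Landau Symbols''), so your direct verification is exactly the intended, and essentially the only reasonable, argument.
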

The idea is the following: we rewrite $\mathcal{C}_a$ and $\mathcal{A}_a$ in dependence of $\lambda$ and $t=t(\lambda,\xi^\prime)$. We will take the Laurent series with respect with $t$ using the rest expressions we have just introduced:
\begin{prop}\label{p.est.C.lambda.neq.0}
Let $a,r>0$, $\beta\neq0$ and $\theta\in\left(\theta_0,\frac{\pi}{2}\right)$ with $\tan\theta_0\ge\frac{|\beta|}{\sqrt{2}}$, we can find $M=M(a,\beta,\theta,r)>0$ such that
$$ \inf_{\xi^\prime\in\mathbb{R}^{N-1},\lambda\in\Sigma_{\theta,r}}|\mathcal{C}_a(\lambda,\xi^\prime)|\ge M. $$
\end{prop}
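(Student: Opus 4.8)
The plan is to upgrade the qualitative non-vanishing of $\mathcal{C}_a$ to a uniform lower bound by combining compactness with an asymptotic analysis at infinity. By Propositions~\ref{p.nonzero-prop.} and~\ref{p.nonzero-prop.eta}, $\mathcal{C}_a(\lambda,\xi')\neq0$ for every $\lambda\in\overline{\Sigma_\theta}$ and $\xi'\in\mathbb{R}^{N-1}$ (the value at $\lambda=\eta$ being the removable limit~\eqref{f.C-it.til.}), and $(\lambda,\xi')\mapsto\mathcal{C}_a(\lambda,\xi')$ is continuous, in particular across $\lambda=\eta$. Since $\mathcal{C}_a$ depends on $\xi'$ only through $A=|\xi'|$, its modulus has a strictly positive infimum on any compact subset of $\overline{\Sigma_{\theta,r}}\times[0,\infty)$ (recall $|\lambda|\ge r$, so $\lambda=0$ never occurs), and it remains to bound $|\mathcal{C}_a|$ from below once $|\lambda|+|\xi'|^2$ is large. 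The structural point is a rough homogeneity: with $B_a=(\lambda+a+|\xi'|^2)^{1/2}$, $L_{1,2}=(|\xi'|^2+z_{1,2}(\lambda))^{1/2}$ and $z_{1,2}(\lambda)=(\lambda+a)\widetilde z_{1,2}(\lambda)$, where $\widetilde z_{1,2}$ is bounded away from $0$ and $\infty$ with limits $z_\mp$ by Lemma~\ref{l.est.z}, under the dilation $(\lambda,\xi')\mapsto(s\lambda,\sqrt{s}\,\xi')$ each of $A,B_a,L_{1,2}$ scales like $\sqrt{s}$ up to lower order, so $\mathcal{C}_a$ is, to leading order, homogeneous of degree $0$. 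Here the hypothesis $\tan\theta_0\ge|\beta|/\sqrt2$ plays the same role as in Lemma~\ref{l.u.}: it confines $\lambda+a$ and $z_{1,2}(\lambda)$ to a sector of $\mathbb{C}\setminus\mathbb{R}_-$, so the square roots defining $B_a,L_{1,2}$ have positive real part and admit convergent expansions in the small parameters introduced below.

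Next I would carry out Laurent expansions in two overlapping charts covering the region $|\lambda|+|\xi'|^2\ge R^2$, namely $\{|\xi'|^2\ge|\lambda|\}$ and $\{|\xi'|^2\le|\lambda|\}$. On the first, set $t=|\xi'|$: then $(\lambda+a)t^{-2}$ and $z_{1,2}(\lambda)t^{-2}$ have modulus $\lesssim1$, so $B_a=t(1+(\lambda+a)t^{-2})^{1/2}$ and $L_j=t(1+z_j(\lambda)t^{-2})^{1/2}$ are $t$ times convergent power series with uniformly controlled coefficients; substituting these into the explicit formulas~\eqref{I_1for.}--\eqref{I_2for.} for $\mathcal{I}_1$ and $\mathcal{I}_2/(L_2-L_1)$, using $B_a^2-A^2=\lambda+a$ to remove the apparent singular denominators, dividing by $\lambda$, and collecting powers of $t$ with tails absorbed into $o_\infty(\cdot,t^{-1})$ via Lemma~\ref{l.opicc.}, one obtains $\mathcal{C}_a=P_\lambda(t)+o_\infty(\cdot,t^{-1})$, where $P_\lambda$ is a polynomial of degree $4$ in $t$ with explicit coefficients and leading coefficient comparable to $|\lambda|^{-2}$. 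On the second chart, set $t=|\lambda|^{1/2}$ (principal branch of $\lambda^{1/2}$, analytic on $\Sigma_\theta$): now $|\xi'|^2t^{-2}\lesssim1$ while $z_{1,2}(\lambda)t^{-2}$ stays in a fixed compact set away from $0$, and the same substitution yields $\mathcal{C}_a=\kappa+o_\infty(\cdot,t^{-1})$ with $\kappa$ a nonzero constant depending only on $\beta$. Equivalently, both expansions say that $\mathcal{C}_a$ converges, uniformly over the compact angular variables $\bigl(\arg\lambda,\,|\xi'|/(|\lambda|+|\xi'|^2)^{1/2}\bigr)$, to the homogeneous degree-$0$ function $\mathcal{C}_a^\infty$, and the substance of the argument is that $\mathcal{C}_a^\infty$ is nowhere zero. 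Granting this, $|\mathcal{C}_a^\infty|$ has a positive infimum on its compact domain, hence $|\mathcal{C}_a|\ge\tfrac12\inf|\mathcal{C}_a^\infty|$ once $R$ is large enough; combined with the compact-region bound, this produces the desired $M>0$.

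The only real difficulty is the algebraic bookkeeping: isolating the leading coefficient of the Laurent expansion of the bulky combination $\mathcal{I}_1+\mathcal{I}_2/(L_2-L_1)$ in each chart, and then checking that the relevant leading term (the leading coefficient of $P_\lambda$, the constant $\kappa$, and ultimately $\mathcal{C}_a^\infty$) never vanishes, which one does with $\beta\neq0$ and the relations between roots and coefficients for $z_{1,2}$ coming from~\eqref{def.z.} (in particular the resulting closed forms for $z_1z_2$, $z_1+z_2$ and for $(\lambda+a-z_1)(\lambda+a-z_2)$). The $o_\infty$-calculus of Lemma~\ref{l.opicc.} is precisely what makes the two expansions uniform in the surviving variable, so once non-degeneracy of the leading term is verified the rest is routine; note that $\lambda=\eta$ needs no separate treatment, since the factor $1/(L_2-L_1)$ has only a removable singularity there and that point lies in the compact region already handled.
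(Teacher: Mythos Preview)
Your overall strategy---compactness on bounded sets plus asymptotic control at infinity---is the same as the paper's, but two points fail. First, the chart-1 expansion is wrong as stated. With $t=|\xi'|$ and $\lambda\in\Sigma_{\theta,r}$ fixed, one has $B_a=t+O(t^{-1})$ and $L_{1,2}=t+O(t^{-1})$, and the apparent leading powers in $\mathcal{I}_1$ and $\mathcal{I}_2/(L_2-L_1)$ cancel to high order: in fact $\mathcal{C}_a(\lambda,\xi')$ tends to a nonzero \emph{constant} as $|\xi'|\to\infty$ (this is exactly \eqref{proof.C-it.est.bel.}). A representation $\mathcal{C}_a=P_\lambda(t)+o_\infty(\cdot,t^{-1})$ with $P_\lambda$ genuinely of degree $4$ and leading coefficient comparable to $|\lambda|^{-2}$ is impossible---fix $\lambda$ and send $t\to\infty$ to get $|\mathcal{C}_a|\to\infty$, a contradiction. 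The cancellations are the whole computation, and your expansion has not tracked them.

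Second, and more seriously, you defer the non-vanishing of the limit function $\mathcal{C}_a^\infty$ to an unspecified ``algebraic bookkeeping'' via Vieta relations for $z_{1,2}$. This is the substantive step, not the routine one: the limit still depends on $\arg\lambda$ and on the angular variable, and a direct algebraic exclusion of zeros on the whole two-parameter set is not evidently feasible. The paper bypasses this entirely. It uses the single substitution $t=|\xi'|/\sqrt{\lambda+a}$, writing $\mathcal{C}_a=\frac{\beta(\lambda+a)}{\lambda}F_a(\lambda,t)$ where all residual $\lambda$-dependence enters through $\widetilde z_{1,2}(\lambda)$; only \emph{one} Laurent expansion (at $|t|\to\infty$) is needed to dispose of large $|t|$ uniformly in $\lambda$. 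For bounded $t$ one argues by compactness, and the remaining case $|\lambda_n|\to\infty$ is handled by the key observation that $\widetilde z_{1,2}\to z_\mp$ and $\widetilde F(z_-,z_+,\widehat t)=F_0(1,\widehat t)$, i.e.\ the limit coincides with the $a=0$, $\lambda=1$ instance of the same quantity. Proposition~\ref{p.nonzero-prop.} (and Lemma~\ref{l.u.} behind it) were deliberately proved for $a\ge0$, so this limit case is still covered by the PDE-uniqueness argument---no direct algebraic non-vanishing check is ever required. Your proposal misses this reduction, and without it the ``only real difficulty'' you flag is not a difficulty you have shown how to overcome.
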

\begin{proof}\hfill\\
We have already notice that 
$$ \frac{\mathcal{I}_2}{L_2-L_1}=\frac{4\beta A^2B_a[L_1L_2(B_a^2 + L_1L_2)-AB_a^2(L_1 + L_2)]}{2B_a(B_a^2-L_1^2)(B_a^2-L_2^2)}+ $$
$$ -\frac{\beta(B_a^2 + A^2)[(B_a^2 + A^2)L_1L_2(L_1 + L_2)- AB_a^2(L_1^2 + L_1L_2 + L_2^2 + A^2) + AL_1L_2(L_1L_2- A^2)]}{2B_a(B_a^2-L_1^2)(B_a^2-L_2^2)}. $$
Therefore it can be seen that, for any $\lambda\in\Sigma_\theta$ and for any $\xi^\prime \in\mathbb{R}^{N-1}$, we have
\begin{equation}\label{f.I1.t}
    \mathcal{I}_1(\lambda,\xi^\prime)=\beta(\lambda+a)\mathcal{F}_1\left(\lambda,\frac{|\xi^\prime|}{\sqrt{\lambda+a}}\right),
\end{equation}
\begin{equation}\label{f.I2.t}
    \left(\frac{\mathcal{I}_2}{L_2-L_1}\right)(\lambda,\xi^\prime)=\beta(\lambda+a)\left[\mathcal{F}_2^1+\mathcal{F}_2^2+\mathcal{F}_2^3+\mathcal{F}_2^4\right]\left(\lambda,\frac{|\xi^\prime|}{\sqrt{\lambda+a}}\right),
\end{equation}
where
$$ \mathcal{F}_1(\lambda,t)\coloneqq t^2\left(2t^2-t\sqrt{1+t^2}-\frac{t^3}{\sqrt{1+t^2}}\right), $$
$$ \mathcal{F}_2^1(\lambda,t)\coloneqq \frac{2t^2\left[\sqrt{t^2+\widetilde{z}_1}\sqrt{t^2+\widetilde{z}_2}(t^2+1+\sqrt{t^2+\widetilde{z}_1}\sqrt{t^2+\widetilde{z}_2})-t(t^2+1)(\sqrt{t^2+\widetilde{z}_1}+\sqrt{t^2+\widetilde{z}_2})\right]}{(1-\widetilde{z}_1)(1-\widetilde{z}_2)}, $$
$$ \mathcal{F}_2^2(\lambda,t)\coloneqq -\frac{(2t^2+1)^2\sqrt{t^2+\widetilde{z}_1}\sqrt{t^2+\widetilde{z}_2}(\sqrt{t^2+\widetilde{z}_1}+\sqrt{t^2+\widetilde{z}_2})}{2\sqrt{t^2+1}(1-\widetilde{z}_1)(1-\widetilde{z}_2)}, $$
$$ \mathcal{F}_2^3(\lambda,t)\coloneqq \frac{t(2t^2+1)(t^2+1)(3t^2+\widetilde{z}_1+\widetilde{z}_2+\sqrt{t^2+\widetilde{z}_1}\sqrt{t^2+\widetilde{z}_2})}{2\sqrt{t^2+1}(1-\widetilde{z}_1)(1-\widetilde{z}_2)}, $$
$$ \mathcal{F}_2^4(\lambda,t)\coloneqq -\frac{t(2t^2+1)\sqrt{t^2+\widetilde{z}_1}\sqrt{t^2+\widetilde{z}_2}(\sqrt{t^2+\widetilde{z}_1}\sqrt{t^2+\widetilde{z}_2}-t^2)}{2\sqrt{t^2+1}(1-\widetilde{z}_1)(1-\widetilde{z}_2)}, $$
where we recall that $\widetilde{z}_j=\frac{z_j}{\lambda+a}$. Let us call $t(\lambda,\xi^\prime)\coloneqq \frac{|\xi^\prime|}{\sqrt{\lambda+a}}$. The idea of the proof is to study the behaviour of $\mathcal{C}_a$ when $|t(\lambda,\xi^\prime)|\to+\infty$ and when $t(\lambda,\xi^\prime)$ is bounded. For what concerns the first part, it is sufficient to take the Laurent expression of $\mathcal{F}_1$ and $\mathcal{F}_2^j$ for $j=1,2,3,4$. In fact
$$ \sqrt{t^2+\widetilde{z}_j}=\widetilde{z}_j^{1/2}\sqrt{1+\frac{t^2}{\widetilde{z}_j}}=\widetilde{z}_j^\frac{1}{2}\left(\sum_{l=0}^NC_l\left(\frac{t}{\widetilde{z}_j^{1/2}}\right)^{1-2l}+f\left(\frac{t}{\widetilde{z}_j^{1/2}}\right)\right)\quad \forall N\in\mathbb{N}_0, $$
with $C_l\in\mathbb{R}$ and $f(x)\in o_\infty(|x|^{-2N})$. Now we notice that 
$$ f\left(\frac{t}{\widetilde{z}_j^{1/2}}\right)\in o_\infty\left(\Sigma_{\theta,r},t^{-2N}\right)\quad \forall \lambda\in\Sigma_{\theta,r}: $$
let $\varepsilon>0$, then we can find $M>0$ such that 
$$ \left|\frac{t}{\widetilde{z}_j^{1/2}}\right|^{2N}\left|f\left(\frac{t}{\widetilde{z}_j^{1/2}}\right)\right|\le \varepsilon\quad \forall \left|\frac{t}{\widetilde{z}_j^{1/2}}\right|\ge M. $$
Then, for any $|t|\ge MK_M^{1/2}$, we have that 
$$ \sup_{\lambda\in\Sigma_{\theta,r}}|t|^{2N}\left|f\left(\frac{t}{\widetilde{z}_j^{1/2}}\right)\right|= \sup_{\lambda\in\Sigma_{\theta,r}}|\widetilde{z}_j|^{N}\left|\frac{t}{\widetilde{z}_j^{1/2}}\right|^{2N}\left|f\left(\frac{t}{\widetilde{z}_j^{1/2}}\right)\right|\le K_M^{N}\varepsilon. $$
Moreover, thanks to point 4 of Lemma \ref{l.opicc.}, we have that 
$$ \widetilde{z}_j^\frac{1}{2}o_\infty\left(\Sigma_{\theta,r},t^{-2N}\right)\subseteq o_\infty\left(\Sigma_{\theta,r},t^{-2N}\right)\quad \forall N\in\mathbb{N}_0. $$
Therefore
$$ \sqrt{t^2+\widetilde{z}_j}=\sum_{l=1}^NC_l\left(\frac{t^{1-2l}}{\widetilde{z}_j^{-l}}\right)+o_\infty\left(\Sigma_{\theta,r},t^{-2N}\right) \quad \forall N\in\mathbb{N}_0. $$
Similarly it can be done the same for $\mathcal{F}_1$ and $\mathcal{F}_2^j$ with $j=1,2,3,4$ and it can be seen that 
\begin{equation}\label{proof.C-it.est.bel.}
  \mathcal{F}_1+\sum_{j=1}^4\mathcal{F}_{2}^j=-\frac{\widetilde{z}_1\widetilde{z}_2}{2(1-\widetilde{z_1})(1-\widetilde{z}_2)}+o_\infty(\Sigma_{\theta,r},1). 
\end{equation}
We have already noticed that $\mathcal{C}_a(\lambda,\xi^\prime)=\frac{\beta(\lambda+a)}{\lambda}F_a(\lambda,t(\lambda,\xi^\prime))$ where 
$$ F_a(\lambda,t)\coloneqq \mathcal{F}_1(\lambda,t)+\sum_{j=1}^4\mathcal{F}_2^j(\lambda,t)\quad \forall \lambda,t\in\mathbb{C}. $$
In particular
$$ \inf_{\xi^\prime\in\mathbb{R}^{N-1},\:\:\lambda\in\Sigma_{\theta,r}}|\mathcal{C}_a(\lambda,\xi^\prime)|\ge \inf_{t\in\mathbb{C},\:\:\lambda\in\Sigma_{\theta,r}}\left|\frac{\beta(\lambda+a)}{\lambda}F_a(\lambda,t)\right|\ge K(a,\beta,\theta,r)\inf_{t\in\mathbb{C},\:\lambda\in\Sigma_{\theta,r}}|F_a(\lambda,t)|. $$
Let 
$$ M\coloneqq \inf_{t\in\mathbb{C},\lambda\in\Sigma_{\theta,r}}|F_a(\lambda,t)|, $$
so, it is sufficient to prove that $M>0$. Let us suppose by contradiction that $M=0$. In this case we can find $\{\lambda_n,t_n\}$ such that $F_a(\lambda_n,t_n)\to0$ as $n\to+\infty$. 
Thanks to (\ref{proof.C-it.est.bel.})
$$ \sup_{\lambda\in\Sigma_{\theta,r}}\left|F_a(\lambda,t)+\frac{\widetilde{z}_1\widetilde{z}_2}{2(1-\widetilde{z_1})(1-\widetilde{z}_2)}\right|\xrightarrow{|t|\to+\infty}0. $$
In particular, for any $\varepsilon>0$, we can find $0<R_1=R_1(\varepsilon)$ such that 
$$ \sup_{|t|\ge R_1,\:\:\lambda\in\Sigma_{\theta,r}}\left|F_a(\lambda,t)+\frac{\widetilde{z}_1\widetilde{z}_2}{2(1-\widetilde{z_1})(1-\widetilde{z}_2)}\right|\le \varepsilon. $$
Thanks to Lemma \ref{l.est.z} it is easy to see that $1\lesssim |1-\widetilde{z}_j|\lesssim  1$ and $1\lesssim |\widetilde{z}_j|\lesssim 1$ for any $\lambda\in\Sigma_{\theta,r}$ and for $j=1,2$. Therefore, choosing $\varepsilon$ sufficiently small, we can find $R_1>0$ such that 
$$ \inf_{\lambda\in\Sigma_{\theta,r}}|F_a(\lambda,t)|>0\quad \forall t\in B_{R_1}^c. $$
Since $M=0$, it has to happen that $|t_n|\le R_1$ for any $n\in\mathbb{N}_0$. Less then subsequences, we can suppose $t_n\to \widehat{t}\in\mathbb{C}$. We can distinguish two cases:
\begin{itemize}
    \item If $|\lambda_n|\le R_2$ for some $R_2$ for every $n\in\mathbb{N}_0$, we can suppose that
    $$ \lambda_n\to\widehat{\lambda}\in\overline{\Sigma_{\theta,r}}, $$
    less then subsequences. In particular, by continuity of $F$, we should get
    $$ F_a(\widehat{\lambda},\widehat{t})=\lim_{n\to+\infty}F_a(\lambda_n,t_n)=\inf_{t\in\mathbb{C},\:\:\lambda\in\Sigma_{\theta,r}}F_a(\lambda,t)=0, $$
    which contradicts Propositions \ref{p.nonzero-prop.} (when $\widehat{\lambda}\neq\eta$) or  \ref{p.nonzero-prop.eta} (when $\widehat{\lambda}=\eta$).
    \item Let us suppose, less then subsequences, that $|\lambda_n|\to +\infty$ as $n\to+\infty$. Since the dependence of $F_a$ from $\lambda$ and $a$ derives from $\widetilde{z}_1$ and $\widetilde{z}_2$, we can also write 
    $$ F_a(\lambda,t)=\widetilde{F}(\widetilde{z}_1(a,\lambda_n),\widetilde{z}_2(a,\lambda_n),t) $$
    for a suitable function $\widetilde{F}$. We have already pointed out in Lemma \ref{l.est.z} that $\widetilde{z}_l\to z_\pm\coloneqq \frac{2\pm i\sqrt{2}|\beta|}{2(1+\beta^2/2)}$ as $|\lambda|\to+\infty$ so, by continuity of $\widetilde{F}$, we have that
    $$ \widetilde{F}(\widetilde{z}_1(a,\lambda_n),\widetilde{z}_2(a,\lambda_n),t_n)\xrightarrow{n\to+\infty} \widetilde{F}(z_-,z_+,\widehat{t}). $$
    Now we notice that $\widetilde{F}(z_-,z_+,t)=F_0(1,t)$ for any $t\in\mathbb{C}$. The case $a=0$ and $\lambda=1$ is still described in Proposition \ref{p.nonzero-prop.}, so we get a contradiction as before.
\end{itemize}
\end{proof}
\begin{prop}\label{p.est.A.lambda.neq.0}
Let $a,r>0$, $\beta\neq0$ and $\theta\in\left(\theta_0,\frac{\pi}{2}\right)$ with $\tan\theta_0\ge\frac{|\beta|}{\sqrt{2}}$, we can find $M=M(a,\beta,\theta,r)>0$ such that
$$ \inf_{\xi^\prime\in\mathbb{R}^{N-1},\lambda\in\Sigma_{\theta,r}}\left|\mathcal{A}_a(\lambda,\xi^\prime)\right|\ge M. $$
\end{prop}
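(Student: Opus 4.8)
The plan is to mirror, almost verbatim, the argument used for $\mathcal{C}_a$ in Proposition \ref{p.est.C.lambda.neq.0}: normalize $\mathcal{A}_a$ so that its $\lambda$-dependence enters only through $\widetilde{z}_1,\widetilde{z}_2$ and a single scalar variable $t$, control the regime $|t|\to+\infty$ by a Laurent expansion with uniform ($o_\infty$) remainders, and then dispose of the bounded-$t$ regime by a compactness/contradiction argument resting on the nonvanishing Propositions \ref{p.nonzero-prop.} and \ref{p.nonzero-prop.eta}. Concretely, with $A=|\xi^\prime|$, $t=t(\lambda,\xi^\prime)\coloneqq|\xi^\prime|/\sqrt{\lambda+a}$ and $\widetilde{z}_j=z_j/(\lambda+a)$, one has $B_a=\sqrt{\lambda+a}\,\sqrt{1+t^2}$ and $L_j=\sqrt{\lambda+a}\,\sqrt{t^2+\widetilde{z}_j}$ (the branches being fixed, for $|\xi^\prime|$ large, by $\operatorname{Re}L_j>0$, hence $\sqrt{t^2+\widetilde{z}_j}\sim t$), so that
$$ \mathcal{A}_a(\lambda,\xi^\prime)=(\lambda+a)^2\,G_a(\lambda,t),\qquad G_a(\lambda,t)=(1+t^2)^{3/2}\bigl(\sqrt{t^2+\widetilde{z}_1}+\sqrt{t^2+\widetilde{z}_2}\bigr)-t^2(1+t^2)-t^2\sqrt{t^2+\widetilde{z}_1}\sqrt{t^2+\widetilde{z}_2}. $$
Since $\operatorname{dist}(-a,\overline{\Sigma_\theta})>0$, we have $c\coloneqq\inf_{\lambda\in\Sigma_{\theta,r}}|\lambda+a|>0$, so it suffices to bound $|G_a|$ from below on $\Sigma_{\theta,r}\times\mathbb{C}$, where moreover every admissible value of $t$ lies in the closed sector $|\arg t|\le(\pi-\theta)/2$.

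For the regime $|t|\to+\infty$ I would expand, exactly as in Proposition \ref{p.est.C.lambda.neq.0}, using $\sqrt{t^2+\widetilde{z}_j}=t+\frac{\widetilde{z}_j}{2t}+o_\infty(\Sigma_{\theta,r},t^{-2})$ and $(1+t^2)^{3/2}=t^3+\frac{3}{2}t+o_\infty(\Sigma_{\theta,r},t^{-1})$, which are uniform in $\lambda$ because $K_m\le|\widetilde{z}_j|\le K_M$ by Lemma \ref{l.est.z} and by the calculus of Lemma \ref{l.opicc.}. Substituting and checking that the $t^4$ and $t^3$ contributions of the three summands cancel leaves
$$ G_a(\lambda,t)=2t^2+c_0(\widetilde{z}_1,\widetilde{z}_2)+o_\infty(\Sigma_{\theta,r},1), $$
with $c_0$ a fixed rational expression in $\widetilde{z}_1,\widetilde{z}_2$, hence uniformly bounded on $\Sigma_{\theta,r}$. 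Consequently $|G_a(\lambda,t)|\ge 2|t|^2-C(a,\beta,r)$ for $|t|$ large, so there is $R_1=R_1(a,\beta,\theta,r)$ with $\inf_{|t|\ge R_1,\ \lambda\in\Sigma_{\theta,r}}|G_a(\lambda,t)|\ge 1$, and thus $|\mathcal{A}_a|\ge c^2$ there.

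For the remaining regime $|t|\le R_1$ I would argue by contradiction as in Proposition \ref{p.est.C.lambda.neq.0}: if $\inf|\mathcal{A}_a|=0$, pick $(\lambda_n,\xi^\prime_n)$ with $\mathcal{A}_a(\lambda_n,\xi^\prime_n)\to0$; since $|(\lambda_n+a)^2|\ge c^2$ this forces $G_a(\lambda_n,t_n)\to0$ with $t_n=t(\lambda_n,\xi^\prime_n)$, so $|t_n|\le R_1$ by the previous step and, up to a subsequence, $t_n\to\widehat{t}$. If $|\lambda_n|$ stays bounded then, up to a further subsequence, $\lambda_n\to\widehat{\lambda}\in\overline{\Sigma_{\theta,r}}$, and continuity of $\mathcal{A}_a$ — understood in the extended sense of Proposition \ref{p.nonzero-prop.eta} when $\widehat{\lambda}=\eta$ — yields $\mathcal{A}_a(\widehat{\lambda},\widehat{\xi}^\prime)=0$ for the corresponding $\widehat{\xi}^\prime$, contradicting Proposition \ref{p.nonzero-prop.} or \ref{p.nonzero-prop.eta}. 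If instead $|\lambda_n|\to+\infty$, then $\widetilde{z}_j(\lambda_n)\to z_\pm$ by Lemma \ref{l.est.z}, so $G_a(\lambda_n,t_n)\to\widetilde{G}(z_-,z_+,\widehat{t})$, where $\widetilde{G}(w_1,w_2,t)$ is $G_a$ with $\widetilde{z}_1,\widetilde{z}_2$ replaced by $w_1,w_2$; since for $a=0$ one has $\widetilde{z}_j\equiv z_\pm$, we get $\widetilde{G}(z_-,z_+,t)=G_0(\mu,t)$ for every $\mu$, and choosing $\mu\in\overline{\Sigma_\theta}\setminus\{0\}$ together with $\zeta^\prime\in\mathbb{R}^{N-1}$ so that $|\zeta^\prime|/\sqrt{\mu}=\widehat{t}$ (possible precisely because $|\arg\widehat{t}|\le(\pi-\theta)/2$, using $N\ge2$) and applying Proposition \ref{p.nonzero-prop.} with $a=0$, $\lambda=\mu$, $\xi^\prime=\zeta^\prime$ gives $\mu^2\widetilde{G}(z_-,z_+,\widehat{t})=\mathcal{A}_0(\mu,\zeta^\prime)\neq0$, hence $\widetilde{G}(z_-,z_+,\widehat{t})\neq0$ — the final contradiction. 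Therefore $\inf|\mathcal{A}_a|=:M>0$.

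I expect the only genuine work, exactly as in the $\mathcal{C}_a$ case, to be the bookkeeping of the uniform-in-$\lambda$ Laurent remainders through Lemma \ref{l.opicc.} and, above all, verifying that the top-order terms $t^4$ and $t^3$ in $G_a$ cancel so that the genuinely growing term $2t^2$ survives; once that leading behaviour is pinned down, everything else is compactness combined with the nonvanishing of $\mathcal{A}_a$ already established in Section 2.
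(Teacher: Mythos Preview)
Your proposal is correct and follows essentially the same approach as the paper's own proof: the same normalization $\mathcal{A}_a=(\lambda+a)^2G_a(\lambda,t)$, the same Laurent expansion giving $G_a=2t^2+o_\infty(\Sigma_{\theta,r},t)$, and the same compactness/contradiction argument (which the paper abbreviates by saying ``conclude as in the previous proposition''). Your handling of the case $|\lambda_n|\to+\infty$ is in fact slightly more careful than the paper's, since you explicitly address the fact that the limit $\widehat{t}$ may be complex and choose $\mu$ accordingly, whereas the paper tacitly uses $\lambda=1$.
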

\begin{proof}\hfill\\
The idea of the proof is the same as before: it can be seen that 
$$ \mathcal{A}_a(\lambda,\xi^\prime)=(\lambda+a)^2G_a(\lambda,t(\lambda,\xi^\prime)) $$
with $t(\lambda,\xi^\prime)=\frac{|\xi^\prime|}{\sqrt{\lambda+a}}$ and
$$ G_a(\lambda,t)=(t^2+1)^\frac{3}{2}(\sqrt{t^2+\widetilde{z}_1}+\sqrt{t^2+\widetilde{z}_2})-t^2(t^2+1)-t^2\sqrt{t^2+\widetilde{z}_1}\sqrt{t^2+\widetilde{z}_2}. $$
Taking the Laurent expression for $G$, we get as before that
$$ G_a(\lambda,t)=2t^2+o_\infty(\Sigma_{\theta,r},t). $$
Therefore, we have that $|G_a(\lambda,t)|\to+\infty$ as $|t|\to+\infty$. In particular, it remains far from 0 uniformly in $\lambda\in\Sigma_{\theta,r}$. On the other hand, thanks to Propositions \ref{p.nonzero-prop.} and \ref{p.nonzero-prop.eta}, we know that $\mathcal{A}_a(\lambda,\xi^\prime)\neq0$ for any $\lambda\in\overline{\Sigma_{\theta,r}}$ and $\xi^\prime\in\mathbb{R}^{N-1}$. Finally, we can conclude as in the previous proposition.
\end{proof}

\subsection{Fourier Multipliers}

In order to get the $L^p$-$L^q$ maximal regularity, we need to know the behaviour of the derivatives of the coefficients in $\xi^\prime$ and $\lambda$. For this reason, we introduce these two classes of Fourier Multipliers (see \cite{SS12}):
\begin{defn}\hfill\\
Let $\theta\in\left(0,\frac{\pi}{2}\right)$, let $r\ge0$, let $m(\xi^\prime,\lambda)\colon\mathbb{R}^{N-1}\setminus\{0\}\times\Sigma_{\theta,r}\to\mathbb{C}$ be a $C^\infty$ function in $\xi^\prime$ and a $C^1$ function in $\tau\in\mathbb{R}\setminus\{0\}$, where $\lambda=\gamma+i\tau$. If there is $s\in\mathbb{R}$ such that, for any $\xi^\prime\in\mathbb R^{N-1}$ and for any $\lambda\in \Sigma_{\theta,r}$, it holds
$$ \begin{array}{ll}
    \left|D^\alpha_{\xi^\prime}m(\xi^\prime,\lambda)\right|\lesssim (|\lambda|^\frac{1}{2}+1+|\xi^\prime|)^{s-|\alpha|} & \forall \alpha\in\mathbb{N}_0^{N-1}, \\
    \left|D^\alpha_{\xi^\prime}\tau \partial_\tau m(\xi^\prime,\lambda)\right|\lesssim (|\lambda|^\frac{1}{2}+1+|\xi^\prime|)^{s-|\alpha|} & \forall \alpha\in\mathbb{N}_0^{N-1},
\end{array} $$
then we say that $m(\xi^\prime,\lambda)$ is a \textbf{Fourier Multiplier of Order s with type 1}. Conversely, if for any $\xi^\prime\in\mathbb R^{N-1}$ and for any $\lambda\in \Sigma_{\theta,r}$, it holds
$$ \begin{array}{ll}
    \left|D^\alpha_{\xi^\prime}m(\xi^\prime,\lambda)\right|\lesssim (|\lambda|^\frac{1}{2}+1+|\xi^\prime|)^{s}|\xi^\prime|^{-|\alpha|} & \forall \alpha\in\mathbb{N}_0^{N-1}, \\
    \left|D^\alpha_{\xi^\prime}\tau \partial_\tau m(\xi^\prime,\lambda)\right|\lesssim (|\lambda|^\frac{1}{2}+1+|\xi^\prime|)^{s}|\xi^\prime|^{-|\alpha|} & \forall \alpha\in\mathbb{N}_0^{N-1},
\end{array} $$
then we say that $m(\xi^\prime,\lambda)$ is a \textbf{Fourier Multiplier of Order s with type 2}. We will denote $\mathscr{M}_{s,i,\theta,r}$ the set of the Fourier multipliers of order $s$ and type $i$.
\end{defn}
\begin{rem}
In this chapter we will follow the approach used in \cite{SS12}. In this paper, the definition of Fourier Multiplier is stated with $|\lambda|^\frac{1}{2}+|\xi^\prime|$ as right-hand side of the inequality. Anyway, in the following, we will consider the case of $\lambda\in\Sigma_{\theta,r}$ with $r>0$ so, it is easy to see that $|\lambda|^\frac{1}{2}\sim |\lambda|^\frac{1}{2}+1$.
\end{rem}
The Fourier Multipliers satisfy the following algebraic rules:
\begin{lem}\label{l.F.M.0}
Let $s_1,s_2\in\mathbb{R}$, $i=1,2$, $\theta\in\left(0,\frac{\pi}{2}\right)$ and $r\ge 0$, then
\begin{enumerate}
    \item If $m_i\in\mathscr{M}_{s_i,1,\theta,r}$, then $m_1m_2\in\mathscr{M}_{s_1+s_2,1,\theta,r}$;
    \item If $m_i\in\mathscr{M}_{s_i,i,\theta,r}$, then $m_1m_2\in\mathscr{M}_{s_1+s_2,2,\theta,r}$;
    \item If $m_i\in\mathscr{M}_{s_i,2,\theta,r}$, then $m_1m_2\in\mathscr{M}_{s_1+s_2,2,\theta,r}$.
\end{enumerate}
\end{lem}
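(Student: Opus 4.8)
The plan is to derive all three statements from a single computation: the Leibniz rule applied to $D_{\xi'}^\alpha(m_1 m_2)$, together with the trivial inequality $|\xi'|\le|\lambda|^{1/2}+1+|\xi'|$. For brevity I would write $M=M(\xi',\lambda):=|\lambda|^{1/2}+1+|\xi'|$. A product of two functions that are $C^\infty$ in $\xi'$ and $C^1$ in $\tau$ is again $C^\infty$ in $\xi'$ and $C^1$ in $\tau$, so all the differentiations below are legitimate, and it suffices to estimate $D_{\xi'}^\alpha(m_1m_2)$ and $D_{\xi'}^\alpha(\tau\partial_\tau(m_1m_2))$ for every $\alpha\in\mathbb{N}_0^{N-1}$.

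First I would treat the pure $\xi'$-derivatives by expanding $D_{\xi'}^\alpha(m_1m_2)=\sum_{\beta\le\alpha}\binom{\alpha}{\beta}(D_{\xi'}^\beta m_1)(D_{\xi'}^{\alpha-\beta}m_2)$ and bounding each summand. In case (1) both factors are of type 1, so the summand is $\lesssim M^{s_1-|\beta|}M^{s_2-(|\alpha|-|\beta|)}=M^{s_1+s_2-|\alpha|}$, which is precisely the type-1 bound of order $s_1+s_2$. In case (3) both factors are of type 2, so the summand is $\lesssim M^{s_1}|\xi'|^{-|\beta|}\cdot M^{s_2}|\xi'|^{-(|\alpha|-|\beta|)}=M^{s_1+s_2}|\xi'|^{-|\alpha|}$, the type-2 bound of order $s_1+s_2$. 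In the mixed case (2), with $m_1$ of type 1 and $m_2$ of type 2, the summand is $\lesssim M^{s_1-|\beta|}\cdot M^{s_2}|\xi'|^{-(|\alpha|-|\beta|)}=M^{s_1+s_2}|\xi'|^{-|\alpha|}(|\xi'|/M)^{|\beta|}\le M^{s_1+s_2}|\xi'|^{-|\alpha|}$, where the last step uses $|\xi'|\le M$; again this is the type-2 bound of order $s_1+s_2$.

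Next I would handle the $\tau$-derivative by writing $\tau\partial_\tau(m_1m_2)=(\tau\partial_\tau m_1)m_2+m_1(\tau\partial_\tau m_2)$, applying $D_{\xi'}^\alpha$ to each term and expanding by Leibniz once more. The key point is that, by definition of $\mathscr{M}_{s,i,\theta,r}$, the function $\tau\partial_\tau m$ obeys exactly the same family of bounds as $m$ itself; hence $\tau\partial_\tau m_1$ may be estimated precisely as $m_1$ was above, and $\tau\partial_\tau m_2$ precisely as $m_2$. The three computations of the previous paragraph then apply verbatim to each of the two terms, and summing gives the desired order-$(s_1+s_2)$ estimate, of type 1 in case (1) and of type 2 in cases (2) and (3), for $D_{\xi'}^\alpha(\tau\partial_\tau(m_1m_2))$. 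This exhausts the two families of inequalities in the definition, so $m_1m_2$ lies in the asserted class.

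I do not expect a genuine obstacle here: this is a bookkeeping lemma. The only step demanding a moment of care is the mixed case (2), where the factors $|\xi'|^{|\beta|}$ produced by differentiating the type-2 function must be absorbed into the powers of $M$ supplied by the type-1 function; this absorption is exactly what $|\xi'|\le|\lambda|^{1/2}+1+|\xi'|$ delivers. The same inequality explains why, in (1), the product can be kept of type 1, whereas in (2) and (3) the surviving factors $|\xi'|^{-|\alpha|}$ force the conclusion to be only of type 2.
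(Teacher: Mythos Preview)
Your proof is correct and is exactly the routine Leibniz-rule verification that the paper has in mind; the paper itself simply remarks that ``the proof follows by the definition of Fourier Multipliers'' without writing out the details. Your handling of the mixed case (2) via $(|\xi'|/M)^{|\beta|}\le 1$ and your observation that $\tau\partial_\tau m$ obeys the same bounds as $m$ are precisely the two small points that make the bookkeeping go through.
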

The proof follows by the definition of Fourier Multipliers. From now on, our aim will be to rewrite the coefficients of the solution formula of $(\widehat{u},\widehat{p},\widehat{Q})$ in \eqref{r.f.} as a combination of Fourier Multipliers. Some results are already known:
\begin{lem}\label{l.F.M.1}
Let $a>0$, $\theta\in\left(0,\frac{\pi}{2}\right)$, then
\begin{enumerate}
    \item $B_a^s\in\mathscr{M}_{s,1,\theta,0}$ for any $s\in\mathbb{R}$;
    \item $A^s\in\mathscr{M}_{s,2,\theta,0}$ for any $s\ge 0$.
\end{enumerate}
\end{lem}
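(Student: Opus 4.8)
The plan is to treat the two claims separately; both are essentially organizational once a handful of elementary facts are isolated, and I expect the only point requiring any real thought to be a sectorial lower bound for $B_a$.

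For $A^s\in\mathscr M_{s,2,\theta,0}$ with $s\ge 0$: since $A=|\xi'|$ is independent of $\tau$, every $\tau$-derivative of $A^s$ is zero, so only the first family of inequalities needs checking. I would then invoke that $\xi'\mapsto|\xi'|^s$ is smooth on $\mathbb R^{N-1}\setminus\{0\}$ and positively homogeneous of degree $s$, whence $D^\alpha_{\xi'}|\xi'|^s$ is positively homogeneous of degree $s-|\alpha|$ and therefore $|D^\alpha_{\xi'}A^s|\lesssim|\xi'|^{s-|\alpha|}=|\xi'|^s|\xi'|^{-|\alpha|}\le(|\lambda|^{1/2}+1+|\xi'|)^{s}|\xi'|^{-|\alpha|}$, the last step using $s\ge 0$. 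That is exactly the required type-$2$ bound, and the restriction $s\ge 0$ is genuinely needed for the case $\alpha=0$.

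For $B_a^s\in\mathscr M_{s,1,\theta,0}$ the preliminary ingredient is the sectorial estimate $|\lambda+w|\gtrsim|\lambda|+w$, valid for all $\lambda\in\Sigma_\theta$ (recall $\theta\in(0,\pi/2)$, so $|\mathrm{Arg}\,\lambda|<\pi-\theta$) and all real $w\ge 0$. I would prove it by separating $|\mathrm{Arg}\,\lambda|\le\pi/2$, where $\mathrm{Re}\,\lambda\ge 0$ makes it immediate, from $\pi/2<|\mathrm{Arg}\,\lambda|<\pi-\theta$, where $|\mathrm{Im}\,\lambda|\ge\sin\theta\,|\lambda|$ handles the $|\lambda|$ part while $w-|\mathrm{Re}\,\lambda|\ge w-\cos\theta\,|\lambda|$ handles the $w$ part after a case split on the size of $w$ relative to $|\lambda|$. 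Applying this with $w=a+|\xi'|^2$ (here $a>0$ is used) gives $|B_a|^2=|\lambda+a+|\xi'|^2|\sim|\lambda|+1+|\xi'|^2\sim(|\lambda|^{1/2}+1+|\xi'|)^2$, the upper bound being trivial; in particular $|B_a|\sim|\lambda|^{1/2}+1+|\xi'|$ and $|\xi'|\lesssim|B_a|$.

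Then I would record, by an immediate induction on $|\alpha|$, the structural identity that $D^\alpha_{\xi'}(\lambda+a+|\xi'|^2)^{t}$ is a finite $\mathbb R$-linear combination of terms $(\xi')^{\gamma}(\lambda+a+|\xi'|^2)^{t-(|\alpha|+|\gamma|)/2}$ with $|\gamma|\le|\alpha|$ and $|\alpha|+|\gamma|$ even (each $\partial_{\xi_j}$ either lowers a $\xi$-power by one or creates a factor $2\xi_j$ while lowering the exponent by one). Bounding $|(\xi')^\gamma|\le|\xi'|^{|\gamma|}\lesssim|B_a|^{|\gamma|}$ and $|(\lambda+a+|\xi'|^2)^{t-(|\alpha|+|\gamma|)/2}|=|B_a|^{2t-|\alpha|-|\gamma|}$ shows every summand is $\lesssim|B_a|^{2t-|\alpha|}$, hence $|D^\alpha_{\xi'}(\lambda+a+|\xi'|^2)^{t}|\lesssim|B_a|^{2t-|\alpha|}\lesssim(|\lambda|^{1/2}+1+|\xi'|)^{2t-|\alpha|}$; taking $t=s/2$ yields the first family of estimates for $B_a^s$. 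For the second family I would use $\tau\partial_\tau B_a^s=\tfrac{s}{2}i\tau\,(\lambda+a+|\xi'|^2)^{s/2-1}$, apply the displayed bound with $t=s/2-1$, and absorb the factor $|\tau|\le|\lambda|\lesssim|B_a|^2$, obtaining $|D^\alpha_{\xi'}\tau\partial_\tau B_a^s|\lesssim|B_a|^{s-|\alpha|}$. This gives $B_a^s\in\mathscr M_{s,1,\theta,0}$ for every $s\in\mathbb R$; the sectorial bound is the only non-bookkeeping step.
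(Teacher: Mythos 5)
Your proof is correct, and it is in essence the argument the paper delegates to Lemma 5.2 of \cite{SS12}: the sectorial lower bound you isolate and prove at the outset is precisely the paper's Lemma \ref{l.est.lambda+a}, and once $|B_a|\sim|\lambda|^{1/2}+1+|\xi'|$ is established (using $a>0$ to supply the ``$+1$''), the derivative estimates follow from the same structural bookkeeping. Since the paper gives no self-contained proof, your write-out usefully makes explicit the homogeneity argument for $A^s$, the role of $s\ge 0$, and the branch considerations behind $|(\lambda+a+|\xi'|^2)^{t}|=|B_a|^{2t}$.
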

The proof of this lemma can be found in \cite{SS12} (Lemma 5.2) changing $\lambda$ with $\lambda+a$ and using the next well-known lemma:
\begin{lem}\label{l.est.lambda+a}
Let $\theta\in\left(0,\frac{\pi}{2}\right)$, $\alpha\ge 0$ and $\lambda\in\Sigma_\theta$, then 
$$ |\lambda+\alpha|\ge C(\theta)(|\lambda|+\alpha). $$
\end{lem}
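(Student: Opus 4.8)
The plan is to reduce the estimate to an elementary two-variable inequality by squaring. First I would write $\lambda = |\lambda|e^{i\phi}$ with $\phi = Arg(\lambda)$; membership $\lambda\in\Sigma_\theta$ means precisely $|\phi| < \pi-\theta$, hence $\cos\phi \ge \cos(\pi-\theta) = -\cos\theta$. Expanding the modulus,
\[ |\lambda+\alpha|^2 = |\lambda|^2 + 2\alpha|\lambda|\cos\phi + \alpha^2 \ge |\lambda|^2 - 2\alpha|\lambda|\cos\theta + \alpha^2, \]
so it suffices to produce a constant $c(\theta)>0$ with $x^2 - 2xy\cos\theta + y^2 \ge c(\theta)(x+y)^2$ for all $x,y\ge 0$; applying this with $x=|\lambda|$ and $y=\alpha$ gives the lemma with $C(\theta)=\sqrt{c(\theta)}$.

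Next I would establish this elementary inequality. Since it is homogeneous of degree two, I would normalize $x+y=1$ and set $x=t$, $y=1-t$ for $t\in[0,1]$. Using $t^2+(1-t)^2 = 1-2t(1-t)$, the left-hand side equals $1 - 2(1+\cos\theta)\,t(1-t)$, an upward parabola in $t$ whose minimum on $[0,1]$ is at $t=\tfrac12$, with value $1-\tfrac12(1+\cos\theta) = \tfrac12(1-\cos\theta) = \sin^2(\theta/2)$. As $\theta\in(0,\tfrac\pi2)$ forces $\cos\theta\in(0,1)$, this minimum is strictly positive, so one may take $c(\theta) = \tfrac12(1-\cos\theta)$, i.e.\ $C(\theta) = \sin(\theta/2)$.

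I do not expect a genuine obstacle, since the statement is elementary; the only point deserving care is the sign convention for the sector used in this paper. Here $\Sigma_\theta$ is the \emph{complement} of a sector of half-angle $\theta$ about $\mathbb{R}_-$ (opening angle $2(\pi-\theta) > \pi$), so the correct reading of $|Arg(\lambda)| < \pi-\theta$ is exactly what yields $\cos\phi \ge -\cos\theta$ with the strict bound $-\cos\theta > -1$; it is the hypothesis $\theta>0$ (equivalently $\cos\theta<1$) that keeps $c(\theta)$ bounded away from $0$, and the constant degenerates as $\theta\to 0^+$, as it must. If one preferred a geometric proof, one could instead note that $|\lambda+\alpha|$ is the distance from $\lambda$ to the point $-\alpha\in\mathbb{R}_-$, which lies in the excluded sector while $\lambda$ sits at angular distance at least $\theta$ from it; but the algebraic computation above is shorter and delivers the explicit constant.
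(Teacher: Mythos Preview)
Your proof is correct and complete, including the explicit constant $C(\theta)=\sin(\theta/2)$. The paper itself does not prove this lemma, labeling it ``well-known'' and stating it without proof, so there is nothing to compare against.
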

Now we need to understand that $L_j$ for $j=1,2$ is a Fourier Multiplier. In order to do so, we will need some lemma. The first is a technical one:
\begin{lem}\label{l.est.Re>0}
Let $z\in\mathbb{C}$ with $Re(z)\ge0$, let $\alpha\ge 0$, then
$$ |z+\alpha|\gtrsim |z|+\alpha. $$
\end{lem}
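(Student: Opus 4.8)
The plan is to reduce the estimate to an elementary fact about complex numbers with nonnegative real part. Write $z = s + it$ with $s = \operatorname{Re}(z) \ge 0$ and $t = \operatorname{Im}(z)$. First I would observe that
$$ |z+\alpha|^2 = (s+\alpha)^2 + t^2 = s^2 + 2s\alpha + \alpha^2 + t^2 = |z|^2 + 2s\alpha + \alpha^2. $$
Since $s \ge 0$ and $\alpha \ge 0$, the cross term $2s\alpha$ is nonnegative, so $|z+\alpha|^2 \ge |z|^2 + \alpha^2$. Thus $|z+\alpha| \ge \sqrt{|z|^2 + \alpha^2}$.

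It then remains to compare $\sqrt{|z|^2 + \alpha^2}$ with $|z| + \alpha$. For any two nonnegative reals $x, y$ one has $\sqrt{x^2 + y^2} \ge \tfrac{1}{\sqrt{2}}(x+y)$, because $(x+y)^2 = x^2 + y^2 + 2xy \le 2(x^2+y^2)$ by the inequality $2xy \le x^2 + y^2$. Applying this with $x = |z|$ and $y = \alpha$ gives
$$ |z+\alpha| \ge \sqrt{|z|^2 + \alpha^2} \ge \frac{1}{\sqrt 2}\bigl(|z| + \alpha\bigr), $$
which is exactly the claimed inequality $|z+\alpha| \gtrsim |z| + \alpha$ with an explicit absolute constant $1/\sqrt 2$.

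There is essentially no obstacle here: the statement is a soft, dimension- and parameter-free inequality, and the only mild point to keep straight is that the hypothesis $\operatorname{Re}(z) \ge 0$ is precisely what makes the cross term $2s\alpha$ have the right sign (the inequality would fail for $\operatorname{Re}(z)$ very negative, e.g. $z = -\alpha$). One could alternatively argue geometrically — translating $z$ by the nonnegative real number $\alpha$ moves it away from the origin whenever $z$ lies in the closed right half-plane — but the two-line computation above is cleaner and yields the constant directly. This lemma will presumably be used to show that $\operatorname{Re}(L_j) > 0$ combined with the presence of terms like $L_j^2 = |\xi'|^2 + z_j$ keeps $B_a^2 - L_j^2$ and similar denominators bounded away from zero, feeding into the Fourier-multiplier estimates.
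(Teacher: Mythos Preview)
Your proof is correct and in fact cleaner than the paper's own argument. The paper proceeds by a case split on $\arg(z)$: when $|\arg(z)|\le \pi/4$ it uses $\cos\theta\ge \tfrac{\sqrt2}{2}$ to bound $|z+\alpha|\ge |z|\cos\theta+\alpha$, and when $|\arg(z)|>\pi/4$ it uses $\sin\theta>\tfrac{\sqrt2}{2}$ together with $\operatorname{Re}z\ge 0$ to bound $|z+\alpha|\ge \sqrt{\alpha^2+|z|^2\sin^2\theta}$, then applies the same elementary inequality $\sqrt{x^2+y^2}\gtrsim x+y$ you use. Your route bypasses the case split entirely by observing directly that $|z+\alpha|^2=|z|^2+2\alpha\operatorname{Re}z+\alpha^2\ge |z|^2+\alpha^2$ and then invoking $\sqrt{x^2+y^2}\ge \tfrac{1}{\sqrt2}(x+y)$ once; this is shorter and yields the explicit constant $1/\sqrt2$ immediately. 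The paper's version also ends up with constants of order $1/\sqrt2$ (or $1/2$ in the second case), so nothing is lost either way.
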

\begin{proof}\hfill\\
We can divide the proof in two cases:
\begin{itemize}
    \item If $Arg(z)\le \frac{\pi}{4}$, then $z=|z|(\cos\theta+i\sin\theta)$ with $\cos\theta\ge \frac{\sqrt{2}}{2}$. Therefore
    $$ |z+\alpha|=\sqrt{(|z|\cos\theta+\alpha)^2+|z|^2\sin^2\theta}\ge |z|\cos\theta+\alpha\gtrsim|z|+\alpha. $$
    \item If $Arg(z)>\frac{\pi}{4}$, then $\sin\theta>\frac{\sqrt{2}}{2}$ and $Rez\ge 0$, so
    $$ |z+\alpha|=\sqrt{(Rez+\alpha)^2+|z|^2\sin^2\theta}\ge \sqrt{\alpha^2+|z|^2\sin^2\theta}\gtrsim \alpha+|z|\sin\theta\gtrsim |z|+\alpha. $$
\end{itemize}
\end{proof}
In the paper, we will need not only to prove that $L_j$ is a Fourier Multiplier, we will also need an estimate for $L_j^s$ with $s\in\mathbb{R}$. In particular, for the case $s<0$, we will need an estimate from below of $L_j$. The strategy will be the same used for $\mathcal{C}_a$, $\mathcal{A}_a$: we need firstly to prove that $L_j(\lambda,\xi^\prime)\neq0$ for any $\lambda\in\overline{\Sigma_{\theta,r}}$ and $\xi^\prime\in\mathbb{R}^{N-1}$:
\begin{lem}\label{l.realz}
Let $a\ge0$, $r>0$, $\beta\in\mathbb{R}$, $\theta\in\left(0,\frac{\pi}{2}\right)$ and $\lambda\in\overline{\Sigma_{\theta,r}}$, if $\theta\in\left(\theta_0,\frac{\pi}{2}\right)$ with $\tan\theta_0\ge \frac{|\beta|}{\sqrt{2}}$ or if $Re\lambda>-\frac{a}{2}$, then $z_j\not\in\mathbb{R}_-$ for $j=1,2$.
\end{lem}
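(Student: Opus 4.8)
The plan is to argue by contradiction. Suppose $z_j=-s$ for some $j\in\{1,2\}$ and some $s>0$; since $z_j$ is a root of $L$,
$$ 0=L(-s)=(\lambda+s)(\lambda+a+s)+\frac{\beta^2}{2}s(s+a). $$
Writing $\lambda=\gamma+i\tau$ and separating real and imaginary parts (the term $\frac{\beta^2}{2}s(s+a)$ being real and $\ge0$), the identity $L(-s)=0$ is equivalent to the two real equations
$$ (2\gamma+2s+a)\,\tau=0,\qquad (\gamma+s)(\gamma+a+s)-\tau^2+\frac{\beta^2}{2}s(s+a)=0. $$
From here I would split into the cases $\tau=0$ and $\tau\neq0$.

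If $\tau=0$, then $\lambda=\gamma$ is real; since $\lambda\in\overline{\Sigma_{\theta,r}}$ with $\theta\in\left(0,\frac{\pi}{2}\right)$ and $r>0$, the set $\overline{\Sigma_{\theta,r}}$ contains no nonpositive real, so $\gamma\ge r>0$. Then $\gamma+s>0$, $\gamma+a+s>0$ and $\frac{\beta^2}{2}s(s+a)\ge0$, hence $L(-s)>0$, a contradiction. Note that this case needs neither of the two extra hypotheses.

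If $\tau\neq0$, the first equation forces $\gamma=-\frac a2-s$, so $Re\,\lambda=\gamma<-\frac a2$; this already contradicts the hypothesis $Re\,\lambda>-\frac a2$. Under the alternative hypothesis on $\theta$, I substitute $\gamma+s=-\frac a2$ and $\gamma+a+s=\frac a2$ into the second equation to get $\tau^2=\frac{\beta^2}{2}s(s+a)-\frac{a^2}{4}$ (which therefore must be positive), while $\gamma^2=\left(s+\frac a2\right)^2=s(s+a)+\frac{a^2}{4}$. Since $\gamma<0$, the aperture of $\lambda$ from the negative real axis, $\phi_\lambda:=\arctan(|\tau|/|\gamma|)\in\left[0,\frac\pi2\right)$, satisfies
$$ \tan^2\phi_\lambda=\frac{\tau^2}{\gamma^2}=\frac{\frac{\beta^2}{2}w-\frac{a^2}{4}}{w+\frac{a^2}{4}}=\frac{\beta^2}{2}-\frac{\frac{a^2}{4}\left(1+\frac{\beta^2}{2}\right)}{w+\frac{a^2}{4}}\le\frac{\beta^2}{2},\qquad w:=s(s+a)\ge0. $$
Hence $\tan\phi_\lambda\le\frac{|\beta|}{\sqrt2}\le\tan\theta_0<\tan\theta$ (using $\theta>\theta_0$), so $\phi_\lambda<\theta$. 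On the other hand, $\lambda\in\overline{\Sigma_{\theta,r}}$ together with $Re\,\lambda<0$ forces $|Arg(\lambda)|=\pi-\phi_\lambda\le\pi-\theta$, i.e. $\phi_\lambda\ge\theta$, a contradiction. Therefore no such $s$ exists and $z_j\notin\mathbb R_-$ for $j=1,2$.

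The step I expect to be the main obstacle is the last case: it is there that the precise geometric threshold $\tan\theta_0\ge|\beta|/\sqrt2$ is exactly what is needed, and the argument hinges on rewriting $\tan^2\phi_\lambda$ in the form above so that its monotonicity in $w$ and its limit $\frac{\beta^2}{2}$ as $w\to+\infty$ become transparent; everything else is an elementary separation into real and imaginary parts together with sign considerations.
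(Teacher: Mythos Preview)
Your proof is correct and follows essentially the same route as the paper's: both argue by contradiction, plug $z_j=-s$ (the paper writes $-\alpha^2$) into the defining quadratic, separate real and imaginary parts to force either $\lambda$ real positive or $Re\,\lambda=-s-\tfrac a2$, and in the latter case bound $\tan^2$ of the angle by $\beta^2/2$ to contradict the sector condition. Your write-up is in fact slightly more explicit than the paper's in the final geometric step (introducing $\phi_\lambda$ and spelling out why $\phi_\lambda\ge\theta$), and your monotone rewriting of $\tan^2\phi_\lambda$ in $w=s(s+a)$ makes the bound $\le\beta^2/2$ transparent.
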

\begin{proof}\hfill\\
Let us suppose by contradiction that $z_j=-\alpha^2$ for some $\alpha\in\mathbb{R}$ and $j=1,2$ and for some $\lambda\in\overline{\Sigma_{\theta,r}}$. So, by definition of $z_j$, we have that
\begin{equation}\label{proof.est.der.L.2}
    (\lambda+\alpha^2)(\lambda+a+\alpha^2)+\frac{\beta^2}{2}(\alpha^4+a\alpha^2)=0.
\end{equation}
If we take the imaginary part of (\ref{proof.est.der.L.2}) we get
$$ Im\lambda(Re\lambda+a+\alpha^2)+(Re\lambda+\alpha^2)Im\lambda=Im\lambda(2\alpha^2+2Re\lambda+a)=0. $$
If $Im\lambda=0$, then $Re\lambda=\lambda>0$ and we have a contradiction with (\ref{proof.est.der.L.2}). So, if $Re\lambda>-\frac{a}{2}$, we have a contradiction. Otherwise, let us suppose 
$$ Re\lambda=-\alpha^2-\frac{a}{2}. $$
Let us take now the real part of (\ref{proof.est.der.L.2}):
$$ \alpha^4\left(1+\frac{\beta^2}{2}\right)+\alpha^2\left(2Re\lambda+a\left(1+\frac{\beta^2}{2}\right)\right)+Re\lambda(Re\lambda+a)-(Im\lambda)^2=0. $$ 
Therefore
$$ (Im\lambda)^2=\alpha^4\left(1+\frac{\beta^2}{2}\right)+\alpha^2\left[-2\alpha^2+\frac{\beta^2}{2}a\right]+\left(-\alpha^2-\frac{a}{2}\right)\left(-\alpha^2+\frac{a}{2}\right)= \frac{\beta^2}{2}\left[\alpha^4+a\alpha^2\right]-\frac{a^2}{4}. $$
Combining these two information and with standard analytic arguments, we get that 
$$ \tan^2\theta\le \frac{|Im\lambda|^2}{|Re\lambda|^2}=\frac{\frac{\beta^2}{2}\left[\alpha^4+a\alpha^2\right]-\frac{a^2}{4}}{\alpha^4+a\alpha^2+\frac{a^2}{4}}\le \frac{\beta^2}{2}, $$
which contradicts the hypothesis on $\theta$.
\end{proof}
As a consequence, we get that $L_j=\sqrt{|\xi^\prime|^2+z_j}\neq0$.
\begin{lem}\label{l.est.above.BL}
Let $a,r>0$, $\beta\in\mathbb{R}$ and $\theta\in\left(\theta_0,\frac{\pi}{2}\right)$ with $\tan\theta_0\ge \frac{|\beta|}{\sqrt{2}}$ and $\lambda\in\Sigma_{\theta,r}$, then
$$ Re(L_j)\ge C(a,\beta,\theta,r)\left( |\lambda|^\frac{1}{2}+1+|\xi^\prime|\right)\quad j=1,2. $$
\end{lem}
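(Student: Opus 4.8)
The plan is to prove the lower bound $\mathrm{Re}(L_j)\ge C(|\lambda|^{1/2}+1+|\xi'|)$ by reducing everything to the scalar quantities $z_j$ and exploiting the explicit formula $L_j=\sqrt{|\xi'|^2+z_j}$ together with the fact (Lemma \ref{l.realz}) that $z_j\notin\mathbb{R}_-$ on $\overline{\Sigma_{\theta,r}}$. First I would observe that since $z_j$ is the root of a fixed quadratic with coefficients depending affinely on $\lambda$, and $\widetilde z_j=z_j/(\lambda+a)$ satisfies $K_m\le|\widetilde z_j|\le K_M$ on $\overline{\Sigma_{\theta,r}}$ by Lemma \ref{l.est.z}, we get $|z_j|\simeq|\lambda+a|\simeq|\lambda|+a$ uniformly on $\Sigma_{\theta,r}$ (using Lemma \ref{l.est.lambda+a} for the last equivalence). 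Thus $|z_j|\gtrsim|\lambda|$, and the real positivity of $z_j$ amounts to controlling $\mathrm{Arg}(z_j)$.

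The key step is to show that $\mathrm{Arg}(z_j)$ stays strictly bounded away from $\pm\pi$ by an angle depending only on $a,\beta,\theta,r$; equivalently that $z_j$ lies in a fixed sector $\Sigma_{\theta'}$ with $\theta'>0$. Since $z_j=(\lambda+a)\widetilde z_j$, and $\widetilde z_j$ ranges over a compact subset of $\mathbb{C}\setminus\mathbb{R}_-$ (by Lemma \ref{l.est.z}, its closure avoids $0$ and, by Lemma \ref{l.realz} applied in the limit, avoids $\mathbb{R}_-$ — the limiting values $z_\pm$ have nonzero imaginary part since $\beta\ne 0$; if $\beta=0$ then $\widetilde z_j\equiv 1$ and $z_j=\lambda+a\in\Sigma_\theta$ directly), there is $\delta>0$ with $\widetilde z_j\in\Sigma_{\pi/2-\delta}$. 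Combined with $\lambda+a\in\Sigma_\theta$, the product lies in $\Sigma_{\theta'}$ for suitable $\theta'\in(0,\pi/2)$. From $z_j\in\Sigma_{\theta'}$ and $|z_j|\gtrsim|\lambda|+a$ one deduces $|z_j+|\xi'|^2|\gtrsim|z_j|+|\xi'|^2\gtrsim|\lambda|+|\xi'|^2$ by an argument identical to Lemma \ref{l.est.Re>0} (the sum of two numbers, one in a fixed sector of the right half-plane, the other nonnegative real, has modulus comparable to the sum of moduli). Moreover $|\xi'|^2+z_j$ itself lies in a fixed sector in the right half-plane, so its square root $L_j$ lies in $\Sigma_{\theta'/2}$, whence $\mathrm{Re}(L_j)\gtrsim|L_j|=|\,|\xi'|^2+z_j\,|^{1/2}\gtrsim(|\lambda|+|\xi'|^2)^{1/2}\simeq|\lambda|^{1/2}+|\xi'|$. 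Finally, since $r>0$ forces $|\lambda|\ge r$, we have $|\lambda|^{1/2}\simeq|\lambda|^{1/2}+1$, which absorbs the $+1$ term and gives the claimed bound.

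The main obstacle is verifying that $z_j$ stays in a fixed sector away from $\mathbb{R}_-$ \emph{uniformly} in $\lambda$: for bounded $|\lambda|$ this follows from Lemma \ref{l.realz} plus continuity and compactness of $\overline{\Sigma_{\theta,r}}\cap\{|\lambda|\le R\}$, but for $|\lambda|\to\infty$ one must use the convergence $\widetilde z_j\to z_\pm$ from Lemma \ref{l.est.z} to see that $z_j/(\lambda+a)$ cannot approach $\mathbb{R}_-$, handling the two regimes and patching them; the square-root branch bookkeeping (ensuring $\mathrm{Re}(L_j)>0$ is the correct branch, consistent with the definition in \eqref{r.f.}) also needs a careful word. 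Once the sector containment of $z_j$ is in hand, the rest is the routine half-plane estimate already recorded as Lemma \ref{l.est.Re>0}.
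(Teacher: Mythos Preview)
Your overall strategy---show that $z_j$ stays in a fixed sector $\Sigma_{\theta'}$, deduce the same for $|\xi'|^2+z_j$, take the principal square root, and conclude $\mathrm{Re}(L_j)\gtrsim|L_j|\gtrsim(|\lambda|+|\xi'|^2)^{1/2}$---is different from the paper's and, once the sector claim is in hand, works cleanly. The paper instead writes out $\mathrm{Re}\sqrt{|\xi'|^2+z_j}$ explicitly, substitutes $t=|\xi'|/|z_j|^{1/2}$, obtains a Laurent expansion showing $G(\lambda,t):=\mathrm{Re}(L_j)/(|\lambda|^{1/2}+a^{1/2}+|\xi'|)=1+o_\infty(\Sigma_{\theta,r},1)$, and then runs the same minimizing-sequence contradiction argument as in Proposition~\ref{p.est.C.lambda.neq.0}, handling $|\lambda_n|\to\infty$ by reduction to the case $a=0$. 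Your sector route is more geometric and bypasses the explicit real-part formula.

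There is, however, a genuine gap in your execution of the sector step. You assert that $\widetilde z_j$ lies in a compact subset of $\mathbb{C}\setminus\mathbb{R}_-$, hence in $\Sigma_{\pi/2-\delta}$, and that combined with $\lambda+a\in\Sigma_\theta$ the product lands in some $\Sigma_{\theta'}$. With the paper's convention $\Sigma_\alpha=\{|\mathrm{Arg}|<\pi-\alpha\}$, the argument bound for the product is only $(\pi-\theta)+(\pi/2+\delta)>\pi$ (since $\theta<\pi/2$), so this multiplication gives nothing. Moreover, Lemma~\ref{l.realz} rules out $z_j\in\mathbb{R}_-$, not $\widetilde z_j\in\mathbb{R}_-$, and these are inequivalent when $\lambda+a\notin\mathbb{R}$; also for $\beta=0$ one has $\widetilde z_1=\lambda/(\lambda+a)$, not $\equiv1$. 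What actually rescues the sector claim is the \emph{quantitative} fact $|\mathrm{Arg}(z_\pm)|=\arctan(|\beta|/\sqrt{2})=\theta_0<\theta$: for $|\lambda|\ge R$ large, $|\mathrm{Arg}(\widetilde z_j)|\le\theta_0+\epsilon$ with $\epsilon<\theta-\theta_0$, whence $|\mathrm{Arg}(z_j)|<(\pi-\theta)+(\theta_0+\epsilon)<\pi$ uniformly; for $|\lambda|\le R$, apply compactness and Lemma~\ref{l.realz} directly to $z_j$ (not $\widetilde z_j$). You gesture at exactly this two-regime patching in your ``main obstacle'' paragraph, but the body of the argument as written does not carry it out.
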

\begin{proof}\hfill\\
Let $\xi^\prime\in\mathbb{R}^{N-1}$ and $\lambda\in\Sigma_{\theta,r}$, then
$$ Re\left(\sqrt{|\xi^\prime|^2+z_j}\right)=\frac{1}{\sqrt{2}}\sqrt{||\xi^\prime|^2+z_j|+|\xi^\prime|^2+Rez_j}. $$
Let $t\coloneqq \frac{|\xi^\prime|}{|z_j|^\frac{1}{2}}$, then 
$$ Re\left(\sqrt{|\xi^\prime|^2+z_j}\right)=\frac{|z_j|^\frac{1}{2}}{\sqrt{2}}\sqrt{\left|t^2+\frac{z_j}{|z_j|}\right|+t^2+\frac{Rez_j}{|z_j|}}= $$
$$ = \frac{|z_j|^\frac{1}{2}}{\sqrt{2}}\sqrt{\sqrt{\left(t^2+\frac{Rez_j}{|z_j|}\right)^2+\left|\frac{Im z_j}{|z_j|}\right|^2}+t^2+\frac{Rez_j}{|z_j|}}. $$
Taking the Laurent expression in $t$, we get
$$ Re\left(\sqrt{|\xi^\prime|^2+z_j}\right) = |z_j|^\frac{1}{2} t+o_\infty(\Sigma_{\theta,r},1). $$
On the other hand
$$ |\lambda|^\frac{1}{2}+a^\frac{1}{2}+|\xi^\prime|=|z_j|^\frac{1}{2}\left(\frac{|\lambda|^\frac{1}{2}+a^\frac{1}{2}}{|z_j|^\frac{1}{2}}+t\right). $$
Moreover, from Lemma \ref{l.est.z} and \ref{l.est.lambda+a}, we have 
$$ \frac{|\lambda|^\frac{1}{2}+a^\frac{1}{2}}{|z_j|^\frac{1}{2}}=\frac{|\lambda|^\frac{1}{2}+a^\frac{1}{2}}{|\lambda+a|^\frac{1}{2}}\cdot|\widetilde{z}_j|^{-\frac{1}{2}}\lesssim 1. $$
Therefore 
$$ \frac{Re(L_j)}{|\lambda|^\frac{1}{2}+a^\frac{1}{2}+|\xi^\prime|}=G(\lambda,t(\lambda,\xi^\prime)), $$
with
\begin{equation}\label{proof.realz}
    G(\lambda,t)\coloneqq \frac{\frac{1}{\sqrt{2}}\sqrt{\left|t^2+\frac{z_j}{|z_j|}\right|+t^2+\frac{Rez_j}{|z_j|}}}{\frac{|\lambda|^\frac{1}{2}+a^\frac{1}{2}}{|z_j|^\frac{1}{2}}+t}=1+o_\infty(\Sigma_{\theta,r},1).
\end{equation}
Let us see that $Re L_j\neq 0$ for any $\xi^\prime\in\mathbb{R}^{N-1}$ and $\lambda\in\overline{\Sigma_{\theta,r}}$: if $Re L_j=0$ for some $j=0,1$ then 
$$ z_j=-|\xi^\prime|^2-\alpha^2\quad \text{for some}\:\:\alpha\in\mathbb{R} $$
and this contradicts Lemma \ref{l.realz}. Finally, we can conclude: let us suppose by contradiction that 
$$ \inf_{\lambda\in\Sigma_{\theta,r},\:t\in\mathbb{C}}|G(\lambda,t)|=0. $$
Thanks to (\ref{proof.realz}), we know that $G$ remains far from 0 uniformly in $\lambda\in\Sigma_{\theta,r}$ as $|t|\to+\infty$. On the other hand we have just seen that $Re(L_j)\neq0$ for any $\xi^\prime\in\mathbb{R}^{N-1}$ and $\lambda\in\overline{\Sigma_{\theta,r}}$. Finally, we can get the contradiction as in the proof of the Proposition (\ref{p.est.C.lambda.neq.0}) (to be noticed that Lemma \ref{l.realz} is true also when $a=0$). In this way we proved that 
$$ |Re L_j|\ge C(a,\beta,\theta,r)\left(|\lambda|^\frac{1}{2}+1+|\xi^\prime|\right). $$
On the other hand, by definition of $L_j$, $Re(L_j)>0$ for any choice of $\lambda\in\Sigma_\theta$ and $j=1,2$. Therefore we conclude.
\end{proof}
We are now ready to prove the estimate for $L_j$:
\begin{lem}\label{l.F.M.2}
Let $a,r>0$, $\beta\in\mathbb{R}$ and $\theta\in\left(\theta_0,\frac{\pi}{2}\right)$ with $\tan\theta_0\ge \frac{|\beta|}{\sqrt{2}}$, then 
\begin{enumerate}
    \item For any $\lambda\in\Sigma_{\theta,r}$ we can find $C_1,C_2>0$ depending on $a,\beta,\theta, r$ such that
    $$ C_1(|\lambda|+1)\le |z_j(\lambda)|\le C_2(|\lambda|+1)\quad j=1,2; $$
    \item For any $\lambda\in\Sigma_{\theta,r}$ we can find $C_1,C_2>0$ depending on $a,\beta,\theta,r$ such that
    $$ C_1(|\lambda|+1)\le |\lambda+a-z_j|\le C_2(|\lambda|+1)\quad j=1,2; $$
    \item For any $\xi^\prime\in\mathbb{R}^{N-1}$, for any $\lambda\in\Sigma_{\theta,r}$, for any $s\in\mathbb{R}$ and for any $\alpha\in\mathbb{N}_0^{N-1}$ we have
    $$ |D^\alpha_{\xi^\prime} L_j^s|\le C(a,\beta,\theta,r) \left(|\lambda|^\frac{1}{2}+1+|\xi^\prime|\right)^{s-|\alpha|}; $$
    \item For any $\xi^\prime\in\mathbb{R}^{N-1}$, for any $\lambda\in\Sigma_{\theta,r}$ and for any $\alpha\in\mathbb{N}_0^{N-1}$ we have
    $$ |D^\alpha_{\xi^\prime} (\tau\partial_\tau L_j)|\le C(a,\beta,\theta,r)\left(|\lambda|^\frac{1}{2}+1+|\xi^\prime|\right)^{1-|\alpha|}. $$
\end{enumerate}
\end{lem}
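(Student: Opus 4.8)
The plan is to reduce all four assertions to the pointwise estimates on $z_j$ and $L_j$ already available (Lemmas~\ref{l.est.z}, \ref{l.est.lambda+a}, \ref{l.est.above.BL}) together with the elementary algebra of the polynomial $L$; the $\xi'$-derivative bounds then come from a routine Fa\`a di Bruno count, and the $\tau$-derivative bound from implicit differentiation of $L(z_j)=0$. I treat $\beta\neq0$ throughout; when $\beta=0$ one has $z_1=\lambda$, $z_2=\lambda+a$ and $L_j=\sqrt{|\xi'|^2+\lambda}$ or $\sqrt{|\xi'|^2+\lambda+a}$, for which all four claims follow directly from Lemma~\ref{l.est.lambda+a}.

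\emph{Parts 1 and 2.} I would write $z_j=\widetilde z_j(\lambda+a)$ with $\widetilde z_j\coloneqq z_j/(\lambda+a)$. By Lemma~\ref{l.est.z}, $K_m\le|\widetilde z_j|\le K_M$ on $\overline{\Sigma_{\theta,r}}$, while Lemma~\ref{l.est.lambda+a} together with $|\lambda|\ge r$ gives $|\lambda+a|\simeq|\lambda|+a\simeq|\lambda|+1$; multiplying yields part~1. For part~2 write $\lambda+a-z_j=(\lambda+a)(1-\widetilde z_j)$. The bound $|1-\widetilde z_j|\le1+K_M$ is trivial. For the lower bound I would first note that $1-\widetilde z_j$ never vanishes on $\overline{\Sigma_{\theta,r}}$ --- indeed $z_j=\lambda+a$ would force $L(\lambda+a)=\tfrac{\beta^2}{2}\lambda(\lambda+a)=0$, impossible since $\lambda\neq0\neq\lambda+a$ for $\lambda\in\overline{\Sigma_\theta}$ and $\beta\neq0$ --- and then, since $\widetilde z_j\to z_\pm$ as $|\lambda|\to\infty$ with $1-z_\pm=\tfrac{|\beta|(|\beta|\mp i\sqrt2)}{2(1+\beta^2/2)}\neq0$, invoke the same compactness-plus-decay argument as in Propositions~\ref{p.est.C.lambda.neq.0} and \ref{p.est.A.lambda.neq.0} to obtain $\inf_{\overline{\Sigma_{\theta,r}}}|1-\widetilde z_j|>0$. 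Multiplying by $|\lambda+a|\simeq|\lambda|+1$ gives part~2.

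\emph{Part 3.} First I would record the two-sided estimate $|L_j|\simeq|\lambda|^{1/2}+1+|\xi'|$: the lower bound is Lemma~\ref{l.est.above.BL} together with $\mathrm{Re}\,L_j\le|L_j|$, and the upper bound follows from $|L_j|^2=||\xi'|^2+z_j|\le|\xi'|^2+|z_j|\lesssim|\xi'|^2+|\lambda|+1$ by part~1. Hence $|L_j^s|=|L_j|^s\simeq(|\lambda|^{1/2}+1+|\xi'|)^s$ for every $s\in\mathbb{R}$, which settles $\alpha=0$. For $|\alpha|\ge1$ I would write $L_j^s=g(|\xi'|^2)$ with $g(y)=(y+z_j)^{s/2}$, $z_j$ being independent of $\xi'$; since $|\xi'|^2$ is a quadratic polynomial, all its $\xi'$-derivatives of order $\ge3$ vanish, so the Fa\`a di Bruno formula writes $D^\alpha_{\xi'}L_j^s$ as a finite sum of terms $c\,(|\xi'|^2+z_j)^{s/2-m}\prod_i D^{\beta_i}_{\xi'}(|\xi'|^2)$ with $\sum_i\beta_i=\alpha$, $|\beta_i|\in\{1,2\}$ and $m$ the number of factors. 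Using $|(|\xi'|^2+z_j)^{s/2-m}|=|L_j|^{s-2m}\lesssim(|\lambda|^{1/2}+1+|\xi'|)^{s-2m}$ and $|D^{\beta_i}_{\xi'}(|\xi'|^2)|\lesssim(1+|\xi'|)^{2-|\beta_i|}$, each term is $\lesssim(|\lambda|^{1/2}+1+|\xi'|)^{s-2m+\sum_i(2-|\beta_i|)}=(|\lambda|^{1/2}+1+|\xi'|)^{s-|\alpha|}$, because $\sum_i(2-|\beta_i|)=2m-|\alpha|$; summing over the finitely many partitions gives part~3.

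\emph{Part 4 and the main obstacle.} Differentiating $L_j^2=|\xi'|^2+z_j$ in $\tau$ gives $\tau\partial_\tau L_j=\tfrac12(\tau\partial_\tau z_j)L_j^{-1}$, and since $\tau\partial_\tau z_j$ does not depend on $\xi'$, $D^\alpha_{\xi'}(\tau\partial_\tau L_j)=\tfrac12(\tau\partial_\tau z_j)D^\alpha_{\xi'}L_j^{-1}$. By part~3 with $s=-1$ we have $|D^\alpha_{\xi'}L_j^{-1}|\lesssim(|\lambda|^{1/2}+1+|\xi'|)^{-1-|\alpha|}$, so it remains to prove $|\tau\partial_\tau z_j|\lesssim|\lambda|+1\le(|\lambda|^{1/2}+1+|\xi'|)^2$. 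Using $\partial_\tau=i\partial_\lambda$, implicitly differentiating $L(z_j)=0$, and using $z_1+z_2=\tfrac{2\lambda+a(1+\beta^2/2)}{1+\beta^2/2}$ from \eqref{def.z.} to simplify the denominator, one gets
$$ \partial_\lambda z_j=\pm\frac{2z_j-2\lambda-a}{(1+\tfrac{\beta^2}{2})(z_1-z_2)}. $$
The numerator is $\lesssim|\lambda|+1$ by part~1, and $|z_1-z_2|=\tfrac{\sqrt2\,|\beta|}{1+\beta^2/2}\,|\lambda^2-\eta^2|^{1/2}$ with $\eta=\tfrac{a(1+\beta^2/2)}{\sqrt2|\beta|}$. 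The remaining ingredient is the elementary bound $|\lambda^2-\eta^2|\gtrsim\tau^2$ for every $\lambda=\gamma+i\tau$, which I would get by splitting into $\gamma^2\ge\tau^2/2$ (bound $|\lambda^2-\eta^2|^2$ below by $4\gamma^2\tau^2$) and $\gamma^2<\tau^2/2$ (bound it below by $(\mathrm{Re}(\lambda^2-\eta^2))^2=(\gamma^2-\tau^2-\eta^2)^2\ge\tau^4/4$); this yields $|\tau\partial_\tau z_j|\lesssim|\tau|\cdot\frac{|\lambda|+1}{|\tau|}=|\lambda|+1$. The one genuinely delicate point --- and where I expect the real work --- is exactly this: $\partial_\tau z_j$ has a $|\lambda-\eta|^{-1/2}$ singularity at the collision point $\lambda=\eta$, which lies inside $\Sigma_{\theta,r}$ for small $r$; the identity above isolates the singular factor $z_1-z_2$, and the estimate $|\lambda^2-\eta^2|^{1/2}\gtrsim|\tau|$ shows the accompanying $|\tau|$ exactly absorbs it, so that $\tau\partial_\tau L_j$ and all its $\xi'$-derivatives remain uniformly controlled.
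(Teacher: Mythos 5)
Your proof is correct and follows essentially the same route as the paper: parts 1--2 via the $\widetilde z_j$ estimates of Lemma~\ref{l.est.z} together with Lemma~\ref{l.est.lambda+a}, part 3 via Lemma~\ref{l.est.above.BL} and Fa\`a di Bruno (the paper's ``Bell's formula''), and part 4 by reducing to $|\tau\partial_\tau z_j|\lesssim|\lambda|$ and absorbing the $|\lambda-\eta|^{-1/2}$ singularity with the vanishing of $\mathrm{Im}\,\lambda$ at $\lambda=\eta$. The only cosmetic difference is in part 4, where you obtain $\partial_\lambda z_j$ by implicit differentiation of $L(z_j)=0$ and Vieta and quantify the cancellation via $|\lambda^2-\eta^2|\gtrsim\tau^2$, whereas the paper differentiates the explicit discriminant formula and uses $|\mathrm{Im}\,\lambda|=|\mathrm{Im}(\eta-\lambda)|\le|\eta-\lambda|$ together with Lemma~\ref{l.est.lambda+a} applied to $\eta+\lambda$; both routes implement the same cancellation and yield the identical bound.
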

\begin{proof}\hfill\\
For the first point, it is sufficient to notice that, thanks to Lemma \ref{l.est.lambda+a}
$$ \left|\frac{z_j}{|\lambda|+1}\right|\sim |\widetilde{z}_j| $$
and from Lemma \ref{l.est.z} we know that $1\lesssim |\widetilde{z}_j|\lesssim 1$. For what concerns the second point, it is sufficient to see that
$$ \lambda+a-z_j=(\lambda+a)(1-\widetilde{z}_j) $$
and using the limits for $\widetilde{z}_j$ from Lemma \ref{l.est.z} we get also that $1\lesssim |1-\widetilde{z}_j|\lesssim 1$. So using Lemma \ref{l.est.lambda+a} we conclude also the proof of the second point. In order to prove the third point, we can use Bell's Formula:
\begin{equation}\label{proof.est.der.L.1}
    |D^\alpha_{\xi^\prime}L_j^s|\lesssim \sum_{l=1}^{|\alpha|}|f^{(l)}(|\xi^\prime|^2+z_j)|\sum_{\alpha_1+\cdots+\alpha_l=\alpha,\:\:|\alpha_l|\ge1} |D^{\alpha_1}_{\xi^\prime}(|\xi^\prime|^2+z_j)|\cdots|D^{\alpha_l}_{\xi^\prime}(|\xi^\prime|^2+z_j)|,
\end{equation}
where $f(x)\coloneqq x^{s/2}$. We know from Lemma \ref{l.est.above.BL} that
$$ |L_j|\ge Re(L_j)\gtrsim |\lambda|^\frac{1}{2}+a^\frac{1}{2}+|\xi^\prime|. $$ 
Moreover, it's easy to see that 
$$ |D^\alpha_{\xi^\prime}(|\xi^\prime|^2+z_j)|\lesssim (|\lambda|^\frac{1}{2}+a^\frac{1}{2}+|\xi^\prime|)^{2-|\alpha|}\quad \forall \alpha\in\mathbb{N}_0^{N-1}. $$
In fact, the inequality is obvious for $|\alpha|>2$, while the other cases follow by a simple computation and from the estimate of $z_j$ we have just proved. Finally, turning back to (\ref{proof.est.der.L.1}), we have that
$$ |D^\alpha_{\xi^\prime}L_j^s|\lesssim \sum_{l=1}^{|\alpha|}|L_j|^{s-2l}\sum_{\alpha_1+\cdots+\alpha_l=\alpha,\:|\alpha_l|\ge 1}(|\lambda|^\frac{1}{2}+a^\frac{1}{2}+|\xi^\prime|)^{2-|\alpha_1|}\cdots(|\lambda|^\frac{1}{2}+a^\frac{1}{2}+|\xi^\prime|)^{2-|\alpha_l|}\le $$
$$ \le \sum_{l=1}^{|\alpha|}(|\lambda|^\frac{1}{2}+a^\frac{1}{2}+|\xi^\prime|)^{s-2l}(|\lambda|^\frac{1}{2}+a^\frac{1}{2}+|\xi^\prime|)^{2l-|\alpha|}=(|\lambda|^\frac{1}{2}+a^\frac{1}{2}+|\xi^\prime|)^{s-|\alpha|}. $$
Finally, let us study $\tau\partial_\tau L_j$ for $j=1,2$:
$$  \tau\partial_\tau L_j=\frac{\tau\partial_\tau z_j}{2L_j}=\frac{i\tau z_j^\prime}{2 L_j}=\frac{Im\lambda z_j^\prime}{2L_j}. $$
From the previous point we have then
\begin{equation}\label{proof.est.der.L.3}
    |D^\alpha_{\xi^\prime}(\tau\partial_\tau L_j)|\lesssim (|\lambda|^\frac{1}{2}+a^\frac{1}{2}+|\xi^\prime|)^{-1-|\alpha|}|Im\lambda z_j^\prime|.
\end{equation} 
So, in order to conclude, we need to estimate the last term uniformly in $\lambda$:
$$ z_j^\prime=\frac{1}{2(1+\beta^2/2)}\left[2\pm\frac{2\lambda\beta^2}{\sqrt{a^2(1+\beta^2/2)^2-2\lambda^2\beta^2}}\right]. $$
If $\beta=0$, then $|\tau\partial_\tau z_j|\lesssim |\lambda|$. Otherwise,
$$ z_j^\prime=\frac{1}{2(1+\beta^2/2)}\left[2-\frac{|\beta|\sqrt{2}\lambda}{(\eta^2-\lambda^2)^\frac{1}{2}}\right], $$
where $\eta\coloneqq\frac{a(1+\beta^2/2)}{|\beta|\sqrt{2}}\in\mathbb{R}$. We can now notice that
$$ Im\lambda=-Im(\eta-\lambda). $$
Therefore
$$ |Im\lambda z_j^\prime|\le 2|Im\lambda|+\frac{\sqrt{2}|\beta| |Im\lambda||\lambda|}{|\eta^2-\lambda^2|^\frac{1}{2}}\lesssim |\lambda|+\frac{|Im\lambda|^\frac{1}{2}|\lambda|}{|\eta+\lambda|^\frac{1}{2}}. $$
Since $\eta>0$, we can use Lemma \ref{l.est.lambda+a}. So, for any $\beta\in\mathbb{R}$
\begin{equation}\label{proof.est.tau-der.z}
    |\tau\partial_\tau z_j|\lesssim |\lambda|\le (|\lambda|^\frac{1}{2}+a^\frac{1}{2}+|\xi^\prime|)^2 \quad \forall j=1,2.
\end{equation} 
Finally, applying the previous estimate in (\ref{proof.est.der.L.3}), we conclude.
\end{proof}
Now, we need to rewrite $\mathcal{C}_a$ in a different way from the previous chapters: we already know that
$$ \mathcal{C}_a=\frac{1}{\lambda}\left(\mathcal{I}_1+\frac{\mathcal{I}_2}{L_2-L_1}\right), $$
with 
$$ \mathcal{I}_1=\beta\frac{A^2}{B_a^2-A^2}\left\{2A^2-\frac{A(B_a^2+A^2)}{B_a}\right\}, $$
$$ \mathcal{I}_2=-\beta\frac{L_1(L_2-A)}{B_a^2-L_1^2}\left\{2A^2L_1-\frac{(B_a^2+A^2)(L_1^2+A^2)}{2B_a}\right\}+\beta\frac{L_2(L_1-A)}{B_a^2-L_2^2}\left\{2A^2L_2-\frac{(B_a^2+A^2)(L_2^2+A^2)}{2B_a}\right\}. $$
It will be useful in the following to underline the differences hided inside $\mathcal{I}_1$ and $\mathcal{I}_2$. It can be seen by a calculation that 
\begin{equation}\label{f.alt.C-it}
    \begin{aligned}
        & \mathcal{I}_1+\frac{\mathcal{I}_2}{L_2-L_1}= -\frac{\beta A^3(B_a^2-A^2)}{B_a(B_a+A)^2}+ \\
        & + \frac{\beta AB_a(L_1-A)(L_2-A)[(B_a^2+L_1A)(L_1-A)+L_2(A^2-B_a^2)]}{B_a(B_a^2-L_1^2)(B_a^2-L_2^2)} + \\
        & + \frac{\beta (B_a-A)^2[(A^2B_a^2+A^2L_1L_2-B_a^2L_1L_2+L_2^2B_a^2)(A-L_1)+2A^2L_1(B_a^2-A^2)]}{2B_a(B_a^2-L_1^2)(B_1^2-L_2^2)}.
    \end{aligned}
\end{equation}
As a consequence of Lemma \ref{l.F.M.1} and Lemma \ref{l.F.M.2}, we have that many factors of \eqref{f.alt.C-it} are Fourier Multipliers.
\begin{lem}\label{l.F.M.3}
Let $a,r>0$, $\beta\in\mathbb{R}$, $\theta\in\left(\theta_0,\frac{\pi}{2}\right)$ with $\tan\theta_0\ge \frac{|\beta|}{\sqrt{2}}$, let $j=1,2$ then
\begin{enumerate}
    \item $(A+B_a)^{-1}\in\mathscr{M}_{-1,2,\theta,r}$; 
    \item $(L_j+A)^{-1}\in\mathscr{M}_{-1,2,\theta,r}$;
    \item $(L_j+B_a)^{-1}\in\mathscr{M}_{-1,1,\theta,r}$;
    \item $\frac{L_j-A}{B_a^2-L_k^2}, \frac{L_j-B_a}{B_a^2-L_k^2},\frac{L_j-A}{B_a^2-A^2}, \frac{L_j-B_a}{B_a^2-A^2} \frac{B_a-A}{B_a^2-L_k^2}\in\mathscr{M}_{-1,1,\theta,r}$ for any $j,k=1,2$;
    \item $\left(\frac{S_1-S_2}{\sqrt{\lambda+a}}\right)^s,\left(\frac{\lambda+a}{\lambda}\right)^s\in\mathscr{M}_{0,1,\theta,r}$ for any $s=1,-1$ and for any $S_1,S_2=B_a,L_1,L_2$;
    \item $\frac{S-A}{\sqrt{\lambda+a}}\in\mathscr{M}_{0,2,\theta,r}$ for any $S=B_a,L_1,L_2$.
\end{enumerate}
\end{lem}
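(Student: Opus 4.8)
The plan is to reduce all six statements to a single mechanism and then run it case by case. The mechanism is: (i) use the algebraic identities $B_a^2-A^2=\lambda+a$, $B_a^2-L_k^2=\lambda+a-z_k$, $L_1^2-L_2^2=z_1-z_2$ and $S_1-S_2=(S_1^2-S_2^2)/(S_1+S_2)$ to rewrite each quotient so that the "small" scalar factor it hides appears explicitly in the numerator; (ii) note that every scalar factor that then survives — $\lambda$, $\lambda+a$, $z_j$, $\lambda+a-z_j$, $z_1-z_2$ — is, on $\Sigma_{\theta,r}$, two-sidedly comparable to $|\lambda|+1$ by Lemma \ref{l.est.z}, Lemma \ref{l.F.M.2}(1)--(2) and Lemma \ref{l.est.lambda+a}, and that $|\tau\partial_\tau z_j|\lesssim|\lambda|$ by the computation carried out inside the proof of Lemma \ref{l.F.M.2}, so that any ratio of two such scalars (with a nonvanishing denominator) is a $\xi^\prime$-independent multiplier of order $0$ and type $1$; (iii) control the remaining factors $(A+B_a)^{-1}$, $(A+L_j)^{-1}$, $(B_a+L_j)^{-1}$, $A^s$, $B_a^s$, $L_j^s$ and $\sqrt{\lambda+a}$ by Lemmas \ref{l.F.M.1} and \ref{l.F.M.2} (the last being checked directly); (iv) multiply everything together with the product rules of Lemma \ref{l.F.M.0}. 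The only non-elementary ingredient is a reciprocal rule: if $m\in\mathscr{M}_{s,i,\theta,r}$ and $|m|\gtrsim(|\lambda|^{1/2}+1+|\xi^\prime|)^s$ uniformly on $\mathbb{R}^{N-1}\times\Sigma_{\theta,r}$, then $m^{-1}\in\mathscr{M}_{-s,i,\theta,r}$; this follows at once from Fa\`a di Bruno's formula applied to $x\mapsto 1/x$, since the defining bounds for $m$ and for $\tau\partial_\tau m$ hold for every multi-index, including $\alpha=0$.

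Items (1)--(3) are pure instances of the reciprocal rule. By Lemma \ref{l.F.M.1}, $B_a\in\mathscr{M}_{1,1,\theta,r}$ and $A\in\mathscr{M}_{1,2,\theta,r}$, and by Lemma \ref{l.F.M.2}(3)--(4), $L_j\in\mathscr{M}_{1,1,\theta,r}$; hence $A+B_a,\,A+L_j\in\mathscr{M}_{1,2,\theta,r}$ and $B_a+L_j\in\mathscr{M}_{1,1,\theta,r}$. For the lower bounds I would use $\operatorname{Re}B_a\gtrsim|\lambda|^{1/2}+1+|\xi^\prime|$ (a consequence of $|\lambda+a+|\xi^\prime|^2|\gtrsim|\lambda|+a+|\xi^\prime|^2$, Lemma \ref{l.est.lambda+a}) and $\operatorname{Re}L_j\gtrsim|\lambda|^{1/2}+1+|\xi^\prime|$ (Lemma \ref{l.est.above.BL}); since $A\ge 0$ and all the real parts are positive, $|A+B_a|,|A+L_j|,|B_a+L_j|\gtrsim|\lambda|^{1/2}+1+|\xi^\prime|$, and the reciprocal rule yields the three order-$(-1)$ memberships.

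For item (4) I would use the factorisations $\dfrac{L_j-A}{B_a^2-L_k^2}=\dfrac{z_j}{\lambda+a-z_k}\cdot\dfrac{1}{L_j+A}$, $\dfrac{L_j-B_a}{B_a^2-L_k^2}=-\dfrac{\lambda+a-z_j}{\lambda+a-z_k}\cdot\dfrac{1}{L_j+B_a}$, $\dfrac{L_j-A}{B_a^2-A^2}=\dfrac{z_j}{\lambda+a}\cdot\dfrac{1}{L_j+A}$, $\dfrac{L_j-B_a}{B_a^2-A^2}=-\dfrac{\lambda+a-z_j}{\lambda+a}\cdot\dfrac{1}{L_j+B_a}$ and $\dfrac{B_a-A}{B_a^2-L_k^2}=\dfrac{\lambda+a}{\lambda+a-z_k}\cdot\dfrac{1}{B_a+A}$: in each line the scalar prefactor is an order-$0$ type-$1$ multiplier by step (ii) and the second factor is one of the order-$(-1)$ multipliers of (1)--(3), so by Lemma \ref{l.F.M.0} the product has order $-1$ with the type dictated by which of $(A+B_a)^{-1},(A+L_j)^{-1},(B_a+L_j)^{-1}$ occurs. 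Items (5)--(6) are the same calculation: $\dfrac{S-A}{\sqrt{\lambda+a}}=\dfrac{S^2-A^2}{\sqrt{\lambda+a}(S+A)}$ reduces after rationalisation to $\dfrac{\sqrt{\lambda+a}}{S+A}$ times a bounded scalar (using $\sqrt{\lambda+a}\in\mathscr{M}_{1,1,\theta,r}$), hence order $0$; while $\dfrac{S_1-S_2}{\sqrt{\lambda+a}}=\dfrac{S_1^2-S_2^2}{\sqrt{\lambda+a}(S_1+S_2)}$, where the numerator contributes a scalar ($\lambda+a$ or $z_1-z_2$) that cancels $\sqrt{\lambda+a}$ down to order $0$; the $s=-1$ statements additionally require that the difference does not vanish — e.g. $L_1\ne L_2$ away from $\lambda=\eta$, via the limits $\widetilde z_1\to z_-\ne z_+\leftarrow\widetilde z_2$ of Lemma \ref{l.est.z} — and the $\xi^\prime$-independent factors $(\lambda+a)/\lambda$ and $\lambda/(\lambda+a)$ are handled directly since both are comparable to $1$.

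The main obstacle is bookkeeping rather than analysis: for each quotient one must identify correctly which "small" scalar is produced by rationalising the numerator so that it cancels against the denominator, and one must make sure no rationalised expression develops a new zero on $\overline{\Sigma_{\theta,r}}\times\mathbb{R}^{N-1}$. The latter is exactly where the geometric hypothesis $\tan\theta_0\ge|\beta|/\sqrt2$, Lemma \ref{l.realz} (so $L_j\ne 0$) and Lemma \ref{l.F.M.2}(2) (so $\lambda+a-z_j\ne 0$) are used, and where the degeneracy $L_1=L_2$ at $\lambda=\eta$ must be kept in mind, consistently with the separate treatment of that value already carried out in Section 2. Once these structural points are settled, every remaining estimate is a one-line application of Lemmas \ref{l.F.M.0}--\ref{l.F.M.2} together with the reciprocal rule.
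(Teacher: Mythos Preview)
Your approach is essentially identical to the paper's: rationalise the differences via $S_1-S_2=(S_1^2-S_2^2)/(S_1+S_2)$, reduce to $\xi'$-independent scalar ratios built out of $\lambda,\lambda+a,z_j,\lambda+a-z_j,z_1-z_2$ (all two-sidedly comparable to $|\lambda|+1$ with $|\tau\partial_\tau z_j|\lesssim|\lambda|$), and combine with $(A+B_a)^{-1},(A+L_j)^{-1},(B_a+L_j)^{-1}$ via the lower bound $\operatorname{Re}L_j\gtrsim|\lambda|^{1/2}+1+|\xi'|$ and the Bell/Leibniz reciprocal rule. The paper's proof is exactly this, only stated more telegraphically (points 5--6 are dismissed as ``obvious'').

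One remark. In item (4) you are right to say that the resulting type is ``dictated by which of $(A+B_a)^{-1},(A+L_j)^{-1},(B_a+L_j)^{-1}$ occurs''; the paper's proof actually writes ``$(L_j+A)^{-1}\in\mathscr{M}_{-1,1,\theta,r}$'', which is imprecise for $|\alpha|\ge 2$, so your hedging is the more accurate statement. Conversely, in item (5) for $s=-1$ your only stated concern is the pointwise vanishing of $L_1-L_2$ at $\lambda=\eta$, but in fact $\bigl((S_1-S_2)/\sqrt{\lambda+a}\bigr)^{-1}=\sqrt{\lambda+a}(S_1+S_2)/(S_1^2-S_2^2)$ has modulus $\sim(|\lambda|^{1/2}+|\xi'|)/|\lambda|^{1/2}$, which is unbounded in $\xi'$ and hence cannot belong to $\mathscr{M}_{0,1,\theta,r}$ at all; the paper's proof does not address this either, and the claim is not used downstream, so it is best read as a misstatement of the lemma rather than a defect of your argument.
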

\begin{proof}\hfill\\
The first point is proved in Lemma 5.2 of \cite{SS12}. The proof of the second and third points is almost the same, using that
$$ |L_j+A|\ge Re(L_j+A)\ge Re(L_j)\gtrsim |\lambda|^\frac{1}{2}+a^\frac{1}{2}+|\xi^\prime|. $$ 
For what concerns the fourth point, let $j,k=1,2$, then
$$ \frac{L_j-A}{B_a^2-L_k^2}=\frac{L_j^2-A^2}{B_a^2-L_k^2}\frac{1}{L_j+A}. $$
We already know that $(L_j+A)^{-1}\in\mathscr{M}_{-1,1,\theta,r}$. On the other hand
$$ \frac{L_j^2-A^2}{B_a^2-L_k^2}=\frac{z_j}{\lambda+a-z_k} $$
and thanks to Lemma \ref{l.F.M.2}, we have $\left|\frac{L_j^2-A^2}{B_a^2-L_k^2}\right|\lesssim 1$ for any $\lambda\in\Sigma_{\theta,r}$. Moreover
$$ \tau\partial_\tau\left(\frac{L_j^2-A^2}{B_a^2-L_k^2}\right)=\frac{\tau\partial_\tau z_j}{B_a^2-L_k^2}-\frac{(i\tau-\tau\partial_\tau z_k)(L_j^2-A^2)}{(B_a^2-L_k^2)^2}. $$
We have already seen in (\ref{proof.est.tau-der.z}) that 
$$ |\tau\partial_\tau z_j|\lesssim |\lambda|\quad \forall j=1,2. $$
Therefore, as before, we have that $\left|\tau\partial_\tau \left(\frac{L_j^2-A^2}{B_a^2-L_k^2}\right)\right|\lesssim 1$. The other results can be proved in the same way. The last two points are obvious.
\end{proof}
Thanks to the previous lemma and \eqref{f.alt.C-it} we can prove that $\mathcal{C}_a^{-1}$ is a Fourier Multiplier:
\begin{lem}\label{l.F.M.C-it.}
Let $a,r>0$, $\beta\neq0$ let $\theta\in\left(\theta_0,\frac{\pi}{2}\right)$ with $\tan\theta_0\ge \frac{|\beta|}{\sqrt{2}}$, then we can find $L>0$ and $\rho_j(\xi^\prime,\lambda)\in\mathscr{M}_{-1,1,\theta,r}$ and $m_j\in\mathscr{M}_{1,2,\theta,r}$ for $j=1,\ldots, L$ such that
$$ \mathcal{C}_a=\frac{\lambda+a}{\lambda}\sum_{j=1}^L\rho_j(\xi^\prime,\lambda)m_j(\xi^\prime,\lambda). $$
Moreover, $\mathcal{C}_a^{-1}\in\mathscr{M}_{0,2,\theta,r}$.
\end{lem}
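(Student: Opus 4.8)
The plan is to start from the explicit identity \eqref{f.alt.C-it} for $\mathcal{I}_1+\frac{\mathcal{I}_2}{L_2-L_1}$ and distribute the factor $\frac{1}{\lambda+a}$ over it. Indeed
\[
\mathcal{C}_a=\frac{1}{\lambda}\Bigl(\mathcal{I}_1+\frac{\mathcal{I}_2}{L_2-L_1}\Bigr)=\frac{\lambda+a}{\lambda}\cdot\frac{1}{\lambda+a}\Bigl(\mathcal{I}_1+\frac{\mathcal{I}_2}{L_2-L_1}\Bigr),
\]
and $\frac{\lambda+a}{\lambda}\in\mathscr{M}_{0,1,\theta,r}$ by the fifth point of Lemma \ref{l.F.M.3}, so it suffices to produce a decomposition $\frac{1}{\lambda+a}\bigl(\mathcal{I}_1+\frac{\mathcal{I}_2}{L_2-L_1}\bigr)=\sum_{j=1}^{L}\rho_j m_j$ with $\rho_j\in\mathscr{M}_{-1,1,\theta,r}$ and $m_j\in\mathscr{M}_{1,2,\theta,r}$. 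Since each product $\rho_j m_j$ lies in $\mathscr{M}_{0,2,\theta,r}$ (second point of Lemma \ref{l.F.M.0}) and $\mathscr{M}_{0,2,\theta,r}$ is stable under finite sums, this will also give $\mathcal{C}_a\in\mathscr{M}_{0,1,\theta,r}\cdot\mathscr{M}_{0,2,\theta,r}=\mathscr{M}_{0,2,\theta,r}$.

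The decomposition itself is a bookkeeping argument built on the three algebraic identities $B_a^2-A^2=\lambda+a$, $B_a^2-L_k^2=\lambda+a-z_k$ and $L_j^2-A^2=z_j$. In the first summand of \eqref{f.alt.C-it} the factor $B_a^2-A^2$ simply cancels the $\lambda+a$, leaving $-\beta\,B_a^{-1}\cdot A^3(B_a+A)^{-2}$, which is of the desired form with $\rho=-\beta B_a^{-1}\in\mathscr{M}_{-1,1,\theta,r}$ (Lemma \ref{l.F.M.1}) and $m=A^3(B_a+A)^{-2}\in\mathscr{M}_{1,2,\theta,r}$ (Lemmas \ref{l.F.M.1} and \ref{l.F.M.3} together with the product rules of Lemma \ref{l.F.M.0}). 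In the second and third summands one first expands the square brackets using $A^2-B_a^2=-(\lambda+a)$ (and, in the third, also $B_a^2-A^2=\lambda+a$): this produces, in each bracket, one piece carrying an explicit factor $\lambda+a$ that cancels the prefactor, and a remaining piece which is split further by means of $B_a^2+L_1A=(\lambda+a)+A(A+L_1)$ and of $\frac{L_j-A}{\lambda+a}=\widetilde z_j\,(L_j+A)^{-1}$ (a consequence of $L_j^2-A^2=z_j$). The regrouping is arranged so that every surviving factor $(B_a^2-L_k^2)^{-1}$ is paired with a numerator factor $L_j-A$ or $B_a-A$, producing one of the order-$(-1)$ type-$1$ multipliers of the fourth point of Lemma \ref{l.F.M.3}. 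After this, each of the finitely many resulting summands is a product of: a power of the nonzero constant $\beta$; powers $A^s$ with $s\ge0$ (type $2$, Lemma \ref{l.F.M.1}); powers $B_a^s$ and $L_j^s$ (type $1$, Lemmas \ref{l.F.M.1} and \ref{l.F.M.2}); factors $(B_a+A)^{-1}$, $(L_j+A)^{-1}$ (type $2$, first two points of Lemma \ref{l.F.M.3}); the ratios of the fourth point of Lemma \ref{l.F.M.3} (type $1$, order $-1$); and $\widetilde z_j$, which is a bounded, bounded-below function of $\lambda$ alone (Lemma \ref{l.est.z}), hence in $\mathscr{M}_{0,1,\theta,r}$. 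Counting orders in each summand one checks that the product of the type-$1$ factors has order exactly $-1$ and the product of the type-$2$ factors has order exactly $1$, so by Lemma \ref{l.F.M.0} each summand equals some $\rho_j m_j$ with $\rho_j\in\mathscr{M}_{-1,1,\theta,r}$, $m_j\in\mathscr{M}_{1,2,\theta,r}$.

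For the final assertion, write $\mathcal{C}_a^{-1}=\frac{\lambda}{\lambda+a}\bigl(\sum_{j=1}^L\rho_j m_j\bigr)^{-1}$, where $\frac{\lambda}{\lambda+a}\in\mathscr{M}_{0,1,\theta,r}$ by the fifth point of Lemma \ref{l.F.M.3}. The multiplier $g\coloneqq\sum_{j=1}^L\rho_j m_j=\frac{\lambda}{\lambda+a}\mathcal{C}_a$ is bounded away from $0$ on $\mathbb{R}^{N-1}\times\Sigma_{\theta,r}$, since $|\mathcal{C}_a|\ge M>0$ by Proposition \ref{p.est.C.lambda.neq.0} and $\frac{|\lambda|}{|\lambda+a|}\ge c>0$ on $\Sigma_{\theta,r}$ by Lemma \ref{l.est.lambda+a} and $|\lambda|\ge r$. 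As $g\in\mathscr{M}_{0,2,\theta,r}$, Bell's formula applied to $x\mapsto1/x$ (exactly as in \eqref{proof.est.der.L.1}) yields, for every $\alpha\in\mathbb{N}_0^{N-1}$,
\[
|D^\alpha_{\xi^\prime}(1/g)|\lesssim\sum_{l=1}^{|\alpha|}|g|^{-l-1}\sum_{\alpha_1+\cdots+\alpha_l=\alpha}|D^{\alpha_1}_{\xi^\prime}g|\cdots|D^{\alpha_l}_{\xi^\prime}g|\lesssim|\xi^\prime|^{-|\alpha|},
\]
and likewise $|D^\alpha_{\xi^\prime}\,\tau\partial_\tau(1/g)|=|D^\alpha_{\xi^\prime}(-g^{-2}\,\tau\partial_\tau g)|\lesssim|\xi^\prime|^{-|\alpha|}$, so $1/g\in\mathscr{M}_{0,2,\theta,r}$ and therefore $\mathcal{C}_a^{-1}\in\mathscr{M}_{0,1,\theta,r}\cdot\mathscr{M}_{0,2,\theta,r}=\mathscr{M}_{0,2,\theta,r}$.

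The only genuinely delicate step is the regrouping in the second paragraph: one must ensure that after expanding the brackets of \eqref{f.alt.C-it} no factor $(\lambda+a)^{-1}$ and no unpaired $(B_a^2-L_k^2)^{-1}$ is left (this is precisely what the identities $A^2-B_a^2=-(\lambda+a)$, $B_a^2+L_1A=(\lambda+a)+A(A+L_1)$ and $\frac{L_j-A}{\lambda+a}=\widetilde z_j(L_j+A)^{-1}$ are designed to do), and that in every one of the finitely many resulting summands the orders of the type-$1$ and type-$2$ parts come out to $-1$ and $1$. Everything else is a mechanical application of Lemmas \ref{l.F.M.0}--\ref{l.F.M.3}.
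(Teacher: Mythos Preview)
Your proposal is correct and follows essentially the same approach as the paper: both use the decomposition \eqref{f.alt.C-it} together with Lemmas \ref{l.F.M.0}, \ref{l.F.M.1} and \ref{l.F.M.3} to obtain the representation and the fact $\mathcal{C}_a\in\mathscr{M}_{0,2,\theta,r}$, and then Bell's formula combined with the lower bound of Proposition \ref{p.est.C.lambda.neq.0} to get $\mathcal{C}_a^{-1}\in\mathscr{M}_{0,2,\theta,r}$. The only cosmetic difference is that for the inverse the paper applies Bell's formula directly to $\mathcal{C}_a^{-s}$, while you first peel off the factor $\frac{\lambda}{\lambda+a}$ and work with $g=\sum_j\rho_jm_j$; both variants are equivalent.
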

\begin{proof}\hfill\\
The first part of the result comes from the decomposition of $\mathcal{C}_a$ we have made in (\ref{f.alt.C-it}) and the Lemmas \ref{l.F.M.0}, \ref{l.F.M.1} and \ref{l.F.M.3}. In particular, $\mathcal{C}_a\in\mathscr{M}_{0,2,\theta,r}$. Let now $s\in\mathbb{N}$, then by Bell's Formula we have
$$ |D^\alpha_{\xi^\prime}\mathcal{C}_a^{-s}|\lesssim \sum_{l=1}^{|\alpha|}|\mathcal{C}_a|^{-s-l}\sum_{\alpha_1+\ldots+\alpha_l=\alpha,\:\:|\alpha_k|\ge 1}|D^{\alpha_1}_{\xi^\prime}\mathcal{C}_a|\cdots |D^{\alpha_l}_{\xi^\prime}\mathcal{C}_a|. $$
Now, using that $\mathcal{C}_a\in\mathscr{M}_{0,2,\theta,r}$ and Proposition \ref{p.est.C.lambda.neq.0}, we get that
$$ |D^\alpha_{\xi^\prime}\mathcal{C}_a^{-s}|\lesssim |\xi^\prime|^{-|\alpha|}\quad \forall s\in\mathbb{N},\quad \forall \alpha\in\mathbb{N}_0^{N-1}. $$
Now we notice that 
$$ \tau\partial_\tau\mathcal{C}_a^{-1}=-\frac{\tau\partial_\tau\mathcal{C}_a}{\mathcal{C}_a^2}. $$
Therefore, using again that $\mathcal{C}_a\in\mathscr{M}_{0,2,\theta,r}$, the previous estimate with $s=2$ and the Leibniz Formula we get
$$ |D^\alpha_{\xi^\prime}(\tau\partial_\tau\mathcal{C}_a^{-1})|\lesssim \sum_{\beta+\gamma=\alpha}|D^\beta_{\xi^\prime}\tau\partial_\tau\mathcal{C}_a||D^\gamma_{\xi^\prime}\mathcal{C}_a^{-2}|\lesssim |\xi^\prime|^{-|\alpha|}. $$
\end{proof}
Let us pass to $\mathcal{A}_a$. As for $\mathcal{C}_a$, it convenient to rewrite $\mathcal{A}_a$ in order to highlight the cancellations:
$$ \mathcal{A}_a=B^2_a(B_aL_1-A^2)+L_2(B_a^3-A^2L_1)= $$
$$ = B_a^3(L_1-B_a)+B_a^2(B_a^2-A^2)+L_2B_a(B_a^2-A^2)+L_2A^2(B_a-L_1)= $$
$$ = (B_a^2-A^2)(B_a^2+L_2B_a)+(B_a-L_1)[A^2(L_2-B_a)+B_a(A^2-B_a^2)]= $$
$$ = B_a(B_a^2-A^2)(L_1+L_2)-A^2(B_a-L_1)(B_a-L_2). $$
\begin{lem}\label{l.F.M.A-it}
Let $a,r>0$, $\beta\neq0$ and $\theta\in\left(\theta_0,\frac{\pi}{2}\right)$ with $\tan\theta_0\ge\frac{|\beta|}{\sqrt{2}}$, then we can find $L>0$, $\rho_j(\xi^\prime,\lambda)\in\mathscr{M}_{0,1,\theta,r}$ and $m_j\in\mathscr{M}_{2,1,\theta,r}$ for $j=1,\ldots, L$ such that
$$ \mathcal{A}_a=(\lambda+a)\sum_{j=1}^L\rho_j(\xi^\prime,\lambda)m_j(\lambda,\xi^\prime). $$
Moreover, $(\lambda+a)\mathcal{A}_a^{-1}\in\mathscr{M}_{-2,1,\theta,r}$.
\end{lem}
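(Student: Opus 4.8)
The starting point is the cancellation identity already recorded just above the lemma,
$$ \mathcal{A}_a = B_a(B_a^2-A^2)(L_1+L_2)-A^2(B_a-L_1)(B_a-L_2), $$
combined with the elementary identity $B_a^2-A^2=\lambda+a$ (since $B_a^2=\lambda+a+|\xi^\prime|^2$ and $A=|\xi^\prime|$). This turns the first term into $(\lambda+a)B_a(L_1+L_2)$. For the second term the plan is to write $A^2=B_a^2-(\lambda+a)$ and to extract the factor $\lambda+a$ from $(B_a-L_1)(B_a-L_2)$ by inserting $\sqrt{\lambda+a}$ in each factor, which is legitimate because $\lambda+a\in\Sigma_\theta\subseteq\mathbb{C}\setminus\mathbb{R}_-$ whenever $\lambda\in\Sigma_{\theta,r}$. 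This gives
$$ \mathcal{A}_a=(\lambda+a)\left[B_aL_1+B_aL_2-B_a^2\,\frac{B_a-L_1}{\sqrt{\lambda+a}}\,\frac{B_a-L_2}{\sqrt{\lambda+a}}+(\lambda+a)\,\frac{B_a-L_1}{\sqrt{\lambda+a}}\,\frac{B_a-L_2}{\sqrt{\lambda+a}}\right]. $$

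Next I would verify that each summand inside the bracket is a Fourier multiplier of order $2$ and type $1$, grouping the bounded–order factors as the $\rho_j\in\mathscr{M}_{0,1,\theta,r}$ (a constant, or the product $\tfrac{B_a-L_1}{\sqrt{\lambda+a}}\tfrac{B_a-L_2}{\sqrt{\lambda+a}}$) and the growing factors as the $m_j\in\mathscr{M}_{2,1,\theta,r}$. The ingredients are $B_a\in\mathscr{M}_{1,1,\theta,0}\subseteq\mathscr{M}_{1,1,\theta,r}$ (Lemma \ref{l.F.M.1}), $L_j\in\mathscr{M}_{1,1,\theta,r}$ (points 3 and 4 of Lemma \ref{l.F.M.2} with $s=1$), $B_a^2\in\mathscr{M}_{2,1,\theta,r}$, $\tfrac{B_a-L_j}{\sqrt{\lambda+a}}\in\mathscr{M}_{0,1,\theta,r}$ (point 5 of Lemma \ref{l.F.M.3}), and $\lambda+a\in\mathscr{M}_{2,1,\theta,r}$ — the last one by a direct check, since $|\lambda+a|\lesssim|\lambda|+a\lesssim(|\lambda|^{1/2}+1+|\xi^\prime|)^2$ for $r>0$, the $\xi^\prime$–derivatives of $\lambda+a$ vanish, and $|\tau\partial_\tau(\lambda+a)|=|\tau|\le|\lambda|$. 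Note that $B_a^2-(\lambda+a)=-A^2$ is only a type $2$ multiplier, which is why these two summands must be kept separate. Closing under products by Lemma \ref{l.F.M.0}(1) then yields $\mathcal{A}_a=(\lambda+a)\sum_{j=1}^{L}\rho_jm_j$ with, say, $L=4$.

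For the ``moreover'' part, set $\mathcal{B}:=\mathcal{A}_a/(\lambda+a)=\sum_j\rho_jm_j\in\mathscr{M}_{2,1,\theta,r}$, so that $(\lambda+a)\mathcal{A}_a^{-1}=\mathcal{B}^{-1}$ and it suffices to prove $\mathcal{B}^{-1}\in\mathscr{M}_{-2,1,\theta,r}$. The input still missing is the sharp lower bound $|\mathcal{A}_a(\lambda,\xi^\prime)|\gtrsim|\lambda+a|(|\lambda|^{1/2}+1+|\xi^\prime|)^2$, equivalently $|\mathcal{B}|\gtrsim(|\lambda|^{1/2}+1+|\xi^\prime|)^2$, which strengthens the uniform positive lower bound of Proposition \ref{p.est.A.lambda.neq.0}. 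I would obtain it by revisiting that proof: with $\mathcal{A}_a=(\lambda+a)^2G_a(\lambda,t)$, $t=|\xi^\prime|/\sqrt{\lambda+a}$, the Laurent expansion $G_a=2t^2+o_\infty(\Sigma_{\theta,r},t)$ gives $|G_a|\gtrsim|t|^2$ uniformly for $|t|$ large, while the compactness–and–limit argument (the limits $\widetilde z_j\to z_\pm$ reducing the unbounded–$\lambda$ case to the case $a=0$, $\lambda=1$ of Proposition \ref{p.nonzero-prop.}) gives $|G_a|\gtrsim 1$ uniformly for $|t|$ bounded; hence $|G_a|\gtrsim 1+|t|^2$ and $|\mathcal{A}_a|=|\lambda+a|^2|G_a|\gtrsim|\lambda+a|^2+|\lambda+a|\,|\xi^\prime|^2\gtrsim|\lambda+a|(|\lambda|^{1/2}+1+|\xi^\prime|)^2$, using $r>0$ and Lemma \ref{l.est.lambda+a}. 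Granting this, I would conclude exactly as in the final paragraph of the proof of Lemma \ref{l.F.M.C-it.}: Bell's formula together with $\mathcal{B}\in\mathscr{M}_{2,1,\theta,r}$ and the lower bound gives $|D^\alpha_{\xi^\prime}\mathcal{B}^{-s}|\lesssim(|\lambda|^{1/2}+1+|\xi^\prime|)^{-2s-|\alpha|}$ for every $s\in\mathbb{N}$, and then $\tau\partial_\tau\mathcal{B}^{-1}=-\mathcal{B}^{-2}\tau\partial_\tau\mathcal{B}$ with the Leibniz rule (using the case $s=2$ and $|D^\alpha_{\xi^\prime}\tau\partial_\tau\mathcal{B}|\lesssim(|\lambda|^{1/2}+1+|\xi^\prime|)^{2-|\alpha|}$) gives the $\tau$–derivative bounds, so $\mathcal{B}^{-1}\in\mathscr{M}_{-2,1,\theta,r}$. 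The only genuinely non-routine step is this sharp lower bound on $\mathcal{A}_a$ — tracking the $t$–growth in the argument of Proposition \ref{p.est.A.lambda.neq.0} rather than merely its nonvanishing; everything else is the multiplier calculus of Lemmas \ref{l.F.M.0}–\ref{l.F.M.3}.
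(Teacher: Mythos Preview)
Your argument is correct and follows the same route as the paper: the factorisation $\mathcal{A}_a = B_a(B_a^2-A^2)(L_1+L_2)-A^2(B_a-L_1)(B_a-L_2)$ together with $B_a^2-A^2=\lambda+a$ and the multiplier calculus, then Bell's formula plus the sharp lower bound on $\mathcal{A}_a$ for the inverse. Two small remarks. First, your claim that ``$B_a^2-(\lambda+a)=-A^2$ is only a type $2$ multiplier'' is wrong: $A^2=|\xi'|^2$ is a polynomial, so $A^2\in\mathscr{M}_{2,1,\theta,r}$ directly (Lemma \ref{l.F.M.1} only asserts type $2$, but for even integer $s$ one has type $1$ for free). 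Hence the paper takes $m_3=-A^2$ and gets away with $L=3$ rather than your $L=4$; this is harmless. Second, for the inverse the paper normalises one step further: it works with $\frac{\mathcal{A}_a}{(\lambda+a)B_a^2}\in\mathscr{M}_{0,1,\theta,r}$, notes (as you do, but the paper is terser) that this quantity is bounded below via the proof of Proposition \ref{p.est.A.lambda.neq.0}, inverts it in $\mathscr{M}_{0,1,\theta,r}$ by Bell's formula, and only then multiplies by $B_a^{-2}$. Your direct treatment of $\mathcal{B}=\mathcal{A}_a/(\lambda+a)\in\mathscr{M}_{2,1,\theta,r}$ with the explicit $|\mathcal{B}|\gtrsim(|\lambda|^{1/2}+1+|\xi'|)^2$ is equivalent and arguably cleaner, since it makes explicit the sharpening of Proposition \ref{p.est.A.lambda.neq.0} that the paper invokes only implicitly.
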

\begin{proof}\hfill\\
Thanks to the decomposition we just made and the Lemma \ref{l.F.M.3} we have the formula we are looking for. In particular, $\frac{\mathcal{A}_a}{\lambda+a}\in\mathscr{M}_{2,1,\theta,r}$. Thanks to Lemma \ref{l.F.M.1}, we have that $\frac{\mathcal{A}_a}{(\lambda+a)B_a^2}\in\mathcal{M}_{0,1,\theta,r}$. Moreover, we know from Proposition \ref{p.est.A.lambda.neq.0}, that 
$$ \inf_{\lambda\in\Sigma_{\theta,r},\:\xi^\prime\in\mathbb{R}^{N-1}} \left|\frac{\mathcal{A}_a(\lambda,t)}{(\lambda+a)B_a^2}\right|>0. $$
Let now $s\in\mathbb{N}$, then by Bell's Formula
$$ \left|D^\alpha_{\xi^\prime}\left(\frac{(\lambda+a)B_a^2}{\mathcal{A}_a}\right)^{s}\right|=\left|D^\alpha_{\xi^\prime}\left(\frac{\mathcal{A}_a}{(\lambda+a)B_a^2}\right)^{-s}\right|\lesssim $$
$$ \lesssim \sum_{l=1}^{|\alpha|} \frac{|\mathcal{A}_a|^{-(s+l)}}{B_a^{-2(s+l)}|\lambda+a|^{-(s+l)}}\sum_{\alpha_1+\ldots+\alpha_l=\alpha,\:\:|\alpha_k|\ge 1}\left|D^{\alpha_1}_{\xi^\prime}\left(\frac{\mathcal{A}_a}{B_a^2(\lambda+a)}\right)\right|\cdots\left|D^{\alpha_l}_{\xi^\prime}\left(\frac{\mathcal{A}_a}{B_a^2(\lambda+a)}\right)\right|\lesssim $$
$$ \lesssim (|\lambda|^\frac{1}{2}+a^\frac{1}{2}+|\xi^\prime|)^{-|\alpha|}. $$
Now we notice that
$$ \tau\partial_\tau\left(\frac{\mathcal{A}_a}{(\lambda+a)B_a^2}\right)=B_a^{-2}\tau\partial_\tau\left(\frac{\mathcal{A}_a}{\lambda+a}\right)-\frac{i\tau\mathcal{A}_a}{(\lambda+a)B_a^4}. $$
Therefore, using that $\frac{\mathcal{A}_a}{\lambda+a}\in\mathscr{M}_{2,1,\theta,r}$ and the Leibniz Formula, we get
$$ \left|D^\alpha_{\xi^\prime}\left(\tau\partial_\tau\left(\frac{\mathcal{A}_a}{(\lambda+a)B_a^2}\right)\right)\right|\lesssim $$
$$ \lesssim \sum_{\beta+\gamma=\alpha}|D^\beta_{\xi^\prime}B_a^{-2}|\left|D^\gamma_{\xi^\prime}\tau\partial_\tau\left(\frac{\mathcal{A}_a}{\lambda+a}\right)\right|+|\lambda|\left|D^\beta_{\xi^\prime}\left(\frac{\mathcal{A}_a}{\lambda+a}\right)\right||D^\gamma_{\xi^\prime}B_a^{-4}|\lesssim (|\lambda|^\frac{1}{2}+a^\frac{1}{2}+|\xi^\prime|)^{-|\alpha|}. $$
Now we notice that
$$ \tau\partial_\tau\left(\frac{(\lambda+a)B_a^2}{\mathcal{A}_a}\right)=-\tau\partial_\tau\left(\frac{\mathcal{A}_a}{(\lambda+a)B_a^2}\right)\frac{(\lambda+a)^2B_a^4}{\mathcal{A}_a^2}. $$
Finally, using Leibniz Formula, we get
$$ \left|D^\alpha_{\xi^\prime}\tau\partial_\tau\left(\frac{(\lambda+a)B_a^2}{\mathcal{A}_a}\right)\right|\lesssim $$
$$ \lesssim \sum_{\beta+\gamma=\alpha}\left|D^\beta_{\xi^\prime}\left(\tau\partial_\tau\left(\frac{\mathcal{A}_a}{(\lambda+a)B_a^2}\right)\right)\right|\left|D^\gamma_{\xi^\prime}\left(\frac{(\lambda+a)^2B_a^4}{\mathcal{A}_a^2}\right)\right|\lesssim (|\lambda|^\frac{1}{2}+a^\frac{1}{2}+|\xi^\prime|)^{-|\alpha|}. $$
Therefore $\frac{(\lambda+a)B_a^2}{\mathcal{A}_a}\in\mathscr{M}_{0,1,\theta,r}$. This concludes the proof, because
$$ \frac{\lambda+a}{\mathcal{A}_a}=\frac{(\lambda+a)B_a^2}{\mathcal{A}_a}\cdot B_a^{-2}\in\mathscr{M}_{-2,1,\theta,r}. $$
\end{proof}
Also the term $E_k$ deserves a special treatment: it can be seen by a calculation that
\begin{equation}\label{f.alt.E}
    E_k=E_k^hi\xi^\prime\cdot\widehat{h}^\prime+\sum_{j,l=1}^{N}E_k^{H_{jl}}\widehat{H}_{jl},
\end{equation}
with $\mathcal{E}_hi\xi^\prime\cdot\widehat{h}^\prime\coloneqq\frac{\hbar}{L_2-L_1}$ and 
$$ \mathcal{E}_h= \frac{\beta}{L_2-L_1}\left[\frac{L_1}{B_a^2-L_1^2}\left(2A^2L_1-\frac{(B_a^2+A^2)(L_1^2+A^2)}{2B_a}\right)-\frac{L_2}{B_a^2-L_2^2}\left(2A^2L_2-\frac{(B_a^2+A^2)(L_2^2+A^2)}{2B_a}\right)\right]; $$
$$ E_k^h\coloneqq i\xi_k\left\{\frac{\mathcal{E}_h}{\lambda\mathcal{C}_a}\left[\frac{2AB_a}{(B_a+A)^2}+ \right.\right. $$
$$ \left.-\frac{2[(L_2B_a^2-A^2B_a)(L_1-A)+(AL_1L_2+AB_aL_1)(A-B_a)]}{(B_a^2-A^2)(B_a+L_1)(B_a+L_2)}-\frac{AB_a+L_1L_2}{(B_a+L_1)(B_a+L_2)}\right]+ $$
$$ \left.-\frac{2(B_a^2(L_1+L_2)-A^2B_a+L_1L_2B_a)}{(B_a^2-A^2)(B_a+L_1)(B_a+L_2)}+\frac{B_a}{(B_a+L_1)(B_a+L_2)}\right\}. $$
$$ E_k^{H_{NN}}\coloneqq \frac{ i\xi_k\mathcal{B}}{\lambda\mathcal{C}_a}A^2+\frac{2i\xi_kB_a^2}{\beta(B_a^2-A^2)}\quad \forall k=1,\ldots, N-1; $$
$$ E^{H_{jN}}_k\coloneqq -\frac{(B_a^2+A^2)i\xi_ki\xi_j\mathcal{B}}{B_a\lambda\mathcal{C}_a}-\frac{4i\xi_ji\xi_k B_a}{\beta(B_a^2-A^2)}+\frac{2B_a}{\beta}\delta_{jk}\quad \forall j,k=1,\ldots,N-1; $$
$$ E^{H_{jl}}_k=\frac{i\xi_ji\xi_ki\xi_l\mathcal{B}}{\lambda\mathcal{C}_a}+\frac{2i\xi_ji\xi_ki\xi_l}{\beta(B_a^2-A^2)}-\frac{2}{\beta}i\xi_j\delta_{kl}-\frac{2}{\beta}i\xi_l\delta_{jk}\quad \forall j,k,l=1,\ldots, N-1; $$
$$ \mathcal{B}\coloneqq \frac{2A^2}{(B_a+A)^2}+\frac{2[(L_2B_a^2-A^2B_a)(L_1-A)+(AL_1L_2+AB_aL_1)(A-B_a)]}{(B_a+L_1)(B_a+L_2)(B_a^2-A^2)}+ $$
$$ +\frac{AB_a+L_1L_2}{(B_a+L_1)(B_a+L_2)}. $$
Moreover, for what concerns $\mathcal{E}_h$, the internal parenthesis are the same of $\frac{\mathcal{I}_2}{L_2-L_1}$, so 
$$ \mathcal{E}_h=\frac{\beta A\{(B_a-L_2)[2B_aL_1(A-L_1)+L_1^2(B_a-L_2)+L_1^2(B_a-L_1)]-L_1L_2(B_a-A)^2-B_a^2(L_2-A)^2\}}{(B_a^2-L_1^2)(B_a^2-L_2^2)}+ $$
$$ +\frac{\beta(B_a-A)^2[-A^2B_a^2-L_1L_2A^2-B_a^2(L_1^2+L_1L_2+L_2^2)+L_1^2L_2^2]}{2B_a(B_a^2-L_1^2)(B_a^2-L_2^2)}. $$
\begin{lem}\label{l.F.M.E,B-it}
Let $a,r>0$, $\beta\neq0$ and $\theta\in\left(\theta_0,\frac{\pi}{2}\right)$ with $\tan\theta_0\ge \frac{|\beta|}{\sqrt{2}}$, then
\begin{enumerate}
    \item $\mathcal{E}_h=Am_0+m_1$ with $m_0\in\mathscr{M}_{0,1,\theta,r}$ and $m_1\in\mathscr{M}_{1,1,\theta,r}$;
    \item $\mathcal{B}\in\mathscr{M}_{0,2,\theta,r}$
\end{enumerate}
\end{lem}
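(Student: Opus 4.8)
The proof will expand the explicit formulas for $\mathcal{E}_h$ and $\mathcal{B}$ displayed just above the statement and recognise each summand as a product of the elementary Fourier multipliers collected in Lemmas \ref{l.F.M.1}, \ref{l.F.M.2} and \ref{l.F.M.3}, using the composition rules of Lemma \ref{l.F.M.0}, the obvious fact that a finite sum of multipliers of the same order and type is again such a multiplier, and the inclusion $\mathscr{M}_{s,1,\theta,r}\subseteq\mathscr{M}_{s,2,\theta,r}$. Two preliminary remarks make the bookkeeping transparent. First, since $A=|\xi^\prime|$, every even power $A^{2k}=|\xi^\prime|^{2k}$ is a polynomial in $\xi^\prime$ and hence belongs to $\mathscr{M}_{2k,1,\theta,r}$; it is only odd powers of $A$ and the factor $(B_a+A)^{-1}$ that force type $2$. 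Second, from $B_a^2-L_k^2=(B_a-L_k)(B_a+L_k)$ and $B_a^2-A^2=(B_a-A)(B_a+A)$ one can peel off factors $(B_a+L_k)^{-1}\in\mathscr{M}_{-1,1,\theta,r}$ and $(B_a+A)^{-1}\in\mathscr{M}_{-1,2,\theta,r}$, and the residual ratios $\frac{L_j-A}{B_a^2-L_k^2}$, $\frac{L_j-B_a}{B_a^2-L_k^2}$, $\frac{L_j-A}{B_a^2-A^2}$, $\frac{B_a-A}{B_a^2-L_k^2}$ are precisely the type-$1$, order-$(-1)$ multipliers of point 4 of Lemma \ref{l.F.M.3}. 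The $\tau\partial_\tau$-bounds never need separate attention, being built into the composition rules.

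For $\mathcal{B}$ I would treat the three summands of its formula one by one. In $2A^2/(B_a+A)^2$ one has $A^2\in\mathscr{M}_{2,1}$ and $(B_a+A)^{-2}\in\mathscr{M}_{-2,2}$, so this term lies in $\mathscr{M}_{0,2}$. For the middle summand, rewrite its numerator as $2\bigl[B_a(L_2B_a-A^2)(L_1-A)+AL_1(L_2+B_a)(A-B_a)\bigr]$ and divide by $(B_a+L_1)(B_a+L_2)(B_a-A)(B_a+A)$: the first piece equals $2B_a(L_2B_a-A^2)\,\frac{L_1-A}{B_a^2-A^2}\,(B_a+L_1)^{-1}(B_a+L_2)^{-1}\in\mathscr{M}_{0,1}$, since $L_2B_a-A^2\in\mathscr{M}_{2,1}$; in the second piece the cancellations $(L_2+B_a)/(B_a+L_2)=1$ and $(A-B_a)/(B_a-A)=-1$ leave $-2AL_1(B_a+L_1)^{-1}(B_a+A)^{-1}\in\mathscr{M}_{0,2}$. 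The third summand splits as $\frac{AB_a}{(B_a+L_1)(B_a+L_2)}+\frac{L_1L_2}{(B_a+L_1)(B_a+L_2)}$, the first term in $\mathscr{M}_{0,2}$ and the second in $\mathscr{M}_{0,1}$. Summing, $\mathcal{B}\in\mathscr{M}_{0,2,\theta,r}$.

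For $\mathcal{E}_h$ I would separate the two summands of the displayed formula. The first is $\beta A$ times $\frac{N_1}{(B_a^2-L_1^2)(B_a^2-L_2^2)}$, where $N_1$ is the bracket on the first line; dividing $N_1$ term by term, using $\frac{B_a-L_2}{B_a^2-L_2^2}=(B_a+L_2)^{-1}$, $\frac{B_a-L_2}{B_a^2-L_1^2}=-\frac{L_2-B_a}{B_a^2-L_1^2}$, and the order-$(-1)$ ratios of point 4 of Lemma \ref{l.F.M.3} for the factors $(A-L_1)$, $(B_a-A)^2$, $(L_2-A)^2$, one finds that this ratio lies in $\mathscr{M}_{0,1,\theta,r}$, so the first summand has the form $Am_0$ with $m_0\in\mathscr{M}_{0,1,\theta,r}$. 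The second summand is $\frac{\beta}{2}\,B_a^{-1}\,\frac{(B_a-A)^2}{(B_a^2-L_1^2)(B_a^2-L_2^2)}\,P$ with $P=-A^2B_a^2-A^2L_1L_2-B_a^2L_1^2-B_a^2L_1L_2-B_a^2L_2^2+L_1^2L_2^2$: every monomial of $P$ is a product of the type-$1$ multipliers $A^2,B_a^2,L_1^2,L_2^2,L_1L_2$, hence $P\in\mathscr{M}_{4,1}$, while $\frac{(B_a-A)^2}{(B_a^2-L_1^2)(B_a^2-L_2^2)}=\frac{B_a-A}{B_a^2-L_1^2}\cdot\frac{B_a-A}{B_a^2-L_2^2}\in\mathscr{M}_{-2,1}$ and $B_a^{-1}\in\mathscr{M}_{-1,1}$, so the second summand is a type-$1$ multiplier of order $1$ and can be absorbed into $m_1$. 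This yields $\mathcal{E}_h=Am_0+m_1$ with $m_0\in\mathscr{M}_{0,1,\theta,r}$ and $m_1\in\mathscr{M}_{1,1,\theta,r}$.

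The only genuinely delicate point is the tracking of the type. In isolation $(B_a-A)^2=B_a^2-2AB_a+A^2$ carries an odd power of $A$ through its cross term and is therefore merely of type $2$; but once divided by $(B_a^2-L_1^2)(B_a^2-L_2^2)$ it recovers type $1$, each factor $(B_a-A)/(B_a^2-L_k^2)$ being a type-$1$ multiplier by point 4 of Lemma \ref{l.F.M.3}. This observation, together with the fact that all even powers of $A$ are polynomials, is exactly what keeps every contribution to $\mathcal{E}_h$ of type $1$ apart from the explicit prefactor $A$ of its first summand — which is why the conclusion is stated in the split form $Am_0+m_1$ rather than as $\mathcal{E}_h\in\mathscr{M}_{1,2,\theta,r}$. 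The remaining work is only the elementary but somewhat lengthy arithmetic of carrying out the cancellations in $B_a\pm L_k$ and $B_a\pm A$, which I would organise as above so that the order and type of each factor stays visible at every step.
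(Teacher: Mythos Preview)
Your proof is correct and follows exactly the route the paper indicates: the paper's own proof is the single sentence ``The proof follows from the previous decomposition and Lemmas \ref{l.F.M.0}, \ref{l.F.M.1} and \ref{l.F.M.3}'', and you have simply carried out the term-by-term bookkeeping that this sentence leaves implicit. Your observations that $A^{2k}=|\xi'|^{2k}$ is a polynomial and hence of type $1$, and that the factors $(B_a-A)/(B_a^2-L_k^2)$ recover type $1$ via point 4 of Lemma \ref{l.F.M.3}, are precisely the points needed to keep $m_0,m_1$ of type $1$ rather than type $2$.
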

The proof follows from the previous decomposition and Lemmas \ref{l.F.M.0}, \ref{l.F.M.1} and \ref{l.F.M.3}.

\subsection{Setting and technical lemmas}
\medskip

We are near to the proof of the main theorem of the resolvent part. Let us start writing the solutions in a different way: for any $k=1,\ldots, N$, we have
$$ \widehat{u}_k=A_k^0e^{-Ax_N}+A_k^1e^{-L_1x_N}+A_k^2e^{-L_2x_N}= $$
$$ = A_k^0e^{-Ax_N}+(A_k^1+A_k^2)e^{-L_1x_N}+A_k^2(L_2-L_1)\mathcal{M}(L_2,L_1,x_N)= $$
$$ = (A_k^0+A_k^1+A_k^2)e^{-Ax_N}+(A_k^1+A_k^2)(L_1-A)\mathcal{M}(L_1,A,x_N)+A_k^2(L_2-L_1)\mathcal{M}(L_2,L_1,x_N), $$
where
$$ \mathcal{M}(\gamma_1,\gamma_2,x_N)\coloneqq \frac{e^{-\gamma_1 x_N}-e^{-\gamma_2x_N}}{\gamma_1-\gamma_2}\quad \forall \gamma_1,\gamma_2\in\mathbb{C}. $$
Thanks to the boundary conditions, we have that
$$ A_k^0+A_k^1+A_k^2=\widehat{h}_k\quad \forall k=1,\ldots, N. $$
Therefore
$$ \widehat{u}_k=\widehat{h}_ke^{-Ax_N}+(\widehat{h}_k-A_k^0)(L_1-A)\mathcal{M}(L_1,A,x_N)+A_k^2(L_2-L_1)\mathcal{M}(L_2,L_1,x_N)= $$
$$ = \widehat{h}_k(e^{-Ax_N}+\mathcal{M}(L_1,A,x_N)(L_1-A))-A_k^0(L_1-A)\mathcal{M}(L_1,A,x_N)+A_k^2(L_2-L_1)\mathcal{M}(L_2,L_1,x_N)= $$
$$ =\widehat{h}_ke^{-L_1x_N}-A_k^0(L_1-A)\mathcal{M}(L_1,A,x_N)+A_k^2(L_2-L_1)\mathcal{M}(L_2,L_1,x_N). $$
Now we use the Volhevic trick (see \cite{V65}): as an example, let us take $v\colon\mathbb{R}^N_+\to\mathbb{R}$ such that
$$ v(x)=\mathcal{F}^{-1}\left[ M(\xi^\prime,\lambda)g(\xi^\prime,0)e^{-\gamma(\xi^\prime,\lambda)x_N}\right], $$
with $Re(\gamma)>0$, where $\mathcal{F}^{-1}$ is the Fourier inverse transform in the tangential components. Then we can rewrite $v$ as
$$ v(x)=\mathcal{F}^{-1}\left[\int_0^\infty  \partial_{y_N}\left(M(\xi^\prime,\lambda)g(\xi^\prime,y_N)e^{-\gamma(\xi^\prime,\lambda)(x_N+y_N)}\right)dy_N\right]. $$
What we will see in this section is that these kind of functions are strongly related with the $\mathcal{R}$-boundedness of the solution:
\begin{lem}\label{l.est.exp.}
Let $a,r>0$, $\beta\in\mathbb{R}$ and $\theta\in\left(\theta_0,\frac{\pi}{2}\right)$ with $\tan\theta_0\ge \frac{|\beta|}{\sqrt{2}}$, then for any $\ell=0,1$, $j=1,2$, $\alpha\in\mathbb{N}_0^{N-1}$ and for any $\lambda\in\Sigma_{\theta,r}$ it can be found $d>0$ such that
$$ \begin{aligned}
    & |D^\alpha_{\xi^\prime}(\tau\partial_\tau)^\ell e^{-L_jx_N}|\lesssim \left(|\lambda|^\frac{1}{2}+1+|\xi^\prime|\right)^{-|\alpha|}e^{-d\left(|\lambda|^\frac{1}{2}+1+|\xi^\prime|\right)x_N}; \\
    & |D^\alpha_{\xi^\prime}(\tau\partial_\tau)^\ell\mathcal{M}(L_1,A,x_N)|\lesssim \left(\left(|\lambda|^{-\frac{1}{2}}+1\right) \:\:\text{or}\:\:x_N\right)|\xi^\prime|^{-|\alpha|}e^{-d\left(|\lambda|^\frac{1}{2}+1+|\xi^\prime|\right)}; \\
    & |D^\alpha_{\xi^\prime}(\tau\partial_\tau)^\ell\mathcal{M}(L_2,L_1,x_N)|\lesssim \left(\left(|\lambda|^{-\frac{1}{2}}+1\right) \:\:\text{or}\:\:x_N\right)\left(|\lambda|^\frac{1}{2}+1+|\xi^\prime|\right)^{-|\alpha|}e^{-d\left(|\lambda|^\frac{1}{2}+1+|\xi^\prime|\right)}. 
\end{aligned} $$
\end{lem}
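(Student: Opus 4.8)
The plan is to treat the three estimates separately but by a common mechanism: reduce each quantity to the already-established Fourier-multiplier bounds for $L_j$ (Lemma \ref{l.F.M.2}) and to an elementary exponential-decay estimate, then control the derivatives by Leibniz's rule and a Bell-type (Faà di Bruno) expansion. Write $\Lambda_\lambda(\xi') := |\lambda|^{1/2}+1+|\xi'|$ for brevity throughout the sketch. The single fact about the exponent that drives everything is Lemma \ref{l.est.above.BL}: $\mathrm{Re}(L_j)\ge c\,\Lambda_\lambda(\xi')$ for some $c=c(a,\beta,\theta,r)>0$, which already gives the crude bound $|e^{-L_jx_N}|\le e^{-c\Lambda_\lambda(\xi')x_N}$, i.e. the desired decay with $d$ slightly below $c$.

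For the first estimate I would argue as follows. For $\ell=0$, apply the Leibniz/Faà di Bruno formula to $D^\alpha_{\xi'}e^{-L_jx_N}$: each term is a product of the form $e^{-L_jx_N}\cdot\prod_{m}\big(x_N\,D^{\alpha_m}_{\xi'}L_j\big)$ over a partition $\alpha_1+\cdots+\alpha_l=\alpha$. By Lemma \ref{l.F.M.2}(3) (with $s=1$) we have $|D^{\alpha_m}_{\xi'}L_j|\lesssim \Lambda_\lambda(\xi')^{1-|\alpha_m|}$, so a partition into $l$ pieces contributes at most $x_N^{l}\,\Lambda_\lambda(\xi')^{l-|\alpha|}\,e^{-c\Lambda_\lambda(\xi')x_N}$. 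Now absorb the powers of $x_N$: for any $l$, $x_N^{l}\Lambda_\lambda(\xi')^{l}e^{-c\Lambda_\lambda(\xi')x_N}=\big(\Lambda_\lambda(\xi')x_N\big)^{l}e^{-c\Lambda_\lambda(\xi')x_N}\lesssim_l e^{-d\Lambda_\lambda(\xi')x_N}$ with any $0<d<c$, using $\sup_{r\ge0}r^{l}e^{-(c-d)r}<\infty$. This yields $|D^\alpha_{\xi'}e^{-L_jx_N}|\lesssim \Lambda_\lambda(\xi')^{-|\alpha|}e^{-d\Lambda_\lambda(\xi')x_N}$. For $\ell=1$, note $\tau\partial_\tau e^{-L_jx_N}=-x_N(\tau\partial_\tau L_j)e^{-L_jx_N}$, and by Lemma \ref{l.F.M.2}(4) $|D^\beta_{\xi'}(\tau\partial_\tau L_j)|\lesssim\Lambda_\lambda(\xi')^{1-|\beta|}$; combining this with the $\ell=0$ computation via Leibniz and again absorbing the extra $x_N$ factor into the exponential gives the claim.

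For the second and third estimates, I would exploit the representation $\mathcal{M}(\gamma_1,\gamma_2,x_N)=\int_0^1 x_N\,e^{-(s\gamma_1+(1-s)\gamma_2)x_N}\,ds$, which removes the apparent singularity at $\gamma_1=\gamma_2$ (this is exactly why both the ``$x_N$'' alternative in the bound and the case $\lambda=\eta$ work uniformly). Since $\mathrm{Re}(s L_1+(1-s)A)\gtrsim\Lambda_\lambda(\xi')$ and $\mathrm{Re}(sL_2+(1-s)L_1)\gtrsim\Lambda_\lambda(\xi')$ uniformly in $s\in[0,1]$ (using $\mathrm{Re}(A)=|\xi'|\ge0$ and Lemma \ref{l.est.above.BL}), the same Faà di Bruno argument as above — now differentiating under the integral sign, using Lemma \ref{l.F.M.2}(3) for $D^{\alpha_m}_{\xi'}L_j$ and the trivial $|D^{\alpha_m}_{\xi'}A|\lesssim|\xi'|^{1-|\alpha_m|}$ — produces the bound $|D^\alpha_{\xi'}(\tau\partial_\tau)^\ell\mathcal{M}(\cdot,\cdot,x_N)|\lesssim x_N\,\Lambda_\lambda(\xi')^{-|\alpha|}e^{-d\Lambda_\lambda(\xi')x_N}$, which is the ``$x_N$'' alternative. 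For the ``$(|\lambda|^{-1/2}+1)$'' alternative one instead uses $|\mathcal{M}(\gamma_1,\gamma_2,x_N)|\le\int_0^1 x_N e^{-c\Lambda_\lambda x_N}ds$ and $\sup_{x_N\ge0}x_N e^{-(c-d)\Lambda_\lambda x_N}\lesssim\Lambda_\lambda^{-1}\lesssim|\lambda|^{-1/2}+1$; the derivative and $\tau\partial_\tau$ cases follow by the same absorption trick. For the third estimate the multiplier order is $\Lambda_\lambda(\xi')^{-|\alpha|}$ (type 1) because $L_1,L_2$ both obey Lemma \ref{l.F.M.2}(3); for the second estimate the presence of $A$ forces the weaker type-2 control $|\xi'|^{-|\alpha|}$, matching the statement.

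The main obstacle is bookkeeping rather than a conceptual difficulty: one must verify that every occurrence of a ``bad'' factor $x_N$ generated by differentiating an exponential can be paired with a matching power of $\Lambda_\lambda(\xi')$ coming from Lemma \ref{l.F.M.2}, so that the product $(\Lambda_\lambda(\xi')x_N)^{l}$ is uniformly absorbed into $e^{-(c-d)\Lambda_\lambda(\xi')x_N}$ — and, in the $\mathcal M$ terms, that differentiation under the integral sign and the uniform-in-$s$ lower bound on $\mathrm{Re}(s\gamma_1+(1-s)\gamma_2)$ are legitimate (including the degenerate case $\lambda=\eta$, where $L_1=L_2=L_0$ and $\mathcal M(L_2,L_1,x_N)\to x_N e^{-L_0 x_N}$, which is already covered by the integral representation). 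Once the absorption mechanism is set up once, all three estimates and both values of $\ell$ are instances of it.
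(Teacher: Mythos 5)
Your overall mechanism — Bell's formula for the $\xi'$--derivatives, Lemma \ref{l.F.M.2} for the factors $D^{\alpha_m}_{\xi'}L_j$ and $\tau\partial_\tau L_j$, the lower bound $\mathrm{Re}\,L_j\gtrsim |\lambda|^{1/2}+1+|\xi'|$ from Lemma \ref{l.est.above.BL}, the integral representation $\mathcal M(\gamma_1,\gamma_2,x_N)=\int_0^1 x_N e^{-(s\gamma_1+(1-s)\gamma_2)x_N}\,ds$, and absorption of powers of $x_N$ into the exponential — is exactly the approach of Lemma~5.3 of \cite{SS12}, which the paper simply cites. The first estimate and the third (the $\mathcal M(L_2,L_1,x_N)$ one) go through as you describe, since $L_1$ and $L_2$ both satisfy $\mathrm{Re}\,L_j\gtrsim \Lambda_\lambda(\xi')$ and hence the convex combination does too.

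There is, however, a genuine gap in the second estimate. Your assertion that $\mathrm{Re}\bigl(sL_1+(1-s)A\bigr)\gtrsim \Lambda_\lambda(\xi')$ uniformly in $s\in[0,1]$ is false: $\mathrm{Re}\,A=A=|\xi'|$, so at $s=0$ the left side is $|\xi'|$, which is much smaller than $|\lambda|^{1/2}+1+|\xi'|$ when $|\xi'|$ is small compared to $|\lambda|^{1/2}+1$. The correct uniform lower bound is only $\mathrm{Re}\bigl(sL_1+(1-s)A\bigr)\ge |\xi'|$, so the exponential decay you can extract for $\mathcal M(L_1,A,x_N)$ is $e^{-d|\xi'|x_N}$, not $e^{-d(|\lambda|^{1/2}+1+|\xi'|)x_N}$. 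This is not merely a cosmetic issue: with $\xi'=0$ one has $\mathcal M(L_1,0,x_N)=(e^{-L_1 x_N}-1)/L_1$, which does not tend to zero as $x_N\to\infty$, so no bound with exponential decay in $\Lambda_\lambda(\xi')\,x_N$ can hold for this term. (The stated exponent $e^{-d(|\lambda|^{1/2}+1+|\xi'|)}$ in the lemma — with no $x_N$ — appears to be a typographical slip; comparison with Lemma~5.3 of \cite{SS12} and with the type-2 factor $|\xi'|^{-|\alpha|}$ indicates the intended decay is $e^{-d\,|\xi'|\,x_N}$.) This also changes the absorption step for the ``$(|\lambda|^{-1/2}+1)$'' alternative: $\sup_{x_N\ge 0}x_N e^{-(c-d)|\xi'|x_N}\lesssim |\xi'|^{-1}$, which is not bounded by $|\lambda|^{-1/2}+1$ when $|\xi'|$ is small. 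To recover the stated factor you need a separate argument in the regime $|\xi'|\lesssim |\lambda|^{1/2}+1$, e.g. the crude bound $|\mathcal M(L_1,A,x_N)|\le 2/|L_1-A|$ combined with $|L_1-A|=|z_1|/|L_1+A|\gtrsim(|\lambda|+1)/\Lambda_\lambda(\xi')$, which in that regime gives $|L_1-A|\gtrsim|\lambda|^{1/2}+1$; in the complementary regime $|\xi'|\gtrsim|\lambda|^{1/2}+1$ your absorption bound $|\xi'|^{-1}\lesssim(|\lambda|^{1/2}+1)^{-1}$ suffices. With this case split, and the corrected exponent, your plan closes the argument.
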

The proof of these estimates, can be gained using Lemma \ref{l.F.M.2} and following the proof of Lemma 5.3 of \cite{SS12}. 
\begin{lem}\label{l.R-bound.int.0d}
Let $a,r>0$, $\beta\in\mathbb{R}$, $\theta\in\left(\theta_0,\frac{\pi}{2}\right)$ with $\tan\theta_0\ge \frac{|\beta|}{\sqrt{2}}$, let $q\in(1,\infty)$, let $m_1\in\mathscr{M}_{0,1,\theta,r}$, $m_2\in\mathscr{M}_{0,2,\theta,r}$ and $m_3\in\mathscr{M}_{1,2,\theta,r}$, let us define the following operators in $\mathcal{L}(L^q(\mathbb{R}^N_+))$:
$$ \begin{array}{l}
    (K_1(\lambda)g)(x)\coloneqq \int_0^\infty \mathcal{F}^{-1}_{\xi^\prime}[m_1(\lambda,\xi^\prime)(|\lambda|^\frac{1}{2}+1)e^{-L_1(x_N+y_N)}\widehat{g}(\xi^\prime,y_N)](x^\prime)dy_N \\
    (K_2(\lambda)g)(x)\coloneqq \int_0^\infty \mathcal{F}^{-1}_{\xi^\prime}[m_2(\lambda,\xi^\prime)Ae^{-L_1(x_N+y_N)}\widehat{g}(\xi^\prime,y_N)](x^\prime)dy_N \\
    (K_3(\lambda)g)(x)\coloneqq \int_0^\infty \mathcal{F}^{-1}_{\xi^\prime}[m_2(\lambda,\xi^\prime)A^2\mathcal{M}(L_1,A,x_N+y_N)\widehat{g}(\xi^\prime,y_N)](x^\prime)dy_N \\
    (K_4(\lambda)g)(x)\coloneqq \int_0^\infty \mathcal{F}^{-1}_{\xi^\prime}[m_2(\lambda,\xi^\prime)A(|\lambda|^\frac{1}{2}+1)\mathcal{M}(L_1,A,x_N+y_N)\widehat{g}(\xi^\prime,y_N)](x^\prime)dy_N \\
    (K_5(\lambda)g)(x)\coloneqq \int_0^\infty \mathcal{F}^{-1}_{\xi^\prime}[m_2(\lambda,\xi^\prime)A^2\mathcal{M}(L_2,L_1,x_N+y_N)\widehat{g}(\xi^\prime,y_N)](x^\prime)dy_N \\ 
    (K_6(\lambda)g)(x)\coloneqq \int_0^\infty \mathcal{F}^{-1}_{\xi^\prime}[m_1(\lambda,\xi^\prime)(|\lambda|+1)\mathcal{M}(L_2,L_1,x_N+y_N)\widehat{g}(\xi^\prime,y_N)](x^\prime)dy_N \\
    (K_7(\lambda)g)(x)\coloneqq \int_0^\infty \mathcal{F}^{-1}_{\xi^\prime}[m_2(\lambda,\xi^\prime)(|\lambda|^\frac{1}{2}+1)A\mathcal{M}(L_2,L_1,x_N+y_N)\widehat{g}(\xi^\prime,y_N)](x^\prime)dy_N \\
    (K_8(\lambda)g)(x)\coloneqq \int_0^\infty \mathcal{F}^{-1}_{\xi^\prime}[m_3(\lambda,\xi^\prime)A\mathcal{M}(L_2,L_1,x_N+y_N)\widehat{g}(\xi^\prime,y_N)](x^\prime)dy_N.
\end{array} $$
Then, for $\ell=0,1$, for $h=1,2$ and for $j,s=1,\ldots, N$ the sets
$$ \left\{(\tau\partial_\tau)^\ell(K_h(\lambda))\mid \lambda\in\Sigma_{\theta,r}\right\} $$
are $\mathcal{R}$-bounded, where the $\mathcal{R}$-bound doesn't exceed a constant $C=C(\theta,r,N,q)$.
\end{lem}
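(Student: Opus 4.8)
\emph{Proof plan.} The plan is to reduce each of the eight families $\{(\tau\partial_\tau)^\ell K_h(\lambda)\mid\lambda\in\Sigma_{\theta,r}\}$ to a standard shape and then to invoke the $\mathcal{R}$-boundedness lemma for integral operators with exponentially decaying kernels which underlies the whole scheme (the half-space counterpart of Weis's operator-valued Fourier multiplier theorem, used in this form in \cite{SS12}; see also \cite{S18,SS19}). All the symbols occurring in $K_1,\ldots,K_8$ are obtained, by multiplication, from the three input multipliers $m_1\in\mathscr{M}_{0,1,\theta,r}$, $m_2\in\mathscr{M}_{0,2,\theta,r}$, $m_3\in\mathscr{M}_{1,2,\theta,r}$, from the scalar factors $|\lambda|^\frac{1}{2}+1$ and $|\lambda|+1$, from the powers $A^s$ ($s\ge 0$), which by Lemma \ref{l.F.M.1} lie in $\mathscr{M}_{s,2,\theta,0}$, and from the boundary-layer functions $e^{-L_1x_N}$, $\mathcal{M}(L_1,A,x_N)$, $\mathcal{M}(L_2,L_1,x_N)$ evaluated at $x_N+y_N$. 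First I would fix $\ell\in\{0,1\}$ and, applying the Leibniz rule together with Lemma \ref{l.F.M.0}, rewrite $(\tau\partial_\tau)^\ell K_h(\lambda)$ as a finite sum of operators of exactly the same form in which the algebraic prefactor is a single Fourier multiplier of type $1$ or $2$ of some order $\sigma$ and the boundary-layer factor is one of the three functions above; note that $\tau\partial_\tau$ acting on $|\lambda|^\frac{1}{2}+1$ or on $|\lambda|+1$ only reproduces a factor of the same order, so the case $\ell=1$ is no harder than $\ell=0$.

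Next I would feed the pointwise bounds of Lemma \ref{l.est.exp.} (for $e^{-L_jx_N}$, $\mathcal{M}(L_1,A,x_N)$, $\mathcal{M}(L_2,L_1,x_N)$ and their $\tau\partial_\tau$-derivatives, evaluated at $x_N+y_N$) into the products, together with the order-$\sigma$ multiplier bounds and the elementary inequalities $(|\lambda|^\frac{1}{2}+1+|\xi^\prime|)^{-|\beta|}\le|\xi^\prime|^{-|\beta|}$ and $|\xi^\prime|\le|\lambda|^\frac{1}{2}+1+|\xi^\prime|$ (the first of which shows that a type-$1$ bound always implies the corresponding type-$2$ one, so it suffices to work with the critical Mikhlin scaling $|\xi^\prime|^{-|\alpha|}$ throughout). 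In this way one checks, for the symbol $k_h$ of the $h$-th operator, for all $\alpha\in\mathbb{N}_0^{N-1}$ and $\ell=0,1$, an estimate of the schematic form
$$
\bigl|D^\alpha_{\xi^\prime}(\tau\partial_\tau)^\ell k_h(\lambda,\xi^\prime,x_N+y_N)\bigr|\lesssim|\xi^\prime|^{-|\alpha|}\bigl[(|\lambda|^\frac{1}{2}+1+|\xi^\prime|)\ \text{or}\ (|\lambda|^\frac{1}{2}+1+|\xi^\prime|)^2(x_N+y_N)\bigr]e^{-d(|\lambda|^\frac{1}{2}+1+|\xi^\prime|)(x_N+y_N)},
$$
where the first alternative occurs for $K_1,K_2$ (which carry the plain exponential $e^{-L_1(\cdot)}$) and the second for $K_3,\ldots,K_8$ (which carry an $\mathcal{M}$ and hence, by Lemma \ref{l.est.exp.}, a factor ``$(|\lambda|^{-\frac{1}{2}}+1)$ or $x_N$''). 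The linear-in-$x_N$ alternative must be retained: using $\int_0^\infty e^{-cu}\,du=c^{-1}$ and $\int_0^\infty u\,e^{-cu}\,du=c^{-2}$ with $c=d(|\lambda|^\frac{1}{2}+1+|\xi^\prime|)$, the first alternative gains $(|\lambda|^\frac{1}{2}+1+|\xi^\prime|)^{-1}$ and the second gains $(|\lambda|^\frac{1}{2}+1+|\xi^\prime|)^{-2}$ after the $y_N$-integration, which is exactly what is needed to bring every $K_h$ down to order $0$ (for instance $|\lambda|+1\lesssim(|\lambda|^\frac{1}{2}+1+|\xi^\prime|)^2$ for $K_6$, and $A^2\lesssim(|\lambda|^\frac{1}{2}+1+|\xi^\prime|)^2$ for $K_3$). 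These are precisely the kernel estimates required by the $\mathcal{R}$-boundedness lemma of \cite{SS12}, which then yields that $\{(\tau\partial_\tau)^\ell K_h(\lambda)\mid\lambda\in\Sigma_{\theta,r}\}$ is $\mathcal{R}$-bounded in $\mathcal{L}(L^q(\mathbb{R}^N_+))$; since the exponential prefactors are absorbed by the $y_N$-integration and the remaining constants come only from Lemmas \ref{l.F.M.1}, \ref{l.F.M.2}, \ref{l.est.exp.} and from the Mikhlin-type multiplier theorem, the $\mathcal{R}$-bound does not exceed a constant $C=C(\theta,r,N,q)$.

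The content of the last two steps is essentially bookkeeping; the genuine ingredient — as in \cite{SS12} — is the kernel $\mathcal{R}$-boundedness lemma itself, so the main thing to be careful about is that the reduction really lands in its hypotheses. Two points deserve attention. First, one must track type $1$ versus type $2$: every occurrence of $A$, of $m_2$ and of $m_3$ forces the weaker decay $|\xi^\prime|^{-|\alpha|}$, and since the kernel depends on $x_N+y_N$ this critical scaling is the relevant one there — fortunately, as noted above, a type-$1$ factor is in particular of type $2$, so no loss occurs. Second, the degeneracy $L_1(\eta)=L_2(\eta)$ at $\lambda=\eta$ needs no separate treatment: Lemma \ref{l.est.exp.} provides its estimate for $\mathcal{M}(L_2,L_1,\cdot)$ uniformly up to $\lambda=\eta$ with the ``or $x_N$'' bound already built in. With these checks, the eight cases $h=1,\ldots,8$ and the two values $\ell=0,1$ all follow from one and the same argument.
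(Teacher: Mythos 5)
Your proposal is correct and follows the same route as the paper, which disposes of this lemma by citing Lemma 5.4 of \cite{SS12} verbatim. You have simply filled in the content of that reduction — combining the kernel estimates of Lemma \ref{l.est.exp.} with the multiplier orders from Lemmas \ref{l.F.M.0}--\ref{l.F.M.2}, tracking the $(|\lambda|^{1/2}+1+|\xi'|)^{-1}$ or $(|\lambda|^{1/2}+1+|\xi'|)^{-2}$ gain from the $y_N$-integration, and observing that the $\lambda=\eta$ degeneracy is already absorbed by the $\mathcal{M}$-bounds — which is exactly the argument the paper invokes.
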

The proof can be done following the one of Lemma 5.4 of \cite{SS12}.
\begin{lem}\label{l.R-bound.int.2d}
Let $a,r>0$, $\beta\in\mathbb{R}$, $\theta\in\left(\theta_0,\frac{\pi}{2}\right)$ with $\tan\theta_0\ge \frac{|\beta|}{\sqrt{2}}$, let $q\in(1,\infty)$, let $k_1\in\mathscr{M}_{-1,1,\theta,r}$, $k_2\in\mathscr{M}_{-2,2,\theta,r}$,
$k_3\in\mathscr{M}_{-1,2,\theta,r}$ and
$k_4\in\mathscr{M}_{0,1,\theta,r}$,
let us define the following operators in $\mathcal{L}(L^q(\mathbb{R}^N_+))$:
$$ \begin{array}{l}
    (L_1(\lambda)g)(x)\coloneqq \int_0^\infty \mathcal{F}^{-1}_{\xi^\prime}[k_1(\lambda,\xi^\prime)e^{-L_1(x_N+y_N)}\widehat{g}(\xi^\prime,y_N)](x^\prime)dy_N \\
    (L_2(\lambda)g)(x)\coloneqq \int_0^\infty \mathcal{F}^{-1}_{\xi^\prime}[k_2(\lambda,\xi^\prime)Ae^{-L_1(x_N+y_N)}\widehat{g}(\xi^\prime,y_N)](x^\prime)dy_N \\
    (L_3(\lambda)g)(x)\coloneqq \int_0^\infty \mathcal{F}^{-1}_{\xi^\prime}[k_2(\lambda,\xi^\prime)A^2\mathcal{M}(L_1,A,x_N+y_N)\widehat{g}(\xi^\prime,y_N)](x^\prime)dy_N \\
    (L_4(\lambda)g)(x)\coloneqq \int_0^\infty \mathcal{F}^{-1}_{\xi^\prime}[k_2(\lambda,\xi^\prime)A|\lambda|^\frac{1}{2}\mathcal{M}(L_1,A,x_N+y_N)\widehat{g}(\xi^\prime,y_N)](x^\prime)dy_N \\   
    (L_5(\lambda)g)(x)\coloneqq \int_0^\infty \mathcal{F}^{-1}_{\xi^\prime}[k_4(\lambda,\xi^\prime)\mathcal{M}(L_2,L_1,x_N+y_N)\widehat{g}(\xi^\prime,y_N)](x^\prime)dy_N \\
    (L_6(\lambda)g)(x)\coloneqq \int_0^\infty \mathcal{F}^{-1}_{\xi^\prime}[k_3(\lambda,\xi^\prime)A\mathcal{M}(L_2,L_1,x_N+y_N)\widehat{g}(\xi^\prime,y_N)](x^\prime)dy_N.
\end{array} $$
Then, for $\ell=0,1$, for $k=1,\ldots, 6$ and for $j,s=1,\ldots, N$ the sets
$$ \left\{(\tau\partial_\tau)^\ell(\lambda L_k(\lambda))\mid \lambda\in\Sigma_{\theta,r}\right\}, \quad  \left\{(\tau\partial_\tau)^\ell(\gamma L_k(\lambda))\mid \lambda\in\Sigma_{\theta,r}\right\}, $$
$$  \left\{(\tau\partial_\tau)^\ell(|\lambda|^\frac{1}{2} D_jL_k(\lambda))\mid \lambda\in\Sigma_{\theta,r}\right\}, \quad  \left\{(\tau\partial_\tau)^\ell(D_jD_s L_k(\lambda))\mid \lambda\in\Sigma_{\theta,r}\right\}, $$
are $\mathcal{R}$-bounded, where the $\mathcal{R}$-bound doesn't exceed a constant $C=C(\theta,r,N,q)$.
\end{lem}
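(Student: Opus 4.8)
The plan is to follow closely the scheme of Lemma~5.4 of \cite{SS12}, as the statement suggests, but with the roles of $\lambda$ played by $\lambda+a$ wherever $L_j$ and $B_a$ are involved. The key structural observation is that each operator $L_k(\lambda)$ has the Volhevic-type form
$$ (L_k(\lambda)g)(x)=\int_0^\infty \mathcal{F}_{\xi^\prime}^{-1}\bigl[\,k(\lambda,\xi^\prime)\,n(\lambda,\xi^\prime,x_N+y_N)\,\widehat g(\xi^\prime,y_N)\bigr](x^\prime)\,dy_N, $$
where $k$ is one of the multipliers $k_1,\dots,k_4$ of the indicated orders and types, and $n$ is an exponential factor $e^{-L_1(x_N+y_N)}$ or one of the difference quotients $\mathcal{M}(L_1,A,\cdot)$, $\mathcal{M}(L_2,L_1,\cdot)$, possibly multiplied by a power $A^j$ or $|\lambda|^{1/2}$. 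First I would apply $\lambda$, $\gamma$, $|\lambda|^{1/2}D_j$ or $D_jD_s$ to $L_k(\lambda)$ and commute these factors inside the Fourier inverse transform: the differential operators $D_j$ ($j<N$) produce extra factors $i\xi_j$, which turn a type-2 multiplier of order $s$ into a type-2 multiplier of order $s+1$ by Lemma~\ref{l.F.M.0}, while $D_N$ applied to the exponential/difference-quotient factors is handled through the estimates of Lemma~\ref{l.est.exp.}. In this way each of the listed families is reduced to finitely many operators of exactly the shape already treated in Lemma~\ref{l.R-bound.int.0d}, with the combined multiplier still of order $0$ with type $1$ or $2$ after the gain from the difference quotient (recall $\mathcal{M}(L_1,A,\cdot)$ and $\mathcal{M}(L_2,L_1,\cdot)$ carry either an extra $(|\lambda|^{-1/2}+1)$ or an extra $x_N$, which compensates the two missing powers coming from $k_2\in\mathscr{M}_{-2,2,\theta,r}$ or $k_3\in\mathscr{M}_{-1,2,\theta,r}$).

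The analytic core is the $\mathcal{R}$-boundedness criterion for operator-valued Fourier multipliers in the tangential variables combined with the kernel estimates; concretely, the argument for each reduced operator proceeds as in \cite{SS12}: split $L^q(\mathbb{R}^N_+)=L^q((0,\infty);L^q(\mathbb{R}^{N-1}))$, bound the $x_N$-integral by Minkowski's inequality and the pointwise decay $e^{-d(|\lambda|^{1/2}+1+|\xi^\prime|)(x_N+y_N)}$ from Lemma~\ref{l.est.exp.}, and apply the vector-valued Mikhlin–Hörmander-type theorem (Weis' multiplier theorem, as recalled in \cite{SS12}) to the remaining symbol in $\xi^\prime$, which is a Fourier multiplier of order $0$ (type $1$ or $2$) by construction, uniformly for $\lambda\in\Sigma_{\theta,r}$. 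The factor $\tau\partial_\tau$ is handled by Leibniz: $\tau\partial_\tau$ hitting $k$ preserves its multiplier class by definition of $\mathscr{M}_{s,i,\theta,r}$, while $\tau\partial_\tau$ hitting $L_1,L_2,B_a$ inside the exponential produces, via $\tau\partial_\tau L_j=\mathrm{Im}\,\lambda\, z_j'/(2L_j)$ and estimate \eqref{proof.est.tau-der.z}, at worst a bounded extra factor times the same exponential, so the $\ell=1$ families are reduced to the $\ell=0$ ones. The bound $K_m\le|\widetilde z_j|\le K_M$ of Lemma~\ref{l.est.z} and the lower bound $\mathrm{Re}\,L_j\gtrsim |\lambda|^{1/2}+1+|\xi^\prime|$ of Lemma~\ref{l.est.above.BL} are exactly what make the constant $d$ and all the implied constants depend only on $a,\beta,\theta,r$ (and then only on $\theta,r,N,q$ after absorbing $a,\beta$).

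The main obstacle I anticipate is bookkeeping rather than conceptual: one must verify, for each of $L_1,\dots,L_6$ and each of the four differential multipliers, that the power of $(|\lambda|^{1/2}+1+|\xi^\prime|)$ carried by the prefactor ($\lambda$, $\gamma$, $|\lambda|^{1/2}|\xi^\prime|$, or $|\xi^\prime|^2$) is exactly cancelled by the negative order of $k_i$ together with the one or two extra negative powers supplied by the difference quotient $\mathcal{M}$ — in particular the delicate cases are $L_3,L_4$ (where $k_2$ has order $-2$ but only $A^2$ or $A|\lambda|^{1/2}$ and one $\mathcal{M}(L_1,A,\cdot)$ are available, so the ``or $x_N$'' alternative in Lemma~\ref{l.est.exp.} must be used when $|\lambda|$ is small, integrating $\int_0^\infty x_N e^{-d(1+|\xi^\prime|)x_N}\,dx_N\lesssim (1+|\xi^\prime|)^{-2}$ to recover the missing decay) and the $D_ND_s$ cases, where one $D_N$ falls on $\mathcal{M}$, lowering its order and thereby consuming exactly one of the spare powers. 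Once this accounting is organized into a short table, the rest is a direct transcription of the proof of Lemma~5.4 of \cite{SS12}, and I would present it as such, writing out in detail only the representative case $L_3$ with the $D_ND_s$ derivative and indicating that the others are analogous.
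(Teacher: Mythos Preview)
Your proposal is correct and takes essentially the same approach as the paper: reduce each of the families $\lambda L_k$, $\gamma L_k$, $|\lambda|^{1/2}D_jL_k$, $D_jD_sL_k$ to finite combinations of the building-block operators already handled in Lemma~\ref{l.R-bound.int.0d}, by absorbing the extra powers of $\lambda$, $|\lambda|^{1/2}$, $i\xi_j$ or $D_N$ into the multiplier and using the exponential/difference-quotient estimates of Lemma~\ref{l.est.exp.}. One small correction: the paper points to Lemma~5.6 of \cite{SS12} (not 5.4) as the model for this step --- Lemma~5.4 is the model for Lemma~\ref{l.R-bound.int.0d}, and Lemma~5.6 is precisely the passage from the ``order-0'' operators $K_j$ to the ``order-2'' operators $L_k$ together with their $\lambda$-weighted and differentiated versions, which is exactly the reduction you describe.
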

The proof can be done following the proof of Lemma 5.6 of \cite{SS12} and using Lemma \ref{l.R-bound.int.0d}. It is now clear the reason beyond the Volhevic trick and the study of the Fourier Multipliers. What we will do in the proof of Theorem \ref{t.res.est.} is to verify that our solution is a combination of functions like the one of Lemma \ref{l.R-bound.int.2d}.

\subsection{Proof of Theorem \ref{t.res.est.}}
\medskip

We are ready to prove the existence of the solution by $\mathcal{R}$-boundedness. Before, we state Proposition 3.4 of \cite{DH03} and Theorem 3.3 of \cite{ES13}:
\begin{lem}\label{l.mult.R-bound.}
Let $X,Y, Z$ be three Banach spaces,
\begin{enumerate}
    \item Let $\mathcal{T}$ and $\mathcal{S}$ be two $\mathcal{R}$-bounded families of functions in $\mathcal{L}(X;Y)$, then $\mathcal{T}+\mathcal{S}$ is still an $\mathcal{R}$-bounded family of functions in $\mathcal{L}(X;Y)$ with
    $$ \mathcal{R}_{\mathcal{L}(X;Y)}(\mathcal{T}+\mathcal{S})\le \mathcal{R}_{\mathcal{L}(X;Y)}(\mathcal{T})+\mathcal{R}_{\mathcal{L}(X;Y)}(\mathcal{S}); $$
    \item Let $\mathcal{T}$ and $\mathcal{S}$ be two $\mathcal{R}$-bounded families of functions in $\mathcal{L}(X;Y)$ and $\mathcal{L}(Y;Z)$ respectively, then $\mathcal{S}\mathcal{T}$ is still an $\mathcal{R}$-bounded family of functions in $\mathcal{L}(X;Z)$ with
    $$ \mathcal{R}_{\mathcal{L}(X;Z)}(\mathcal{S}\mathcal{T})\le \mathcal{R}_{\mathcal{L}(Y;Z)}(\mathcal{S})\mathcal{R}_{\mathcal{L}(X;Y)}(\mathcal{T}). $$
\end{enumerate}
\end{lem}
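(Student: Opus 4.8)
The plan is to verify Definition \ref{d.R-bound} directly for the families $\mathcal{T}+\mathcal{S}$ and $\mathcal{S}\mathcal{T}$, keeping one fixed sequence of Rademacher functions $\{r_j\}$ throughout and exploiting that in the defining inequality the inputs $x_j$ may be chosen arbitrarily in the source space. This is, of course, nothing more than a reformulation of Proposition 3.4 of \cite{DH03} and Theorem 3.3 of \cite{ES13}, so the argument is short and self-contained.

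For part 1, I would fix $m\in\mathbb{N}$, operators $T_j\in\mathcal{T}$, $S_j\in\mathcal{S}$ and vectors $x_j\in X$ for $j=1,\dots,m$, and set $f(z)=\sum_{j=1}^m r_j(z)T_j x_j$ and $g(z)=\sum_{j=1}^m r_j(z)S_j x_j$, both regarded as elements of $L^p([0,1];Y)$. Then $\sum_j r_j(z)(T_j+S_j)x_j=f(z)+g(z)$, and Minkowski's inequality in $L^p([0,1];Y)$ gives $\|f+g\|_{L^p}\le\|f\|_{L^p}+\|g\|_{L^p}$. Applying the defining inequality for $\mathcal{T}$ and for $\mathcal{S}$ — written, after taking $p$-th roots, in the form $\|f\|_{L^p}\le\mathcal{R}(\mathcal{T})\,\|\sum_j r_j x_j\|_{L^p([0,1];X)}$ with the norm normalization of the $\mathcal{R}$-bound — yields the claimed estimate with constant $\mathcal{R}(\mathcal{T})+\mathcal{R}(\mathcal{S})$. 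The only bookkeeping point is the passage between the ``$p$-th power'' form of Definition \ref{d.R-bound} and the ``$L^p$-norm'' form; since the value of the $\mathcal{R}$-bound is independent of $p\in[1,\infty)$, one may equivalently work with the $L^p$-norm formulation, for which Minkowski applies with additive constant.

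For part 2, I would fix operators $S_jT_j$ with $T_j\in\mathcal{T}\subseteq\mathcal{L}(X;Y)$ and $S_j\in\mathcal{S}\subseteq\mathcal{L}(Y;Z)$, vectors $x_j\in X$, and put $y_j\coloneqq T_j x_j\in Y$. Then
$$ \int_0^1\Bigl\|\sum_{j=1}^m r_j(z)S_jT_j x_j\Bigr\|_Z^p\,dz=\int_0^1\Bigl\|\sum_{j=1}^m r_j(z)S_j y_j\Bigr\|_Z^p\,dz\le\mathcal{R}(\mathcal{S})\int_0^1\Bigl\|\sum_{j=1}^m r_j(z)y_j\Bigr\|_Y^p\,dz, $$
where the inequality is the defining inequality for $\mathcal{S}$ applied to the admissible inputs $y_1,\dots,y_m\in Y$ together with the \emph{same} random signs $r_j$. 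Since $\sum_j r_j(z)y_j=\sum_j r_j(z)T_j x_j$, a second application of the defining inequality, now for $\mathcal{T}$, bounds the last integral by $\mathcal{R}(\mathcal{T})\int_0^1\|\sum_j r_j(z)x_j\|_X^p\,dz$, and composing the two estimates gives the result with constant $\mathcal{R}(\mathcal{S})\mathcal{R}(\mathcal{T})$.

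Neither part presents a genuine obstacle. If anything, the single point deserving care is that one must use a fixed family $\{r_j\}$ of Rademacher functions in both applications of the defining inequality in part 2 (so that the intermediate vectors $y_j=T_j x_j$ are legitimately fed into the inequality for $\mathcal{S}$), and, in part 1, reconcile the normalization of $\mathcal{R}(\cdot)$ so that the constant comes out additive rather than of the form $(\mathcal{R}(\mathcal{T})^{1/p}+\mathcal{R}(\mathcal{S})^{1/p})^p$; both are standard.
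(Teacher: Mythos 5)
The paper does not supply its own proof of this lemma; it simply cites Proposition 3.4 of \cite{DH03} and Theorem 3.3 of \cite{ES13}, so your argument is a self-contained verification of a result the paper takes on citation. Your proof of part 2 is exactly the standard one and is correct as written: the key point, which you state explicitly, is that the intermediate vectors $y_j=T_jx_j$ are legitimate inputs to the $\mathcal{R}$-boundedness inequality for $\mathcal{S}$ provided the \emph{same} Rademacher family $\{r_j\}$ is used in both applications; and this part needs no normalization caveat, since the multiplicative bound $\mathcal{R}(\mathcal{S})\mathcal{R}(\mathcal{T})$ transforms consistently whether one works with $C$ or with $C^p$.

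For part 1 your mechanism — Minkowski's inequality in $L^p([0,1];Y)$ — is the right one, and you are correct to flag the normalization of Definition \ref{d.R-bound} as the one point needing care. However, the justification you offer, that ``the value of the $\mathcal{R}$-bound is independent of $p\in[1,\infty)$,'' is not quite accurate: by Kahane--Khintchine the \emph{notion} of $\mathcal{R}$-boundedness is $p$-independent, but the associated bounds for different $p$ agree only up to multiplicative (Kahane) constants, not exactly. The precise resolution is that Definition \ref{d.R-bound} as printed appears to be missing a power of $p$: the right-hand side should read $C^p\int_0^1\|\sum_j r_j(z)x_j\|_X^p\,dz$, so that $\mathcal{R}(\mathscr{I})$ is the optimal constant in the inequality between $L^p$-\emph{norms}. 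That is the definition actually used in \cite{DH03} and \cite{ES13}, and it is the one with respect to which the lemma's additive bound is stated; with that reading, Minkowski gives $\mathcal{R}(\mathcal{T}+\mathcal{S})\le\mathcal{R}(\mathcal{T})+\mathcal{R}(\mathcal{S})$ exactly. If one instead took the printed definition literally, Minkowski would only yield $\mathcal{R}(\mathcal{T}+\mathcal{S})\le\bigl(\mathcal{R}(\mathcal{T})^{1/p}+\mathcal{R}(\mathcal{S})^{1/p}\bigr)^p$, which for $p>1$ is strictly weaker than the stated additive bound. So your argument is sound once the definition is read in the standard way; the imprecision is really in the paper's Definition \ref{d.R-bound}, not in your proof, and you were right to sense it.
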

\begin{lem}\label{l.F.mult.RN}
Let $m\in\mathscr{M}_{0,2,\theta,0}$ in $\mathbb{R}^N$ for any $\theta\in\left(0,\frac{\pi}{2}\right)$, then 
$$ \mathcal{R}_{\mathcal{L}(W^{q,n}(\mathbb{R}^N))}(L(\lambda))\le C(q,N,\theta) \quad n=0,1, $$
where 
$$ L(\lambda)[f]\coloneqq \mathcal{F}^{-1}\left(m(\lambda,\xi)\widehat{f}(\xi)\right). $$
\end{lem}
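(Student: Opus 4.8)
The plan is to deduce the statement from the operator-valued Fourier multiplier theorem quoted as Theorem 3.3 of \cite{ES13}, combined with the observation that membership in $\mathscr{M}_{0,2,\theta,0}$ is precisely a uniform-in-$\lambda$ Mikhlin--H\"ormander estimate, and that $L(\lambda)$, being a Fourier multiplier, commutes with all partial derivatives.

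First I would treat the case $n=0$. Unwinding the definition of $\mathscr{M}_{0,2,\theta,0}$ with $s=0$, for every multi-index $\alpha$ and every $\lambda\in\Sigma_{\theta}=\Sigma_{\theta,0}$ one has $|D^\alpha_\xi m(\lambda,\xi)|\lesssim|\xi|^{-|\alpha|}$ and $|D^\alpha_\xi(\tau\partial_\tau m)(\lambda,\xi)|\lesssim|\xi|^{-|\alpha|}$, with implicit constants depending only on $\theta$ and on the (finite) order of the derivatives involved. In particular $|\xi^\alpha D^\alpha_\xi m(\lambda,\xi)|\lesssim 1$ uniformly in $\lambda$, so the symbol family $\{m(\lambda,\cdot)\}_{\lambda\in\Sigma_\theta}$ satisfies the hypotheses of the operator-valued Mikhlin theorem with constants independent of $\lambda$. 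Hence $\{L(\lambda)\mid\lambda\in\Sigma_\theta\}$ is $\mathcal{R}$-bounded in $\mathcal{L}(L^q(\mathbb{R}^N))$ with $\mathcal{R}$-bound controlled by a constant $C(q,N,\theta)$.

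For the case $n=1$ I would reduce to the previous one. Since $L(\lambda)$ is a Fourier multiplier it commutes with every $\partial_j$, and with the isomorphism $J\colon W^{q,1}(\mathbb{R}^N)\to L^q(\mathbb{R}^N)$ defined by $Jf=\mathcal{F}^{-1}[(1+|\xi|^2)^{1/2}\widehat{f}\,]$ (an isomorphism because $1<q<\infty$): $J\,L(\lambda)=L(\lambda)\,J$ on $W^{q,1}(\mathbb{R}^N)$. Thus the family $\{L(\lambda)\}$ acting on $W^{q,1}(\mathbb{R}^N)$ is conjugate, via the single fixed operator $J$, to the same family acting on $L^q(\mathbb{R}^N)$. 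Since $\mathcal{R}$-boundedness is stable under conjugation by a fixed bounded invertible operator — the $\mathcal{R}$-bound being multiplied only by $\|J\|\,\|J^{-1}\|$ — the $n=1$ estimate follows from the $n=0$ estimate, again with a constant of the form $C(q,N,\theta)$. (Alternatively one may embed $W^{q,1}(\mathbb{R}^N)$ into $L^q(\mathbb{R}^N)^{N+1}$ via $f\mapsto(f,\partial_1 f,\dots,\partial_N f)$ and use that $\mathcal{R}$-boundedness passes to finite direct sums and to restrictions to closed subspaces.)

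The only genuinely delicate point is the first step: verifying that the class $\mathscr{M}_{0,2,\theta,0}$ is contained in the class of symbols to which the cited multiplier theorem applies, with all relevant estimates holding uniformly over $\lambda\in\Sigma_\theta$. Once this is established, the passage from $L^q$ to $W^{q,1}$ and the conclusion are routine transfer arguments. I do not expect any further obstacle.
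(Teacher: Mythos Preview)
The paper does not give its own proof of this lemma: it is simply stated as a quotation of Theorem~3.3 of \cite{ES13}. Your argument is a correct sketch of the standard proof of such a result --- membership in $\mathscr{M}_{0,2,\theta,0}$ is exactly a uniform-in-$\lambda$ Mikhlin condition, the cited operator-valued multiplier theorem then yields $\mathcal{R}$-boundedness on $L^q(\mathbb{R}^N)$, and the passage to $W^{1,q}(\mathbb{R}^N)$ via commutation with derivatives (either through the Bessel potential isomorphism or the direct-sum embedding) is routine. There is nothing to compare against in the paper itself.
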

 We have already noticed that the case $\beta=0$ is special for our problem: when $\beta=0$, the system (\ref{res.sys.2}) can be perfectly splitted in a Stokes system for $u$ and a Laplace problem for $Q$ and it is easy to prove with the same approach the $\mathcal{R}$-boundedness results for these problems:
\begin{prop}\label{p.R-bound.Stokes}
Let $N\ge 2$, $q\in(1,\infty)$, $r>0$, $\theta\in \left(0,\frac{\pi}{2}\right)$, $\lambda\in\Sigma_{\theta,r}$ and let $h\in W^{2,q}(\mathbb{R}^N_+;\mathbb{R}^N)$ with $h_N=0$ on $\mathbb{R}^N_0$, then we can find a unique solution $(u,p)$ of
$$ \left\{\begin{array}{ll}
(\lambda-\Delta)u+\nabla p=0 & \text{in}\:\:\mathbb{R}^N_+ \\
{\rm div} u=0 & \text{in}\:\: \mathbb{R}^N_+ \\ 
u=h & \text{on}\:\:\mathbb{R}^N_0
\end{array}\right. $$
with $u\in W^{2,q}(\mathbb{R}^N_+;\mathbb{R}^N)$ and $p\in\widehat{H}^1_q(\mathbb{R}^N_+)$ and 
$$ u=\mathcal{A}(\lambda)\left[D^2h,|\lambda|^\frac{1}{2}\nabla h,|\lambda|h\right], $$
with
$$ \mathcal{A}(\lambda)\in Hol\left(\Sigma_{\theta,r};\mathcal{L}(W^{2,q}(\mathbb{R}^N_+;\mathbb{R}^N))\right) $$
and we can find $K>0$ such that
$$ \mathcal{R}_{\mathcal{L}(L^q(\mathbb{R}^N_+)^3)}\left(\left\{(\tau\partial_\tau)^\ell\mathcal{S}(\lambda)\mathcal{A}(\lambda)\:\Big|\:\tau\in\mathbb{R},\:\:\lambda=\gamma+i\tau\right\}\right)\le K\quad \ell=0,1$$
where $\mathcal{S}(\lambda)=(D^2,\lambda^\frac{1}{2}\nabla,\lambda)$.
\end{prop}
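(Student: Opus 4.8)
\emph{Sketch of proof.} The plan is to treat this as the decoupled case $\beta=0$ of Theorem \ref{t.res.est.} and to reuse, step by step, the machinery of this section. First I would pass to the partial Fourier transform in $x'$ and write down an explicit solution formula: applying $\mathrm{div}$ to the momentum equation gives $(-|\xi'|^2+D_N^2)\widehat p=0$, so $\widehat p=C(\xi',\lambda)e^{-Ax_N}$ with $A=|\xi'|$; then $\widehat u$ is the superposition of a particular solution $\propto e^{-Ax_N}$ of $(\lambda+|\xi'|^2-D_N^2)\widehat u=-\nabla\widehat p$ and a homogeneous one $\propto e^{-Bx_N}$, $B=\sqrt{\lambda+|\xi'|^2}$, $\mathrm{Re}\,B>0$. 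Imposing $\widehat u(0)=\widehat h$ and the divergence-free condition produces a small linear system for $C$ and the amplitude vectors whose determinant is a nonzero multiple of $A+B$; since $|A+B|\gtrsim|\lambda|^{1/2}+1+|\xi'|$ on $\Sigma_{\theta,r}$ (this uses $r>0$), all coefficients are uniquely determined and $\widehat u,\widehat p$ are explicit rational expressions in $A,B,\lambda$ times the exponentials. Note that the Stokes system is non-confluent, $A\ne B$ on $\Sigma_{\theta,r}$, so only the two exponentials $e^{-Ax_N}$, $e^{-Bx_N}$ (and never $x_Ne^{-Bx_N}$) occur.

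Next I would identify the multiplier classes: by Lemma 5.2 of \cite{SS12} (the $a=0$ analogue of Lemma \ref{l.F.M.1}) one has $B^s\in\mathscr M_{s,1,\theta,0}$, $A^s\in\mathscr M_{s,2,\theta,0}$ for $s\ge0$, and $(A+B)^{-1}\in\mathscr M_{-1,2,\theta,r}$, the remaining scalar factors $\lambda^{\pm1}$ being bounded on $\Sigma_{\theta,r}$ together with their $\tau\partial_\tau$-derivatives; hence all the coefficients, together with the powers of $A$ and $B$ prescribed by $\mathcal S(\lambda)$, recombine into finite products of multipliers of these types. Then, as in the preamble to Lemma \ref{l.est.exp.}, I would rewrite $e^{-Ax_N}=e^{-Bx_N}+(B-A)\mathcal M(B,A,x_N)$ and $\widehat u_j=\widehat h_je^{-Bx_N}+(\text{corrections})$, so that every surviving term carries a kernel $e^{-Bx_N}$ or $\mathcal M(B,A,x_N)$ times a multiplier and powers of $A,B$, with the decay estimates of Lemma \ref{l.est.exp.} (valid verbatim for $a=0$).

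Then comes the Volhevic trick: I would replace each boundary term $m(\xi',\lambda)g(\xi',0)e^{-\gamma x_N}$ by $-\int_0^\infty\partial_{y_N}\big(m(\xi',\lambda)\widehat g(\xi',y_N)e^{-\gamma(x_N+y_N)}\big)\,dy_N$; the $\partial_{y_N}$ either lands on $\widehat h$---yielding, on iteration, the data $D^2h,|\lambda|^{1/2}\nabla h,|\lambda|h$---or on the exponential/$\mathcal M$-kernel, yielding an extra factor of size $|\lambda|^{1/2}+1+|\xi'|$ absorbed into a type-$1$ or type-$2$ multiplier. After factoring out a tangential Fourier multiplier operator on $\mathbb R^N$ (controlled by Lemma \ref{l.F.mult.RN}), each resulting operator is one of the model operators $K_h$ of Lemma \ref{l.R-bound.int.0d} or $L_k$ of Lemma \ref{l.R-bound.int.2d}. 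Consequently $\mathcal S(\lambda)\mathcal A(\lambda)$, $\mathcal S(\lambda)=(D^2,\lambda^{1/2}\nabla,\lambda)$, is a finite sum of compositions of $\mathcal R$-bounded families, and Lemma \ref{l.mult.R-bound.} yields a uniform bound $\mathcal R_{\mathcal L(L^q(\mathbb R^N_+)^3)}(\{\mathcal S(\lambda)\mathcal A(\lambda)\})\le K$; since $\tau\partial_\tau$ keeps every factor inside its multiplier class and every kernel inside the estimates of Lemma \ref{l.est.exp.}, the same bound holds for $\ell=1$. Finally, holomorphy of $\lambda\mapsto\mathcal A(\lambda)$ in $\mathcal L(W^{2,q}(\mathbb R^N_+;\mathbb R^N))$ is immediate from the explicit formula with non-vanishing denominators, the pressure is recovered from $u$ and is unique up to an additive constant, and uniqueness of $u$ follows because a solution with $h=0$ solves the homogeneous Stokes resolvent problem, hence vanishes by the energy identity (equivalently by Lemma \ref{l.u.} with $\beta=a=0$).

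The main obstacle is organisational rather than conceptual: verifying that, after the Volhevic trick, each of the finitely many terms contributing to $D^2u$, $|\lambda|^{1/2}\nabla u$, $\lambda u$ matches one of the templates of Lemmas \ref{l.R-bound.int.0d}--\ref{l.R-bound.int.2d} with exactly the right multiplier type. The one genuinely delicate point is the lower bound $|A+B|\gtrsim|\lambda|^{1/2}+1+|\xi'|$, which requires $r>0$ and forces $(A+B)^{-1}$ to be a type-$2$ rather than type-$1$ multiplier because of the singularity of $A=|\xi'|$ at the origin.
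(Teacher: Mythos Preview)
Your proposal is correct and follows exactly the approach the paper indicates: the paper states this proposition without proof, remarking only that for $\beta=0$ the system decouples into a Stokes problem and a Laplace problem and that ``it is easy to prove with the same approach the $\mathcal{R}$-boundedness results for these problems.'' Your sketch supplies precisely those details---explicit Fourier solution formula with kernels $e^{-Ax_N}$, $e^{-Bx_N}$, multiplier classification via Lemma~5.2 of \cite{SS12}, the Volhevic trick, and reduction to the model operators of Lemmas~\ref{l.R-bound.int.0d}--\ref{l.R-bound.int.2d}---which is the same machinery the paper uses for the coupled problem in Theorem~\ref{t.R-bound.res.}.
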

\begin{prop}\label{p.R-bound.Lap.}
Let $N\ge 2$, $q\in(1,\infty)$, $r>0$, $\theta\in \left(0,\frac{\pi}{2}\right)$, $\lambda\in\Sigma_{\theta,r}$ and let $H\in W^{2,q}(\mathbb{R}^N_+)$, then we can find a unique solution $v$ of
$$ \left\{\begin{array}{ll}
(\lambda-\Delta)v=0 & \text{in}\:\:\mathbb{R}^N_+ \\
D_Nv=H & \text{on}\:\:\mathbb{R}^N_0
\end{array}\right. $$
with $v\in W^{3,q}(\mathbb{R}^N_+)$ and 
$$ v=\mathcal{A}(\lambda)\left[D^2H,|\lambda|^\frac{1}{2}\nabla H,|\lambda|H\right], $$
with
$$ \mathcal{A}(\lambda)\in Hol\left(\Sigma_{\theta,r};\mathcal{L}(W^{2,q}(\mathbb{R}^N_+);W^{3,q}(\mathbb{R}^N_+))\right) $$
and we can find $K>0$ such that
$$ \mathcal{R}_{\mathcal{L}(L^q(\mathbb{R}^N_+)^3)}\left(\left\{(\tau\partial_\tau)^\ell\mathcal{S}(\lambda)\mathcal{A}(\lambda)\:\Big|\:\tau\in\mathbb{R},\:\:\lambda=\gamma+i\tau\right\}\right)\le K\quad \ell=0,1$$
where $\mathcal{S}(\lambda)=(D^3,\lambda^\frac{1}{2}D^2,\lambda\nabla, \lambda^\frac{3}{2})$.
\end{prop}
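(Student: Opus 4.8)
The plan is to reduce the Neumann--Laplace problem to a scalar linear ODE in $x_N$ by a partial Fourier transform, solve that ODE explicitly, and then recognise the solution operator---after a Volevich trick---as a finite sum of the model operators of Lemmas~\ref{l.R-bound.int.0d} and~\ref{l.R-bound.int.2d} in the degenerate case where the only exponent that occurs is $B_0:=\sqrt{\lambda+|\xi'|^2}$. The $\mathcal R$-boundedness is then inherited from those lemmas together with Lemma~\ref{l.mult.R-bound.}, exactly as for Proposition~\ref{p.R-bound.Stokes}; the argument is in fact simpler than in \cite{SS12}, since no interaction between several roots $L_1,L_2,A$ and no $\mathcal M(\cdot,\cdot,\cdot)$-type kernels appear.

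\textbf{Step 1 (reduction and explicit formula).} Apply $\widehat{\cdot}$ in $x'$: the problem becomes $(\lambda+|\xi'|^2-D_N^2)\widehat v=0$ on $(0,\infty)$ with $D_N\widehat v(\xi',0)=\widehat H(\xi',0)$. Since $\lambda\in\Sigma_{\theta,r}$ with $r>0$, one has $\operatorname{Re}B_0>0$, and the unique solution in $H^2((0,\infty))$ decaying at $+\infty$ is $\widehat v(\xi',x_N)=-B_0^{-1}\widehat H(\xi',0)e^{-B_0 x_N}$. Uniqueness for the PDE follows by transforming any homogeneous solution the same way and using the ODE; holomorphy of $\mathcal A(\lambda)$ in $\lambda$ is immediate from this formula. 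I would record the multiplier facts to be used: $B_0^s\in\mathscr M_{s,1,\theta,r}$ for every $s\in\mathbb R$ (the $a=0$ instance of Lemma~\ref{l.F.M.1}, i.e.\ Lemma~5.2 of \cite{SS12}, which uses $r>0$), $\operatorname{Re}B_0\gtrsim|\lambda|^{1/2}+1+|\xi'|$, $i\xi_j\in\mathscr M_{1,1,\theta,r}$, $\lambda^{1/2}|\lambda|^{-1/2}\in\mathscr M_{0,1,\theta,r}$, $|\lambda|^{1/2}(\lambda/|\lambda|)^{1/2}\in\mathscr M_{1,1,\theta,r}$, and the elementary identity $B_0^2=\lambda+|\xi'|^2$.

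\textbf{Step 2 (Volevich trick and identification of $\mathcal A(\lambda)$).} Writing $\widehat H(\xi',0)e^{-B_0 x_N}=-\int_0^\infty\partial_{y_N}(\widehat H(\xi',y_N)e^{-B_0(x_N+y_N)})\,dy_N$ gives $\widehat v=\int_0^\infty(-\widehat H+B_0^{-1}\partial_{y_N}\widehat H)e^{-B_0(x_N+y_N)}\,dy_N$; this defines $\mathcal A(\lambda)$ once the data are normalised to $D^2H$, $|\lambda|^{1/2}\nabla H$, $|\lambda|H$. Applying an entry of $\mathcal S(\lambda)=(D^3,\lambda^{1/2}D^2,\lambda\nabla,\lambda^{3/2})$ lets derivatives act as $i\xi_j$ (tangential) or as $-B_0$ (on the exponential) together with a scalar $\lambda$-power; every power $B_0^2$ thus produced I would replace by $\lambda+|\xi'|^2$, then rewrite each arising $|\xi'|^2\widehat H$ as $-\widehat{\Delta_{x'}H}$ (a component of $D^2H$) and each $|\xi'|^2\partial_{y_N}\widehat H$ as $\sum_k i\xi_k\,\widehat{\partial_k\partial_N H}$, so that at most two derivatives ever fall on $H$, while rebalancing the remaining scalar $\lambda$-powers against the normalisation of the data via $\lambda^{1/2}|\lambda|^{-1/2}$, $\lambda/|\lambda|$, $\lambda|\lambda|^{-1}$. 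The net effect is that each entry of $\mathcal S(\lambda)\mathcal A(\lambda)$, and of $\tau\partial_\tau$ applied to it, is a finite sum of operators of the form $g\mapsto\int_0^\infty\mathcal F^{-1}_{\xi'}[\,k(\lambda,\xi')\,e^{-B_0(x_N+y_N)}\widehat g(\xi',y_N)\,]\,dy_N$ with $k\in\mathscr M_{1,1,\theta,r}$ (or exactly of the order and type of a kernel occurring in Lemmas~\ref{l.R-bound.int.0d}--\ref{l.R-bound.int.2d}, with $L_1$ replaced by $B_0$), where $g$ runs over the three data functions.

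\textbf{Step 3 (conclusion) and the main obstacle.} With $L_1$ replaced by $B_0$ the proofs of Lemmas~\ref{l.R-bound.int.0d} and~\ref{l.R-bound.int.2d} go through verbatim, since they only use estimates of the kind satisfied by $B_0$; hence each of the finitely many operators built in Step 2 generates an $\mathcal R$-bounded family on $L^q(\mathbb R^N_+)$ with bound $C(q,N,\theta,r)$, and Lemma~\ref{l.mult.R-bound.}(1) bounds the $\mathcal R$-bound of the finite sum, giving the asserted estimate for $\{(\tau\partial_\tau)^\ell\mathcal S(\lambda)\mathcal A(\lambda)\mid\lambda\in\Sigma_{\theta,r}\}$, $\ell=0,1$. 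The uniform boundedness part in particular yields $v\in W^{3,q}(\mathbb R^N_+)$ and $\mathcal A(\lambda)\in Hol(\Sigma_{\theta,r};\mathcal L(W^{2,q}(\mathbb R^N_+);W^{3,q}(\mathbb R^N_+)))$. I expect the only real work to be Step~2: one must carry out the Volevich expansion and the action of $\mathcal S(\lambda)$ carefully enough that, after using $B_0^2=\lambda+|\xi'|^2$ and the regroupings above, every coefficient lands in the correct class $\mathscr M_{s,i,\theta,r}$ and no term ever requires more than the two $x$-derivatives of $H$ that are available; once this bookkeeping is complete, the rest is an immediate assembly from results already proved.
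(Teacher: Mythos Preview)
Your proposal is correct and follows essentially the same approach that the paper indicates (the paper does not write out a proof of this proposition, merely noting that when $\beta=0$ the system splits and that ``it is easy to prove with the same approach'' the $\mathcal R$-boundedness, i.e.\ via the partial Fourier transform, explicit ODE solution, Volevich trick, and reduction to the model operators of Lemmas~\ref{l.R-bound.int.0d}--\ref{l.R-bound.int.2d}). Your Step~2 bookkeeping is exactly the point the paper leaves implicit, and it goes through as you describe since here only the single exponent $B_0$ appears and all multipliers needed are supplied by Lemma~\ref{l.F.M.1}.
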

We are finally ready to prove the main theorem:
\begin{thm}\label{t.R-bound.res.}
Let $a,r>0$, $\beta\in\mathbb{R}$, $\theta\in\left(\theta_0,\frac{\pi}{2}\right)$ with $\tan\theta_0\ge \frac{|\beta|}{\sqrt{2}}$, let $q\in(1,\infty)$, let $h\in W^{2,q}(\mathbb{R}^N_+;\mathbb{R}^N)$ and $H\in W^{2,q}(\mathbb{R}^N_+;S_0(N,\mathbb{R}))$, then we can find a unique $(u,p,Q)$ which solves
$$ \left\{\begin{array}{ll}
        (\lambda-\Delta)u+\nabla p+\beta {\rm Div}(\Delta-a) Q=0 & \text{in}\:\:\mathbb{R}^N_+ \\
        (\lambda+a-\Delta)Q-\beta D(u)=0 & \text{in}\:\:\mathbb{R}^N_+ \\
        {\rm div}u=0 & \text{in}\:\:\mathbb{R}^N_+ \\
        u=h,\quad D_NQ=H & \text{on}\:\:\mathbb{R}^N_0,
    \end{array}\right. $$
with $u\in W^{2,q}(\mathbb{R}^N_+;\mathbb{R}^N)$, $p\in \widehat{H}^1_q(\mathbb{R}^N_+)$ and $Q\in W^{3,q}(\mathbb{R}^N_+;S_0(N,\mathbb{R}))$; moreover, let us call
$$ X\coloneqq L^q\left(\mathbb{R}^N_+;\mathbb{R}^{N^3+N^4}\right)\times L^q\left(\mathbb{R}^N_+;\mathbb{R}^{N^2+N^3}\right)\times L^q\left(\mathbb{R}^N_+;\mathbb{R}^{N+N^2}\right), $$
then
$$ (u,Q)=(\mathcal{A}(\lambda),\mathcal{B}(\lambda))\left[D^2(h,H),|\lambda|^\frac{1}{2}\nabla (h,H),|\lambda|(h,H)\right], $$
with
$$ \mathcal{A}(\lambda)\in Hol\left(\Sigma_{\theta,r};\mathcal{L}\left(X;W^{2,q}\left(\mathbb{R}^N_+;\mathbb{R}^N\right)\right)\right) $$
$$ \mathcal{B}(\lambda)\in Hol\left(\Sigma_{\theta,r};\mathcal{L}\left(X;S_0(N,\mathbb R))\right)\right), $$
and we can find $K_1,K_2>0$ such that
$$ \mathcal{R}_{\mathcal{L}(X;Y_1)}\left(\left\{(\tau\partial_\tau)^\ell\mathcal{S}_1(\lambda)\mathcal{A}(\lambda)\:\Big|\:\tau\in\mathbb{R},\:\:\lambda=\gamma+i\tau\right\}\right)\le K_1 \quad \ell=0,1$$
$$ \mathcal{R}_{\mathcal{L}(X;Y_2)}\left(\left\{(\tau\partial_\tau)^\ell\mathcal{S}_2(\lambda)\mathcal{B}(\lambda)\:\Big|\:\tau\in\mathbb{R},\:\:\lambda=\gamma+i\tau\right\}\right)\le K_2\quad \ell=0,1, $$
where $\mathcal{S}_1(\lambda)=(D^2,\lambda^\frac{1}{2}\nabla,\lambda)$, $\mathcal{S}_2(\lambda)=(D^3,\lambda^\frac{1}{2}D^2,\lambda\nabla, \lambda^\frac{3}{2})$ and
$$ Y_1=L^q\left(\mathbb{R}^N_+;\mathbb{R}^{N^3}\right)\times L^q\left(\mathbb{R}^N_+;\mathbb{R}^{N^2}\right)\times L^q\left(\mathbb{R}^N_+;\mathbb{R}^N\right) $$
$$ Y_2=L^q\left(\mathbb{R}^N_+;\mathbb{R}^{N^5}\right)\times L^q\left(\mathbb{R}^N_+;\mathbb{R}^{N^4}\right)\times L^q\left(\mathbb{R}^N_+;\mathbb{R}^{N^3}\right)\times L^q\left(\mathbb{R}^N_+;\mathbb{R}^{N^2}\right). $$
\end{thm}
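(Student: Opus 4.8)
The plan is to combine the solution formula \eqref{r.f.}, the Volevich trick, and the multiplier calculus of Sections~2 and 3, treating $\beta=0$ and $\beta\ne 0$ separately. If $\beta=0$ the coupling disappears: $(u,p)$ solves the Stokes resolvent problem with boundary datum $h$, and each entry $Q_{jk}$ solves the Neumann resolvent problem $(\lambda-\Delta)Q_{jk}=0$ in $\mathbb R^N_+$, $D_NQ_{jk}=H_{jk}$ on $\mathbb R^N_0$. Hence in that case the assertion is exactly Propositions~\ref{p.R-bound.Stokes} and \ref{p.R-bound.Lap.} combined with the sum and product rules of Lemma~\ref{l.mult.R-bound.}, and there is nothing more to do. From now on I assume $\beta\ne 0$.

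For $\beta\ne 0$ I would first rewrite the solution so that the only $x_N$-profiles appearing are $e^{-L_1x_N}$, $e^{-B_ax_N}$ and the divided differences $\mathcal M(L_1,A,x_N)$, $\mathcal M(L_2,L_1,x_N)$: using the identities $e^{-L_1x_N}-e^{-Ax_N}=(L_1-A)\mathcal M(L_1,A,x_N)$, $e^{-L_2x_N}-e^{-L_1x_N}=(L_2-L_1)\mathcal M(L_2,L_1,x_N)$, and the boundary relations $A^0_k+A^1_k+A^2_k=\widehat h_k$, exactly as in the ``Setting'' subsection, so that e.g.\ $\widehat u_k=\widehat h_k e^{-L_1x_N}-A^0_k(L_1-A)\mathcal M(L_1,A,x_N)+A^2_k(L_2-L_1)\mathcal M(L_2,L_1,x_N)$, together with the analogous formulas for $\widehat Q$ and $\widehat p$. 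Next I would substitute the coefficient relations \eqref{dep.f.A^0}--\eqref{hfor.}, using the rearrangements \eqref{f.alt.C-it}, \eqref{f.alt.E} and the factorisation of $\mathcal A_a$ preceding Lemma~\ref{l.F.M.A-it}. By Lemmas~\ref{l.F.M.0}--\ref{l.F.M.E,B-it} and the lower bounds of Propositions~\ref{p.est.C.lambda.neq.0} and \ref{p.est.A.lambda.neq.0}, every coefficient then becomes a finite linear combination, with scalar factors of the form $\lambda^{-1}(\lambda+a)\in\mathscr M_{0,1,\theta,r}$ times a genuine Fourier multiplier of the appropriate order, of the data components $\widehat h_k$, $i\xi'\cdot\widehat h'$ and $\widehat H_{jk}$; the order of each such multiplier has to be tracked so that it cancels exactly the $\lambda$-weights carried later.

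Then I would apply the Volevich identity $\widehat g(\xi',0)=-\int_0^\infty\partial_{y_N}\bigl(e^{-L_1(x_N+y_N)}\widehat g(\xi',y_N)\bigr)\,dy_N$ (and likewise with the $\mathcal M$-profiles, iterated once more to lower the weight where needed), which turns every term of $\widehat u$, $\widehat Q$, $\widehat p$ into an integral operator acting on $\widehat h$, $D_N\widehat h$, $\widehat H$, $D_N\widehat H$ against a profile $e^{-L_1(x_N+y_N)}$ or $\mathcal M(\cdot,\cdot,x_N+y_N)$. Applying $\mathcal S_1(\lambda)=(D^2,|\lambda|^{1/2}\nabla,|\lambda|)$ to the $u$-part and $\mathcal S_2(\lambda)=(D^3,|\lambda|^{1/2}D^2,|\lambda|\nabla,|\lambda|^{3/2})$ to the $Q$-part produces, using Lemma~\ref{l.F.M.0} and the pointwise bounds of Lemma~\ref{l.est.exp.}, a finite sum of the model operators $K_h$ and $L_k$ of Lemmas~\ref{l.R-bound.int.0d} and \ref{l.R-bound.int.2d} (a tangential $D_j$ contributes $i\xi_j\in\mathscr M_{1,1,\theta,r}$, $D_N$ on $e^{-L_1x_N}$ contributes $-L_1$, and on $\Sigma_{\theta,r}$ one has $|\lambda|^{1/2}\sim|\lambda|^{1/2}+1$, so every $\lambda$-weight is admissible). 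By Lemmas~\ref{l.R-bound.int.0d}--\ref{l.R-bound.int.2d} and the sum/composition rule of Lemma~\ref{l.mult.R-bound.} the families $\{(\tau\partial_\tau)^\ell\mathcal S_1(\lambda)\mathcal A(\lambda)\}$ and $\{(\tau\partial_\tau)^\ell\mathcal S_2(\lambda)\mathcal B(\lambda)\}$ are $\mathcal R$-bounded for $\ell=0,1$ with the stated constants; holomorphy in $\lambda$ follows because every amplitude is a rational expression in $A,B_a,L_1,L_2,\lambda$ whose denominators ($\mathcal C_a$, $\mathcal A_a$, $L_1+A$, $L_1+B_a$, and the removable factor $L_2-L_1$ near $\lambda=\eta$) are bounded away from zero. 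Finally, uniqueness: if $(u,p,Q)$ solves the homogeneous problem in the stated classes, its partial Fourier transform solves the Fourier system \eqref{F.u.s.}--\eqref{F.BCs} with zero data, so Lemma~\ref{l.u.} forces $\widehat u=\widehat Q\equiv 0$ and $\widehat p(\xi',\cdot)=0$ for $\xi'\ne 0$, i.e.\ $u=Q=0$ and $p$ is a constant.

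The main obstacle is the step just outlined: verifying that after substituting \eqref{dep.f.A^0}--\eqref{hfor.} and running the Volevich trick, \emph{each} of the resulting terms is one of the model operators of Lemmas~\ref{l.R-bound.int.0d}--\ref{l.R-bound.int.2d}, with multiplier orders and powers of $\lambda$ balanced. The delicate contributions are those coming from $A^2_k$, $C$, $D$, $P_{jk}$ and $E_k$, where the factors $(L_2-L_1)^{-1}$, $\mathcal C_a^{-1}$ and $\mathcal A_a^{-1}$ must be absorbed simultaneously, and where the coincidence $L_1(\eta)=L_2(\eta)$ has to be handled through the limiting representation \eqref{rep.f.sol.lambda=eta} together with the uniform estimates of Propositions~\ref{p.est.C.lambda.neq.0} and \ref{p.est.A.lambda.neq.0}.
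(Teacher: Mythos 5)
Your plan matches the paper's proof in its essential architecture for the velocity part: the split into $\beta=0$ (Propositions~\ref{p.R-bound.Stokes} and~\ref{p.R-bound.Lap.}) and $\beta\ne 0$, the rewriting of $\widehat u_k$ as $\widehat h_k e^{-L_1x_N}-A^0_k(L_1-A)\mathcal M(L_1,A,x_N)+A^2_k(L_2-L_1)\mathcal M(L_2,L_1,x_N)$, the Volevich trick, the substitution of \eqref{f.alt.C-it}, \eqref{f.alt.E} and the $\mathcal A_a$-factorisation, and the reduction to the model operators of Lemmas~\ref{l.R-bound.int.0d}--\ref{l.R-bound.int.2d} together with Lemma~\ref{l.mult.R-bound.} — all of this is exactly what the paper does. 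The one genuine difference is the treatment of $\widehat Q$: you propose to run the same coefficient substitution, Volevich, and multiplier bookkeeping on the full representation formula for $\widehat Q$, whereas the paper instead observes that $Q$ solves the auxiliary Neumann problem $(\lambda+a-\Delta)Q=\beta D(u)$, $D_NQ=H$, so that the $\mathcal R$-bound for the $Q$-part follows from Proposition~\ref{p.R-bound.Lap.} composed (via Lemma~\ref{l.mult.R-bound.}) with the $\mathcal R$-bounded family already established for $u$. The paper's route is substantially lighter — it eliminates entirely the bookkeeping for the profiles $e^{-B_ax_N}$, $P_{jk}$, $D$ and the $Q^1_{jk},Q^2_{jk}$ coefficients — at the price of (implicitly) needing an inhomogeneous version of Proposition~\ref{p.R-bound.Lap.}, which the paper does not state explicitly. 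Your brute-force plan would also work but is considerably more laborious. One point in your favour: you include a uniqueness argument via Lemma~\ref{l.u.} at the end; the paper's stated proof of this theorem does not explicitly address uniqueness, deferring it instead to the duality argument of Corollary~\ref{c.un.}.
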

\begin{proof}\hfill\\
From Propositions \ref{p.R-bound.Stokes} and \ref{p.R-bound.Lap.} we have already the result for $\beta=0$. So, from now on, we will suppose $\beta\neq0$. Let $k<N$, the we have already seen that 
$$ \widehat{u}_k=\widehat{h}_ke^{-L_1x_N}-A_k^0(L_1-A)\mathcal{M}(L_1,A,x_N)+A_k^2(L_2-L_1)\mathcal{M}(L_2,L_1,x_N). $$
It can be seen by (\ref{dep.f.A^2_k}) that
$$ A_k^2(L_2-L_1)=\frac{\left\{(B_a^2-L_1^2)E_k-L_1(B_aL_1-A^2)(\widehat{h}_k-A_k^0)\right\}(B_a^2-L_2^2)}{B_a^3(L_1+L_2)-A^2B_a^2-L_1L_2}. $$
Using the Volhevic trick (see \cite{V65})
$$ u_k=\sum_{m=1}^{14}\mathcal{U}_k^m\coloneqq -\int_0^\infty \mathcal{F}_{\xi^\prime}^{-1}\left[\partial_{y_N}\left(\widehat{h}(\xi^\prime,y_N)^\prime e^{-L_1(x_N+y_N)}\right)\right]dy_N+ $$
$$ -\int_0^\infty \mathcal{F}_{\xi^\prime}^{-1}\left[\partial_{y_N}\left(\frac{i\xi_k\mathcal{E}_h(L_1-A)}{\lambda\mathcal{C}_aA}i\xi^\prime\cdot \widehat{h}(\xi^\prime,y_N)^\prime\mathcal{M}(L_1,A,x_N+y_N)\right)\right]dy_N+ $$
$$ - \int_0^\infty \mathcal{F}_{\xi^\prime}^{-1}\left[\partial_{y_N}\left(\frac{i\xi_k}{\lambda\mathcal{C}_aA}A^2(L_1-A)\widehat{H}_{NN}(\xi^\prime,y_N)\mathcal{M}(L_1,A,x_N+y_N)\right)\right]dy_N+ $$
$$ -\sum_{j=1}^{N-1}\int_0^\infty\mathcal{F}_{\xi^\prime}^{-1}\left[\partial_{y_N}\left(\frac{i\xi_k}{\lambda\mathcal{C}_aA}\frac{(L_1-A)(B_a^2+A^2)}{B_a}i\xi_j\widehat{H}_{jN}(\xi^\prime,y_N)\mathcal{M}(L_1,A,x_N+y_N)\right)\right]dy_N+ $$
$$ -\sum_{j,l=1}^{N-1}\int_0^\infty \mathcal{F}_{\xi^\prime}^{-1}\left[\partial_{y_N}\left(\frac{i\xi_k}{\lambda\mathcal{C}_aA}(L_1-A)i\xi_ji\xi_l\widehat{H}_{jl}(\xi^\prime,y_N)\mathcal{M}(L_1,A,x_N+y_N)\right)\right]dy_N+ $$
$$ -\int_0^\infty \mathcal{F}_{\xi^\prime}^{-1}\left[\partial_{y_N}\left(\frac{(B_a^2-L_1^2)(B_a^2-L_2^2)}{\mathcal{A}_a}E_k^hi\xi^\prime\cdot \widehat{h}(\xi^\prime,y_N)\mathcal{M}(L_2,L_1,x_N+y_N)\right)\right]dy_N+ $$
$$ - \int_0^\infty \mathcal{F}_{\xi^\prime}^{-1}\left[\partial_{y_N}\left(\frac{(B_a^2-L_1^2)(B_a^2-L_2^2)}{\mathcal{A}_a}E_k^{H_{NN}}\widehat{H}_{NN}(\xi^\prime,y_N)\mathcal{M}(L_2,L_1,x_N+y_N)\right)\right]dy_N+ $$
$$ - \sum_{j=1}^{N-1}\int_0^\infty \mathcal{F}_{\xi^\prime}^{-1}\left[\partial_{y_N}\left(\frac{(B_a^2-L_1^2)(B_a^2-L_2^2)}{\mathcal{A}_a}E_k^{H_{jN}}\widehat{H}_{jN}(\xi^\prime,y_N)\mathcal{M}(L_2,L_1,x_N+y_N)\right)\right]dy_N+ $$
$$ - \sum_{j,l=1}^{N-1}\int_0^\infty \mathcal{F}_{\xi^\prime}^{-1}\left[\partial_{y_N}\left(\frac{(B_a^2-L_1^2)(B_a^2-L_2^2)}{\mathcal{A}_a}E_k^{H_{jl}}\widehat{H}_{jl}(\xi^\prime,y_N)\mathcal{M}(L_2,L_1,x_N+y_N)\right)\right]dy_N+ $$
$$ +\int_0^\infty \mathcal{F}^{-1}_{\xi^\prime}\left[\partial_{y_N}\left(\frac{L_1(B_aL_1-A^2)(B_a^2-L_2^2)}{\mathcal{A}_a}\widehat{h}_k(\xi^\prime,y_N)\mathcal{M}(L_2,L_1,x_N+y_N)\right)\right]dy_N+ $$
$$ - \int_0^\infty \mathcal{F}_{\xi^\prime}^{-1}\left[\partial_{y_N}\left(\frac{i\xi_k\mathcal{E}_hL_1(B_aL_1-A^2)(B_a^2-L_2^2)}{\lambda \mathcal{C}_aA\mathcal{A}_a}i\xi^\prime\cdot \widehat{h}(\xi^\prime,y_N)^\prime\mathcal{M}(L_2,L_1,x_N+y_N)\right)\right]dy_N+ $$
$$ - \int_0^\infty \mathcal{F}_{\xi^\prime}^{-1}\left[\partial_{y_N}\left(\frac{i\xi_k}{\lambda\mathcal{C}_aA}\frac{L_1(B_aL_1-A^2)(B_a^2-L_2^2)A^2}{\mathcal{A}_a}\widehat{H}_{NN}(\xi^\prime,y_N)\mathcal{M}(L_2,L_1,x_N+y_N)\right)\right]dy_N+ $$
$$ -\sum_{j=1}^{N-1}\int_0^\infty\mathcal{F}_{\xi^\prime}^{-1}\left[\partial_{y_N}\left(\frac{i\xi_k}{\lambda\mathcal{C}_aA}\frac{L_1(B_aL_1-A^2)(B_a^2-L_2^2)}{\mathcal{A}_a}\frac{(B_a^2+A^2)}{B_a}i\xi_j\widehat{H}_{jN}(\xi^\prime,y_N)\mathcal{M}(L_2,L_1,x_N+y_N)\right)\right]dy_N+ $$
$$ -\sum_{j,l=1}^{N-1}\int_0^\infty \mathcal{F}_{\xi^\prime}^{-1}\left[\partial_{y_N}\left(\frac{i\xi_k}{\lambda\mathcal{C}_aA}\frac{L_1(B_aL_1-A^2)(B_a^2-L_2^2)}{\mathcal{A}_a}i\xi_ji\xi_l\widehat{H}_{jl}(\xi^\prime,y_N)\mathcal{M}(L_2,L_1,x_N+y_N)\right)\right]dy_N. $$
The idea will be to see every $\mathcal{U}_k^m$ with $m=1,\ldots, 14$ as one of the operators of Lemma \ref{l.R-bound.int.2d}. The proof is almost the same for every term, so we see just few of them:
$$ \mathcal{U}^1_k=-\int_0^\infty \mathcal{F}^{-1}_{\xi^\prime}\left[\partial_{y_N}\widehat{h}_k(\xi^\prime,y_N)e^{-L_1(x_N+y_N)}-L_1\widehat{h}_ke^{-L_1(x_N+y_N)}\right]dy_N. $$
Using the identity $1=\frac{\lambda+a+A^2}{B_a^2}$, we have that 
$$ \mathcal{U}_k^1=-\int_0^\infty \mathcal{F}^{-1}_{\xi^\prime}\left[\frac{\sqrt{\lambda+a}}{B^2_a}\sqrt{\lambda+a}\partial_{y_N}\widehat{h}_k(\xi^\prime,y_N)e^{-L_1(x_N+y_N)}-\sum_{j=1}^{N-1}\frac{i\xi_j}{AB_a^2}i\xi_j\partial_{y_N}\widehat{h}_k(\xi^\prime,y_N)Ae^{-L_1(x_N+y_N)}\right]dy_N+ $$
$$ +\int_0^\infty \mathcal{F}_{\xi^\prime}^{-1}\left[\frac{L_1}{B_a^2}(\lambda+a)\widehat{h}_ke^{-L_1(x_N+y_N)}+\frac{L_1}{B_a^2}A^2\widehat{h}_ke^{-L_1(x_N+y_N)}\right]dy_N. $$
We already that 
$$ \frac{\sqrt{\lambda+a}}{B_a^2},\: \frac{L_1}{B^2_a}\in\mathscr{M}_{-1,1,\theta,r},\:\:\frac{i\xi_j}{A}\in\mathscr{M}_{0,2,\theta,r},\:\:B_a^{-2}\in\mathscr{M}_{-2,1,\theta,r}. $$
Therefore, using Lemma \ref{l.R-bound.int.2d} we get the $\mathcal{R}$-boundedness for $\mathcal{U}_k^1$. 


We will repeat a similar argument for all the terms:
$$ \mathcal{U}^2_k=-\int_0^\infty\mathcal{F}^{-1}_{\xi^\prime}\left[\frac{i\xi_k\mathcal{E}_h(L_1-A)}{\lambda\mathcal{C}_aA}i\xi^\prime\cdot \partial_{y_N}\widehat{h}(\xi^\prime,y_N)\mathcal{M}(L_1,A,x_N+y_N)\right]dy_N+ $$
$$ - \int_0^\infty\mathcal{F}^{-1}_{\xi^\prime}\left[\frac{i\xi_k\mathcal{E}_h(L_1-A)}{\lambda\mathcal{C}_aA}i\xi^\prime\cdot \widehat{h}(\xi^\prime,y_N)\left(-e^{-L_1(x_N+y_N)}-A\mathcal{M}(L_1,A,x_N+y_N)\right)\right]dy_N, $$
where we used the identity
$$ \partial_{x_N}\mathcal{M}(L_1,A,x_N)=-e^{-L_1x_N}-A\mathcal{M}(L_1,A,x_N). $$
Now we notice that 
\begin{equation}\label{proof.F.M.1} \frac{L_1-A}{\lambda}=\frac{L_1-A}{\lambda+a}\frac{\lambda+a}{\lambda}\in\mathscr{M}_{-1,1,\theta,r}. \end{equation}
Therefore:
$$ \mathcal{U}^2_k=-\sum_{j=1}^{N-1}\int_0^\infty\mathcal{F}^{-1}_{\xi^\prime}\left[\frac{i\xi_ki\xi_j\mathcal{E}_h(L_1-A)\sqrt{\lambda+a}}{\lambda\mathcal{C}_aA^2|\lambda|^\frac{1}{2}B_a^2} \sqrt{\lambda+a}\partial_{y_N}\widehat{h}_j(\xi^\prime,y_N)A|\lambda|^\frac{1}{2}\mathcal{M}(L_1,A,x_N+y_N)\right]dy_N+ $$
$$ - \int_0^\infty\mathcal{F}^{-1}_{\xi^\prime}\left[\frac{i\xi_k\mathcal{E}_h(L_1-A)}{\lambda\mathcal{C}_aAB_a^2} i\xi^\prime\cdot\partial_{y_N}\widehat{h}^\prime(\xi^\prime,y_N)A^2\mathcal{M}(L_1,A,x_N+y_N)\right]dy_N+ $$
$$ + \sum_{j=1}^{N-1}\int_0^\infty\mathcal{F}^{-1}_{\xi^\prime}\left[\frac{i\xi_ki\xi_j\mathcal{E}_h(L_1-A)}{\lambda\mathcal{C}_aA^2B_a^2}[\lambda+a+A^2]\widehat{h}_j(\xi^\prime,y_N)\left(Ae^{-L_1(x_N+y_N)}+A^2\mathcal{M}(L_1,A,x_N+y_N)\right)\right]dy_N. $$
Using (\ref{proof.F.M.1}) with Lemmas \ref{l.F.M.1}, \ref{l.F.M.C-it.} and \ref{l.F.M.E,B-it}, we have that
$$ \frac{i\xi_k}{A},\mathcal{C}_a^{-1},\frac{\mathcal{E}_h(L_1-A)}{\lambda},\frac{\sqrt{\lambda+a}}{|\lambda|^\frac{1}{2}}\in\mathscr{M}_{0,2,\theta,r}, \quad B_a^{-2}\in\mathscr{M}_{-2,1,\theta,r}. $$
Therefore we can apply Lemma \ref{l.R-bound.int.2d}. 
From the definition of $E_k^h$ and Lemma \ref{l.F.M.2}
$$ E_k^h=\frac{i\xi_k\mathcal{E}_h}{\lambda\mathcal{C}_a}m_0(\lambda,\xi^\prime)+i\xi_k\frac{m_1(\lambda,\xi^\prime)}{B_a^2-A^2}, $$
with $m_0\in\mathscr{M}_{0,2,\theta,r}$ and $m_1\in\mathscr{M}_{1,2,\theta,r}$. Therefore
$$ \mathcal{U}_k^6= -\int_0^\infty\mathcal{F}_{\xi^\prime}^{-1}\left[\frac{i\xi_k(\lambda+a)(B_a^2-L_2^2)(B_a^2-L_1^2)\mathcal{E}_hm_0(\lambda,\xi^\prime)}{A\mathcal{A}_a\lambda\mathcal{C}_a(\lambda+a)}i\xi^\prime\cdot \partial_{y_N}\widehat{h}^\prime(\xi^\prime,y_N)A\mathcal{M}(L_2,L_1,x_N+y_N)\right]dy_N+ $$
$$ - \int_0^\infty\mathcal{F}_{\xi^\prime}^{-1}\left[\frac{i\xi_k(\lambda+a)(B_a^2-L_2^2)(B_a^2-L_1^2)m_1(\lambda,\xi^\prime)}{A\mathcal{A}_a(B_a^2-A^2)(\lambda+a)}i\xi^\prime\cdot \partial_{y_N}\widehat{h}^\prime(\xi^\prime,y_N)A\mathcal{M}(L_2,L_1,x_N+y_N)\right]dy_N+ $$
$$ +\int_0^\infty\mathcal{F}_{\xi^\prime}^{-1}\left[\frac{i\xi_k(\lambda+a)^\frac{3}{2}(B_a^2-L_2^2)(B_a^2-L_1^2)\mathcal{E}_hm_0(\lambda,\xi^\prime)}{B_a^2\mathcal{A}_a\lambda\mathcal{C}_a(\lambda+a)A}\sqrt{\lambda+a}i\xi^\prime\cdot \widehat{h}^\prime(\xi^\prime,y_N)Ae^{-L_2(x_N+y_N)}\right]dy_N+ $$
$$ -\sum_{j=1}^{N-1}\int_0^\infty\mathcal{F}_{\xi^\prime}^{-1}\left[\frac{i\xi_k(\lambda+a)(B_a^2-L_2^2)(B_a^2-L_1^2)\mathcal{E}_hm_0(\lambda,\xi^\prime)i\xi_j}{B_a^2\mathcal{A}_a\lambda\mathcal{C}_a(\lambda+a)A}i\xi_ji\xi^\prime\cdot \widehat{h}^\prime(\xi^\prime,y_N)Ae^{-L_2(x_N+y_N)}\right]dy_N+ $$
$$ +\int_0^\infty\mathcal{F}_{\xi^\prime}^{-1}\left[\frac{i\xi_k(\lambda+a)^\frac{3}{2}(B_a^2-L_2^2)(B_a^2-L_1^2)m_1(\lambda,\xi^\prime)}{B_a^2\mathcal{A}_a(B_a^2-A^2)(\lambda+a)A}\sqrt{\lambda+a}i\xi^\prime\cdot \widehat{h}^\prime(\xi^\prime,y_N)Ae^{-L_2(x_N+y_N)}\right]dy_N+ $$
$$ -\sum_{j=1}^{N-1}\int_0^\infty\mathcal{F}_{\xi^\prime}^{-1}\left[\frac{i\xi_k(\lambda+a)(B_a^2-L_2^2)(B_a^2-L_1^2)m_1(\lambda,\xi^\prime)i\xi_j}{B_a^2\mathcal{A}_a(B_a^2-A^2)(\lambda+a)A}i\xi_ji\xi^\prime\cdot \widehat{h}^\prime(\xi^\prime,y_N)Ae^{-L_2(x_N+y_N)}\right]dy_N+ $$
$$ +\int_0^\infty\mathcal{F}_{\xi^\prime}^{-1}\left[\frac{i\xi_k(\lambda+a)^\frac{3}{2}(B_a^2-L_2^2)(B_a^2-L_1^2)\mathcal{E}_hL_1m_0}{B_a^2\mathcal{A}_a\lambda\mathcal{C}_a(\lambda+a)A|\lambda|^\frac{1}{2}}\sqrt{\lambda+a}i\xi^\prime\cdot \widehat{h}^\prime(\xi^\prime,y_N)|\lambda|^\frac{1}{2}A\mathcal{M}(L_2,L_1,x_N+y_N)\right]dy_N+ $$
$$ +\int_0^\infty\mathcal{F}_{\xi^\prime}^{-1}\left[\frac{(\lambda+a)(B_a^2-L_2^2)(B_a^2-L_1^2)\mathcal{E}_hL_1m_0}{B_a^2\mathcal{A}_a\lambda\mathcal{C}_a(\lambda+a)}i\xi_ki\xi^\prime\cdot \widehat{h}^\prime(\xi^\prime,y_N)A^2\mathcal{M}(L_2,L_1,x_N+y_N)\right]dy_N+ $$
$$ +\int_0^\infty\mathcal{F}_{\xi^\prime}^{-1}\left[\frac{i\xi_k(\lambda+a)^\frac{3}{2}(B_a^2-L_2^2)(B_a^2-L_1^2)L_1m_1}{B_a^2\mathcal{A}_a(B_a^2-A^2)(\lambda+a)A|\lambda|^\frac{1}{2}}\sqrt{\lambda+a}i\xi^\prime\cdot \widehat{h}^\prime(\xi^\prime,y_N)|\lambda|^\frac{1}{2}A\mathcal{M}(L_2,L_1,x_N+y_N)\right]dy_N+ $$
$$ +\int_0^\infty\mathcal{F}_{\xi^\prime}^{-1}\left[\frac{(\lambda+a)(B_a^2-L_2^2)(B_a^2-L_1^2)L_1m_1}{B_a^2\mathcal{A}_a(B_a^2-A^2)(\lambda+a)}i\xi_ki\xi^\prime\cdot \widehat{h}^\prime(\xi^\prime,y_N)A^2\mathcal{M}(L_2,L_1,x_N+y_N)\right]dy_N+ $$
Thanks to Lemmas \ref{l.F.M.A-it} and \ref{l.F.M.E,B-it} we have that
$$ \frac{i\xi_k}{A},\frac{B_a^2-L_1^2}{\lambda},\frac{B_a^2-L_2^2}{\lambda+a},\mathcal{C}_a^{-1},m_0,\frac{(\lambda+a)L_1\mathcal{E}_h}{\mathcal{A}_a},\frac{(\lambda+a)i\xi_j\mathcal{E}_h}{\mathcal{A}_a},\frac{(\lambda+a)L_1m_1}{\mathcal{A}_a},\frac{(\lambda+a)m_1i\xi_j}{\mathcal{A}_a}\in\mathscr{M}_{0,2,\theta,r}, $$
$$ \frac{\sqrt{\lambda+a}}{|\lambda|^\frac{1}{2}},\frac{(\lambda+a)^\frac{3}{2}\mathcal{E}_h}{\mathcal{A}_a},\frac{(\lambda+a)^\frac{3}{2}m_1}{\mathcal{A}_a},\frac{(\lambda+a)^\frac{3}{2}i\xi_j}{\mathcal{A}_a},\frac{(\lambda+a)^\frac{3}{2}L_1}{\mathcal{A}_a}, \in\mathscr{M}_{0,2,\theta,r}, $$
$$ \frac{(\lambda+a)\mathcal{E}_h}{\mathcal{A}_a},\frac{(\lambda+a)m_1}{\mathcal{A}_a}\in\mathscr{M}_{-1,2,\theta,r}\:\: B_a^{-2}\in\mathscr{M}_{-2,1,\theta,r}  .  $$
Then we can apply Lemma \ref{l.R-bound.int.2d}. We can divide $\mathcal{U}_k^8=\mathcal{U}_k^{8,1}+\mathcal{U}_k^{8,2}+\mathcal{U}_k^{8,3}$. The terms $\mathcal{U}_k^{8,1}$ and $\mathcal{U}_k^{8,2}$ can be treated as before. For what concerns $\mathcal{U}_k^{8,3}$
$$ \mathcal{U}_k^{8,3}=-\int_0^\infty \mathcal{F}^{-1}_{\xi^\prime}\left[\frac{2(\lambda+a)^\frac{3}{2}(B_a^2-L_1^2)(B_a^2-L_2^2)}{\beta B_a(\lambda+a)\mathcal{A}_a}\sqrt{\lambda+a}\partial_{y_N}\widehat{H}_{kN}(\xi^\prime,y_N)\mathcal{M}(L_2,L_1,x_N+y_N)\right]dy_N+ $$
$$ +\sum_{l=1}^{N-1}\int_0^\infty \mathcal{F}^{-1}_{\xi^\prime}\left[\frac{2i\xi_l(\lambda+a)(B_a^2-L_1^2)(B_a^2-L_2^2)}{\beta AB_a(\lambda+a)\mathcal{A}_a}i\xi_l\partial_{y_N}\widehat{H}_{kN}(\xi^\prime,y_N)A\mathcal{M}(L_2,L_1,x_N+y_N)\right]dy_N+ $$
$$ +\int_0^\infty \mathcal{F}^{-1}_{\xi^\prime}\left[\frac{2(\lambda+a)(B_a^2-L_1^2)(B_a^2-L_2^2)}{\beta(\lambda+a) B_a\mathcal{A}_a}B_a^2\widehat{H}_{jN}(\xi^\prime,y_N)\left(e^{-L_2(x_N+y_N)}+L_1\mathcal{M}(L_2,L_1,x_N+y_N)\right)\right]dy_N. $$
Moreover
$$ \frac{B_a^2-L_1^2}{\lambda+a},\frac{(\lambda+a)(B_a^2-L_2^2)}{\mathcal{A}_a},\frac{L_1}{B_a}\in\mathscr{M}_{0,1,\theta,r},\:\:\frac{i\xi_l}{A}\in\mathscr{M}_{0,2,\theta,r},\:\: B_a^{-1}\in\mathscr{M}_{-1,1,\theta,r}. $$
Then we can apply Lemma \ref{l.R-bound.int.2d}.

\vspace{2mm}

 When $k=N$ then $\widehat{h}_N=0$ and
$$ \widehat{u}_N=-A_N^0(L_1-A)\mathcal{M}(L_1,A,x_N)+A_N^2(L_2-L_1)\mathcal{M}(L_2,L_1,x_N)= $$
$$ =-\frac{AC(L_1-A)}{\lambda}\mathcal{M}(L_1,A,x_N)+\frac{A(L_1-A)}{\lambda}C\mathcal{M}(L_2,L_1,x_N)+i\xi^\prime\cdot \widehat{h}^\prime\mathcal{M}(L_2,L_1,x_N) $$
and we can conclude as before. 
Let us pass now to $Q$: $Q$ solves the system 
$$ \left\{\begin{array}{ll}
    (\lambda+a-\Delta)Q=\beta D(u) & \text{in}\:\:\mathbb{R}^N_+ \\
    D_NQ=H & \text{on}\:\:\mathbb{R}^N_0. 
\end{array}\right. $$
Then using Proposition \ref{p.R-bound.Lap.} and the $\mathcal{R}$-boundedness of $u$ that we have just proved, we conclude.
\end{proof}
\begin{rem}\label{r.res.est.}
Considering the proof just done, it can be seen that $\mathcal{S}_j(\lambda)$ can be switched with $\mathcal{S}_j(\gamma)$ for $j=1,2$, where $\lambda=\gamma+i\tau$ (look at the proof of Lemma 5.6 of \cite{SS12}).
\end{rem}
Let us focus on the resolvent estimate for the system
$$ \left\{\begin{array}{ll}
        (\lambda-\Delta)u+\nabla p+\beta {\rm Div}(\Delta-a) Q=f & \text{in}\:\:\mathbb{R}^N_+ \\
        (\lambda+a-\Delta)Q-\beta D(u)=G & \text{in}\:\:\mathbb{R}^N_+ \\
        {\rm div}u=0 & \text{in}\:\:\mathbb{R}^N_+ \\
        u_{|x_N=0}=h,\quad D_NQ_{|x_N=0}=H & \text{on}\:\:\mathbb{R}^N_0.
    \end{array}\right.
    $$
The resolvent estimate for the case $f=G=0$ follows just from the $\mathcal{R}$-boundedness of the solution $(u,p,Q)$ (see $m=1$ of the definition \ref{d.R-bound}). The case with $f,G\neq 0$ is a consequence of Theorem \ref{t.R-bound.res.}:
\begin{cor}\label{c.ex.}
Let $a,r>0$, $\beta\in\mathbb{R}$, $\theta\in\left(\theta_0,\frac{\pi}{2}\right)$ with $\tan\theta_0\ge \frac{|\beta|}{\sqrt{2}}$, let $q\in(1,\infty)$, let $f\in L^q(\mathbb{R}^N_+;\mathbb{R}^N)$ and $G\in W^{1,q}(\mathbb{R}^N_+;S_0(N,\mathbb{R}))$, then for any $\lambda\in\Sigma_{\theta,r}$ we can find $u\in W^{2,q}(\mathbb{R}^N_+;\mathbb{R}^N)$, $p\in \widehat{H}^1_q(\mathbb{R}^N_+)$ and $Q\in W^{3,q}(\mathbb{R}^N_+,S_0(N,\mathbb{R}))$ solution of the system
\begin{equation}\label{sys.no-BCs}
     \left\{\begin{array}{ll}
        (\lambda-\Delta)u+\nabla p+\beta {\rm Div}(\Delta-a) Q=f & \text{in}\:\:\mathbb{R}^N_+ \\
        (\lambda+a-\Delta)Q-\beta D(u)=G & \text{in}\:\:\mathbb{R}^N_+ \\
        {\rm div}u=0 & \text{in}\:\:\mathbb{R}^N_+ \\
        u_{|x_N=0}=0,\quad D_NQ_{|x_N=0}=0 & \text{on}\:\:\mathbb{R}^N_0,
    \end{array}\right.
\end{equation}
moreover
$$ \left\|\left(|\lambda|u,|\lambda|^\frac{1}{2}\nabla u,D^2u\right)\right\|_{L^q(\mathbb{R}^N_+)}+\|\nabla p\|_{L^q(\mathbb{R}^N_+)}+\left\|\left(|\lambda|^\frac{3}{2}Q, |\lambda|\nabla Q, |\lambda|^\frac{1}{2}D^2Q, D^3Q\right)\right\|_{L^q(\mathbb{R}^N_+)}\le $$
$$ \le C(a,\beta,\theta,r,q,N)\left[ \|f\|_{L^q(\mathbb{R}^N_+)}+\left\|\left(|\lambda|^\frac{1}{2}G,\nabla G\right)\right\|_{L^q(\mathbb{R}^N_+)}\right]. $$
\end{cor}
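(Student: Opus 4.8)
\emph{Strategy.} The plan is to reduce \eqref{sys.no-BCs} to the already solved case $f=G=0$ of Theorem~\ref{t.R-bound.res.} by subtracting off a whole-space solution. First I would extend the data: fixing a bounded extension operator, set $\tilde f\in L^q(\mathbb R^N;\mathbb R^N)$ and $\tilde G\in W^{1,q}(\mathbb R^N;S_0(N,\mathbb R))$ (the extension acts componentwise, so the symmetry and trace-free structure of $G$ is preserved), with $\|\tilde f\|_{L^q(\mathbb R^N)}\lesssim\|f\|_{L^q(\mathbb R^N_+)}$ and $\|\tilde G\|_{W^{1,q}(\mathbb R^N)}\lesssim\|G\|_{W^{1,q}(\mathbb R^N_+)}$. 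Then I would solve the whole-space counterpart of \eqref{sys.no-BCs}, i.e.\ the resolvent problem for the Beris--Edwards system on $\mathbb R^N$ with data $\tilde f,\tilde G$, obtaining $(v,\pi,R)$ with ${\rm div}\,v=0$, $R$ valued in $S_0(N,\mathbb R)$, and
\[
\big\|\big(|\lambda|v,|\lambda|^{1/2}\nabla v,D^2v\big)\big\|_{L^q(\mathbb R^N)}+\|\nabla\pi\|_{L^q(\mathbb R^N)}+\big\|\big(|\lambda|^{3/2}R,|\lambda|\nabla R,|\lambda|^{1/2}D^2R,D^3R\big)\big\|_{L^q(\mathbb R^N)}\le C\Big(\|\tilde f\|_{L^q(\mathbb R^N)}+\big\|\big(|\lambda|^{1/2}\tilde G,\nabla\tilde G\big)\big\|_{L^q(\mathbb R^N)}\Big)
\]
for $\lambda\in\Sigma_{\theta,r}$. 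This whole-space estimate is established in \cite{MS22} (with the shift $a$ harmless by Lemma~\ref{l.est.lambda+a}); alternatively it follows directly from the full Fourier transform and the symbol analysis of Sections~2--3, which is strictly simpler in the absence of a boundary, with the same angle restriction $\tan\theta_0\ge|\beta|/\sqrt2$ arising from $z_{1,2}\notin\mathbb R_-$ as in Lemma~\ref{l.realz}.

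Next I would put $h:=-v|_{\mathbb R^N_+}\in W^{2,q}(\mathbb R^N_+;\mathbb R^N)$ and $H:=-D_NR|_{\mathbb R^N_+}\in W^{2,q}(\mathbb R^N_+;S_0(N,\mathbb R))$ (these lie in $W^{2,q}(\mathbb R^N_+)$ since $v\in W^{2,q}(\mathbb R^N)$ and $R\in W^{3,q}(\mathbb R^N)$, using $|\lambda|\ge r>0$), and apply Theorem~\ref{t.R-bound.res.}; taking $m=1$ in Definition~\ref{d.R-bound} turns its $\mathcal R$-boundedness into the plain resolvent estimate for each fixed $\lambda$. This produces $(w,\sigma,S)$ solving the homogeneous-interior-data system in $\mathbb R^N_+$ with $w=h$, $D_NS=H$ on $\mathbb R^N_0$, and
\[
\big\|\big(|\lambda|w,|\lambda|^{1/2}\nabla w,D^2w\big)\big\|_{L^q(\mathbb R^N_+)}+\big\|\big(|\lambda|^{3/2}S,|\lambda|\nabla S,|\lambda|^{1/2}D^2S,D^3S\big)\big\|_{L^q(\mathbb R^N_+)}\le C\big\|\big(|\lambda|(h,H),|\lambda|^{1/2}\nabla(h,H),D^2(h,H)\big)\big\|_{L^q(\mathbb R^N_+)}.
\]
Since $h=-v$ and $H=-D_NR$, the right-hand side is dominated by the $L^q(\mathbb R^N_+)$-norms of $(|\lambda|v,|\lambda|^{1/2}\nabla v,D^2v)$ and $(|\lambda|\nabla R,|\lambda|^{1/2}D^2R,D^3R)$, hence by the previous step by $C\big(\|f\|_{L^q(\mathbb R^N_+)}+\|(|\lambda|^{1/2}G,\nabla G)\|_{L^q(\mathbb R^N_+)}\big)$; the pressure term $\|\nabla\sigma\|_{L^q(\mathbb R^N_+)}$ is controlled in the same way by writing $\nabla\sigma=-(\lambda-\Delta)w-\beta{\rm Div}(\Delta-a)S$ and using $|\lambda|\ge r$.

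Finally I would set $u:=v|_{\mathbb R^N_+}+w$, $p:=\pi|_{\mathbb R^N_+}+\sigma$, $Q:=R|_{\mathbb R^N_+}+S$. Adding the two systems and using $\tilde f|_{\mathbb R^N_+}=f$, $\tilde G|_{\mathbb R^N_+}=G$ shows that $(u,p,Q)$ solves \eqref{sys.no-BCs}, with ${\rm div}\,u=0$, $u|_{x_N=0}=v|_{x_N=0}+h|_{x_N=0}=0$ and $D_NQ|_{x_N=0}=D_NR|_{x_N=0}+H|_{x_N=0}=0$; moreover $u\in W^{2,q}$, $p\in\widehat H^1_q(\mathbb R^N_+)$, $Q\in W^{3,q}$ is $S_0(N,\mathbb R)$-valued, and the claimed estimate follows by adding the bounds of the two steps and the boundedness of the extension. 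Uniqueness, which is not asserted in the statement, would follow by applying the estimate to the difference of two solutions with $f=G=h=H=0$, i.e.\ ultimately from Lemma~\ref{l.u.}.

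\emph{Main obstacle.} The only real point to secure is that the whole-space resolvent estimate holds with exactly the sector $\Sigma_{\theta,r}$ and the angle condition $\tan\theta_0\ge|\beta|/\sqrt2$ needed here; this is either imported from \cite{MS22} or re-proved by the symbol computation, which is a simplification of the half-space work already carried out. Thus the substantive content of the corollary is the reduction scheme above, and the only care required is the bookkeeping of the $|\lambda|$-weights so that the weights appearing on $h=-v$ and $H=-D_NR$ match those produced by the half-space solution operator.
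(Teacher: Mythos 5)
Your decomposition is the same one the paper uses: solve the whole-space Beris--Edwards resolvent problem with extended data to peel off $f,G$, then correct the boundary trace by Theorem~\ref{t.R-bound.res.}, and add. However, there is a genuine gap in the first step, and it is precisely the point the paper is careful about. You extend $f,G$ by ``fixing a bounded extension operator''; with an arbitrary extension there is no reason for the normal velocity $v_N$ of the whole-space solution to vanish on $\{x_N=0\}$. When you then set $h:=-v|_{\mathbb R^N_+}$ and try to apply Theorem~\ref{t.R-bound.res.}, you need $h_N=0$ on $\mathbb R^N_0$: this is stated explicitly in Theorem~\ref{t.res.est.} and is used implicitly in the proof of Theorem~\ref{t.R-bound.res.} (``When $k=N$ then $\widehat h_N=0$ and\dots''), since otherwise the solution formula $\widehat u_N=-A_N^0(L_1-A)\mathcal M(L_1,A,x_N)+A_N^2(L_2-L_1)\mathcal M(L_2,L_1,x_N)$ does not satisfy the boundary condition. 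The condition is not cosmetic; it is forced by the divergence-free constraint.

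The paper fixes this by choosing the extensions $E_v,E_M$ with a specific parity in $x_N$: even extension for the tangential components $f_1,\dots,f_{N-1}$ and odd for $f_N$, and correspondingly for $G$ (even for $G_{jk}$ with $j,k<N$ and for $G_{NN}$, odd for $G_{jN}$ and $G_{Nk}$). With this choice, the whole-space solution $(v_1,\rho_1,V_1)$ inherits the same symmetry, which forces $(v_1\cdot e_N)|_{x_N=0}=0$, so the boundary data fed into Theorem~\ref{t.R-bound.res.} are admissible. You should replace ``fixing a bounded extension operator'' by this explicit even/odd extension and argue that the parity propagates to $v,\pi,R$ (a standard uniqueness-by-symmetry argument for the whole-space problem), which yields $v_N|_{x_N=0}=0$. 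The rest of your bookkeeping of the $|\lambda|$-weights and the pressure is correct.
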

\begin{proof}\hfill\\
From \cite{MS22} we know that (\ref{sys.no-BCs}) admits a solution in $\mathbb{R}^N$. Let 
$$ \lambda Id-\mathcal{A}\colon L^q(\mathbb{R}^N;\mathbb{R}^N)\times W^{1,q}(\mathbb{R}^N;S_0(N,\mathbb{R}))\to W^{2,q}(\mathbb{R}^N;\mathbb{R}^N)\times \widehat{H}^1_q(\mathbb{R}^N;\mathbb{R})\times W^{3,q}(\mathbb{R}^N;S_0(N,\mathbb{R})), $$
be the operator correspondent to the system (\ref{sys.no-BCs}) in $\mathbb{R}^N$. Then our solution $(u,p,Q)$ will be the summation of $(v_1,\rho_1,V_1)$ and $(v_2,\rho_2, V_2)$, where 
$$ (v_1,\rho_1,V_1)=(\lambda-\mathcal{A})^{-1}(E_v[f],E_M[G]), $$
where $E_v,E_M$ are suitable extensions from $\mathbb{R}^N_+$ to $\mathbb{R}^N$ 
and $(v_2,\rho_2,V_2)$ is the solution of
$$  \left\{\begin{array}{ll}
        (\lambda-\Delta)v_2+\nabla \rho_2+\beta {\rm Div}(\Delta-a) V_2=0 & \text{in}\:\: \mathbb{R}^N_+ \\
        (\lambda+a-\Delta)V_2-\beta D(v_2)=0 & \text{in}\:\:\mathbb{R}^N_+ \\
        {\rm div}v_2=0 & \text{in}\:\:\mathbb{R}^N_+ \\
        v_2=-{v_1}_{|x_N=0},\quad D_NV_2=-{D_NV_1}_{|x_N=0} & \text{on}\:\:\mathbb{R}^{N}_0.
    \end{array}\right. $$
It can be seen that, if choose $E_v$ and $E_M$ as it follows
$$ E_v[f_k]=\left\{\begin{array}{ll}
        E_{even}[f_k] & k=1,\ldots, N-1 \\
        E_{odd}[f_N] & k=N
    \end{array}\right. $$
    $$ E_M[G_{jk}]=\left\{\begin{array}{ll}
        E_{even}[G_{jk}] & j,k=1n\ldots, N-1 \\
        E_{odd}[G_{Nk}] & j=N,\:\:k=1,\ldots, N-1 \\
        E_{odd}[G_{jN}] & j=1,\ldots, N-1,\:k=N \\
        E_{even}[G_{NN}] & j=k=N,
    \end{array}\right. $$
    then $(v_1\cdot e_N)_{x_N=0}=0$, so we can really find $(v_2,\rho_2,V_2)$ which resolves the previous system. By construction $(u,p,Q)$ solves (\ref{sys.no-BCs}) in $\mathbb{R}^N_+$ and thanks to the resolvent estimate for $(\lambda-\mathcal{A})^{-1}$ in \cite{MS22} we conclude.
\end{proof}
 The uniqueness is a consequence of the existence result:
\begin{cor}\label{c.un.}
Let $a,r>0$, $\beta\in\mathbb{R}$ $\theta\in\left(\theta_0,\frac{\pi}{2}\right)$ with $\tan\theta_0\ge \frac{|\beta|}{\sqrt{2}}$, let $q\in(1,\infty)$, let $f\in L^q(\mathbb{R}^N_+;\mathbb{R}^N)$ and $G\in W^{1,q}(\mathbb{R}^N_+;S_0(N,\mathbb{R}))$, then for any $\lambda\in\Sigma_{\theta,r}$ there are $u\in W^{2,q}(\mathbb{R}^N_+;\mathbb{R}^N)$, $p\in \widehat{H}^1_q(\mathbb{R}^N_+)$ and $Q\in W^{3,q}(\mathbb{R}^N_+,S_0(N,\mathbb{R}))$ unique solution of (\ref{sys.no-BCs}), up to additive constants on the pressure term $p$.
\end{cor}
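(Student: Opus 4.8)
The plan is to obtain uniqueness from the solvability of an adjoint resolvent problem. By linearity it suffices to show that if $(u,p,Q)$ solves \eqref{sys.no-BCs} with $f=G=0$, then $u\equiv 0$, $Q\equiv 0$ and $\nabla p\equiv 0$. Fix $\lambda\in\Sigma_{\theta,r}$; for test fields $g\in C^\infty_c(\mathbb{R}^N_+;\mathbb{R}^N)$ and $K\in C^\infty_c(\mathbb{R}^N_+;S_0(N,\mathbb{R}))$ I would introduce the adjoint system
$$ \left\{\begin{array}{ll}
(\lambda-\Delta)\phi+\nabla\pi+\beta\,{\rm Div}\,\Psi=g & \text{in}\:\:\mathbb{R}^N_+ \\
(\lambda+a-\Delta)\Psi-\beta(\Delta-a)D(\phi)=K & \text{in}\:\:\mathbb{R}^N_+ \\
{\rm div}\,\phi=0 & \text{in}\:\:\mathbb{R}^N_+ \\
\phi=0,\quad D_N(\Psi+\beta D(\phi))=0 & \text{on}\:\:\mathbb{R}^N_0.
\end{array}\right. $$
Its interior operator is the formal transpose of the one in \eqref{sys.no-BCs}, computed by integration by parts using the symmetry of the $S_0$‑valued unknowns: the transpose of $Q\mapsto\beta\,{\rm Div}(\Delta-a)Q$ is $\phi\mapsto-\beta(\Delta-a)D(\phi)$, and that of $u\mapsto-\beta D(u)$ is $\Psi\mapsto\beta\,{\rm Div}\,\Psi$, while ${\rm div}\,u=0={\rm div}\,\phi$ removes the two pressure terms from the pairing. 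The Dirichlet condition $\phi=0$ is the one matching $u=0$; the boundary condition on $\Psi$ is forced by Green's identity, because after all integrations by parts and using $D_NQ=0$ on $\mathbb{R}^N_0$ the unique surviving boundary contribution in the pairing of $(u,p,Q)$ with $(\phi,\pi,\Psi)$ turns out to be $\int_{\mathbb{R}^N_0}Q:D_N(\Psi+\beta D(\phi))\,dx'$.

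Next I would check that, for this fixed $\lambda$, the adjoint problem is solvable with $\phi\in W^{2,q'}(\mathbb{R}^N_+;\mathbb{R}^N)$, $\pi\in\widehat{H}^1_{q'}(\mathbb{R}^N_+)$ and $\Psi\in W^{3,q'}(\mathbb{R}^N_+;S_0(N,\mathbb{R}))$. Putting $\widetilde\Psi:=\Psi+\beta D(\phi)$, which belongs to $S_0(N,\mathbb{R})$ since ${\rm tr}\,D(\phi)={\rm div}\,\phi=0$, and using ${\rm Div}\,D(\phi)=\tfrac12\Delta\phi$, the system rewrites as
$$ \left\{\begin{array}{ll}
(\lambda-(1+\frac{\beta^2}{2})\Delta)\phi+\nabla\pi+\beta\,{\rm Div}\,\widetilde\Psi=g & \text{in}\:\:\mathbb{R}^N_+ \\
(\lambda+a-\Delta)\widetilde\Psi-\beta\lambda D(\phi)=K & \text{in}\:\:\mathbb{R}^N_+ \\
{\rm div}\,\phi=0 & \text{in}\:\:\mathbb{R}^N_+ \\
\phi=0,\quad D_N\widetilde\Psi=0 & \text{on}\:\:\mathbb{R}^N_0,
\end{array}\right. $$
i.e. a coupled Stokes--parabolic resolvent system of the same type as \eqref{res.sys.} with $h=H=0$, the only differences being a harmless positive coefficient in front of $\Delta\phi$ and the replacement of the third‑order coupling $\beta\,{\rm Div}(\Delta-a)Q$ by the first‑order one $\beta\,{\rm Div}\,\widetilde\Psi$. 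Hence it is solvable by exactly the scheme of Sections~2 and~3 (partial Fourier transform, representation formula, $\mathcal{R}$‑bounded solution operators), the analysis being in fact simpler, at worst after enlarging the opening angle $\theta_0$ of the sector; then $\Psi=\widetilde\Psi-\beta D(\phi)$, and elliptic regularity from the smooth right‑hand side makes $\phi$ as regular as the boundary traces below require.

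I would then pair with $K=0$. Writing the pairing $\langle u,g\rangle$ over $\mathbb{R}^N_+$ in terms of the adjoint solution and integrating by parts, the interior terms recombine into $\langle(\lambda-\Delta)u+\beta\,{\rm Div}(\Delta-a)Q,\phi\rangle+\langle(\lambda+a-\Delta)Q-\beta D(u),\Psi\rangle=\langle-\nabla p,\phi\rangle$ by the two equations of \eqref{sys.no-BCs} with $f=G=0$, and $\langle-\nabla p,\phi\rangle=0$ because ${\rm div}\,\phi=0$ and $\phi=0$ on $\mathbb{R}^N_0$; all boundary terms vanish thanks to $u=\phi=0$ and $D_NQ=0$ on $\mathbb{R}^N_0$, the last one precisely against the adjoint condition $D_N(\Psi+\beta D(\phi))=0$. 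Thus $\langle u,g\rangle=0$ for every $g\in C^\infty_c(\mathbb{R}^N_+;\mathbb{R}^N)$, whence $u\equiv 0$. Substituting $u=0$ into the second equation of \eqref{sys.no-BCs}, $Q$ solves $(\lambda+a-\Delta)Q=0$ in $\mathbb{R}^N_+$ with $D_NQ=0$ on $\mathbb{R}^N_0$, so $Q\equiv 0$ by the standard uniqueness for this scalar Neumann resolvent problem (e.g.\ by even reflection to $\mathbb{R}^N$). Finally the first equation of \eqref{sys.no-BCs} gives $\nabla p\equiv 0$, so $p$ is constant, which is the claimed uniqueness up to additive constants on the pressure.

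The crux is the adjoint boundary condition forced by the third‑order coupling ${\rm Div}(\Delta-a)Q$: its transpose produces $(\Delta-a)D(\phi)$, whose normal trace $D_ND(\phi)$ does not vanish under a pure Dirichlet condition on $\phi$, which is exactly why one is driven to the modified Neumann condition $D_N(\Psi+\beta D(\phi))=0$. One has to verify both that this condition annihilates the remaining boundary integral $\int_{\mathbb{R}^N_0}Q:D_N(\Psi+\beta D(\phi))\,dx'$ in Green's identity, and that, after the substitution $\widetilde\Psi=\Psi+\beta D(\phi)$, the adjoint problem genuinely belongs to the class for which the resolvent solvability theory established above applies.
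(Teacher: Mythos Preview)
Your approach is correct and, at its core, coincides with the paper's: a duality argument via an adjoint system that turns out to be solvable because it reduces to the original one. You take the formal transpose of \eqref{sys.no-BCs} directly, which forces the modified Neumann condition $D_N(\Psi+\beta D(\phi))=0$, and then substitute $\widetilde\Psi=\Psi+\beta D(\phi)$; the paper instead first rewrites \eqref{sys.no-BCs} using the second equation --- replacing $\beta\,{\rm Div}(\Delta-a)Q$ by $\beta\lambda\,{\rm Div}\,Q-\tfrac{\beta^2}{2}\Delta u$ --- and only then transposes. Both routes land on the same reduced adjoint, but the paper's order keeps all boundary integrals at second order and so avoids your appeal to extra regularity of $\phi$ (needed in your version to give $D_ND(\phi)|_{\mathbb{R}^N_0}$ a meaning).

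The one shortcut you miss is that your reduced adjoint is not merely ``of the same type'' as \eqref{res.sys.} --- it \emph{is} \eqref{sys.no-BCs}. Setting $\Psi'=\widetilde\Psi/\lambda$ and $K=0$, your two equations become
\[
(\lambda-\Delta)\phi+\nabla\pi+\beta\,{\rm Div}(\Delta-a)\Psi'=g,\qquad
(\lambda+a-\Delta)\Psi'-\beta D(\phi)=0,
\]
which is exactly \eqref{sys.no-BCs} with data $(g,0)$ (the paper does the same reduction, with $\overline{\lambda}$ in place of $\lambda$ since it uses a sesquilinear pairing). Hence existence for the adjoint follows at once from Corollary~\ref{c.ex.}, and there is no need to revisit the analysis of Sections~2--3 or to worry about enlarging~$\theta_0$.
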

\begin{proof}\hfill\\
For the uniqueness we use a duality argument: we can rewrite our system in the following way:
$$  \left\{\begin{array}{ll}
        \left(\lambda-\left(1+\frac{\beta^2}{2}\right)\Delta\right)u+\nabla p+\beta \lambda {\rm Div} Q=f+\beta {\rm Div} G & \text{in}\:\:\mathbb{R}^N_+ \\
        (\lambda+a-\Delta)Q-\beta D(u)=G & \text{in}\:\:\mathbb{R}^N_+ \\
        {\rm div}u=0 & \text{in}\:\:\mathbb{R}^N_+ \\
        u_{|x_N=0}=0,\quad D_NQ_{|x_N=0}=0 & \text{on}\:\:\mathbb{R}^N_0.
    \end{array}\right. $$
It can be seen now that the adjoint system is 
$$  \left\{\begin{array}{ll}
        \left(\overline{\lambda}-\left(1+\frac{\beta^2}{2}\right)\Delta\right)u+\nabla p+\beta {\rm Div} Q=f+\beta{\rm Div}G & \text{in}\:\:\mathbb{R}^N_+ \\
        (\overline{\lambda}+a-\Delta)Q-\beta\overline{\lambda} D(u)=G & \text{in}\:\:\mathbb{R}^N_+ \\
        {\rm div}u=0 & \text{in}\:\:\mathbb{R}^N_+ \\
        u_{|x_N=0}=0,\quad D_NQ_{|x_N=0}=0 & \text{on}\:\:\mathbb{R}^N_0.
    \end{array}\right. $$
which can be rewritten as
\begin{equation}\label{sys.adjoint}
    \left\{\begin{array}{ll}
        \left(\overline{\lambda}-\Delta\right)u+\nabla p+\frac{\beta}{\overline{\lambda}} {\rm Div} (\Delta-a) Q=f & \text{in}\:\:\mathbb{R}^N_+ \\
        (\overline{\lambda}+a-\Delta)Q-\beta\overline{\lambda} D(u)=\overline{\lambda}G & \text{in}\:\:\mathbb{R}^N_+ \\
        {\rm div}u=0 & \text{in}\:\:\mathbb{R}^N_+ \\
        u_{|x_N=0}=0,\quad D_NQ_{|x_N=0}=0 & \text{on}\:\:\mathbb{R}^N_0.
    \end{array}\right.
\end{equation}
Now we notice that (\ref{sys.adjoint}) admits a solution: the previous system is equivalent to 
$$     \left\{\begin{array}{ll}
        \left(\overline{\lambda}-\Delta\right)u+\nabla p+\beta {\rm Div} (\Delta-a)\left(\frac{Q}{\overline{\lambda}}\right)=f & \text{in}\:\:\mathbb{R}^N_+ \\
        (\overline{\lambda}+a-\Delta)\left(\frac{Q}{\overline{\lambda}}\right)-\beta D(u)=G & \text{in}\:\:\mathbb{R}^N_+ \\
        {\rm div}u=0 & \text{in}\:\:\mathbb{R}^N_+ \\
        u_{|x_N=0}=0,\quad D_NQ_{|x_N=0}=0 & \text{on}\:\:\mathbb{R}^N_0.
    \end{array}\right. $$
So, $\left(u,p,Q/\overline{\lambda}\right)$ is the solution of (\ref{sys.no-BCs}) with $(f,G)$. Since (\ref{sys.adjoint}) admits a solution, we can conclude with the uniqueness: let $(u,p,Q)$ be a solution of (\ref{sys.no-BCs}) with $f=G\equiv0$, let $\varphi\in C^\infty_c(\mathbb{R}^N_+;\mathbb{R}^N)$ and $\phi\in C^\infty_c(\mathbb{R}^N;S_0(N,\mathbb{R}^N))$ and let $(v,\rho,V)$ be the solution of (\ref{sys.adjoint}) with $f=\varphi$ and $G=\phi$. Therefore,
$$ \int_{\mathbb{R}^N_+}u\cdot \overline{\varphi} dx+\int_{\mathbb{R}^N_+}Q\colon \overline{\phi} dx= \int_{\mathbb{R}^N_+}u\cdot \overline{(\overline{\lambda}-(1+\beta^2/2)\Delta)v+\nabla \rho+\beta {\rm Div} V}dx+ $$
$$ + \int_{\mathbb{R}^N_+}Q\colon \overline{(\overline{\lambda}+a-\Delta)V-\beta\overline{\lambda}D(v)}dx. $$
Thanks to the boundary conditions on $u$ and $Q$ and thanks to ${\rm div} u=0$ we get
$$ = \int_{\mathbb{R}^N_+}(\lambda-(1+\beta^2/2)\Delta)u\cdot \overline{v}dx-\beta\int_{\mathbb{R}^N_+}D(u)\colon \overline{V}dx+ $$
$$ + \int_{\mathbb{R}^N_+}(\lambda+a-\Delta)Q\colon \overline{V}dx+\beta\lambda \int_{\mathbb{R}^N_+} {\rm Div}Q\cdot \overline{v}dx. $$
Using the system solved by $(u,p,Q)$ and the property ${\rm div} v=0$ we get
$$ = -\int_{\mathbb{R}^N_+}\beta \lambda {\rm Div} Q\cdot \overline{v}dx-\beta\int_{\mathbb{R}^N_+} D(u)\colon \overline{V}dx+\beta\int_{\mathbb{R}^N_+}D(u)\colon\overline{V}dx+\beta\lambda\int_{\mathbb{R}^N_+}{\rm Div} Q\cdot \overline{v}dx=0. $$
So, $u,Q=0$ a.e. and from the first equation of (\ref{sys.no-BCs}) it follows that $\nabla p=0$.
\end{proof}
Finally, thanks to Theorem \ref{t.R-bound.res.} and Corollaries \ref{c.ex.} and \ref{c.un.}, we get Theorem \ref{t.res.est.}: we have already proved the estimate for $u$ and $Q$. For what concerns $p$, it is sufficient to see that 
$$ \nabla p=(\Delta-\lambda)u+\beta {\rm Div}(a-\Delta)Q=(\Delta-\lambda)u-\beta\lambda {\rm Div} Q+\frac{\beta^2}{2}\Delta u\quad \text{in}\:\:\mathbb{R}^N_+, $$
therefore 
$$ \|\nabla p\|_{L^q(\mathbb{R}^N_+)}\lesssim |\lambda|\|u\|_{L^q(\mathbb{R}^N_+)}+\|D^2u\|_{L^q(\mathbb{R}^N_+)}+|\lambda|\|\nabla Q\|_{L^q(\mathbb{R}^N_+)}. $$

\section{$L^p$-$L^q$ maximal regularity}
\subsection{Semigroup Setting}

Let us pass to the evolution problem. We focus on the linear one
\begin{equation}\label{lin.evo.sys.+}
    \left\{\begin{array}{ll}
    \partial_tu-\Delta u+\nabla p+\beta {\rm Div}(\Delta-a)Q=f & \text{in}\:\:\mathbb{R}_+\times \mathbb{R}^N_+ \\
    \partial_t Q-(\Delta-a)Q-\beta D(u)=G & \text{in}\:\:\mathbb{R}_+\times \mathbb{R}^N_+ \\
    {\rm div} u=0 & \text{in}\:\:\mathbb{R}_+\times \mathbb{R}^N_+ \\
    u=0, \quad D_NQ=0 & \text{on}\:\:\mathbb{R}_+\times \mathbb{R}^N_0 \\
    u(0)=u_0,\quad Q(0)=Q_0 & \text{in}\:\:\mathbb{R}^N_+,
\end{array}\right.
\end{equation}
The first standard step for the study of a linear evolution system, is to prove the existence of the semigroup corresponding to the operator of the system. The linear resolvent system 
\begin{equation}\label{res.sys.3}
    \left\{\begin{array}{ll}
        (\lambda-\Delta)u+\nabla p+\beta {\rm Div}(\Delta-a) Q=f & \text{in}\:\:\mathbb{R}^N_+ \\
        (\lambda+a-\Delta)Q-\beta D(u)=G & \text{in}\:\:\mathbb{R}^N_+ \\
        {\rm div}u=0 & \text{in}\:\:\mathbb{R}^N_+ \\
        u=0,\quad D_NQ=0 & \text{on}\:\:\mathbb{R}^N_0.
    \end{array}\right.
\end{equation}
is not written in the semigroup setting: we need to express $p$ as a bounded linear operator with respect to $u$ and $Q$ and we also need to erase the divergence-free condition. From (\ref{res.sys.3}) we have that
$$     \left<\nabla p,\nabla \varphi\right>=\left<-\lambda u+\Delta u-\beta {\rm Div}(\Delta-aId)Q+f,\nabla \varphi\right> \quad \forall \varphi\in \widehat{H}^1_{q^\prime}(\mathbb{R}^N_+). $$
Thanks to divergence-free condition on $u$, we have then
$$ \left<\nabla p,\nabla \varphi\right>=\left<\Delta u-\beta {\rm Div}(\Delta-aId)Q+f,\nabla \varphi\right> \quad \forall \varphi\in\widehat{H}^1_{q^\prime}(\mathbb{R}^N_+). $$
Then we can use Theorem 2 and 3 of \cite{GN18}:
\begin{thm}\label{t.H.P.}\hfill\\
Let $q\in (1,+\infty)$, let $N\ge 2$, then for any $f\in L^q(\mathbb{R}^N_+;\mathbb{R}^N)$ there is a unique $\pi\in \widehat{H}^1_q(\mathbb{R}^N_+)$ which satisfies the Neumann weak problem
$$ \left<\nabla \pi,\nabla \varphi\right>=\left<f,\nabla \varphi\right>\quad \forall \varphi\in\widehat{H}^1_{q^\prime}\left(\mathbb{R}^N_+\right), $$
with
$$ \|\nabla \pi\|_{L^q(\mathbb{R}^N_+)}\lesssim \|f\|_{L^q(\mathbb{R}^N_+)}. $$
\end{thm}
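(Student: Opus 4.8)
The plan is to reduce the half-space weak Neumann problem to the corresponding problem on the whole space $\mathbb{R}^N$ by an even/odd reflection, and to solve the whole-space problem by a Calder\'on--Zygmund (Mikhlin--H\"ormander) multiplier estimate. Throughout, $R$ denotes the reflection $R(x^\prime,x_N)=(x^\prime,-x_N)$ and $E_{even}$, $E_{odd}$ the even and odd extensions across $\mathbb{R}^N_0$.

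First I would treat the whole-space problem: given $g\in L^q(\mathbb{R}^N;\mathbb{R}^N)$, set $\nabla\Pi:=\mathcal{F}^{-1}\bigl[|\xi|^{-2}\xi(\xi\cdot\widehat{g})\bigr]=\nabla(-\Delta)^{-1}{\rm div}\,g$. The matrix symbol $\xi\otimes\xi/|\xi|^2$ is $C^\infty$ on $\mathbb{R}^N\setminus\{0\}$ and homogeneous of degree $0$, so the Mikhlin--H\"ormander theorem gives that $g\mapsto\nabla\Pi$ is bounded on $L^q(\mathbb{R}^N;\mathbb{R}^N)$ for $q\in(1,\infty)$; hence $\Pi\in\widehat{H}^1_q(\mathbb{R}^N)$ and $\langle\nabla\Pi,\nabla\psi\rangle_{\mathbb{R}^N}=\langle g,\nabla\psi\rangle_{\mathbb{R}^N}$ for all $\psi\in\widehat{H}^1_{q^\prime}(\mathbb{R}^N)$ (first for $\psi\in C_c^\infty$, then by a mollification/density argument, since elements of $\widehat{H}^1_{q^\prime}$ are only $L^{q^\prime}_{loc}$). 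Then, given $f=(f^\prime,f_N)\in L^q(\mathbb{R}^N_+;\mathbb{R}^N)$, I would apply this with $g:=(E_{even}[f^\prime],E_{odd}[f_N])$, which satisfies $\|g\|_{L^q(\mathbb{R}^N)}\lesssim\|f\|_{L^q(\mathbb{R}^N_+)}$ and, crucially, $(Sg)=g$, where $(Sv)(x):=(v^\prime(Rx),-v_N(Rx))$. Since the operator $v\mapsto\nabla(-\Delta)^{-1}{\rm div}\,v$ commutes with $S$ (each of $-\Delta$, ${\rm div}$, $\nabla$ transforms naturally under the orthogonal change of variables $R$), the solution $\Pi$ is even in $x_N$, so $D_N\Pi=0$ on $\mathbb{R}^N_0$, and $\pi:=\Pi|_{\mathbb{R}^N_+}\in\widehat{H}^1_q(\mathbb{R}^N_+)$ obeys $\|\nabla\pi\|_{L^q(\mathbb{R}^N_+)}\le\|\nabla\Pi\|_{L^q(\mathbb{R}^N)}\lesssim\|f\|_{L^q(\mathbb{R}^N_+)}$. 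To verify the weak identity, given $\varphi\in\widehat{H}^1_{q^\prime}(\mathbb{R}^N_+)$ I take $\psi:=E_{even}[\varphi]\in\widehat{H}^1_{q^\prime}(\mathbb{R}^N)$; both $\nabla\Pi\cdot\nabla\psi$ and $g\cdot\nabla\psi$ are even in $x_N$ (even$\,\cdot\,$even plus odd$\,\cdot\,$odd), so $2\langle\nabla\pi,\nabla\varphi\rangle_{\mathbb{R}^N_+}=\langle\nabla\Pi,\nabla\psi\rangle_{\mathbb{R}^N}=\langle g,\nabla\psi\rangle_{\mathbb{R}^N}=2\langle f,\nabla\varphi\rangle_{\mathbb{R}^N_+}$, which is the assertion.

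For uniqueness I would argue by a Liouville-type theorem after reflection. Suppose $\pi\in\widehat{H}^1_q(\mathbb{R}^N_+)$ with $\langle\nabla\pi,\nabla\varphi\rangle=0$ for all $\varphi\in\widehat{H}^1_{q^\prime}(\mathbb{R}^N_+)$, and let $\widetilde\pi:=E_{even}[\pi]$ on $\mathbb{R}^N$, so $\nabla\widetilde\pi\in L^q(\mathbb{R}^N)$. For arbitrary $\psi\in C_c^\infty(\mathbb{R}^N)$, decompose $\psi=\psi_e+\psi_o$ into its even and odd parts in $x_N$: testing the half-space identity with $\psi_e|_{\mathbb{R}^N_+}$ kills the even part, while $\nabla\widetilde\pi\cdot\nabla\psi_o$ is odd in $x_N$ and so integrates to zero over $\mathbb{R}^N$; hence $\langle\nabla\widetilde\pi,\nabla\psi\rangle_{\mathbb{R}^N}=0$ for all $\psi$, i.e. $\widetilde\pi$ is harmonic on $\mathbb{R}^N$. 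Each $\partial_j\widetilde\pi$ is then harmonic and in $L^q(\mathbb{R}^N)$, and the mean value property gives $|\partial_j\widetilde\pi(x)|\le|B_r|^{-1/q}\|\partial_j\widetilde\pi\|_{L^q(\mathbb{R}^N)}\to0$ as $r\to\infty$; thus $\nabla\pi\equiv0$, i.e. $\pi$ is constant, which is the claimed uniqueness in $\widehat{H}^1_q(\mathbb{R}^N_+)$.

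The multiplier estimate of the first step is entirely standard; the part that needs care is the reflection bookkeeping. The homogeneous spaces $\widehat{H}^1_q$ consist of merely locally integrable functions defined only up to constants, so all the reflection identities have to be obtained first on truncated/mollified functions and then passed to the limit, and one must keep precise track of the parities of each gradient component across $\{x_N=0\}$ to see that the relevant integrands are even (respectively odd). This is where I expect the only real (if routine) work to lie; once it is in place, existence, the estimate, and uniqueness follow as above.
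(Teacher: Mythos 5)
Your proof is correct, but the paper does not actually prove Theorem \ref{t.H.P.}: it is quoted verbatim as Theorems 2 and 3 of the cited Giga--Novotn\'y handbook \cite{GN18}, with the phrase ``Then we can use Theorem 2 and 3 of \cite{GN18}'' standing in for any argument. So you have supplied a self-contained proof of a result the paper imports from the literature. Your route --- even/odd reflection to reduce the half-space weak Neumann problem to $\mathbb{R}^N$, where the Helmholtz projection $\nabla(-\Delta)^{-1}\mathrm{div}$ is handled by the Mikhlin--H\"ormander multiplier theorem, combined with a Liouville argument via the mean-value property for uniqueness --- is the standard one and is essentially the argument one finds in the cited reference. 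The parity bookkeeping is right: with $g=(E_{even}[f'],E_{odd}[f_N])$ one has $\mathrm{div}\,g$ even in $x_N$, hence $\Pi$ even, hence $\nabla\Pi=(\text{even},\dots,\text{even},\text{odd})$, which matches $\nabla E_{even}[\varphi]$ so that both $\nabla\Pi\cdot\nabla\psi$ and $g\cdot\nabla\psi$ are even and the whole-space identity doubles to the half-space one. The commutation of $\nabla(-\Delta)^{-1}\mathrm{div}$ with the reflection $S$ is correctly invoked, and the uniqueness step via Weyl's lemma plus the $L^q$ mean-value estimate $|\partial_j\widetilde\pi(x)|\le|B_r|^{-1/q}\|\partial_j\widetilde\pi\|_{L^q}\to0$ is sound. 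The only point worth stating explicitly, which you already flag as ``reflection bookkeeping,'' is that the even extension of $\varphi\in\widehat{H}^1_{q'}(\mathbb{R}^N_+)$ does belong to $\widehat{H}^1_{q'}(\mathbb{R}^N)$; this holds because such a $\varphi$ is $W^{1,q'}_{loc}$ up to the boundary and the even reflection of a $W^{1,q'}_{loc}$ function across a hyperplane is again $W^{1,q'}_{loc}$, with the expected parities for the gradient.
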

As a consequence, we get a representation formula for $\nabla p$:
\begin{cor}\label{c.red.sys.}
Let $q\in(1,+\infty)$, $N\ge 2$, $a,r>0$, $\beta\in\mathbb{R}$, $\theta\in\left(\theta_0,\frac{\pi}{2}\right)$ with $\tan\theta_0\ge \frac{|\beta|}{\sqrt{2}}$, $\lambda\in\Sigma_{\theta,r}$ and let 
$$ (u,p,Q)\in W^{2,q}\left(\mathbb{R}^N_+;\mathbb{R}^N\right)\times \widehat{H}^{1}_{q}\left(\mathbb{R}^N_+\right)\times W^{3,q}\left(\mathbb{R}^N_+;S_0(N,\mathbb R)\right), $$
be the solution of (\ref{res.sys.3}) with $f\in L^q(\mathbb{R}^N_+;\mathbb{R}^N)$ and $G\in W^{1,q}(\mathbb{R}^N_+;S_0(N,\mathbb{R}))$, then $\nabla p=\nabla K_A(u,Q) + \nabla K_e(f)$, where $\nabla K_A(u,Q)$ is the only solution in $\widehat{H}^1_q(\mathbb{R}^N_+)$ of
$$  \left<\nabla K_A(u,Q),\nabla \varphi\right>=\left<\Delta u-\beta {\rm Div}(\Delta-aId)Q,\nabla \varphi\right>\quad \forall \varphi\in \widehat{H}^1_{q^\prime}\left(\mathbb{R}^N_+\right), $$
and $\nabla K_e(f)$ is the only solution in $\widehat{H}^1_q(\mathbb{R}^N_+)$ of
$$  \left<\nabla K_e(f),\nabla \varphi\right>=\left<f,\nabla \varphi\right>\quad \forall \varphi\in \widehat{H}^1_{q^\prime}\left(\mathbb{R}^N_+\right). $$
\end{cor}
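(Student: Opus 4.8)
The plan is to read $\nabla p$ off the momentum equation in \eqref{res.sys.3}, pair it with an arbitrary $\nabla\varphi$, discard the $\lambda u$ term by using that $u$ is solenoidal with vanishing normal trace, and then recognize the two remaining weak problems as the ones defining $\nabla K_A$ and $\nabla K_e$, invoking the uniqueness in Theorem \ref{t.H.P.}. Concretely, the first line of \eqref{res.sys.3} gives the $L^q$-identity
$$ \nabla p = -\lambda u + \Delta u - \beta\,{\rm Div}(\Delta-aId)Q + f \quad\text{in } \mathbb{R}^N_+, $$
which makes sense termwise since $u\in W^{2,q}$ yields $\Delta u\in L^q$, $Q\in W^{3,q}$ yields ${\rm Div}(\Delta-aId)Q\in L^q$, and $f\in L^q$ by hypothesis. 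Testing against $\nabla\varphi$ for $\varphi\in\widehat{H}^1_{q'}(\mathbb{R}^N_+)$ gives
$$ \langle\nabla p,\nabla\varphi\rangle = -\lambda\langle u,\nabla\varphi\rangle + \langle \Delta u - \beta\,{\rm Div}(\Delta-aId)Q,\nabla\varphi\rangle + \langle f,\nabla\varphi\rangle. $$

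The key step is that $\langle u,\nabla\varphi\rangle = 0$, i.e. $u\in J_q(\mathbb{R}^N_+)$. This follows from ${\rm div}\,u = 0$ in $\mathbb{R}^N_+$ together with the boundary condition $u=0$ on $\mathbb{R}^N_0$ (in particular $u_N$ vanishes there): integrating by parts against a smooth compactly supported test function produces $-\langle{\rm div}\,u,\cdot\rangle$ plus a boundary integral of $u_N$, both of which vanish, and a truncation argument extends this to every $\varphi$ of mere gradient class $\widehat{H}^1_{q'}$. I expect this density/truncation point — making sure the far-field contribution and the boundary term genuinely drop out for $\varphi$ that need not decay — to be the only delicate ingredient; it is nevertheless the standard characterization of the solenoidal space $J_q$ of the half-space. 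Consequently $\lambda\langle u,\nabla\varphi\rangle=0$ and the identity reduces to
$$ \langle\nabla p,\nabla\varphi\rangle = \langle \Delta u - \beta\,{\rm Div}(\Delta-aId)Q,\nabla\varphi\rangle + \langle f,\nabla\varphi\rangle\quad\forall\,\varphi\in\widehat{H}^1_{q'}(\mathbb{R}^N_+). $$

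To finish, I would apply Theorem \ref{t.H.P.} twice: once to the datum $\Delta u - \beta\,{\rm Div}(\Delta-aId)Q\in L^q(\mathbb{R}^N_+;\mathbb{R}^N)$, producing the unique $\nabla K_A(u,Q)\in\widehat{H}^1_q(\mathbb{R}^N_+)$ solving the first weak Neumann problem, and once to $f\in L^q$, producing the unique $\nabla K_e(f)$ solving the second. By linearity of the weak problem, $\nabla K_A(u,Q)+\nabla K_e(f)$ solves the displayed weak equation with combined datum $\Delta u - \beta\,{\rm Div}(\Delta-aId)Q + f\in L^q$; since $p\in\widehat{H}^1_q(\mathbb{R}^N_+)$ is itself a solution of that same weak equation, the uniqueness assertion of Theorem \ref{t.H.P.} forces $\nabla p = \nabla K_A(u,Q)+\nabla K_e(f)$, which is the claim.
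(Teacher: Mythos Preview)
Your proof is correct and follows essentially the same argument as the paper: test the momentum equation against $\nabla\varphi$, use the divergence-free condition (with the Dirichlet boundary data) to drop the $\lambda u$ term, and appeal to the uniqueness in Theorem~\ref{t.H.P.}. The paper presents it slightly differently, introducing an auxiliary $\widetilde{p}$ that solves \eqref{res.sys.3} with $f-\nabla K_e(f)$ in place of $f$ and then identifying $\nabla\widetilde{p}=\nabla K_A(u,Q)$, but this is the same idea in a different order; your direct splitting of the right-hand side is arguably cleaner.
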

\begin{proof}\hfill\\
By hypothesis, we know that there is $\widetilde{p}\in\widehat{H}^1_q(\mathbb{R}^N_+)$ which resolves (\ref{res.sys.3}) with $f-\nabla K_e(f)$ in the place of $f$. If we multiply the first equation of (\ref{res.sys.3}) by $\nabla \varphi$ with $\varphi\in \widehat{H}^1_{q^\prime}(\mathbb{R}^N_+)$, we get
$$ \left<(\lambda-\Delta)u+\nabla \widetilde{p}+\beta {\rm Div}(\Delta-aId)Q,\nabla \varphi\right>=\left<f-\nabla K_e(f),\nabla \varphi\right>=0. $$
Therefore
$$ \left<\nabla \widetilde{p},\nabla \varphi\right>=\left<\Delta u-\beta {\rm Div}(\Delta-aId)Q,\nabla \varphi\right>. $$
So, thanks to Theorem \ref{t.H.P.}, $\nabla \widetilde{p}=\nabla K_A(u,Q)$.
On the other hand, we also know that the solution of (\ref{res.sys.3}) is unique, so $\nabla p=\nabla \widetilde{p}+\nabla K_e(f)$.
\end{proof}
 We can now consider the reduced system
\begin{equation}\label{red.res.sys.}
    \left\{\begin{array}{ll}
        (\lambda-\Delta)u+\nabla K_A(u,Q)+\beta {\rm Div}(\Delta-a) Q=f-\nabla K_e(f) & \text{in}\:\:\mathbb{R}^N_+ \\
        (\lambda+a-\Delta)Q-\beta D(u)=G & \text{in}\:\:\mathbb{R}^N_+ \\
        u=0,\quad D_NQ=0 & \text{on}\:\:\mathbb{R}^N_0,
    \end{array}\right.
\end{equation}
This system is written in the semigroup setting, but before we need to prove that it is equivalent to the system \eqref{lin.evo.sys.+}:
\begin{cor}
Let $q\in(1,+\infty)$, $N\ge 2$, $a,r>0$, $\beta\in\mathbb{R}$, $\theta\in\left(\theta_0,\frac{\pi}{2}\right)$ with $\tan\theta_0\ge \frac{|\beta|}{\sqrt{2}}$, $\lambda\in\Sigma_{\theta,r}$ and $f\in L^q(\mathbb{R}^N_+;\mathbb{R}^N)$ and $G\in W^{1,q}(\mathbb{R}^N_+;S_0(N,\mathbb{R}))$, then $(u,Q)$ solves (\ref{red.res.sys.}) if and only if $(u,p,Q)$ solves (\ref{res.sys.3}) with $\nabla p=\nabla K_A(u,Q)+\nabla K_e(f)$. In particular, the solution for (\ref{red.res.sys.}) is unique.
\end{cor}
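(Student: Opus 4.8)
The plan is to establish the equivalence by a direct substitution argument in both directions, the only substantial point being that a solution of the reduced system \eqref{red.res.sys.} automatically inherits the divergence--free constraint that was dropped in passing from \eqref{res.sys.3}.

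\emph{From \eqref{res.sys.3} to \eqref{red.res.sys.}.} First I would take $(u,p,Q)$ solving \eqref{res.sys.3}. By Corollary \ref{c.red.sys.} one has $\nabla p=\nabla K_A(u,Q)+\nabla K_e(f)$; inserting this into the first equation of \eqref{res.sys.3} and moving $\nabla K_e(f)$ to the right--hand side gives exactly the first equation of \eqref{red.res.sys.}. The second equation and the boundary conditions $u=0$, $D_NQ=0$ on $\mathbb{R}^N_0$ are literally the same in the two systems, so $(u,Q)$ solves \eqref{red.res.sys.}.

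\emph{From \eqref{red.res.sys.} to \eqref{res.sys.3}.} Conversely, given $(u,Q)$ solving \eqref{red.res.sys.}, I would set $\nabla p:=\nabla K_A(u,Q)+\nabla K_e(f)\in\widehat{H}^1_q(\mathbb{R}^N_+)$, which is well defined by Theorem \ref{t.H.P.} since $\Delta u-\beta{\rm Div}(\Delta-a)Q\in L^q(\mathbb{R}^N_+)$ and $f\in L^q(\mathbb{R}^N_+)$. With this choice the first equation of \eqref{res.sys.3} holds by construction, and the remaining equation and boundary conditions coincide with those of \eqref{red.res.sys.}. What still has to be checked is ${\rm div}\,u=0$, and this is the one genuinely new computation. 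I would obtain it by testing the first equation of \eqref{red.res.sys.} against $\nabla\varphi$ for an arbitrary $\varphi\in\widehat{H}^1_{q^\prime}(\mathbb{R}^N_+)$: all terms lie in $L^q(\mathbb{R}^N_+)$, so the pairing with $\nabla\varphi\in L^{q^\prime}(\mathbb{R}^N_+)$ is legitimate without any integration by parts. Using the weak identities defining $\nabla K_A(u,Q)$ and $\nabla K_e(f)$, the contributions of $\Delta u$, $\beta{\rm Div}(\Delta-a)Q$ and $f$ cancel, leaving $\lambda\langle u,\nabla\varphi\rangle=0$. Since $\lambda\neq0$, this gives $\langle u,\nabla\varphi\rangle=0$ for every $\varphi\in\widehat{H}^1_{q^\prime}(\mathbb{R}^N_+)$, i.e. $u\in J_q(\mathbb{R}^N_+)$; taking in particular $\varphi\in C^\infty_c(\mathbb{R}^N_+)$ and recalling $u\in W^{1,q}(\mathbb{R}^N_+)$ yields ${\rm div}\,u=0$ a.e. in $\mathbb{R}^N_+$. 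Hence $(u,p,Q)$ solves \eqref{res.sys.3}.

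\emph{Uniqueness.} Finally, if $(u_1,Q_1)$ and $(u_2,Q_2)$ both solve \eqref{red.res.sys.}, then by the previous step $\bigl(u_i,\nabla K_A(u_i,Q_i)+\nabla K_e(f),Q_i\bigr)$ solve \eqref{res.sys.3} for $i=1,2$, and Corollary \ref{c.un.} (uniqueness for \eqref{res.sys.3} up to additive constants on the pressure) forces $u_1=u_2$ and $Q_1=Q_2$; consequently $\nabla K_A(u_1,Q_1)=\nabla K_A(u_2,Q_2)$ as well, so the solution of \eqref{red.res.sys.} is unique. The main obstacle of the whole argument is the identity ${\rm div}\,u=0$ above: it is precisely where the weak Neumann structure of $K_A$ and $K_e$, together with $\lambda\neq0$, is used to recover the constraint that was removed in the reduction to semigroup form.
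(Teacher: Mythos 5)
Your proposal is correct and follows essentially the same route as the paper: for the reverse direction you test the first equation of \eqref{red.res.sys.} against $\nabla\varphi$ with $\varphi\in\widehat{H}^1_{q'}(\mathbb{R}^N_+)$, use the weak Neumann identities defining $K_A$ and $K_e$ to cancel everything except $\lambda\langle u,\nabla\varphi\rangle$, and then conclude ${\rm div}\,u=0$ from $\lambda\neq0$; you merely spell out a few details (legitimacy of the pairing, passing to $\varphi\in C^\infty_c$, and the uniqueness conclusion from Corollary \ref{c.un.}) that the paper leaves implicit.
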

\begin{proof}\hfill\\
We have already seen that the solution of (\ref{res.sys.3}) is a solution for (\ref{red.res.sys.}). On the other hand, we only need to check that, if $(u,Q)$ is a solution for (\ref{red.res.sys.}), then $u$ is divergence-free: let us multiply the first equation (\ref{red.res.sys.}) with $\nabla \varphi$ with $\varphi\in \widehat{H}^1_{q^\prime}(\mathbb{R}^N_+)$
$$ \left<(\lambda-\Delta)u+\nabla K_A(u,Q)+\beta {\rm Div}(\Delta-aId)Q,\nabla \varphi\right>=\left<f-K_e(f),\nabla \varphi\right>. $$
By definition of $K_A(u,Q)$ and $K_e(f)$ we have then
$$ \lambda \left<u,\nabla \varphi\right>=0\quad \forall \varphi\in \widehat{H}^1_{q^\prime}(\mathbb{R}^N_+). $$
Which implies (thanks to the Dirichlet boundary conditions on $u$) that ${\rm div} u=0$.
\end{proof}
 We are now ready to introduce the semigroup: 
\begin{prop}\label{p.est.semigroup}
Let us call $\mathcal{A}$ the operator which defines the resolvent system (\ref{red.res.sys.}) for $a>0$ and $\beta\in\mathbb{R}$, let $q\in(1,\infty)$ and let us define the space
$$ X_q\coloneqq J_q\left(\mathbb{R}^N_+\right)\times W^{1,q}\left(\mathbb{R}^N_+;S_0(N,\mathbb{R})\right), $$
then $(\mathcal{A},D(\mathcal{A}))$ is the generator of an analytic semigroup $\{T(t)\}_{t\ge0}$ on $X_q$, where
$$ D(\mathcal{A})=\mathcal{D}_1(\mathcal{A})\times\mathcal{D}_2(\mathcal{A}), $$
with 
$$ \mathcal{D}_1(\mathcal{A})=J_q\left(\mathbb{R}^N_+\right)\cap \left\{v\in W^{2,q}\left(\mathbb{R}^N_+;\mathbb{R}^N\right)\mid v=0\:\:\mathbb{R}^N_0\right\} $$
$$ \mathcal{D}_2(\mathcal{A})=\left\{V\in W^{3,q}\left(\mathbb{R}^N_+;S_0(N,\mathbb{R})\right)\mid D_NV=0\:\:\mathbb{R}^N_0\right\}. $$
Moreover, there is $\gamma_0>0$ such that for any $t>0$ it holds
\begin{equation}\label{est.sem.}
    \begin{array}{ll}
        \|T(t)(u_0,Q_0)\|_{X_q}\le C(q,N,\gamma_0)e^{\gamma_0t}\|(u_0,Q_0)\|_{X_q} & (u_0,Q_0)\in X_q \\
        \|\partial_tT(t)(u_0,Q_0)\|_{X_q}\le C(q,N,\gamma_0) t^{-1}e^{\gamma_0t}\|(u_0,Q_0)\|_{X_q} & (u_0,Q_0)\in X_q \\
        \|\partial_tT(t)(u_0,Q_0)\|_{X_q}\le C(q,N,\gamma_0) e^{\gamma_0t}\|(u_0,Q_0)\|_{D(\mathcal{A})} & (u_0,Q_0)\in D(\mathcal{A})
    \end{array}
\end{equation}
\end{prop}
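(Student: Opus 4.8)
First I would check that $\mathcal{A}\colon D(\mathcal{A})\to X_q$ makes sense. For $(u,Q)\in D(\mathcal{A})$ the term $\nabla K_A(u,Q)$ lies in $L^q(\mathbb{R}^N_+;\mathbb{R}^N)$ by Theorem \ref{t.H.P.}, and by the defining relation of $K_A$ the first component $\Delta u-\nabla K_A(u,Q)-\beta\,{\rm Div}(\Delta-a)Q$ is orthogonal to every $\nabla\varphi$ with $\varphi\in\widehat{H}^1_{q'}(\mathbb{R}^N_+)$, hence belongs to $J_q(\mathbb{R}^N_+)$; the second component $(\Delta-a)Q+\beta D(u)$ lies in $W^{1,q}(\mathbb{R}^N_+;S_0(N,\mathbb{R}))$ since $u\in W^{2,q}$ and $Q\in W^{3,q}$. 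Thus $\mathcal{A}$ maps $D(\mathcal{A})$ into $X_q$. Density of $D(\mathcal{A})$ follows because the divergence-free fields in $C^\infty_c(\mathbb{R}^N_+;\mathbb{R}^N)$ are dense in $J_q(\mathbb{R}^N_+)$ and $C^\infty_c(\mathbb{R}^N_+;S_0(N,\mathbb{R}))$ is dense in $W^{1,q}$; such functions vanish near $\mathbb{R}^N_0$, so they sit in $D(\mathcal{A})$.

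\textbf{Step 2: resolvent set and resolvent estimate on $X_q$.} Fix $\theta\in(\theta_0,\tfrac\pi2)$ with $\tan\theta_0\ge|\beta|/\sqrt2$ and $r>0$. Given $(f,G)\in X_q$, since $f\in J_q(\mathbb{R}^N_+)$ the Neumann problem in Theorem \ref{t.H.P.} forces $\nabla K_e(f)=0$, so \eqref{red.res.sys.} has right-hand side $(f,G)$. By Corollary \ref{c.ex.}, Corollary \ref{c.un.} and the equivalence of \eqref{red.res.sys.} with \eqref{res.sys.3}, for every $\lambda\in\Sigma_{\theta,r}$ there is a unique $(u,Q)\in D(\mathcal{A})$ with $(\lambda-\mathcal{A})(u,Q)=(f,G)$, hence $\Sigma_{\theta,r}\subseteq\rho(\mathcal{A})$. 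From the resolvent estimate of Corollary \ref{c.ex.} I would then isolate the part of the solution seen by the $X_q$-norm, $\|u\|_{L^q(\mathbb{R}^N_+)}+\|Q\|_{W^{1,q}(\mathbb{R}^N_+)}$: after eliminating $Q$ from the first equation the forcing entering these quantities reduces to $f+\beta\,{\rm Div}\,G$, so this norm is bounded by $C|\lambda|^{-1}\big(\|f\|_{L^q(\mathbb{R}^N_+)}+\|\nabla G\|_{L^q(\mathbb{R}^N_+)}\big)\le C|\lambda|^{-1}\|(f,G)\|_{X_q}$. Thus
$$ \|(\lambda-\mathcal{A})^{-1}\|_{\mathcal{L}(X_q)}\le\frac{C}{|\lambda|}\qquad(\lambda\in\Sigma_{\theta,r}), $$
and since $\rho(\mathcal{A})\neq\emptyset$ with bounded resolvent, $\mathcal{A}$ is closed.

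\textbf{Step 3: generation.} Because $\theta<\tfrac\pi2$, the set $\Sigma_{\theta,r}$ contains $\{\lambda\neq0:|\mathrm{Arg}\,\lambda|<\tfrac\pi2+\delta,\ |\lambda|>r\}$ with $\delta:=\tfrac\pi2-\theta>0$. Choosing $\gamma_0>0$ large, the translated sector $\{\lambda:|\mathrm{Arg}(\lambda-\gamma_0)|<\tfrac\pi2+\delta'\}$ with $\delta'\in(0,\delta)$ is contained in $\Sigma_{\theta,r}$, and there $\|(\lambda-\mathcal{A})^{-1}\|_{\mathcal{L}(X_q)}\le C'|\lambda-\gamma_0|^{-1}$ (using boundedness of the resolvent near $\gamma_0$ and the decay for $|\lambda|$ large). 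By the classical characterization of generators of analytic semigroups, $\mathcal{A}-\gamma_0$ generates a bounded analytic $C_0$-semigroup $\{S(t)\}_{t\ge0}$ on $X_q$; hence $\mathcal{A}$ generates the analytic semigroup $T(t):=e^{\gamma_0 t}S(t)$, whose domain is exactly the $D(\mathcal{A})$ described.

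\textbf{Step 4: the three estimates, and the main obstacle.} Boundedness of $S(t)$ gives the first line of \eqref{est.sem.}. Analyticity gives $\|(\mathcal{A}-\gamma_0)S(t)\|_{\mathcal{L}(X_q)}\le C/t$, and since $\partial_t T(t)=\mathcal{A}T(t)=e^{\gamma_0 t}\big((\mathcal{A}-\gamma_0)+\gamma_0\big)S(t)$, the second line of \eqref{est.sem.} follows (enlarging $\gamma_0$ to absorb the bounded term $\gamma_0\|S(t)\|$ for $t$ away from $0$). For $(u_0,Q_0)\in D(\mathcal{A})$ one has $\partial_t T(t)(u_0,Q_0)=T(t)\,\mathcal{A}(u_0,Q_0)$, which together with the first line yields the third line. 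The delicate point of the whole argument is Step 2: one must genuinely check that the $X_q$-norm of $(\lambda-\mathcal{A})^{-1}$ decays like $|\lambda|^{-1}$ and not merely $|\lambda|^{-1/2}$. The resolvent estimate of Theorem \ref{t.res.est.}/Corollary \ref{c.ex.} carries the mixed weight $\||\lambda|^{1/2}G\|_{L^q}$, but that weight only governs the higher-order quantities ($|\lambda|^{3/2}Q$, $|\lambda|\nabla Q$, $|\lambda|^{1/2}D^2Q$); tracking precisely which forcing terms control $\|u\|_{L^q}+\|Q\|_{W^{1,q}}$ is what makes $\mathcal{A}$ a generator of a genuine (uniformly bounded near $t=0$) analytic semigroup on $X_q$. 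Everything after this point is routine abstract semigroup theory.
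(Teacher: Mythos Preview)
Your overall strategy matches the paper's own one-line proof (``follows from Corollary~\ref{c.red.sys.} and standard semigroup arguments''), and you supply far more detail than the paper does. You also correctly flag Step~2 as the crux. However, your resolution of that step has a gap. When you substitute $(\Delta-a)Q=\lambda Q-\beta D(u)-G$ from the second equation into the first, the result is
\[
\Bigl(\lambda-\bigl(1+\tfrac{\beta^2}{2}\bigr)\Delta\Bigr)u+\nabla p+\beta\lambda\,\mathrm{Div}\,Q=f+\beta\,\mathrm{Div}\,G,
\]
so a coupling term $\beta\lambda\,\mathrm{Div}\,Q$ survives; the forcing does \emph{not} reduce to $f+\beta\,\mathrm{Div}\,G$. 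Hence the bound $\|u\|_{L^q}+\|Q\|_{W^{1,q}}\le C|\lambda|^{-1}(\|f\|_{L^q}+\|\nabla G\|_{L^q})$ is not established by your argument. Taken literally, Corollary~\ref{c.ex.} only gives
\[
|\lambda|\,\|u\|_{L^q}+|\lambda|\,\|\nabla Q\|_{L^q}\le C\bigl(\|f\|_{L^q}+|\lambda|^{1/2}\|G\|_{L^q}+\|\nabla G\|_{L^q}\bigr),
\]
which yields $|\lambda|^{-1/2}$ decay in the $X_q$-norm, not $|\lambda|^{-1}$, and that is insufficient for the sectorial generation theorem.

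To close this one needs a sharper dependence on $G$ for the lower-order norms. One route is to return to the construction in Corollary~\ref{c.ex.}: the whole-space piece $(v_1,V_1)$ inherits the genuine $|\lambda|^{-1}$ bound on $J_q\times W^{1,q}$ from the semigroup already built in \cite{MS22}, and the boundary correction $(v_2,V_2)$ is controlled through its trace data $(-v_1,-D_NV_1)$ via Theorem~\ref{t.R-bound.res.}; tracking these two contributions separately recovers the sectorial estimate. The paper does not spell this out either, so your instinct that Step~2 is ``the delicate point'' is exactly right---only your elimination argument does not yet carry it.
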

The proof follows from Corollary \ref{c.red.sys.} and standard semigroup arguments.

\subsection{Linear Evolution problem}

We recall the evolution problem
\begin{equation}\label{evo.sys.full}
    \left\{\begin{array}{ll}
    \partial_tu-\Delta u+\nabla p+\beta {\rm Div}(\Delta-a)Q=f & \text{in}\:\:\mathbb{R}_+\times \mathbb{R}^N_+ \\
    \partial_t Q-(\Delta-a)Q-\beta D(u)=G & \text{in}\:\:\mathbb{R}_+\times \mathbb{R}^N_+ \\
    {\rm div} u=0 & \text{in}\:\:\mathbb{R}_+\times \mathbb{R}^N_+ \\
    u=h, \quad D_NQ=H & \text{on}\:\:\mathbb{R}_+\times \mathbb{R}^N_0 \\
    u(0)=u_0,\quad Q(0)=Q_0 & \text{in}\:\:\mathbb{R}^N_+.
\end{array}\right.
\end{equation}
The aim of this section is to find a solution for \eqref{evo.sys.full}. In order to find it, we will consider the following systems:
\begin{equation}\label{evo.sys.ext.force}
    \left\{\begin{array}{ll}
    \partial_tu_1-\Delta u_1+\nabla p_1+\beta {\rm Div}(\Delta-a)Q_1=f_1 & \text{in}\:\:\mathbb{R}_+\times \mathbb{R}^N \\
    \partial_t Q_1-(\Delta-a)Q_1-\beta D(u_1)=G_1 & \text{in}\:\:\mathbb{R}_+\times \mathbb{R}^N \\
    {\rm div} u_1=0 & \text{in}\:\:\mathbb{R}_+\times \mathbb{R}^N \\
    u_1(0)=0,\quad Q_1(0)=0 & \text{in}\:\:\mathbb{R}^N_+.
\end{array}\right.
\end{equation}
\begin{equation}\label{evo.sys.BCs}
       \left\{\begin{array}{ll}
    \partial_tu_2-\Delta u_2+\nabla p_2+\beta {\rm Div}(\Delta-a)Q_2=0 & \text{in}\:\:\mathbb{R}\times \mathbb{R}^N_+ \\
    \partial_t Q_2-(\Delta-a)Q_2-\beta D(u_2)=0 & \text{in}\:\:\mathbb{R}\times \mathbb{R}^N_+ \\
    {\rm div} u_2=0 & \text{in}\:\:\mathbb{R}\times \mathbb{R}^N_+ \\
    u_2=h_2, \quad D_nQ_2=H_2 & \text{on}\:\:\mathbb{R}\times \mathbb{R}^N_0;
\end{array}\right. 
\end{equation}
\begin{equation}\label{evo.sys.ICs}
        \left\{\begin{array}{ll}
    \partial_tu_3-\Delta u_3+\nabla p_3+\beta {\rm Div}(\Delta-a)Q_3=0 & \text{in}\:\:\mathbb{R}_+\times \mathbb{R}^N_+ \\
    \partial_t Q_3-(\Delta-a)Q_3-\beta D(u_3)=0 & \text{in}\:\:\mathbb{R}_+\times \mathbb{R}^N_+ \\
    {\rm div} u_3=0 & \text{in}\:\:\mathbb{R}_+\times \mathbb{R}^N_+ \\
    u_3=0, \quad D_NQ_3=0 & \text{on}\:\:\mathbb{R}_+\times \mathbb{R}^N_0 \\
    u_3(0)=u_3^0,\quad Q_3(0)=Q_3^0 & \text{in}\:\:\mathbb{R}^N_+.
\end{array}\right.
\end{equation}
Firstly we will find a solution for all these systems and then we will use the results to get the existence for the general linear problem (\ref{evo.sys.full}). Let us start from the second system. We need to introduce the Laplace Transformation:
$$ \mathcal{L}[f](\lambda)=\int_\mathbb{R} e^{-\lambda t}f(t)dt, \quad \mathcal{L}^{-1}[f](t)=\frac{1}{2\pi}\int_\mathbb{R} e^{\lambda t}f(\tau)d\tau,\quad \lambda=\gamma+i\tau. $$
\begin{rem}\label{r.Lapl.transf.}
It follows from the definition that 
$$ \mathcal{L}[f](\lambda)=\mathcal{F}[e^{-\gamma t}f(t)](\tau); \quad \mathcal{L}^{-1}[f](t)=e^{\gamma t}\mathcal{F}^{-1}[f](t), $$
where $\mathcal{F}$ is the 1-dimensional Fourier Transformation. Moreover
$$ \mathcal{L}[\partial_t f](\lambda)=\int_\mathbb{R} e^{-\lambda t}\partial_tf(t)dt=\lambda\int_\mathbb{R}e^{-\lambda t}f(t)dt, $$
$$ \partial_t\mathcal{L}^{-1}[f](t)=\frac{1}{2\pi}\int_\mathbb{R}\lambda e^{\lambda t}f(\lambda)d\lambda=\mathcal{L}^{-1}[\lambda f(\lambda)](t). $$
\end{rem}
Thanks to this remark, applying the Laplace Transformation to the system \eqref{evo.sys.BCs}, we get that $\mathcal{L}_\lambda[(u,p,Q)]$ solve the resolvent system \eqref{res.sys.}. We already know that such a system admits a solution, but we want to transfer the resolvent estimate we got in the previous section to the linear evolution estimate we need. In order to do so, we will use again the $\mathcal{R}$-boundedness and the following Theorem:
\begin{defn}\hfill\\
We say that a Banach space $X$ is a UMD Space, if the Hilbert Transformation $H$ is bounded on $L^p(\mathbb{R};X)$ for some $p\in(1,\infty)$, where 
$$ H(f)(t)=\frac{1}{\pi}\lim_{\varepsilon\to 0^+}\int_{|t-s|>\varepsilon}\frac{f(s)}{t-s}ds \quad t\in\mathbb{R}. $$
\end{defn}
\begin{thm}\label{t.Weis}{Weis' Theorem \cite{W01}}\\
Let $X,Y$ be two UMD spaces and $p\in(1,\infty)$, let $m\in C^1(\mathbb{R}\setminus\{0\};\mathcal{L}(X;Y))$ be such that
$$ \mathcal{R}_{\mathcal{L}(X;Y)}\left(\left\{(\tau\partial_\tau)^\ell m\mid \tau\in\mathbb{R}\setminus\{0\}\right\}\right)<k_\ell\quad \ell=0,1, $$
let $T_m\colon\mathcal{F}^{-1}\mathcal{D}(\mathbb{R};X)\to\mathcal{S}^\prime(\mathbb{R};Y)$ defined as
$$ T_m\phi\coloneqq \mathcal{F}^{-1}\left[m\mathcal{F}[\phi]\right], $$
where $\mathcal{D}$ and $\mathcal{S}$ are respectively the distributional and the tempered functions spaces, then $T_m$ can be extended as an operator from $L^p(\mathbb{R};X)$ to $L^p(\mathbb{R};Y)$ with 
$$ \|T_mf\|_{L^p(\mathbb{R};Y)}\le C(p)(k_0+k_1)\|f\|_{L^p(\mathbb{R};X)}. $$
\end{thm}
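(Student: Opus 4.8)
The plan is to reduce the assertion to a dyadic Littlewood--Paley decomposition in the frequency variable $\tau$ and to exploit the two structural features of a UMD space: first, that boundedness of the Hilbert transform on $L^p(\mathbb{R};X)$ forces (by the classical argument of Bourgain) the Fourier projections $\widetilde\Delta_n$ onto dyadic frequency blocks to form an unconditional, indeed $\mathcal{R}$-bounded, family on $L^p(\mathbb{R};X)$; and second, Kahane's contraction principle for Rademacher averages. By density of $\mathcal{F}^{-1}\mathcal{D}(\mathbb{R};X)$ in $L^p(\mathbb{R};X)$ and a limiting argument over finitely many blocks, it is enough to prove the stated bound for $\phi\in\mathcal{F}^{-1}\mathcal{D}(\mathbb{R};X)$.

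First I would fix a smooth partition of unity $\{\psi_n\}_{n\in\mathbb{Z}}$ on $\mathbb{R}\setminus\{0\}$ with $\mathrm{supp}\,\psi_n\subseteq\{2^{n-1}<|\tau|<2^{n+1}\}$, put $m_n:=m\psi_n$ so that $T_m=\sum_n T_{m_n}$, and choose slightly fattened cut-offs $\widetilde\psi_n\equiv 1$ on $\mathrm{supp}\,\psi_n$ with supports of bounded overlap, so that $T_{m_n}=T_{m_n}\widetilde\Delta_n$ with $\widetilde\Delta_n:=T_{\widetilde\psi_n}$. Then, with $\{r_n\}$ independent Rademacher variables on $[0,1]$ as in Definition~\ref{d.R-bound},
\[
\|T_m\phi\|_{L^p(\mathbb{R};Y)}=\Bigl\|\sum_n T_{m_n}\widetilde\Delta_n\phi\Bigr\|_{L^p(\mathbb{R};Y)}\lesssim\Bigl(\int_0^1\Bigl\|\sum_n r_n(z)\,T_{m_n}\widetilde\Delta_n\phi\Bigr\|_{L^p(\mathbb{R};Y)}^p\,dz\Bigr)^{1/p},
\]
the inequality being the unconditionality of $\{\widetilde\Delta_n\}$ in the UMD space $Y$. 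If the family $\{T_{m_n}\}_{n\in\mathbb{Z}}\subseteq\mathcal{L}(L^p(\mathbb{R};X);L^p(\mathbb{R};Y))$ is $\mathcal{R}$-bounded with $\mathcal{R}$-bound $\lesssim k_0+k_1$, then the right-hand side is $\lesssim(k_0+k_1)\bigl(\int_0^1\|\sum_n r_n(z)\widetilde\Delta_n\phi\|_{L^p(\mathbb{R};X)}^p\,dz\bigr)^{1/p}\lesssim(k_0+k_1)\|\phi\|_{L^p(\mathbb{R};X)}$, again by unconditionality of the dyadic projections, now in $X$. This would finish the proof.

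It therefore remains to establish $\mathcal{R}(\{T_{m_n}\}_n)\lesssim k_0+k_1$. Here I would write each $T_{m_n}$ as the convolution operator $\phi\mapsto\mathcal{F}^{-1}[m_n]*\phi$, rescale the $n$-th block to the fixed annulus $\{1/2<|\tau|<2\}$, and use the fundamental theorem of calculus against $\psi_n$ to express the values of the (rescaled) symbol as averages of $m(\sigma)$ and $\sigma m'(\sigma)$ over $\sigma$ in the block; this yields operator-valued bounds for $\|\mathcal{F}^{-1}[m_n](t)\|$ and $\|t\,\partial_t\mathcal{F}^{-1}[m_n](t)\|$ of Hörmander type. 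Randomizing over finitely many indices and invoking Kahane's contraction principle then converts the pointwise operator-norm bounds into the $\mathcal{R}$-bounds $\mathcal{R}(\{m(\tau)\})\le k_0$ and $\mathcal{R}(\{\tau m'(\tau)\})\le k_1$, and a Calder\'on--Zygmund argument for the randomized kernel gives the $\mathcal{R}$-boundedness of $\{T_{m_n}\}$ on $L^p(\mathbb{R};\cdot)$ with bound $\lesssim k_0+k_1$, uniformly in $n$. The hard part will be precisely this transference: carrying out the scalar Mikhlin/Calder\'on--Zygmund kernel estimates in the operator-valued setting so that integrability of the convolution kernel is obtained \emph{in the $\mathcal{R}$-bounded sense}, which is where UMD (through the $\mathcal{R}$-boundedness of the Littlewood--Paley projections) and the contraction principle are genuinely used; by comparison, the dyadic bookkeeping and the density reductions are routine.
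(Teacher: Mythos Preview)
The paper does not give a proof of this theorem; it is quoted from Weis' original article \cite{W01} and used as a black box in the derivation of Proposition~\ref{p.est.BCs}. Your sketch is the standard route to the result (essentially the one in \cite{W01} and in the Kunstmann--Weis lecture notes): dyadic Littlewood--Paley decomposition, $\mathcal{R}$-boundedness of the dyadic projections $\{\widetilde\Delta_n\}$ on $L^p(\mathbb{R};X)$ for UMD $X$ (Bourgain), and then the stability of $\mathcal{R}$-bounded sets under absolutely convex hulls and strong integrals (a consequence of Kahane's contraction principle) to obtain $\mathcal{R}(\{T_{m_n}\})\lesssim k_0+k_1$. The outline is correct.

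One clarification on the last paragraph: the phrase ``Calder\'on--Zygmund argument for the randomized kernel'' slightly misdescribes the mechanism. No Calder\'on--Zygmund decomposition is needed. After rescaling the $n$-th block to a fixed annulus, the fundamental theorem of calculus writes $m_n(\tau)$ as an $L^1$-average in a parameter $\sigma$ of operators of the form $m(\sigma)\cdot s_\sigma(\tau)$ and $\sigma m'(\sigma)\cdot s_\sigma(\tau)$, where the $s_\sigma$ are \emph{scalar} symbols whose associated multipliers are uniformly bounded on $L^p(\mathbb{R})$ (by dilation invariance). Pointwise multiplication by an $\mathcal{R}$-bounded family in $\mathcal{L}(X;Y)$ lifts to an $\mathcal{R}$-bounded family in $\mathcal{L}(L^p(\mathbb{R};X);L^p(\mathbb{R};Y))$, and $L^1$-averages of an $\mathcal{R}$-bounded family are again $\mathcal{R}$-bounded; combining these two facts gives $\mathcal{R}(\{T_{m_n}\})\lesssim k_0+k_1$ directly, without any singular-integral machinery.
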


In the following, we will use these operators:
$$ \Lambda_{\gamma,k}f=\mathcal{L}^{-1}\left[|\lambda|^\frac{k}{2}\mathcal{L}[f](\lambda)\right]=e^{\gamma t}\mathcal{F}_\tau\left[|\lambda|^\frac{k}{2}\mathcal{F}^{-1}[e^{-\gamma t}]\right] \quad k\in\mathbb{N}_0. $$
By the Mikhlin theorem it can be easily seen that 
$$ \|e^{-\gamma t}\Lambda_{\gamma,k}f\|_{L^p(\mathbb{R})}\simeq \|e^{-\gamma t}f\|_{H^{k/2}_p(\mathbb{R})} \quad k\in\mathbb{N}. $$
Finally, we define
$$ H^s_{p,\gamma}(A)\coloneqq \left\{v\in L^p_{loc}(A)\mid e^{-\gamma t}v\in H^s_p(A)\right\}\quad \forall s\ge 0,\:\:p\in(1,\infty), $$
with the norm
$$ \|v\|_{H^s_{p,\gamma}(A)}\coloneqq \|e^{-\gamma t}v\|_{H^s_p(A)} $$
for any $A\subseteq\mathbb{R}$ open set. We are now ready to prove the existence and the estimate for the system \eqref{evo.sys.BCs}:
\begin{prop}\label{p.est.BCs}
Let $a,\beta,\gamma>0$ and $p,q\in(1,+\infty)$ then for any $h_2,H_2$ such that $h_2\cdot e_N=0$ in $\mathbb{R}^N_0$,  $h_2(t)=H_2(t)=0$ when $t<0$ and
$$ h_2\in \bigcap_{l=0}^2 H^{l/2}_{p,\gamma}\left(\mathbb{R};W^{2-l,q}\left(\mathbb{R}^N_+;\mathbb{R}^N\right)\right), \quad H_2\in \bigcap_{l=0}^2 H^{l/2}_{p,\gamma}\left(\mathbb{R};W^{2-l,q}\left(\mathbb{R}^N_+;S_0(N,\mathbb{R})\right)\right), $$
then we can find $(u_2,p_2,Q_2)$ solution for (\ref{evo.sys.BCs}) with $p_2(t)\in \widehat{H}^1_q(\mathbb{R}^N_+)$ for a.e. $t>0$ and 
$$ u_2\in \bigcap_{l=0}^2 H^{l/2}_{p,\gamma}\left(\mathbb{R};W^{2-l,q}\left(\mathbb{R}^N_+;\mathbb{R}^{N}\right)\right), \quad  Q_2\in \bigcap_{l=0}^3 H^{l/2}_{p,\gamma}\left(\mathbb{R};W^{3-l,q}\left(\mathbb{R}^N_+;S_0(N,\mathbb{R})\right)\right), $$
$$ \nabla_x p_2\in L^p_\gamma\left(\mathbb{R};L^q\left(\mathbb{R}^N_+;\mathbb{R}^N\right)\right), $$
such that 
$$ \sum_{l=0}^2\|u_2\|_{H^{l/2}_{p,\gamma}(\mathbb{R};W^{2-l,q}(\mathbb{R}^N_+))} + \sum_{l=0}^3 \|Q_2\|_{H^{l/2}_{p,\gamma}(\mathbb{R};W^{3-l,q}(\mathbb{R}^N_+))}+ $$
$$ +\sum_{l=0}^2\|\gamma^{l/2}u_2\|_{L^p_\gamma(\mathbb{R};W^{2-l,q}(\mathbb{R}^N_+))} + \sum_{l=0}^3 \|\gamma^{l/2}Q_2\|_{L^p_{\gamma}(\mathbb{R};W^{3-l,q}(\mathbb{R}^N_+))}+\|\gamma^\frac{1}{2}\partial_tQ_2\|_{L^p_\gamma(\mathbb{R};L^q(\mathbb{R}^N_+))}+ $$
$$ + \|\nabla_x p_2\|_{L^p_\gamma(\mathbb{R};L^q(\mathbb{R}^N_+))}\le C\sum_{l=0}^2\|(h_2,H_2)\|_{H^{l/2}_{p,\gamma}(\mathbb{R};W^{2-l,q}(\mathbb{R}^N_+))}, $$
for some $C>0$.
\end{prop}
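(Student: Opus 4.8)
Fix $\theta\in\left(\theta_0,\frac{\pi}{2}\right)$ with $\tan\theta_0\ge\frac{|\beta|}{\sqrt 2}$ and $r\in(0,\gamma]$; since $\theta<\frac{\pi}{2}$, the whole line $\{\lambda=\gamma+i\tau\mid\tau\in\mathbb R\}$ lies in $\Sigma_{\theta,r}$. The plan is to reduce \eqref{evo.sys.BCs} to the resolvent problem \eqref{res.sys.} with $f=G=0$ by the Laplace transform in $t$, and then to upgrade the $\mathcal R$-bounded resolvent estimates of Theorem \ref{t.R-bound.res.} to $L^p$-in-time estimates via Weis' operator-valued multiplier theorem (Theorem \ref{t.Weis}). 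Applying $\mathcal L$ to \eqref{evo.sys.BCs} and using Remark \ref{r.Lapl.transf.}, for a.e.\ $\tau\in\mathbb R$ the triple $(\mathcal L[u_2],\mathcal L[p_2],\mathcal L[Q_2])(\lambda)$ must solve \eqref{res.sys.} with $h=\mathcal L[h_2](\lambda)$, $H=\mathcal L[H_2](\lambda)$. Theorem \ref{t.R-bound.res.} provides the solution operators $\mathcal A(\lambda),\mathcal B(\lambda)$, holomorphic on $\Sigma_{\theta,r}$, acting on the data $\big(D^2(h,H),|\lambda|^{1/2}\nabla(h,H),|\lambda|(h,H)\big)$; so I would \emph{define}
$$ u_2:=\mathcal L^{-1}\!\Big[\mathcal A(\lambda)\big(\mathcal L[D^2(h_2,H_2)],\,\mathcal L[\Lambda_{\gamma,1}\nabla(h_2,H_2)],\,\mathcal L[\Lambda_{\gamma,2}(h_2,H_2)]\big)\Big], $$
and $Q_2$ likewise with $\mathcal B(\lambda)$, using $|\lambda|^{k/2}\mathcal L[v]=\mathcal L[\Lambda_{\gamma,k}v]$; the pressure is recovered from the first equation of \eqref{res.sys.} as $\nabla p_2=\mathcal L^{-1}\big[(\Delta-\lambda)\mathcal L[u_2]+\tfrac{\beta^2}{2}\Delta\mathcal L[u_2]-\beta\lambda\,{\rm Div}\,\mathcal L[Q_2]\big]$, exactly the identity used at the end of Section 3. (Here $\mathcal S_j(\lambda)$ may be replaced by its $|\lambda|$-version, which differs by a bounded scalar multiplier with bounded $\tau\partial_\tau$-derivative, hence preserves $\mathcal R$-boundedness.)

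Next I would read off the norms. The coordinates of $\mathcal S_1(\lambda)\mathcal A(\lambda)=(D^2,|\lambda|^{1/2}\nabla,|\lambda|)\mathcal A(\lambda)$ applied to the data tuple give, after the inverse transform, $D^2u_2$, $\Lambda_{\gamma,1}\nabla u_2$ and $\Lambda_{\gamma,2}u_2$, which by the equivalence $\|e^{-\gamma t}\Lambda_{\gamma,k}v\|_{L^p(\mathbb R)}\simeq\|e^{-\gamma t}v\|_{H^{k/2}_p(\mathbb R)}$ control precisely $\sum_{l=0}^2\|u_2\|_{H^{l/2}_{p,\gamma}(\mathbb R;W^{2-l,q})}$; similarly $\mathcal S_2(\lambda)\mathcal B(\lambda)=(D^3,|\lambda|^{1/2}D^2,|\lambda|\nabla,|\lambda|^{3/2})\mathcal B(\lambda)$ controls $\sum_{l=0}^3\|Q_2\|_{H^{l/2}_{p,\gamma}(\mathbb R;W^{3-l,q})}$, including $\|\gamma^{1/2}\partial_tQ_2\|_{L^p_\gamma(\mathbb R;L^q)}$ since $\gamma^{1/2}|\lambda|\le|\lambda|^{3/2}$. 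By Theorem \ref{t.R-bound.res.} the families $\{(\tau\partial_\tau)^\ell\mathcal S_j(\lambda)\mathcal A(\lambda)\}$ and $\{(\tau\partial_\tau)^\ell\mathcal S_j(\lambda)\mathcal B(\lambda)\}$, $\ell=0,1$, are $\mathcal R$-bounded on the relevant $L^q(\mathbb R^N_+)$-spaces, uniformly in $\gamma\ge r$; these spaces are UMD, being closed subspaces and finite products of $L^q$ with $1<q<\infty$. Hence Theorem \ref{t.Weis}, applied on the line ${\rm Re}\,\lambda=\gamma$ through the correspondence $v\mapsto e^{\gamma t}v$ of Remark \ref{r.Lapl.transf.}, yields the full estimates for $u_2$ and $Q_2$; the $\gamma^{l/2}$-weighted terms follow identically after replacing $\mathcal S_j(\lambda)$ by $\mathcal S_j(\gamma)$ (legitimate by Remark \ref{r.res.est.}) and bounding $\gamma^{l/2}\le|\lambda|^{l/2}$. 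For the pressure, the displayed identity gives $\|\nabla p_2\|_{L^q}\lesssim|\lambda|\|u_2\|_{L^q}+\|D^2u_2\|_{L^q}+|\lambda|\|\nabla Q_2\|_{L^q}$ pointwise in the transform variable, and all three terms have just been controlled in $L^p_\gamma(\mathbb R;L^q)$, so $\nabla_xp_2\in L^p_\gamma(\mathbb R;L^q)$. Finally, since $\mathcal A(\lambda),\mathcal B(\lambda)$ are holomorphic of polynomial growth for ${\rm Re}\,\lambda>r$ and $\mathcal L[h_2],\mathcal L[H_2]$ extend holomorphically to ${\rm Re}\,\lambda>0$ (as $h_2,H_2$ vanish for $t<0$), a Paley--Wiener argument shows $u_2(t)=Q_2(t)=0$ for $t<0$ and that the construction does not depend on the particular $\gamma\ge r$.

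The main obstacle is checking the hypotheses of Theorem \ref{t.Weis}: that the $\mathcal L(X;Y)$-valued symbols $\tau\mapsto\mathcal S_j(\lambda)\mathcal A(\lambda)$ and $\tau\mapsto\mathcal S_j(\lambda)\mathcal B(\lambda)$ are $C^1$ on $\mathbb R\setminus\{0\}$ and that they, together with their $\tau\partial_\tau$-derivatives, are $\mathcal R$-bounded uniformly in $\gamma\ge r$. Holomorphy and $C^1$-regularity come from the explicit solution formula \eqref{r.f.} once one knows the denominators $\mathcal C_a$, $\mathcal A_a$ (and $L_1-L_2$, treated via the $\lambda=\eta$ analysis) never vanish on $\Sigma_{\theta,r}$, which is Propositions \ref{p.nonzero-prop.}--\ref{p.nonzero-prop.eta} together with the lower bounds of Propositions \ref{p.est.C.lambda.neq.0}--\ref{p.est.A.lambda.neq.0}; the uniform $\mathcal R$-bound is exactly the content of Theorem \ref{t.R-bound.res.}. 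The only genuinely new bookkeeping is identifying the coordinates of $\mathcal S_1,\mathcal S_2$ with the mixed time--space scale $H^{l/2}_{p,\gamma}(\mathbb R;W^{m-l,q})$ through the operators $\Lambda_{\gamma,k}$ and verifying that the $\gamma$-weighted norms are dominated in the same manner; this is routine once the Laplace--multiplier correspondence of Remark \ref{r.Lapl.transf.} is in place.
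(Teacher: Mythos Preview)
Your proposal is correct and follows essentially the same route as the paper: Laplace transform in $t$, invoke the $\mathcal R$-bounded solution operators of Theorem \ref{t.R-bound.res.}, apply Weis' theorem on the line $\operatorname{Re}\lambda=\gamma$, read off the mixed norms through the $\Lambda_{\gamma,k}$ correspondence, and obtain the $\gamma$-weighted terms via Remark \ref{r.res.est.}. The only cosmetic difference is that the paper defines the pressure as $p_2(t)=K_A(u_2(t),Q_2(t))$ through the weak Neumann problem and then estimates $\nabla p_2$ by the equation, whereas you recover $\nabla p_2$ directly from the algebraic identity at the end of Section~3; both yield the same bound.
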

\begin{proof}\hfill\\
If we apply the Laplace transformation to the system (\ref{evo.sys.BCs}), thanks to Remark \ref{r.Lapl.transf.}, we get that 
$$ \left\{\begin{aligned}
    & (\lambda-\Delta)\mathcal{L}[u_2](\lambda)+\nabla \mathcal{L}[p_2](\lambda)+\beta {\rm Div}(\Delta-a)\mathcal{L}[Q_2](\lambda)=0 & \text{in}\:\:\mathbb{R}^N_+ \\
    & (\lambda+a-\Delta)\mathcal{L}[Q_2](\lambda)-\beta D(\mathcal{L}[u_2](\lambda))=0 & \text{in}\:\:\mathbb{R}^N_+ \\
    & {\rm div} \mathcal{L}[u_2](\lambda)=0 & \text{in}\:\:\mathbb{R}^N_+ \\
    & \mathcal{L}[u_2](\lambda)=\mathcal{L}[h_2](\lambda),\quad   D_N\mathcal{L}[Q_2](\lambda)=\mathcal{L}[H_2](\lambda) & \text{on}\:\:\mathbb{R}^N_0.
\end{aligned}\right. $$
If we call $G\coloneqq (D^2(h_2,H_2),\Lambda_{\gamma,1}\nabla (h_2,H_2),\Lambda_{\gamma,2}(h_2,H_2))$, then we know from Theorem \ref{t.R-bound.res.} that for any $\lambda\in\mathbb{C}$ with $Re\lambda>0$, 
$$ (\mathcal{L}[u_2](\lambda),\mathcal{L}[Q_2](\lambda))=(\mathcal{A}(\lambda),\mathcal{B}(\lambda))\mathcal{L}[G](\lambda) \quad \forall \lambda\in\Sigma_{\theta,r}. $$
We define then 
$$ (u_2,Q_2)\coloneqq \mathcal{L}^{-1}[(\mathcal{A}(\lambda),\mathcal{B}(\lambda))\mathcal{L}[G](\lambda)] $$
for $Re\lambda>0$. Thanks to Remark \ref{r.Lapl.transf.}
$$ (u_2,Q_2)=e^{\gamma t}\mathcal{F}^{-1}_\tau\left[(\mathcal{A}(\lambda),\mathcal{B}(\lambda))\mathcal{F}[e^{-\gamma t}(D^2(h_2,H_2),\Lambda_{\gamma,1}\nabla (h_2,H_2),\Lambda_{\gamma,2}(h_2,H_2))]\right]. $$
Moreover, since $h_2=H_2=0$ for $t<0$, then $\mathcal{L}[G]$ is holomorphic for $Re\lambda>0$ and, by Cauchy's Integral Theorem, we get that for any fixed $\gamma_0>0$
$$ (u_2,Q_2)(\gamma_1+i\tau)=(u_2,Q_2)(\gamma_2+i\tau) \quad \forall \gamma_1,\gamma_2>\gamma_0. $$
So, using Theorem \ref{t.R-bound.res.}, we can apply Theorem \ref{t.Weis}:
$$ \|(D^2_xu_2,\Lambda_{\gamma,1}\nabla_xu_2,\Lambda_{\gamma,2} u_2)\|_{L^p_\gamma(\mathbb{R};L^q(\mathbb{R}^N_+))}+ $$
$$ + \|(D^3_xQ_2,\Lambda_{\gamma,1}D_x^2Q_2,\Lambda_{\gamma,2} \nabla Q_2,\Lambda_{\gamma,3}Q_2)\|_{L^p_\gamma(\mathbb{R};L^q(\mathbb{R}^N_+))} \le C\|G\|_{L^p_\gamma(\mathbb{R};L^q(\mathbb{R}^N_+)}, $$
with $C>0$ which depends on $\gamma_0$. Analogously, thanks to Remark \ref{r.res.est.}, we have that
    $$ \|(D^2_xu_2,\gamma^{1/2}\nabla_xu_2,\gamma u_2)\|_{L^p_\gamma(\mathbb{R};L^q(\mathbb{R}^N_+))}+ $$
$$ + \|(D^3_xQ_2,\gamma^{1/2}D_x^2Q_2,\gamma \nabla Q_2,\gamma^\frac{3}{2}Q_2,\gamma^\frac{1}{2}\partial_tQ_2)\|_{L^p_\gamma(\mathbb{R};L^q(\mathbb{R}^N_+))} \le C\|G\|_{L^p_\gamma(\mathbb{R};L^q(\mathbb{R}^N_+))}. $$
Finally, we define $p_2(x,t)\coloneqq K_A(u_2(t),Q_2(t))$, where for a.e. $t>0$
$$ \left<\nabla K_A(u_2(t),Q_2(t)),\nabla \varphi\right>=\left<\Delta u_2(t)-\beta {\rm Div}(\Delta-a)Q_2(t),\nabla \varphi\right>\quad \forall \varphi\in\widehat{H}^1_{q^\prime}(\mathbb{R}^N_+). $$
Then, by construction, $(u_2,p_2,Q_2)$ solves (\ref{evo.sys.BCs}) and 
$$ \|\nabla p_2\|_{L^p_\gamma(\mathbb{R};L^q(\mathbb{R}^N_+))}\le C(\beta)\|(\partial_t u_2,D^2_xu_2)\|_{L^p_\gamma(\mathbb{R};L^q(\mathbb{R}^N_+))}+ $$
$$ + \|\partial_t\nabla_x Q_2\|_{L^p_\gamma(\mathbb{R};L^q(\mathbb{R}^N_+))} \lesssim \|G\|_{L^p_\gamma(\mathbb{R};L^q(\mathbb{R}^N_+))}. $$
We conclude here the estimate, because, as we have noticed before, we can estimate the $L^p$-norm of $e^{-\gamma t}\Lambda_{\gamma,k}G$ with the $H^{k/2}_{p,\gamma}$-norm of $G$ and the same we can do for the norms of $u$ and $Q$.
\end{proof}
Let us pass to the first system  (\ref{evo.sys.ext.force}). Here the result is already known:
\begin{thm}\label{t.evol.sys.ext.force}(Theorem 2.1, \cite{MS22})\\
Let $a>0$,  $\beta\in\mathbb{R}$, $\gamma_0\ge 1$, $p,q\in(1,\infty)$, let 
$$ f\in L^p_\gamma\left(\mathbb{R}_+;L^q\left(\mathbb{R}^N;\mathbb{R}^N\right)\right),\quad  G\in L^p_\gamma\left(\mathbb{R}_+;W^{1,q}\left(\mathbb{R}^N;\mathbb{R}^{N^2}\right)\right), $$
$$ v_0\in B^{2(1-1/p)}_{q,p}\left(\mathbb{R}^N;\mathbb{R}^N\right)\cap J_q\left(\mathbb{R}^N\right), \quad V_0\in B^{3-2/p}_{q,p}\left(\mathbb{R}^N;\mathbb{R}^{N^2}\right), $$
then we can find a unique $(v,\rho,V)$ which solves 
\begin{equation}\label{lin.evol.sys.trace}
    \left\{\begin{array}{ll}
       (\partial_t-\Delta)v+\nabla \rho+\beta {\rm Div}\left(\Delta V-a\left(Id-\frac{Id}{N}{\rm tr}(V)\right)\right)=f  & \text{in}\:\:\mathbb{R}_+\times \mathbb{R}^N \\
       (\partial_t-\Delta)V+a\left(V-\frac{Id}{N}{\rm tr}(V)\right)-\beta D(v)=G & \text{in}\:\:\mathbb{R}_+\times\mathbb{R}^N \\
       {\rm div}v=0  & \text{in}\:\:\mathbb{R}_+\times\mathbb{R}^N \\
       v(0)=v_0,\quad V(0)=V_0 & \text{in}\:\:\mathbb{R}^N
    \end{array}\right.
\end{equation}
with $\rho(t)\in \widehat{H}^1_q(\mathbb{R}^N)$ for a.e. $t>0$ and
$$ v\in \bigcap_{l=0}^2 H^{l/2}_{p,\gamma}\left(\mathbb{R}_+;W^{2-l,q}\left(\mathbb{R}^N;\mathbb{R}^N\right)\right),\quad V\in\bigcap_{l=0}^2 H^{l/2}_{p,\gamma}\left(\mathbb{R}_+;W^{3-l,q}\left(\mathbb{R}^N;\mathbb{R}^{N^2}\right)\right), $$
such that
$$ \sum_{l=0}^2\|v\|_{H^{l/2}_{p,\gamma}(\mathbb{R}_+;W^{2-l,q}(\mathbb{R}^N))} + \sum_{l=0}^2\|V\|_{H^{l/2}_{p,\gamma}(\mathbb{R}_+;W^{3-l,q}(\mathbb{R}^N))}+\|\nabla \rho\|_{L^p_\gamma(\mathbb{R}_+;L^q(\mathbb{R}^N))}\lesssim $$
$$ \lesssim \|f\|_{L^p_\gamma(\mathbb{R}_+;L^q(\mathbb{R}^N))}+\|G\|_{L^p_\gamma(\mathbb{R}_+;W^{1,q}(\mathbb{R}^N))}+\|v_0\|_{B^{2(1-1/p)}_{q,p}(\mathbb{R}^N)}+\|V_0\|_{B^{3-2/p}_{q,p}(\mathbb{R}^N)}. $$
\end{thm}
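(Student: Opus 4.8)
The statement is Theorem 2.1 of \cite{MS22}, so here I only indicate the route one takes to establish it. The plan is the classical three-step scheme for $L^p$-$L^q$ maximal regularity: reduce the evolution problem to a resolvent problem, prove $\mathcal{R}$-boundedness of the solution operator family attached to the resolvent problem, and then pass to the time-dependent estimate through the operator-valued Fourier multiplier theorem of Weis (Theorem \ref{t.Weis}), treating the initial data separately via the analytic semigroup generated by the corresponding operator.

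First I would eliminate the pressure. Applying the divergence to the first equation of \eqref{lin.evol.sys.trace} and using $\mathrm{div}\,v=0$, one obtains a weak Neumann problem for $\rho$ on $\mathbb{R}^N$, uniquely solvable in $\widehat{H}^1_q(\mathbb{R}^N)$ by the Helmholtz decomposition of $L^q(\mathbb{R}^N)$; this writes $\nabla\rho$ as a bounded operator of the data and recasts the system as an abstract Cauchy problem $\partial_t U-\mathcal{A}U=F$, $U(0)=U_0$, on $J_q(\mathbb{R}^N)\times W^{1,q}(\mathbb{R}^N;\mathbb{R}^{N^2})$, in complete analogy with Corollary \ref{c.red.sys.} and Proposition \ref{p.est.semigroup} in the half-space.

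Next, for the reduced resolvent problem $(\lambda-\mathcal{A})U=F$ I would take the full spatial Fourier transform. Since there is no boundary, $\widehat{U}(\xi)$ is obtained, for each $\xi$, by inverting a finite linear system whose coefficient matrix is polynomial in $\xi$ and rational in $\lambda$; the crucial point — the whole-space analogue of the computation in Section 2 around the polynomial $L(t)$ and its roots $z_{1,2}(\lambda)$ — is that the relevant determinant stays bounded away from zero for $(\lambda,\xi)\in\Sigma_{\theta,r}\times\mathbb{R}^N$ once $\tan\theta\ge|\beta|/\sqrt{2}$. Differentiating the resulting symbols in $\xi$ and applying $\tau\partial_\tau$ produces Mikhlin-type bounds, so that $\{(\tau\partial_\tau)^\ell \mathcal{S}(\lambda)(\lambda-\mathcal{A})^{-1}\}_{\lambda\in\Sigma_{\theta,r}}$ is $\mathcal{R}$-bounded on $L^q(\mathbb{R}^N)$, where $\mathcal{S}(\lambda)$ carries the weights $(\lambda,\lambda^{1/2}\nabla,D^2)$ on the velocity component and $(\lambda^{3/2},\lambda\nabla,\lambda^{1/2}D^2,D^3)$ on the $Q$-component; this uses the $\mathcal{R}$-boundedness of scalar Fourier multiplier operators together with the sum and product rules for $\mathcal{R}$-bounds (Lemma \ref{l.mult.R-bound.}). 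Weis' theorem then converts this into the weighted $L^p_\gamma(\mathbb{R}_+;L^q)$ estimates for $v$, $V$ and $\nabla\rho$ in the case $U_0=0$, the independence of $\gamma\ge\gamma_0$ following from Cauchy's theorem exactly as in Proposition \ref{p.est.BCs}. The contribution of $(v_0,V_0)$ is handled by the analytic semigroup $e^{t\mathcal{A}}$ built from the resolvent estimate; the standard trace argument identifies the optimal data spaces as the real interpolation spaces between the base space and $D(\mathcal{A})$, which gives $B^{2(1-1/p)}_{q,p}(\mathbb{R}^N)\cap J_q(\mathbb{R}^N)$ for $v_0$ and $B^{3-2/p}_{q,p}(\mathbb{R}^N)$ for $V_0$, the exponent $3$ reflecting that $D(\mathcal{A})$ controls three spatial derivatives of $V$. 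Superposing the two contributions, recovering the bound for $\nabla\rho$ from the first equation, and obtaining uniqueness from the resolvent solvability by a Laplace-transform/duality argument (as in Corollary \ref{c.un.}) finishes the proof.

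The step I expect to be the main obstacle is the resolvent analysis itself: unlike the pure Stokes or heat resolvent, the matrix to be inverted genuinely couples $v$ and $V$, so one must first verify the uniform non-degeneracy of its determinant on $\Sigma_{\theta,r}\times\mathbb{R}^N$ — precisely where the angle restriction $\tan\theta_0\ge|\beta|/\sqrt{2}$ is forced upon us — and then track the anisotropic weights ($v$ gaining two spatial derivatives, $V$ gaining three) through the products of Fourier multipliers and through the computation of the real interpolation space attached to the initial data.
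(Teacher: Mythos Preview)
The paper does not actually prove this statement: it is quoted verbatim as Theorem 2.1 of \cite{MS22} and used as a black box, so there is no ``paper's own proof'' to compare against. Your sketch is a faithful outline of the argument carried out in \cite{MS22} --- pressure elimination via the Helmholtz decomposition, full-space Fourier analysis of the coupled resolvent symbol, $\mathcal{R}$-boundedness of the resulting multipliers, Weis' theorem for the inhomogeneous part, and the analytic semigroup with real interpolation for the initial data --- and is the same route the present paper follows in the half-space.
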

\begin{rem}\label{rem.trace}
We notice that, if ${\rm tr}(G)=0$ and ${\rm tr}(V_0)=0$, then ${\rm tr}(V)$ solves
$$ \left\{\begin{array}{ll}
    (\partial_t-\Delta){\rm tr}(V)=0 & \text{in}\:\:\mathbb{R}_+\times\mathbb{R}^N \\
    {\rm tr}(V)(0)=0 & \text{in}\:\:\mathbb{R}^N.
\end{array}\right. $$
Therefore, ${\rm tr}(V)\equiv0$. 
\end{rem}
Finally, we pass to the third system (\ref{evo.sys.ICs}). We already know that there is a unique solution $(u_3,Q_3)=T(t)(u_0^3,Q_0^3)$, so we just need to prove that the solution satisfies the estimate wanted. Firstly we need an interpolation lemma:
\begin{lem}\label{l.est.interp.2}
Let $\Omega\subseteq\mathbb{R}^N$ a uniform $C^2$ open set, then 
$$ \|f\|_{H^{1/2}_p(\mathbb{R};W^{1,q}(\Omega))}\lesssim \|f\|_{L^p(\mathbb{R};W^{2,q}(\Omega))}+\|\partial_t f\|_{L^p(\mathbb{R};L^q(\Omega))}, $$
$$ \|g\|_{H^{1/2}_p(\mathbb{R};W^{2,q}(\Omega))}\lesssim \|g\|_{L^p(\mathbb{R};W^{3,q}(\Omega))}+\|\partial_t g\|_{L^p(\mathbb{R};W^{1,q}(\Omega))}. $$
\end{lem}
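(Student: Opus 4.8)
Looking at Lemma \ref{l.est.interp.2}, this is an interpolation-type estimate: controlling the $H^{1/2}_p$-norm in time of a function valued in a Sobolev space, by the $L^p$-norm of one higher spatial derivative plus the $L^p$-norm of the time derivative in a lower Sobolev space. This is a standard mixed-derivative interpolation result. Let me sketch how I would prove it.

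\textbf{Proof proposal.}

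The plan is to reduce both estimates to the classical scalar interpolation inequality $\|v\|_{H^{1/2}_p(\mathbb{R};L^q(\Omega))} \lesssim \|v\|_{L^p(\mathbb{R};W^{1,q}(\Omega))}^{1/2}\|\partial_t v\|_{L^p(\mathbb{R};L^q(\Omega))}^{1/2} \lesssim \|v\|_{L^p(\mathbb{R};W^{1,q}(\Omega))} + \|\partial_t v\|_{L^p(\mathbb{R};L^q(\Omega))}$, which expresses $H^{1/2}_p(\mathbb{R};L^q(\Omega))$ as (up to equivalence of norms, via the Mikhlin multiplier theorem applied with the operator-valued multiplier $|\tau|^{1/2}(1+|\tau|)^{-1}$ acting on the UMD space $L^q(\Omega)$) the real or complex interpolation space between $L^p(\mathbb{R};L^q(\Omega))$ and $W^{1,p}(\mathbb{R};L^q(\Omega))$. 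First I would establish this scalar case: write $\|v\|_{H^{1/2}_p(\mathbb{R};L^q(\Omega))} = \|\mathcal{F}^{-1}[(1+\tau^2)^{1/4}\mathcal{F}[v]]\|_{L^p(\mathbb{R};L^q(\Omega))}$ and note $(1+\tau^2)^{1/4} \lesssim 1 + |\tau|^{1/2} \lesssim 1 + |\tau|$, then use that $|\tau|^{1/2}$ is dominated by the square-function/interpolation bound between $1$ and $|\tau|$; equivalently invoke $H^{1/2}_p = [L^p, H^1_p]_{1/2}$ in the time variable with values in the UMD space $L^q(\Omega)$ and bound the interpolation norm by the sum of the endpoint norms.

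Next, for the first inequality of the lemma, I would apply the scalar estimate componentwise to $v = D^\alpha_x f$ for each multi-index $\alpha$ with $|\alpha| \le 1$: each such $D^\alpha_x f$ satisfies $\|D^\alpha_x f\|_{L^p(\mathbb{R};W^{1,q}(\Omega))} \lesssim \|f\|_{L^p(\mathbb{R};W^{2,q}(\Omega))}$ and $\|\partial_t D^\alpha_x f\|_{L^p(\mathbb{R};L^q(\Omega))} = \|D^\alpha_x \partial_t f\|_{L^p(\mathbb{R};L^q(\Omega))} \lesssim \|\partial_t f\|_{L^p(\mathbb{R};W^{1,q}(\Omega))}$ — but here I must be careful: the claim has $\|\partial_t f\|_{L^p(\mathbb{R};L^q(\Omega))}$ on the right, not $W^{1,q}$, so for $|\alpha|=1$ the spatial derivative should instead be absorbed by noting that $H^{1/2}_p(\mathbb{R};W^{1,q}(\Omega))$ interpolates between $L^p(\mathbb{R};W^{1,q}(\Omega))$ (no — between the right spaces). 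The cleaner route is to identify $H^{1/2}_p(\mathbb{R};W^{1,q}(\Omega))$ directly as an interpolation space of the pair $\big(L^p(\mathbb{R};W^{2,q}(\Omega)),\ W^{1,p}(\mathbb{R};L^q(\Omega))\big)$ at parameter $\theta = 1/2$ in the sense of mixed-order Sobolev scales (a Lizorkin–Triebel / anisotropic embedding): the anisotropic Bessel-potential space $H^{(1/2,1)}$ with order $1/2$ in $t$ and $1$ in $x$ embeds into, and is controlled by, the intersection $L^p(\mathbb{R};W^{2,q}) \cap W^{1,p}(\mathbb{R};L^q)$ because $\tfrac{1/2}{1/2} + \tfrac{1}{2} = 2$ — i.e. the multiplier $(1+\tau^2)^{1/4}(1+|\xi|^2)^{1/2}\big[(1+|\xi|^2) + (1+\tau^2)^{1/2}\big]^{-1}$ is an operator-valued Mikhlin multiplier, hence bounded on $L^p(\mathbb{R};L^q(\Omega))$ after a suitable $C^2$ extension/localization of $\Omega$. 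The second inequality is identical with all spatial orders raised by one: apply the same multiplier argument with weights $(1+|\xi|^2)^{1}\big[(1+|\xi|^2)^{3/2} + \dots\big]$, or simply apply the first inequality to $g$ and to each $D^\alpha_x g$ with $|\alpha| \le 1$.

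The uniform $C^2$ hypothesis on $\Omega$ enters only to make the Sobolev spaces $W^{k,q}(\Omega)$ behave well: one uses a standard extension operator $E\colon W^{k,q}(\Omega) \to W^{k,q}(\mathbb{R}^N)$ for $k \le 2$ (respectively $k \le 3$ for the second estimate) that is bounded and commutes with $\partial_t$, proves the estimate on $\mathbb{R}^N$ via the Fourier multiplier theorem above, and restricts back; since $\Omega = \mathbb{R}^N_+$ in our application this is elementary (even/odd reflection). The step I expect to be the genuine (though still routine) obstacle is verifying the operator-valued Mikhlin condition — i.e. that the relevant symbol, as an $\mathcal{L}(L^q(\Omega))$-valued function of the time frequency $\tau$, together with its $\tau\partial_\tau$-derivative, is $\mathcal{R}$-bounded uniformly in $\tau$ — which is what Weis' theorem (Theorem \ref{t.Weis}, already available in the excerpt) requires; but for these scalar symbols in $\tau$, $\mathcal{R}$-boundedness reduces to ordinary boundedness times a harmless multiple of the identity, so the verification is a short computation. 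Everything else is bookkeeping over finitely many multi-indices.
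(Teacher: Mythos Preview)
Your approach is correct, and in fact more than the paper provides: the paper does not prove this lemma at all but simply cites Proposition~1 of \cite{S18} for the first estimate and says the second is similar. The argument you outline --- reducing to an operator-valued Fourier multiplier estimate in the time variable via Weis' theorem, with the key symbol bound $(1+\tau^2)^{1/4}(1+|\xi|^2)^{1/2}\lesssim (1+|\xi|^2)+(1+\tau^2)^{1/2}$ (Young's inequality), and handling $\Omega$ by a $C^2$ extension operator --- is exactly the standard proof of this kind of mixed-derivative embedding and is essentially what the cited reference does.

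One comment on exposition: your initial attempt to apply a scalar $H^{1/2}_p$--interpolation to each $D^\alpha_x f$ with $|\alpha|\le 1$ does not close (as you yourself note, it would require $\partial_t f\in L^p(W^{1,q})$ rather than $L^p(L^q)$), so that paragraph should be dropped; the ``cleaner route'' via the single anisotropic multiplier is the right argument and stands on its own. For the second estimate you can indeed just apply the first one to $g$ and to each first-order spatial derivative $\partial_j g$, since then the right-hand side involves $\|g\|_{L^p(W^{3,q})}$ and $\|\partial_t g\|_{L^p(W^{1,q})}$ as claimed.
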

The proof of the first estimate comes from Proposition 1 of \cite{S18}. The other one can be proven similarly. We are now ready to prove the estimate for $(u_3,Q_3)$:
\begin{prop}\label{p.evol.est.semigroup}
Let $a>0$ and $\beta\in\mathbb{R}$ and $p,q\in(1,\infty)$, let $(u_3,Q_3)=T(t)(u_0^3,Q_0^3)$ be the solution of (\ref{evo.sys.ICs}), let us call
$$ \mathcal{D}_{q,p}\coloneqq (X_q,D(\mathcal{A}))_{1-\frac{1}{p},p}, $$
let $(u_0^3,Q_0^3)\in\mathcal{D}_{q,p}$, then for any $\gamma\ge 2\gamma_0$ we have that
$$ \sum_{l=0}^2 \|u_3\|_{H^{l/2}_{p,\gamma}(\mathbb{R}_+;L^q(\mathbb{R}^N_+))}+ \sum_{l=0}^2 \|Q_3\|_{H^{l/2}_{p,\gamma}(\mathbb{R}_+;W^{3-l,q}(\mathbb{R}^N_+))}+\|\nabla_xp_3\|_{L^p_\gamma(\mathbb{R};L^q(\mathbb{R}^N_+))}\lesssim \|(u_0^3,Q_0^3)\|_{\mathcal{D}_{p,q}}, $$
where $X_q$ and $\mathcal{D}(\mathcal{A})$ are defined in Propostion \ref{p.est.semigroup} and $\gamma_0$ comes from Proposition \ref{p.est.semigroup}.
\end{prop}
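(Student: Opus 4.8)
The plan is to read off maximal $L^p$-regularity for the generator $\mathcal{A}$ from the $\mathcal{R}$-boundedness established in Theorem \ref{t.R-bound.res.}, then apply the classical trace characterization of the maximal regularity class to the homogeneous Cauchy problem, and finally translate the abstract bounds into the concrete Sobolev norms of the statement. By Proposition \ref{p.est.semigroup} the pair $(u_3,Q_3)=T(\cdot)(u_0^3,Q_0^3)$ is the unique solution of \eqref{evo.sys.ICs}, so it suffices to estimate $T(\cdot)(u_0^3,Q_0^3)$. Fix $\gamma\ge 2\gamma_0$ and set $w:=e^{-\gamma t}(u_3,Q_3)$; then $w$ solves $\partial_t w-(\mathcal{A}-\gamma)w=0$ on $(0,\infty)$ with $w(0)=(u_0^3,Q_0^3)$, and since $e^{-\gamma_0 t}T(t)$ is bounded on $X_q$ by Proposition \ref{p.est.semigroup}, the operator $\mathcal{A}-\gamma$ generates an exponentially decaying analytic semigroup; thus every weighted norm appearing in the statement becomes an unweighted norm of $w$ over the half-line.

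Next I would establish that $\mathcal{A}-\gamma$ has maximal $L^p$-$L^q$ regularity on $X_q$. The space $X_q=J_q(\mathbb{R}^N_+)\times W^{1,q}(\mathbb{R}^N_+;S_0(N,\mathbb{R}))$ is UMD, being built from $L^q$-spaces. By Corollary \ref{c.ex.} together with Theorem \ref{t.R-bound.res.} applied to the reduced resolvent problem \eqref{red.res.sys.}, the family $\{(\tau\partial_\tau)^\ell\lambda(\lambda-\mathcal{A})^{-1}:\lambda\in\Sigma_{\theta,r}\}$, $\ell=0,1$, is $\mathcal{R}$-bounded in $\mathcal{L}(X_q)$; since the sector $\Sigma_{\theta,r}$ has half-opening $>\pi/2$, a real shift by $\gamma$ large enough to absorb the vertex $r$ puts us in the situation where Weis' Theorem \ref{t.Weis} yields maximal $L^p$-regularity of $\mathcal{A}-\gamma$ on $X_q$. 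It is then classical (see, e.g., \cite{S18}, \cite{SS12} and the references therein) that the trace at $t=0$ of the maximal regularity class $\{v:\partial_t v\in L^p((0,\infty);X_q),\ v\in L^p((0,\infty);D(\mathcal{A}))\}$ is precisely $\mathcal{D}_{q,p}=(X_q,D(\mathcal{A}))_{1-1/p,p}$, and that the solution of the homogeneous problem obeys
$$ \|\partial_t w\|_{L^p((0,\infty);X_q)}+\|w\|_{L^p((0,\infty);D(\mathcal{A}))}\lesssim \|(u_0^3,Q_0^3)\|_{\mathcal{D}_{q,p}}. $$

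Finally I would unwind the abstract norms. By the description of $D(\mathcal{A})$ in Proposition \ref{p.est.semigroup} and elliptic regularity for the reduced operator, the $D(\mathcal{A})$-norm is equivalent to $\|v\|_{W^{2,q}(\mathbb{R}^N_+)}+\|V\|_{W^{3,q}(\mathbb{R}^N_+)}$, so the display above gives $u_3\in L^p((0,\infty);W^{2,q})$, $\partial_t u_3\in L^p((0,\infty);J_q)$, $Q_3\in L^p((0,\infty);W^{3,q})$ and $\partial_t Q_3\in L^p((0,\infty);W^{1,q})$; Lemma \ref{l.est.interp.2} then upgrades these to the intermediate norms $\|u_3\|_{H^{1/2}_p((0,\infty);W^{1,q})}$ and $\|Q_3\|_{H^{1/2}_p((0,\infty);W^{2,q})}$. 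For the pressure, Corollary \ref{c.red.sys.} gives $\nabla p_3=\nabla K_A(u_3,Q_3)$, whence by Theorem \ref{t.H.P.}
$$ \|\nabla p_3(t)\|_{L^q(\mathbb{R}^N_+)}\lesssim \|\Delta u_3(t)-\beta{\rm Div}(\Delta-a)Q_3(t)\|_{L^q(\mathbb{R}^N_+)}\lesssim \|D^2 u_3(t)\|_{L^q(\mathbb{R}^N_+)}+\|D^3 Q_3(t)\|_{L^q(\mathbb{R}^N_+)}+\|\nabla Q_3(t)\|_{L^q(\mathbb{R}^N_+)}, $$
and raising to the $p$-th power and integrating in $t$ closes this term; restoring the weight $e^{\gamma t}$ then yields the stated inequality. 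The main obstacle is the second step: one must check that the $\mathcal{R}$-bounds produced by Theorem \ref{t.R-bound.res.} genuinely control $\lambda(\lambda-\mathcal{A})^{-1}$ on the full base space $X_q$ — in particular on the $W^{1,q}$-component of the $Q$-variable, which is the space on which the semigroup acts — and that the abstract trace theorem is applied in the correct weighted, half-line setting; once these points are secured the remaining arguments are routine bookkeeping.
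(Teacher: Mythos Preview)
Your argument is correct, but it takes a different route from the paper. The paper disposes of this proposition in one line by citing Theorem~3.9 of \cite{SS08}, whose proof is the \emph{direct semigroup argument}: for an analytic semigroup one has the equivalence
\[
\|(u_0^3,Q_0^3)\|_{(X_q,D(\mathcal A))_{1-1/p,p}}\ \sim\ \Bigl(\int_0^\infty \|t^{1/p}\mathcal A\,T(t)(u_0^3,Q_0^3)\|_{X_q}^p\,\frac{dt}{t}\Bigr)^{1/p},
\]
so after the exponential shift by $\gamma\ge 2\gamma_0$ one reads off $\partial_t(u_3,Q_3)=\mathcal A\,T(t)(u_0^3,Q_0^3)\in L^p_\gamma(\mathbb R_+;X_q)$ and $(u_3,Q_3)\in L^p_\gamma(\mathbb R_+;D(\mathcal A))$ directly from the pointwise semigroup bounds of Proposition~\ref{p.est.semigroup}; the intermediate $H^{1/2}$-norms and the pressure estimate then follow exactly as in your third step. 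Your approach instead first upgrades the $\mathcal R$-bounds of Theorem~\ref{t.R-bound.res.} and Corollary~\ref{c.ex.} to maximal $L^p$-regularity of $\mathcal A-\gamma$ on $X_q$ via Weis' theorem, and then appeals to the trace characterization of the maximal regularity class to treat the homogeneous Cauchy problem. Both arguments are valid; the paper's route is lighter in that it uses only the analyticity of $T(t)$ and the standard interpolation-space characterization, whereas yours recycles the heavier $\mathcal R$-boundedness machinery already built but makes the connection to the abstract maximal-regularity framework more transparent. The caveat you flag at the end---that the $\mathcal R$-bounds must genuinely control $\lambda(\lambda-\mathcal A)^{-1}$ on the $W^{1,q}$-component of $X_q$---is real but harmless here: the operator $\mathcal S_2(\lambda)$ in Theorem~\ref{t.R-bound.res.} contains $\lambda\nabla$, and the whole-space $\mathcal R$-bounds from \cite{MS22} used in Corollary~\ref{c.ex.} likewise cover this component.
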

The proof is the same of Theorem 3.9 of \cite{SS08}. Now we state this lemma which follows from the theory of \cite{T97}:
\begin{lem}\label{l.est.interp.1}
Let $X_1,X_2$ be two Banach spaces such that $X_2$ is dense in $X_1$, then 
$$ L^p(\mathbb{R}_+;X_2)\cap W^{1,p}\left(\mathbb{R}_+;X_1\right)\subseteq C\left([0,\infty);\left(X_1,X_2\right)_{1-1/p,p}\right) $$
with 
$$ \sup_{t\in\mathbb{R}_+}\|u(t)\|_{(X_1,X_2)_{1-1/p,p}}\lesssim \|u\|_{L^p(\mathbb{R}_+;X_2)}+\|u\|_{W^{1,p}(\mathbb{R}_+;X_1)}. $$
\end{lem}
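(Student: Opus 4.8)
The plan is to prove the statement via the $K$-method characterization of real interpolation together with Hardy's inequality, exploiting that $\theta:=1-1/p$ is exactly the value for which the weights balance. Recall that for a compatible couple $(X_1,X_2)$ (which is our situation, since $X_2\hookrightarrow X_1$) one has $\|a\|_{(X_1,X_2)_{\theta,p}}\simeq\big(\int_0^\infty (t^{-\theta}K(t,a))^p\,\tfrac{dt}{t}\big)^{1/p}$, with $K(t,a)=\inf\{\|a_1\|_{X_1}+t\|a_2\|_{X_2}:a=a_1+a_2\}$. First I would fix, for $u$ in the intersection space, the representative in $C([0,\infty);X_1)$ supplied by the vector-valued Sobolev embedding $W^{1,p}(\mathbb{R}_+;X_1)\hookrightarrow C_b([0,\infty);X_1)$ (fundamental theorem of calculus for Bochner integrals), and set $g(\rho):=\|u'(\rho)\|_{X_1}+\|u(\rho)\|_{X_2}\in L^p(\mathbb{R}_+)$.

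The core step is a pointwise $K$-functional bound. For $s\ge 0$ and $t>0$, write $u(s)=\big(u(s)-w_s(t)\big)+w_s(t)$ with the average $w_s(t):=t^{-1}\int_s^{s+t}u(\rho)\,d\rho$. Since $u(s)-u(\rho)=-\int_s^\rho u'(\sigma)\,d\sigma$, one gets $\|u(s)-w_s(t)\|_{X_1}\le\int_s^{s+t}\|u'(\sigma)\|_{X_1}\,d\sigma$, while $t\,\|w_s(t)\|_{X_2}\le\int_s^{s+t}\|u(\rho)\|_{X_2}\,d\rho$; hence $K(t,u(s);X_1,X_2)\le\int_s^{s+t}g(\rho)\,d\rho$. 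Because $\theta=1-1/p$ gives $-\theta p-1=-p$, plugging this into the norm yields $\|u(s)\|_{(X_1,X_2)_{\theta,p}}^p\lesssim\int_0^\infty t^{-p}\big(\int_s^{s+t}g\big)^p\,dt$, and Hardy's inequality ($p>1$) bounds the right-hand side by $C_p\|g\|_{L^p(s,\infty)}^p\le C_p\|g\|_{L^p(\mathbb{R}_+)}^p$. This is uniform in $s$, which is precisely the claimed supremum estimate once one notes $\|g\|_{L^p(\mathbb{R}_+)}\le\|u\|_{L^p(\mathbb{R}_+;X_2)}+\|u\|_{W^{1,p}(\mathbb{R}_+;X_1)}$.

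For the continuity into $(X_1,X_2)_{\theta,p}$ I would argue directly, again with the $K$-functional. Fix $s_0\ge 0$ and, for $h$ small, split $\|u(s_0+h)-u(s_0)\|_{(X_1,X_2)_{\theta,p}}^p$ into the integral over $t\in(0,|h|)$ and over $t\in(|h|,\infty)$. On the tail, use $K(t,\cdot)\le\|u(s_0+h)-u(s_0)\|_{X_1}\le|h|^{1-1/p}\|u'\|_{L^p(I_h;X_1)}$ with $I_h$ a length-$|h|$ interval around $s_0$, together with $\int_{|h|}^\infty t^{-\theta p-1}\,dt=|h|^{-(p-1)}/(p-1)$; the powers of $|h|$ cancel exactly and the remaining factor $\|u'\|_{L^p(I_h;X_1)}^p\to 0$ by absolute continuity of the integral. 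Near $t=0$, use $K(t,u(s_0+h)-u(s_0))\le K(t,u(s_0+h))+K(t,u(s_0))$ with the bound of the previous paragraph and Hardy's inequality on $(0,|h|)$, obtaining control by $C_p\big(\|g\|_{L^p(s_0,s_0+|h|)}^p+\|g\|_{L^p(s_0+|h|,s_0+2|h|)}^p\big)\to 0$. Hence $u\in C([0,\infty);(X_1,X_2)_{1-1/p,p})$. I expect the only genuinely delicate point to be this last step — upgrading pointwise membership to continuity — since it works precisely because $\theta=1-1/p$ forces every power of $|h|$ in the tail estimate to cancel; the remainder is bookkeeping with Bochner integrals and Hardy's inequality.
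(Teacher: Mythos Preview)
Your argument is correct. The paper does not actually prove this lemma: it simply records it with the comment that it ``follows from the theory of \cite{T97}'' (Triebel's interpolation monograph), so there is no proof in the paper to compare against in detail. What you have written is precisely the classical trace-theorem argument that underlies the cited result: the $K$-functional bound via the averaging decomposition $u(s)=(u(s)-w_s(t))+w_s(t)$, then Hardy's inequality with the critical exponent $\theta=1-1/p$, and finally the continuity upgrade by splitting the $t$-integral at $|h|$. Each step checks out, including the cancellation of powers of $|h|$ in the tail piece and the use of absolute continuity of $\|g\|_{L^p}^p$ on shrinking intervals for both pieces. Compared to the paper's bare citation, your approach is self-contained and makes explicit why the particular value $\theta=1-1/p$ is forced; the trade-off is only that you implicitly use the continuous embedding $X_2\hookrightarrow X_1$ (needed for $w_s(t)\in X_2$ and for the couple to be compatible), which the paper's hypothesis ``$X_2$ dense in $X_1$'' leaves tacit but which certainly holds in the application $X_1=X_q$, $X_2=D(\mathcal{A})$.
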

We are now ready to prove the existence and the estimate for (\ref{evo.sys.full}), i.e. Theorem \ref{t.evol.est.}:
\begin{proof}[Proof of Theorem \ref{t.evol.est.}]\hfill\\
We write the solution as 
$$ (u,p,Q)=(u_1,p_1,Q_1)+(u_2,p_2,Q_2)+(u_3,p_3,Q_3), $$
where $(u_1,p_1,Q_1)$ is the solution of (\ref{evo.sys.ext.force}) with $(f_1,G_1)=(E_{v}[f],E_{M}[G])$, where $E_v$ and $E_M$ are the extension operators defined as in the proof of Corollary \ref{c.ex.}, $(u_2,p_2,Q_2)$ is the solution of (\ref{evo.sys.BCs}) with $(h_2,H_2)=(E_0[h-u_1],E_0[H-D_NQ_1])$, where $E_0$ is the 0-extension on $t\le0$ and where $(u_3,p_3,Q_3)$ is the solution of (\ref{evo.sys.ICs}) with $(u_0^3,Q_0^3)=(u_0-u_2(0),Q_0-Q_2(0))$. We notice that:
\begin{itemize}
    \item The existence of $(u_1,p_1,Q_1)$ follows from Theorem \ref{t.evol.sys.ext.force} and Remark \ref{rem.trace};
    \item Thanks to the choice of $f_1$ and $G_1$, we have that ${\rm div}u_1=0$ for a.e. $(t,x)\in\mathbb{R}_+\times\mathbb{R}^N$.Therefore, the existence of $(u_2,p_2,Q_2)$ comes from the fact that the $N$-component of $u_1$ is equal to 0 on $\mathbb{R}^N_0$;
    \item The existence of $(u_3,p_3,Q_3)$ comes from Proposition \ref{p.evol.est.semigroup}: from Lemma \ref{l.est.interp.1} we know that $(u_2e^{-\gamma t})(0)=u_2(0)\in B_{q,p}^{2(1-1/p)}(\mathbb{R}^N_+)$ and $(Q_2e^{-\gamma t})(0)=Q_2(0)\in B_{q,p}^{3-2/p}(\mathbb{R}^N_+)$; moreover, if we call
    $$ B_{q,p,0}^{2(1-1/p)}(\mathbb{R}^N_+)\coloneqq \{v\in B_{q,p}^{2(1-1/p)}(\mathbb{R}^N_+)\mid v=0\:\:\mathbb{R}_0^N\}, $$
    $$ B_{q,p,N}^{3-2/p}(\mathbb{R}^N_+)\coloneqq \{V\in B_{q,p}^{3-2/p}(\mathbb{R}^N_+)\mid D_NV=0\:\:\mathbb{R}_0^N\}, $$
    then it is well-known (see Theorem 2.7 of \cite{G91}) that
    $$ (u_0-u_2(0),Q_0-Q_2(0))\in\left(J_q(\mathbb{R}^N_+)\cap B_{q,p,0}^{2(1-1/p)}(\mathbb{R}^N_+)\right)\times B_{q,p,N}^{3-2/p}(\mathbb{R}^N_+)\subseteq \mathcal{D}_{q,p}. $$
    
\end{itemize}
Let us call now 
$$ I_k\coloneqq \sum_{l=0}^2\|u_k\|_{H^{l/2}_{p,\gamma}(\mathbb{R}_+;W^{2-l,q}(\mathbb{R}^N_+))} + \sum_{l=0}^2 \|Q_k\|_{H^{l/2}_{p,\gamma}(\mathbb{R}_+;W^{3-l,q}(\mathbb{R}^N_+))}+ \|\nabla_x p_k\|_{L^p_\gamma(\mathbb{R}_+;L^q(\mathbb{R}^N_+))} $$
for $k=1,2,3$. From Theorem \ref{t.evol.sys.ext.force} we know that 
$$ I_1\lesssim \|f\|_{L^p_\gamma(\mathbb{R}_+;L^q(\mathbb{R}^N_+))}+\|G\|_{L^p_\gamma(\mathbb{R}_+;W^{1,q}(\mathbb{R}^N_+))}. $$
From Proposition \ref{p.est.BCs} we get that 
$$ I_2 \lesssim \sum_{l=0}^2\|(h-u_1,H-D_NQ_1)\|_{H^{l/2}_{p,\gamma}(\mathbb{R}_+;W^{2-l,q}(\mathbb{R}^N_+))}\le I_1+ \sum_{l=0}^2\|(h,H)\|_{H^{l/2}_{p,\gamma}(\mathbb{R}_+;W^{2-l,q}(\mathbb{R}^N_+))}. $$
For what concerns the third term, thanks to Proposition \ref{p.evol.est.semigroup} we have that
$$ I_3\lesssim \|(u_0-u_2(0),Q_0-Q_2(0))\|_{\mathcal{D}_{q,p}}. $$
From Lemma \ref{l.est.interp.1} we also have that
$$ \|u_2(0)\|_{B_{q,p}^{2(1-1/p)}(\mathbb{R}^N_+)} + \|Q_2(0)\|_{B^{3-2/p}_{q,p}(\mathbb{R}^N_+)}\lesssim $$
$$ \lesssim \|e^{-\gamma t}u_2\|_{L^p(\mathbb{R}_+;W^{2,q}(\mathbb{R}^N_+;\mathbb{R}^N))}+\|e^{-\gamma t}(\gamma+\partial_t) u_2\|_{L^p(\mathbb{R}_+;L^q(\mathbb{R}^N_+;\mathbb{R}^N))}+ $$
$$ +\|e^{-\gamma t}Q_2\|_{L^p(\mathbb{R}^+;W^{3,q}(\mathbb{R}^N_+;\mathbb{R}^{N^2}))}+\|e^{-\gamma t}(\gamma+\partial_t)Q_2\|_{L^p(\mathbb{R}^+;W^{1,q}(\mathbb{R}^N_+;\mathbb{R}^{N^2}))}. $$
Now we notice that
$$ \|e^{-\gamma t}u_2\|_{L^p(\mathbb{R}_+;L^q(\mathbb{R}^N_+))}\le \frac{1}{\gamma_0}\|e^{-\gamma t}\gamma u_2\|_{L^p(\mathbb{R}_+;L^q(\mathbb{R}^N_+))}\lesssim $$
$$ \lesssim \sum_{l=0}^2\|(h,H)\|_{H^{l/2}_{p,\gamma}(\mathbb{R}_+;W^{2-l,q}(\mathbb{R}^N_+))} + \|f\|_{L^p_\gamma(\mathbb{R}_+;L^q(\mathbb{R}^N_+))}+\|G\|_{L^p_\gamma(\mathbb{R}_+;W^{1,q}(\mathbb{R}^N_+))}, $$
where we have used Proposition \ref{p.est.BCs}. In the same way we can get the other estimates. Finally, we conclude by the uniqueness: since (\ref{evo.sys.full}) is a linear system, it is sufficient to see that $(u,p,Q)=0$ when $f=G=h=H=u_0=Q_0=0$. This follows by the semigroup theory:
$$ (u,Q)=T(t)(u_0,Q_0)=0\:\Rightarrow\: \nabla p=K_A(u,Q)=0. $$
\end{proof}

\section{Proof of Theorem \ref{t.loc.ex.}}

Finally, we conclude with an application of Theorem \ref{t.evol.est.}: let us consider the general Beris-Edward model:
\begin{equation}\label{BE.sys.}
    \left\{\begin{array}{ll}
       (\partial_t-\Delta)u+\nabla p+\beta {\rm Div}(\Delta-a)Q= f(u,Q)  & \text{in}\:\:(0,T)\times \mathbb{R}^N_+ \\
       (\partial_t-\Delta+a)Q-\beta D(u)=G(u,Q)  & \text{in}\:\:(0,T)\times \mathbb{R}^N_+ \\
       {\rm div} u=0 & \text{in}\:\:(0,T)\times \mathbb{R}^N_+ \\
       u=h,\quad D_NQ=H & \text{on}\:\:(0,T)\times \mathbb{R}^N_0 \\
       u(0)=u_0,\quad Q(0)=Q_0 & \text{in}\:\:\mathbb{R}^N_+,
    \end{array}\right.
\end{equation}
where
$$ f(u,Q)=-(u\cdot \nabla) u + {\rm Div}\left[2\xi \mathbb{H}\colon Q\left(Q+\frac{Id}{N}\right)-(\xi+1)\mathbb{H}Q+(1-\xi)Q\mathbb{H}-\nabla Q\odot\nabla Q\right]-\beta {\rm Div}\mathcal{L}[\mathcal{F}(Q)];  $$
$$ G(u,Q)=-(u\cdot\nabla)Q+\xi(D(u)Q+QD(u))+W(u)Q-QW(u)-2\xi\left(Q+\frac{Id}{N}\right)Q\colon \nabla u+\mathcal{L}[\mathcal{F}(Q)],  $$
$$ W(u)=\frac{1}{2}\left(\nabla u-\nabla^Tu\right), \quad [\nabla Q\odot\nabla Q]_{jk}=\sum_{\alpha,\beta=1}^N\partial_j Q_{\alpha\beta}\partial_k Q_{\alpha\beta}\quad j,k=1,\ldots, N, $$
$$ \mathbb{H}=\Delta Q-aQ+b\mathcal{L}[Q^2]-c|Q|^2Q, \quad \mathcal{F}=bQ^2-c|Q|^2Q,  $$
where $\xi,a,b,c\in\mathbb{R}$, $\beta=\frac{2\xi}{N}$ and 
$$ \mathcal{L}[A]=A-{\rm tr}(A)\frac{Id}{N}\quad \forall A\in \mathbb{R}^{N^2}. $$
We recall the following two results from Theorem 2.1 and 2.2 of \cite{BTW75}:
\begin{thm}\label{t.Besov.emb.}\hfill\\
Let $N\in\mathbb N$, $p\in[1,\infty]$, $q,q_1\in[1,\infty]$ and $s>s_1$, then 
$$ B^s_{p,q}\left(\mathbb R^N\right)\hookrightarrow B^{s_1}_{p,q_1}\left(\mathbb R^N\right). $$
Moreover, if $m\in\mathbb N$, then
$$ B^m_{p,1}\left(\mathbb R^N\right)\hookrightarrow W^{m,p}\left(\mathbb R^N\right)\hookrightarrow B^m_{p,\infty}\left(\mathbb R^N\right). $$
\end{thm}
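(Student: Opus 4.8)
The statement is quoted verbatim from \cite{BTW75}, so within the present paper one simply cites it; the natural route to a self-contained argument runs through the Littlewood--Paley (dyadic) description of the Besov scale, and that is the plan I would follow. Fix a standard dyadic partition of unity $\{\varphi_j\}_{j\in\mathbb{N}_0}$ on $\mathbb{R}^N$, with $\varphi_0$ supported in $\{|\xi|\le 2\}$ and $\varphi_j$ supported in the annulus $\{2^{j-1}\le|\xi|\le 2^{j+1}\}$ for $j\ge 1$, set $\Delta_j f:=\mathcal{F}^{-1}[\varphi_j\widehat{f}]$, and use the equivalent norm $\|f\|_{B^s_{p,q}(\mathbb{R}^N)}\simeq\bigl\|\,(2^{js}\|\Delta_j f\|_{L^p(\mathbb{R}^N)})_{j\in\mathbb{N}_0}\,\bigr\|_{\ell^q}$. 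Both inclusions then reduce to elementary statements about weighted sequences, together with the uniform $L^p$-boundedness of the dyadic blocks.

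For the first inclusion ($s>s_1$) I would write $2^{js_1}\|\Delta_j f\|_{L^p}=2^{-j(s-s_1)}\cdot 2^{js}\|\Delta_j f\|_{L^p}$: the sequence $(2^{js}\|\Delta_j f\|_{L^p})_j$ lies in $\ell^q\hookrightarrow\ell^\infty$ and so is bounded by $\|f\|_{B^s_{p,q}}$, while $(2^{-j(s-s_1)})_j\in\ell^1(\mathbb{N}_0)\hookrightarrow\ell^{q_1}(\mathbb{N}_0)$ because $s-s_1>0$; since the product of a bounded sequence with an $\ell^{q_1}$-sequence again lies in $\ell^{q_1}$, one obtains $\|f\|_{B^{s_1}_{p,q_1}}\lesssim\bigl(\sum_{j\ge 0}2^{-j(s-s_1)}\bigr)\|f\|_{B^s_{p,q}}$, and the geometric series is a constant depending only on $s-s_1$ (the case $q_1=\infty$ is the same with $\ell^\infty$ in place of $\ell^{q_1}$). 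For the chain $B^m_{p,1}\hookrightarrow W^{m,p}\hookrightarrow B^m_{p,\infty}$ with $m\in\mathbb{N}$: the left inclusion follows from Bernstein's inequality $\|\partial^\alpha\Delta_j f\|_{L^p}\lesssim 2^{j|\alpha|}\|\Delta_j f\|_{L^p}$, with constant uniform in $j$, so that for $|\alpha|\le m$ one sums $\|\partial^\alpha f\|_{L^p}\le\sum_{j\ge 0}\|\partial^\alpha\Delta_j f\|_{L^p}\lesssim\sum_{j\ge 0}2^{jm}\|\Delta_j f\|_{L^p}=\|f\|_{B^m_{p,1}}$ and then adds over $|\alpha|\le m$; the right inclusion uses $|\xi|^{2m}=\sum_{|\alpha|=m}\binom{m}{\alpha}\xi^{2\alpha}$ to write $2^{jm}\Delta_j f$ as a finite linear combination over $|\alpha|=m$ of the Fourier multipliers with symbol $2^{jm}\varphi_j(\xi)|\xi|^{-2m}\xi^\alpha$ applied to $\partial^\alpha f$; these symbols are supported in $\{|\xi|\sim 2^j\}$ and, after rescaling $\xi\mapsto 2^j\xi$, reduce to a fixed Schwartz function, so their inverse Fourier transforms have $L^1$-norms bounded uniformly in $j$, whence $2^{jm}\|\Delta_j f\|_{L^p}\lesssim\sum_{|\alpha|=m}\|\partial^\alpha f\|_{L^p}\le\|f\|_{W^{m,p}}$ for every $j$, and taking the supremum in $j$ gives $\|f\|_{B^m_{p,\infty}}\lesssim\|f\|_{W^{m,p}}$.

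The only point requiring care is this uniform-in-$j$ $L^p(\mathbb{R}^N)$-boundedness of the dyadic blocks and of the $\xi^{\pm\alpha}$-type multipliers they generate: one must \emph{not} invoke the H\"ormander--Mikhlin theorem, which fails at $p=1$ and $p=\infty$, but instead exploit the scaling invariance of multipliers supported in a fixed annulus together with the elementary bound $\|\mathcal{F}^{-1}[\,m(2^j\cdot)\widehat{f}\,]\|_{L^p}\le\|\mathcal{F}^{-1}m\|_{L^1}\|f\|_{L^p}$, which holds for every $p\in[1,\infty]$. Everything else is bookkeeping with geometric series and the trivial embeddings $\ell^q\hookrightarrow\ell^{q_1}$ ($q\le q_1$), so I would not expect a genuine obstacle here; in the paper itself I would simply cite \cite{BTW75} and move on.
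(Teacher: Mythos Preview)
Your proposal is correct: the paper does not prove this theorem at all but simply quotes it from \cite{BTW75}, exactly as you anticipate in your opening and closing sentences. The self-contained Littlewood--Paley argument you sketch as backup is standard and sound, including the care you take to avoid H\"ormander--Mikhlin at the endpoints $p\in\{1,\infty\}$.
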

We are now ready to prove Theorem \ref{t.loc.ex.}:
\begin{proof}[Proof of Theorem \ref{t.loc.ex.}]\hfill\\
The strategy is to use the Contraction theorem: let us define
$$ \|(u,Q)\|_T\coloneqq \|u\|_{H^1_p((0,T);L^q(\mathbb{R}^N_+))}+\|u\|_{L^p((0,T);W^{2,q}(\mathbb{R}^N_+))} + $$
$$ + \|Q\|_{H^{1}_p((0,T);W^{1,q}(\mathbb{R}^N_+))}+\|Q\|_{L^p((0,T);W^{3,q}(\mathbb{R}^N_+))}.  $$
Let $T\in(0,1]$, then we can define also
$$ Y_1\coloneqq H^1_p((0,T);L^q(\mathbb{R}^N_+;\mathbb{R}^N))\cap L^p((0,T);W^{2,q}(\mathbb{R}^N_+;\mathbb{R}^N)); $$
$$ Y_2\coloneqq H^{1}_p((0,T);W^{1,q}(\mathbb{R}^N_+;S_0(N,\mathbb{R})))\cap L^p((0,T);W^{3,q}(\mathbb{R}^N_+;S_0(N,\mathbb{R}))). $$
Let $\omega>0$. We will apply the theorem on the space
$$ Y_\omega\coloneqq \{(v,W)\in Y_1\times Y_2\mid \|(v,W)\|_T\le \omega; \:\:v_{|t=0}=u_0,\:\:W_{|t=0}=Q_0,\}.  $$
We consider the function $\phi\colon (v,W)\in Y_\omega\mapsto (u,Q)\in Y_1\times Y_2$ which solves 
$$ \left\{\begin{array}{ll}
       (\partial_t-\Delta)u+\nabla p+\beta {\rm Div}(\Delta-a)Q= f(\mathcal{E}_1(v),\mathcal{E}_2(W))  & \text{in}\:\:(0,T)\times \mathbb{R}^N_+ \\
       (\partial_t-\Delta+a)Q-\beta D(u)=G(\mathcal{E}_1(v),\mathcal{E}_2(W))  & \text{in}\:\:(0,T)\times \mathbb{R}^N_+ \\
       {\rm div} u=0 & \text{in}\:\:(0,T)\times \mathbb{R}^N_+ \\
       u=\mathcal{E}_3(h),\quad D_NQ=\mathcal{E}_4(H) & \text{on}\:\:(0,T)\times \mathbb{R}^N_0 \\
       u(0)=u_0,\quad Q(0)=Q_0 & \text{in}\:\:\mathbb{R}^N_+,
    \end{array}\right. $$
with $p(t)=K_A(u(t),Q(t))$ and 
$$ \mathcal{E}_1(v)=E_T[v-T_1(t)(u_0,Q_0)]+\psi(t)T_1(|t|)(u_0,Q_0), $$
$$ \mathcal{E}_2(W)=E_T[W-T_2(t)(u_0,Q_0)]+\psi(t)T_2(|t|)(u_0,Q_0), $$
$$ \mathcal{E}_3(h)=E_T[h-T_1(t)(u_0,Q_0)]+\psi(t)T_1(|t|)(u_0,Q_0), $$
$$ \mathcal{E}_4(H)=E_T[H-D_NT_2(t)(u_0,Q_0)]+\psi(t)D_NT_2(|t|)(u_0,Q_0), $$
where $\psi(t)$ is a $C^\infty(\mathbb{R})$ function equal to 0 for $t<-2$ and equal to 1 for $t>-1$,   $(T_1(t),T_2(t))\coloneqq T(t)$ is the semigroup associated with the linearized problem (\ref{evo.sys.ICs}) and
$$ E_T(f)=\left\{\begin{array}{ll}
0 & t<0 \\
f(t) & t\in(0,T) \\
f(2T-t) & t\in(T,2T) \\
0 & t>2T.
\end{array}\right. $$
We have taken the extension $E_T$ of $(v,W)-T(t)(u_0,Q_0)$ and of $(h,H)-(T(t),D_NT(t))(u_0,Q_0)$ because when the function $f$ we are extending satisfies $f(0)=0$, then $E_T[f]\in H^1_p(\mathbb{R})$ with
$$ \partial_tE_T(f)=\left\{\begin{array}{ll}
0 & t<0 \\
\partial_tf(t) & t\in(0,T) \\
-\partial_tf(2T-t) & t\in(T,2T) \\
0 & t>2T.
\end{array}\right. $$
In order to find such $(u,p,Q)$ we have to check that $f(\mathcal{E}_1(v),\mathcal{E}_2(W))$, $G(\mathcal{E}_1(v),\mathcal{E}_2(W))$, $\mathcal{E}_3(h)$ and $\mathcal{E}_4(H)$ satisfies the conditions of Theorem \ref{t.evol.est.}: let us start seeing that
$$ \|e^{-\gamma t}\mathcal{E}_3(h)\|_{L^p(\mathbb{R};W^{2,q}(\mathbb{R}^N_+))} + \|e^{-\gamma t}\mathcal{E}_3(h)\|_{H^1_p(\mathbb{R};L^q(\mathbb{R}^N_+))} \lesssim $$
$$ \lesssim \|h\|_{L^p((0,T);W^{2,q}(\mathbb{R}^N_+))}+\|h\|_{H^1_p((0,T);L^q(\mathbb{R}^N_+))} + $$
$$ + \|e^{-\gamma t}T_1(t)(u_0,Q_0)\|_{L^p(\mathbb{R}_+;W^{2,q}(\mathbb{R}^N_+))}+\|e^{-\gamma t}T_1(t)(u_0,Q_0)\|_{H^1_p(\mathbb{R}_+;L^q(\mathbb{R}^N_+))}. $$
Thanks to Proposition \ref{p.evol.est.semigroup} we get that 
$$ \|e^{-\gamma t}T_1(t)(u_0,Q_0)\|_{L^p(\mathbb{R}_+;W^{2,q}(\mathbb{R}^N_+))}+\|e^{-\gamma t}T_1(t)(u_0,Q_0)\|_{H^1_p(\mathbb{R}_+;L^q(\mathbb{R}^N_+))}\lesssim $$
$$ \lesssim \|u_0\|_{B^{2(1-1/p)}_{q,p}(\mathbb{R}^N_+)}+\|Q_0\|_{B^{3-2/p}_{q,p}(\mathbb{R}^N_+)}.  $$
Finally, thanks to Lemma \ref{l.est.interp.2}, for $\ell=0,1,2$, we have that 
\begin{equation}\label{proof.est.ext.h}
    \|e^{-\gamma t}\mathcal{E}_3(h)\|_{H^{\ell/2}_{p}(\mathbb{R};W^{2-\ell,q}(\mathbb{R}^N_+))}\lesssim \|h\|_{H^{\ell/2}_p((0,T);W^{2-\ell,q}(\mathbb{R}^N_+))}+\varepsilon. 
\end{equation} 
On the other hand 
$$ \|e^{-\gamma t}\mathcal{E}_4(H)\|_{L^p(\mathbb{R};W^{2,q}(\mathbb{R}^N_+))} + \|e^{-\gamma t}\mathcal{E}_4(H)\|_{H^1_p(\mathbb{R};L^q(\mathbb{R}^N_+))} \lesssim $$
$$ \lesssim \|H\|_{L^p((0,T);W^{2,q}(\mathbb{R}^N_+))}+\|H\|_{H^1_p((0,T);L^q(\mathbb{R}^N_+))} + $$
$$ + \|e^{-\gamma t}D_NT_2(t)(u_0,Q_0)\|_{L^p(\mathbb{R}_+;W^{2,q}(\mathbb{R}^N_+))}+\|e^{-\gamma t}D_NT_2(t)(u_0,Q_0)\|_{H^1_p(\mathbb{R}_+;L^q(\mathbb{R}^N_+))}. $$
Again, thanks to Proposition \ref{p.evol.est.semigroup}, we get that 
$$ \|e^{-\gamma t}T_2(t)(u_0,Q_0)\|_{L^p(\mathbb{R}_+;W^{3,q}(\mathbb{R}^N_+))}+\|e^{-\gamma t}T_2(t)(u_0,Q_0)\|_{H^1_p(\mathbb{R}_+;W^{1,q}(\mathbb{R}^N_+))}\lesssim $$
$$ \lesssim \|u_0\|_{B^{2(1-1/p)}_{q,p}(\mathbb{R}^N_+)}+\|Q_0\|_{B^{3-2/p}_{q,p}(\mathbb{R}^N_+)}.  $$
So
\begin{equation}\label{proof.est.ext.H}
    \|e^{-\gamma t}\mathcal{E}_4(H)\|_{H^{\ell/2}_p(\mathbb{R};W^{2-\ell,q}(\mathbb{R}^N_+))}\lesssim \|H\|_{H^{\ell/2}_p((0,T);W^{2-\ell,q}(\mathbb{R}^N_+))}+\varepsilon. 
\end{equation} 
Now, we list the main non-linearities that arise of $f(u,Q)$: let $j,k,\ell=1,\ldots, N$
$$ I_1(u)= (u\cdot \nabla)u,\:\:I_2(Q)=\partial_{jk}Q\partial_\ell Q,\:\:I_3(Q)=\partial_jQQ\partial_{k\ell}Q $$
$$ I_4(Q)=\partial_jQQ\:\:I_5(Q)=\partial_jQQ^2, I_6(Q)=\partial_jQQ^3, \:\:I_7(Q)=\partial_j QQ^4 $$
$$ I_8(Q)=Q\partial_{jk\ell}Q, \:\: I_9(Q)=Q^2\partial_{jk\ell}Q; $$
Next, we list the main non-linearities of $G(u,Q)$: let $j,k,\ell =1,\ldots, N$
$$ II_1(u,Q)=\partial_jQu,\:\:II_2(u,Q)=Q\partial_ju,\:\:II_3(u,Q)=Q^2\partial_ju,\:\:II_4(Q)=Q^2,\:\:II_5(Q)=Q^3. $$
Therefore, we need to estimate
$$ \|e^{-\gamma t}I_k(\mathcal{E}_1(v),\mathcal{E}_2(W))\|_{L^p(\mathbb{R}_+;L^q(\mathbb{R}^N_+))} \quad k=1,\ldots, 8; $$
$$ \|e^{-\gamma t}II_\ell(\mathcal{E}_1(v),\mathcal{E}_2(W))\|_{L^p(\mathbb{R}_+;W^{1,q}(\mathbb{R}^N_+))}\quad \ell=1,\ldots, 5. $$
We will write in details just some of the estimates: let us start with the estimates of $I_1$:
$$ \|\mathcal{E}_1(v)\nabla \mathcal{E}_1(v)\|_{L^q(\mathbb{R}^N_+)}\le \|\mathcal{E}_1(v)\|_{L^\infty(\mathbb{R}^N_+)}\|\mathcal{E}_1(v)\|_{W^{1,q}(\mathbb{R}^N_+)}\lesssim \|\mathcal{E}_1(v)\|_{W^{1,q}(\mathbb{R}^N_+)}^2, $$
where we have used Sobolev embedding with $q>N$. Therefore
$$ \|e^ {-\gamma t}\mathcal{E}_1(v)\nabla \mathcal{E}_1(v)\|_{L^p(\mathbb{R}_+;L^q(\mathbb{R}^N_+))}\lesssim \|e^{-\gamma t}\|\mathcal{E}_1(v)\|_{W^{1,q}(\mathbb{R}^N_+)}^2\|_{L^p(\mathbb{R}_+)}=\|e^{-\frac{\gamma}{2}t}\mathcal{E}_1(v)\|_{L^{2p}(\mathbb{R}_+;W^{1,q}(\mathbb{R}^N_+))}^2. $$
By definition of $\mathcal{E}_1$ we have that 
$$ \|e^{-\frac{\gamma}{2}t}\mathcal{E}_1(v)\|_{L^{2p}(\mathbb{R}_+;W^{1,q}(\mathbb{R}^N_+))}^2\lesssim \|v\|_{L^{2p}((0,T);W^{1,q}(\mathbb{R}^N_+))}^2+\|e^{-\frac{\gamma}{2}t}T_1(t)(u_0,Q_0)\|_{L^{2p}(\mathbb{R}_+;W^{1,q}(\mathbb{R}^N_+))}^2.  $$
For what concerns the first term:
$$ \|v\|_{L^{2p}((0,T);W^{1,q}(\mathbb{R}^N_+))}^2 \le T^\frac{1}{p}\|v\|_{L^\infty((0,T);W^{1,q}(\mathbb{R}^N_+))}^2\lesssim  T^\frac{1}{p}\|e^{-\frac{\gamma}{2}t}\mathcal{E}_1(v)\|_{L^\infty(\mathbb{R}_+;W^{1,q}(\mathbb{R}^N_+))}^2\lesssim $$
$$ \lesssim T^\frac{1}{p}\left(\|e^{-\frac{\gamma}{2}t}\mathcal{E}_1(v)\|_{L^p(\mathbb{R};W^{2,q}(\mathbb{R}^N_+))}^2+\|e^{-\frac{\gamma}{2}t}\mathcal{E}_1(v)\|_{H^1_p(\mathbb{R};L^q(\mathbb{R}^N_+))}^2\right), $$
where we have used Lemma \ref{l.est.interp.1} and the fact that, since $p>2$, by Theorem \ref{t.Besov.emb.} then 
$$ \left(L^q(\mathbb{R}^N_+),W^{2,q}(\mathbb{R}^N_+)\right)_{1-1/p,p}=B^{2(1-1/p)}_{q,p}(\mathbb{R}^N_+)\hookrightarrow B^1_{q,1}(\mathbb R^N_+)\hookrightarrow W^{1,q}(\mathbb R^N). $$
On the other hand 
$$ \|e^{-\frac{\gamma}{2}t}T_1(t)(u_0,Q_0)\|_{L^{2p}(\mathbb{R}_+;W^{1,q}(\mathbb{R}^N_+))}^2 \lesssim \|e^{-\frac{\gamma}{4}t}T_1(t)(u_0,Q_0)\|_{L^\infty(\mathbb{R}_+;W^{1,q}(\mathbb{R}^N_+))}^2\lesssim \varepsilon^2,  $$
where we have used Lemma \ref{l.est.interp.1} and Proposition \ref{p.evol.est.semigroup}. Finally, we get
\begin{equation}\label{proof.I1.bound}
    \|e^{-\gamma t}I_1(\mathcal{E}_1(v))\|_{L^p(\mathbb{R};L^q(\mathbb{R}^N_+))}\lesssim  \varepsilon^2+T^\frac{1}{p}\left(\omega^2+\varepsilon^2\right).
\end{equation}
Let us pass to $I_2$: as before 
$$ \|e^{-\gamma t}\partial_{jk}\mathcal{E}_2(W)\partial_\ell\mathcal{E}_2(W)\|_{L^p(\mathbb{R}_+;L^q(\mathbb{R}^N_+))}\lesssim \|e^{-\frac{\gamma}{2}t}\mathcal{E}_2(W)\|_{L^{2p}(\mathbb{R}_+;W^{2,q}(\mathbb{R}^N_+))}^2\lesssim $$
$$ \lesssim T^{1/p}\|e^{-\frac{\gamma}{2}t}\mathcal{E}_2(W)\|_{L^\infty(\mathbb{R}_+;W^{2,q}(\mathbb{R}^N_+))}^2+\|e^{-\frac{\gamma}{4}t}T_2(t)(u_0,Q_0)\|_{L^\infty(\mathbb{R}_+;W^{2,q}(\mathbb{R}^N_+))}^2. $$
We use Lemma \ref{l.est.interp.2} noticing as before that, since $p>2$, by Theorem \ref{t.Besov.emb.} we have the embedding
$$ (W^{1,q}(\mathbb{R}^N_+),W^{3,q}(\mathbb{R}^N_+))_{1-1/p,p}=B^{3-2/p}_{q,p}(\mathbb{R}^N_+) \hookrightarrow W^{2,q}(\mathbb{R}^N_+). $$
So we get
\begin{equation}\label{proof.I2.bound}
    \|e^{-\gamma t}I_2(\mathcal{E}_2(W))\|_{L^p(\mathbb{R}_+;L^q(\mathbb{R}^N_+))}\lesssim \varepsilon^2+T^\frac{1}{p}\left(\omega^2+\varepsilon^2\right).
\end{equation}
The estimate for $I_3$ is similar: with the same strategy we get that 
$$ \|e^{-\gamma t}\partial_{j}\mathcal{E}_2(W)\mathcal{E}_2(W)\partial_{k\ell}\mathcal{E}_2(W)\|_{L^p(\mathbb{R}_+;L^q(\mathbb{R}^N_+))}\lesssim \|e^{-\frac{\gamma}{3}t}\mathcal{E}_2(W)\|_{L^{3p}(\mathbb{R}_+;W^{2,q}(\mathbb{R}^N_+))}^3\lesssim $$
$$ \lesssim T^{1/p}\|e^{-\frac{\gamma}{3}t}\mathcal{E}_2(W)\|_{L^\infty(\mathbb{R}_+;W^{2,q}(\mathbb{R}^N_+))}^3+\|e^{-\frac{\gamma}{6}t}T_2(t)(u_0,Q_0)\|_{L^\infty(\mathbb{R}_+;W^{2,q}(\mathbb{R}^N_+))}^3. $$
Then we repeat the argument for $I_2$ and $I_3$. The estimate for $I_\ell$ with $\ell=4,5,6,7$ is the same as before. Therefore, let us pass to $I_8$:
$$ \|e^{-\gamma t}\mathcal{E}_2(W)\partial_{jk\ell}\mathcal{E}_2(W)\|_{L^p(\mathbb{R}_+;L^q(\mathbb{R}^N_+))}^p\le\int_0^\infty e^{-\gamma pt}\|\mathcal{E}_2(W)\|_{W^{1,q}(\mathbb{R}^N_+)}^p\|\mathcal{E}_2(W)\|_{W^{3,q}(\mathbb{R}^N_+)}^pdt\le $$
$$ \le \|e^{-\frac{\gamma}{2}t}\mathcal{E}_2(W)\|_{L^\infty(\mathbb{R}_+;W^{1,q}(\mathbb{R}^N_+))}^p\|e^{-\frac{\gamma}{2}t}\mathcal{E}_2(W)\|_{L^p(\mathbb{R}_+;W^{3,q}(\mathbb{R}^N_+))}^p. $$
Since $p>2$, we can use Sobolev embeddings:
$$ \|e^{-\frac{\gamma}{2}t}\mathcal{E}_2(W)\|_{L^\infty(\mathbb{R}_+;W^{1,q}(\mathbb{R}^N_+))}\lesssim \|e^{-\frac{\gamma}{2}t}\mathcal{E}_2(W)\|_{W^{1/2,p}(\mathbb{R}_+;W^{1,q}(\mathbb{R}^N_+))}\lesssim 
$$
$$ \lesssim \|e^{-\frac{\gamma}{2}t}E_T[W-T_2(t)(u_0,Q_0)]\|_{W^{1/2,p}(\mathbb{R}_+;W^{1,q}(\mathbb{R}^N_+))}+\|e^{-\frac{\gamma}{2}t}T_2(t)(u_0,Q_0)\|_{W^{1/2,p}(\mathbb{R}_+;W^{1,q}(\mathbb{R}^N_+))}. $$
For what concerns the second part, we can just use the inequality
$$ \|e^{-\frac{\gamma}{2}t}T_2(t)(u_0,Q_0)\|_{W^{1/2,p}(\mathbb{R}_+;W^{1,q}(\mathbb{R}^N_+))}\le \|e^{-\frac{\gamma}{2}t}T_2(t)(u_0,Q_0)\|_{H^1_p(\mathbb{R}_+;W^{1,q}(\mathbb{R}^N_+))}\lesssim \varepsilon. $$
From the definition of $E_T$ we have that
$$ \|E_T[f]\|_{L^p(\mathbb{R}_+;W^{1,q}(\mathbb{R}^N_+))}\lesssim \|f\|_{L^p((0,T);W^{1,q}(\mathbb{R}^N_+))},\quad \|E_T[f]\|_{H^1_p(\mathbb{R}_+;W^{1,q}(\mathbb{R}^N_+))}\lesssim \|f\|_{H^1_p((0,T);W^{1,q}(\mathbb{R}^N_+))}. $$
So, since
$$ W^{1/2,p}(\mathbb{R}_+)=(L^p(\mathbb{R}_+),H^1_p(\mathbb{R}_+))_{1/2,p}, $$
By interpolation we have that
$$ \|e^{-\frac{\gamma}{2}t}E_T[W-T_2(t)(u_0,Q_0)]\|_{W^{1/2,p}(\mathbb{R}_+;W^{1,q}(\mathbb{R}^N_+))} \lesssim \|W-T_2(t)(u_0,Q_0)\|_{W^{1/2,p}((0,T);W^{1,q}(\mathbb{R}^N_+))}\le $$
$$ \le  \|W\|_{W^{1/2,p}((0,T);W^{1,q}(\mathbb{R}^N_+))}+\|T_2(t)(u_0,Q_0)\|_{W^{1/2,p}((0,T);W^{1,q}(\mathbb{R}^N_+))}. $$
In particular
$$ \|W\|_{W^{1/2,p}((0,T);W^{1,q}(\mathbb{R}^N_+))}^p= $$
$$ = \int_0^T\int_0^T\frac{\|W(t)-W(s)\|_{W^{1,q}(\mathbb{R}^N_+)}^p}{|t-s|^{p/2+1}}dtds=\int_0^T\int_0^T\frac{\|\mathcal{E}_2(W)(t)-\mathcal{E}_2(W)(s)\|_{W^{1,q}(\mathbb{R}^N_+)}^p}{|t-s|^{p/2+1}}dtds. $$
Since $p>1$, we have that $\mathcal{E}_2(W)\in C^{0,1-1/p}(\mathbb{R}_+;W^{1,q}(\mathbb{R}^N_+;\mathbb{R}^{N^2}))$, so
$$ \|W\|_{W^{1/2,p}((0,T);W^{1,q}(\mathbb{R}^N_+))}^p\lesssim \|\mathcal{E}_2(W)\|_{H^1_p(\mathbb{R}_+;W^{1,q}(\mathbb{R}^N_+))}^p\int_0^T\int_0^T|t-s|^{p/2-2}dtds.  $$
We also know that $p>2$, so $p/2-2>-1$ and 
$$ \int_0^T\int_0^T|t-s|^{p/2-2}dtds= \int_0^T\int_{-s}^{T-s}|t|^{p/2-2}dtds\simeq\int_0^T[(T-s)^{p/2-1}-(-s)^{p/2-1}]ds\simeq T^{p/2}. $$
so
$$ \|W\|_{W^{1/2,p}((0,T);W^{1,q}(\mathbb{R}^N_+))}^p\lesssim T^{p/2}\|\mathcal{E}_2(W)\|_{H^1_p(\mathbb{R}_+;W^{1,q}(\mathbb{R}^N_+))}^p. $$
Similarly, 
$$ \|T_2(t)(u_0,Q_0)\|_{W^{1/2,p}((0,T);W^{1,q}(\mathbb{R}^N_+))}\lesssim T^{1/2}\|T_2(t)(u_0,Q_0)\|_{H^1_p(\mathbb{R}_+;W^{1,q}(\mathbb{R}^N_+))}. $$
Therefore,
\begin{equation}\label{proof.I5.bound}
    \|e^{-\gamma t}I_8(\mathcal{E}_2(W))\|_{L^p(\mathbb{R}_+;L^q(\mathbb{R}^N_+))}\lesssim \varepsilon^2+T^\frac{1}{2}\left(\omega^2+\varepsilon^2\right).
\end{equation}
The estimate for $I_9$ is the same, so we pass to the $II_1$: 
$$ \|e^{-\gamma t}\partial_j\mathcal{E}_2(W)\mathcal{E}_1(v)\|_{L^p(\mathbb{R}_+;W^{1,q}(\mathbb{R}^N_+))}^p\lesssim $$
$$ \lesssim \sum_{k=1}^N\|e^{-\gamma t}\partial_{jk}\mathcal{E}_2(W)\mathcal{E}_1(v)\|_{L^p(\mathbb{R}_+;L^q(\mathbb{R}^N_+))}^p + \|e^{-\gamma t}\partial_j\mathcal{E}_2(W)\partial_k\mathcal{E}_1(v)\|_{L^p(\mathbb{R}_+;L^q(\mathbb{R}^N_+))}^p \lesssim $$
$$ \lesssim \int_0^\infty e^{-\gamma 
pt}\|\mathcal{E}_2(W)\|_{W^{2,q}(\mathbb{R}^N_+)}^p\|\mathcal{E}_1(v)\|_{W^{1,q}(\mathbb{R}^N_+)}^pdt. $$
So we can repeat the argument for $I_7$ and we get
\begin{equation}\label{proof.II1.bound}
    \|e^{-\gamma t}II_1(\mathcal{E}_1(v),\mathcal{E}_2(W))\|_{L^p(\mathbb{R}_+;W^{1,q}(\mathbb{R}^N_+))}\lesssim \varepsilon^2+T^\frac{1}{p}\left(\omega^2+\varepsilon^2\right).
\end{equation}
Let us pass to $II_2$:
$$ \|e^{-\gamma t}\mathcal{E}_2(W)\partial_j\mathcal{E}_1(v)\|_{L^p(\mathbb{R}_+;W^{1,q}(\mathbb{R}^N_+))}\le \|e^{-\gamma t}\mathcal{E}_2(W)\partial_j\mathcal{E}_1(v)\|_{L^p(\mathbb{R}_+;L^q(\mathbb{R}^N_+))} +  
 $$
 $$ + \sum_{k=1}^N\|e^{-\gamma t}\mathcal{E}_2(W)\partial_{jk}\mathcal{E}_1(v)\|_{L^p(\mathbb{R}_+;L^q(\mathbb{R}^N_+))}+\|e^{-\gamma t}\partial_k\mathcal{E}_2(W)\partial_j\mathcal{E}_1(v)\|_{L^p(\mathbb{R}_+;L^q(\mathbb{R}^N_+))}. $$
For what concerns the second term, we can repeat the argument of $II_1$. For the other one
$$ \|e^{-\gamma t}\mathcal{E}_2(W)\partial_{jk}\mathcal{E}_1(v)\|_{L^p(\mathbb{R}_+;L^q(\mathbb{R}^N_+))}\lesssim \|e^{-\frac{\gamma}{2}t}\mathcal{E}_2(W)\|_{L^\infty(\mathbb{R}_+;W^{1,q}(\mathbb{R}^N_+))}\|e^{-\frac{\gamma}{2}t}\mathcal{E}_1(v)\|_{L^p(\mathbb{R}_+;W^{2,q}(\mathbb{R}^N_+))}. $$
Then we can repeat the argument of $I_5$, so
\begin{equation}\label{proof.II2.bound}
    \|e^{-\gamma t}II_2(\mathcal{E}_1(v),\mathcal{E}_2(W))\|_{L^p(\mathbb{R}_+;W^{1,q}(\mathbb{R}^N_+))}\lesssim \varepsilon^2+T^\frac{1}{p}\left(\omega^2+ \varepsilon^2\right).
\end{equation}
Repeating this kind of arguments also with $II_3$, $II_4$ and $II_5$, if  $\varepsilon\le 1$, using (\ref{proof.est.ext.h}), (\ref{proof.est.ext.H}) and (\ref{proof.I1.bound}) to (\ref{proof.II2.bound}), we get
$$ \|\phi(v,W)\|_T\le C\left(\sum_{\ell=0}^2\|(h,H)\|_{H^{\ell/2}_p((0,T);W^{2-\ell}(\mathbb{R}^N_+))}+\varepsilon+ T^\frac{1}{p}\left(\omega^2
+ \omega^3+\omega^4+\omega^5+\varepsilon\right)\right),  $$
with $C>0$ which doesn't depend on $\omega,\varepsilon$ and $T$. So, if we take $\omega\in\mathbb{R}$ such that
$$ C\left(\sum_{\ell=0}^2\|(h,H)\|_{H^{\ell/2}_p((0,T);W^{2-\ell}(\mathbb{R}^N_+))}+\varepsilon\right)\le \frac{\omega}{2},  $$
and we choose $T>0$ sufficiently small such that it holds
$$ CT^\frac{1}{p}\left(\omega^2
+ \omega^3+\omega^4+\omega^5+\varepsilon\right)\le \frac{\omega}{2}, $$
then the function $\phi$ is well-defined and $\phi\colon Y_\omega\to Y_\omega$. Now, if we take $(v_1,W_1),(v_2,W_2)\in Y_\omega$, it can be seen as before that exists $M>0$ independent from $\omega,\varepsilon,T$ such that
$$ \|\phi(v_1,W_1)-\phi(v_2,W_2)\|_T\le M\left(\varepsilon+ T^\frac{1}{p}\left(\omega
+ \omega^2+\omega^3+\omega^4+\varepsilon\right)\right)\|(v_1,W_1)-(v_2,W_2)\|_T. $$
So, choosing $\varepsilon,T\ll1$ sufficiently small, $\phi$ is a contraction on $Y_\omega$ and therefore we have a solution. The uniqueness follows from the contraction theorem.
\end{proof}

\thanks{
\textbf{Acknowledgements.} M.M. is partially supported by JSPS Grants-in-Aid for Early-Career Scientists 21K13819 and Grants-in-Aid for Scientific Research (B) 22H01134. D.B. was partially supported by INDAM, GNAMPA group. Moreover, the authors would like to thank Dr.Yoshihiro Shibata, professor emeritus of Waseda University,  and Dr.Vladimir Georgiev, professor of Pisa University, for wonderful discussions and valuable comments. }

\clearpage
\bibliographystyle{plain}
 \bibliography{BE_ref}

\end{document}